\begin{document}

\newtheorem{definition}{Definition}[section]
\newtheorem{definitions}[definition]{Definitions}
\newtheorem{lemma}[definition]{Lemma}
\newtheorem{prop}[definition]{Proposition}
\newtheorem{theorem}[definition]{Theorem}
\newtheorem{cor}[definition]{Corollary}
\newtheorem{cors}[definition]{Corollaries}
\theoremstyle{remark}
\newtheorem{remark}[definition]{Remark}
\theoremstyle{remark}
\newtheorem{remarks}[definition]{Remarks}
\theoremstyle{remark}
\newtheorem{notation}[definition]{Notation}
\theoremstyle{remark}
\newtheorem{example}[definition]{Example}
\theoremstyle{remark}
\newtheorem{examples}[definition]{Examples}
\theoremstyle{remark}
\newtheorem{dgram}[definition]{Diagram}
\theoremstyle{remark}
\newtheorem{fact}[definition]{Fact}
\theoremstyle{remark}
\newtheorem{illust}[definition]{Illustration}
\theoremstyle{remark}
\newtheorem{rmk}[definition]{Remark}
\theoremstyle{definition}
\newtheorem{observation}[definition]{Observation}
\theoremstyle{definition}
\newtheorem{question}[definition]{Question}
\theoremstyle{definition}
\newtheorem{conj}[definition]{Conjecture}

\newcommand{\stac}[2]{\genfrac{}{}{0pt}{}{#1}{#2}}
\newcommand{\stacc}[3]{\stac{\stac{\stac{}{#1}}{#2}}{\stac{}{#3}}}
\newcommand{\staccc}[4]{\stac{\stac{#1}{#2}}{\stac{#3}{#4}}}
\newcommand{\stacccc}[5]{\stac{\stacc{#1}{#2}{#3}}{\stac{#4}{#5}}}

\renewcommand{\marginpar}[2][]{}

\renewenvironment{proof}{\noindent {\bf{Proof.}}}{\hspace*{3mm}{$\Box$}{\vspace{9pt}}}

\title{Model theory in compactly generated (tensor-)triangulated categories}

\author{Mike Prest and Rose Wagstaffe, \\ Department of Mathematics, University of Manchester, M13 9PL, UK \\
mprest@manchester.ac.uk \hspace{1cm}
rose.wagstaffe@manchester.ac.uk}

\date{}

\maketitle

\abstract{We give an account of model theory in the context of compactly generated triangulated and tensor-triangulated categories ${\cal T}$.  We describe pp formulas, pp-types and free realisations in such categories and we prove elimination of quantifiers and elimination of imaginaries.  We compare the ways in which definable subcategories of ${\cal T}$ may be specified.  Then we link definable subcategories of ${\cal T}$ and finite-type torsion theories on the category of modules over the compact objects of ${\cal T}$.  We briefly consider spectra and dualities.  If ${\cal T}$ is tensor-triangulated then new features appear, in particular there is an internal duality in rigidly-compactly generated tensor-triangulated categories.}

\footnotetext{Keywords:  triangulated category, tensor-triangulated category, model theory, definable subcategory}

\footnotetext{MSC:  03C60, 18E45, 18G80}

\tableofcontents

\section{Introduction and Background}

\subsection{Introduction}

Model theory in a compactly generated triangulated category ${\cal T}$ falls within the scope of the model theory of modules {\it via} the restricted Yoneda embedding ${\cal T} \to {\rm Mod}\mbox{-}{\cal T}^{\rm c}$ where ${\cal T}^{\rm c}$ denotes the subcategory of compact objects in ${\cal T}$.  The model theory of modules over possibly many-sorted rings, such as ${\cal T}^{\rm c}$, is well-developed but there are many special features of triangulated categories that make it worthwhile to directly develop model theory in the triangulated context.  That is what we do here, and we also consider additional features which appear when the category is tensor-triangulated.  A good number of the results appear elsewhere but we give a detailed and unified account which, we hope, will be a useful reference.

\vspace{4pt}

What began as the model theory of modules - the investigation of model-theoretic questions in the context of modules over a ring - has developed in scope - to much more general categories - in depth, and in purpose having for a long time been led by interests and questions coming from representation theory.  Many aspects - purity, pure-injectives, definable subcategories etc. - can be dealt with purely algebraically and, in the context of compactly generated triangulated categories, this was developed by Beligiannis and Krause \cite{BeligHomTriang}, \cite{KraTel} (for earlier relevant work, see \cite{ChrStr}, \cite{BenGnac}, and \cite{NeeRev}).  But, apart from a brief treatment in \cite{GarkPre2}, some use in \cite{ALPP} and a recent detailed exposition of some aspects in \cite{B-TSigmaPI}, there has not been much explicit appearance of model theory in triangulated categories.  To some extent that is because there is a `dictionary' between model theoretic and algebraic/functor-category methods, allowing much of what can be proved with model theory to be proved by other methods.  But sometimes what is obvious and natural using one language is not so easily translatable into the other.  Moreover, model theory can give new insights and simpler proofs.  Our main aim in this paper is to make the methods of model theory readily available to be used in compactly generated triangulated categories.  Some aspects - dualities, spectra, enhancements, extensions to well-generated triangulated categories - are currently in development, so we don't aim to be comprehensive but we do present the more settled material in detail.

\vspace{4pt}

Some minimal acquaintance with model theory, at least with basic ideas in the model theory of modules, will be helpful for the reader but we do keep formal aspects of model theory to a minimum.  Really, all that we need is the notion of a formula and its solution set in a structure.  

We do need to use sorted variables.  Variables in a formula are place-holders for elements from a structure; in our context these elements may belong to different {\em sorts}.  The idea is very simple and well-illustrated by representations of the quiver $A_2$ which is $\bullet \to \star$.  A representation of this quiver in the category of modules over a ring $R$ consists of two $R$-modules $M_\bullet$, $M_\star$ and an $R$-linear map from $M_\bullet$ to $M_\star$.  Such a structure is naturally two-sorted, with elements of the sort (labelled by) $\bullet$ being those of $M_\bullet$ and those of sort (labelled by) $\star$ being those of $M_\star$.  The variables we would use in writing formulas reflect that, say with subscripts, and for this example we would use variables of two sorts (labelled respectively by $\bullet$ and $\star$).  The difference between using a 2-sorted and 1-sorted language is the difference between treating (2-sorted) representations of that quiver (equivalently modules over the 2-sorted ring which is the ($R$-)path category of the quiver) and (1-sorted) modules over the path algebra of the quiver (the path algebra of the quiver is a normal, 1-sorted, ring).  That is a matter of choice if there are only finitely many sorts but, because ${\cal T}^{\rm c}$ is skeletally infinite, we do need to use sorted structures and take account of sorts in formulas.  For more discussion, and many examples, of this, see \cite{PreErice}.

\vspace{4pt}

{\bf We suppose throughout this paper that ${\cal T}$ is a compactly generated triangulated category.} We take this to include the requirement that ${\cal T}$ has infinite coproducts.  We suppose that the reader knows something about these categories, but we do recall here that the derived category ${\cal D}({\rm Mod}\mbox{-}R)$ of the category ${\rm Mod}\mbox{-}R$ of $R$-modules is a basic example which is obtained from the category of chain complexes of $R$-modules by a type of localisation process which preserves homological information.  The exact sequences of ${\rm Mod}\mbox{-}R$ give rise to {\em triangles} - certain triples of composable morphisms - in  ${\cal D}({\rm Mod}\mbox{-}R)$.  There is also a {\em shift} autoequivalence on  ${\cal D}({\rm Mod}\mbox{-}R)$ which is induced by the shift operation on chain complexes.  In general a {\em triangulated category} is an additive category equipped with a structure of triangles and a shift, subject to certain conditions which can be found in references such as \cite{NeeBk}, \cite[Chpt.~10]{Wei}, and \cite{StevTour} for tensor-triangulated categories.  

An object $A$ of a triangulated category ${\cal T}$ is {\bf compact} if the hom-functor $(A,-)$ commutes with direct sums and ${\cal T}$ is said to be {\bf compactly generated} if there is, up to isomorphism, just a set of compact objects in ${\cal T}$ and if the compact objects of ${\cal T}$ see every object in the sense that, if $X\in {\cal T}$ and if $(A,X)=0$ for every compact object $A$ in ${\cal T}$, then $X=0$.  The restriction that ${\cal T}$ be compactly generated could be weakened to ${\cal T}$ being well-generated but, in that case, model theory using infinitary languages would be needed, so we would lose the Compactness Theorem of model theory and its many consequences.  This is an interesting direction to follow and a start has been made, see \cite{KraLetz} for instance, but here we don't look any further in that direction (also cf.~\cite[\S 5B]{AdRo}).

Let ${\cal T}^{\rm c}$ denote the full subcategory of compact objects of ${\cal T}$.  Model theory for the objects of ${\cal T}$ is based on the key idea that the {\em elements} of objects of ${\cal T}$ are the morphisms from compact objects.  That is, if $X$ is an object of ${\cal T}$ and $A$ is a compact object of ${\cal T}$, then an {\bf element} of $X$ {\bf of sort} (indexed by) $A$ is a morphism $A\to X$ in ${\cal T}$, that is, the value of the functor $(-,X): ({\cal T}^{\rm c})^{\rm op} \to {\bf Ab}$ on $A$, where ${\bf Ab}$ denotes the category of abelian groups.  This is just an extension of the fact that, if $M$ is a (right) module over a (normal, 1-sorted) ring $R$, then the elements of $M$ may be identified with the morphisms from the module $R_R$ to $M$.

There is, up to isomorphism, just a set of compact objects, so we may use the objects in a small version of ${\cal T}^{\rm c}$ to index the sorts of the language for ${\cal T}$.  A ``small version" of ${\cal T}^{\rm c}$ means an equivalent category which has just a set of objects.  We don't go into detail about setting up the language - for that see \cite[Appx.~B]{PreNBK} or various other background references on the model theory of modules, for instance \cite[\S 5]{PreErice}, \cite[Chpt.~18]{PreMAMS} - because all we really need is that it gives us a way of writing down formulas, in particular (in our context) pp formulas.  Each formula defines, for every $X\in {\cal T}$, a certain subset of $(A_1,X) \oplus \dots \oplus (A_n,X)$ with $A_i\in {\cal T}^{\rm c}$ (the $A_i$ label the sorts of the free variables of the formula). 

Of course, for every object $X\in {\cal T}$, each sort $(A,X)$, for $A\in {\cal T}^{\rm c}$, has an abelian group structure, and this is built into the formal language.  Also built into the language is the action of (a small version of) ${\cal T}^{\rm c}$ on objects $X \in {\cal T}$ - the morphisms of ${\cal T}^{\rm c}$ ``multiply" the ``elements" of $X$, taking those of one sort to a possibly different sort.  Explicitly, if $f:A \to B$ is a morphism of ${\cal T}^{\rm c}$, then this induces $b\in (B,X) \mapsto bf \in (A,X)$ - multiplication by $f$ from sort $B$ to sort $A$.  Note how this generalises the action of a ring on a (1-sorted) right module.  In particular, each sort $(A,X)$ is a right module over ${\rm End}(A)$ but these multiplications on single sorts are only some of the multiplications that constitute the action of (the many-sorted ring) ${\cal T}^{\rm c}$ on objects $X$ of ${\cal T}$. 

In this way an object $X$ of ${\cal T}$ is replaced by a (many-sorted) set-with-structure, precisely by the right ${\cal T}^{\rm c}$-module which is the representable functor $(-,X)$ restricted to ${\cal T}^{\rm c}$.  This replacement is effected by the restricted Yoneda functor $y:{\cal T} \to {\rm Mod}\mbox{-}{\cal T}^{\rm c}$ which is given on objects by $X \to (-,X)\upharpoonright {\cal T}^{\rm c}$ and on morphisms $f:X\to Y$ by $f\mapsto (-,f):(-,X) \to (-,Y)$.  This functor is neither full nor faithful but, see \ref{ybijpi}, \ref{ybijcpct} below, it loses nothing of the model theory\footnote{That is because we use finitary model theory; infinitary languages would detect more, including some {\bf phantom} morphisms, that is, morphisms $f$ with $yf=0$.} so we may do model theory directly in ${\cal T}$ or, equivalently, we may move to the functor/module category ${\rm Mod}\mbox{-}{\cal T}^{\rm c}$, where the well-worked-out model theory of multisorted modules applies.  Sometimes it is more convenient to work in the one category than the other; in any case, moving from the one context to the other is straightforward (and is detailed in this paper).

The move to ${\rm Mod}\mbox{-}{\cal T}^{\rm c}$ gives us the immediate conclusion that the theory of ${\cal T}$ has pp-elimination of quantifiers.

\begin{theorem}\label{ppeq} \marginpar{ppeq}  If ${\cal T}$ is a compactly generated triangulated category, then every formula in the language for ${\cal T}$ is equivalent to the conjunction of a sentence (which refers to sizes of quotients of pp-definable subgroups) and a finite boolean combination of pp formulas.  
\end{theorem}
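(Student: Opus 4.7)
The plan is to reduce to the known pp-elimination of quantifiers theorem (Baur--Monk) for modules, applied to the (many-sorted) ring $\mathcal{T}^{\rm c}$, and then transport the result back to $\mathcal{T}$ through the restricted Yoneda functor $y : \mathcal{T} \to {\rm Mod}\mbox{-}{\cal T}^{\rm c}$.

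First, I would make precise the correspondence between formulas in the language for $\mathcal{T}$ and formulas in the language for right $\mathcal{T}^{\rm c}$-modules. The language for $\mathcal{T}$ has a sort for each (isomorphism class of) compact object $A \in \mathcal{T}^{\rm c}$, whose elements in $X \in \mathcal{T}$ are the morphisms $(A,X)$, together with the abelian group structure on each sort and the action of ${\rm End}(A)$ and of all the hom-morphisms of $\mathcal{T}^{\rm c}$, as described in the background material. These are precisely the operations in the language of modules over the many-sorted ring $\mathcal{T}^{\rm c}$, and the restricted Yoneda functor $y$ sends an object $X$ to the multi-sorted structure $(-,X)\!\upharpoonright\!\mathcal{T}^{\rm c}$. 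So a formula $\varphi$ interpreted in $X \in \mathcal{T}$ has exactly the same solution set as the corresponding module-theoretic formula interpreted in $yX$.

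Next, I would invoke the multi-sorted version of the Baur--Monk--Garavaglia theorem: over any (possibly many-sorted) ring $S$, every formula in the language of right $S$-modules is equivalent, modulo the theory of modules, to a conjunction of a boolean combination of pp formulas and a sentence that is a boolean combination of "invariant sentences" of the form ``$[\varphi(M) : \psi(M)] \geq n$'' where $\varphi,\psi$ are pp formulas with $\psi \leq \varphi$ (i.e., the index of one pp-definable subgroup inside another is bounded below by $n$). This is standard in the one-sorted case and the proof -- essentially a quantifier-by-quantifier induction that uses the key fact that pp formulas are closed under conjunction and existential quantification and that cosets of pp-definable subgroups are themselves pp-definable -- goes through verbatim in the multi-sorted setting (see the references on the model theory of multi-sorted modules cited above).

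Finally, I would transport the equivalence back to $\mathcal{T}$ using the sort-wise bijections established by $y$ on pp-definable subsets (references \ref{ybijpi} and \ref{ybijcpct} are flagged in the introduction for exactly this purpose). Since pp formulas correspond under $y$ and invariant sentences only depend on pp-subgroup indices, which are also preserved, the Baur--Monk normal form in ${\rm Mod}\mbox{-}{\cal T}^{\rm c}$ pulls back to the same normal form in $\mathcal{T}$. The main conceptual obstacle, and the only place any real work is needed, is in setting up this correspondence carefully, in particular in checking that every module-theoretic formula can be viewed as arising from a formula in the language for $\mathcal{T}$ (so the Baur--Monk equivalence is not only available in ${\rm Mod}\mbox{-}{\cal T}^{\rm c}$ but descends); once the language is organized symmetrically around the many-sorted ring $\mathcal{T}^{\rm c}$, this is tautological, which is why the authors call the conclusion ``immediate''.
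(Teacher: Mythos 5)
Your proposal is correct and follows essentially the same route as the paper: identify each $X\in{\cal T}$ with the ${\cal T}^{\rm c}$-module $yX$ (so the theory of ${\cal T}$ is the theory of the Yoneda image in ${\rm Mod}\mbox{-}{\cal T}^{\rm c}$) and invoke the Baur--Monk pp-elimination theorem for modules over a many-sorted ring, which the paper handles by noting each formula involves only finitely many sorts and so reduces to the 1-sorted case. Your final appeal to \ref{ybijpi} and \ref{ybijcpct} is not really needed, since the identification of solution sets is definitional rather than something to be transported, but this does not affect correctness.
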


A pp formula (in our context) is an existentially quantified system of linear equations.  A system of $R$-linear equations over a possibly multisorted ring $R$ can be written in the form
$$\bigwedge_{j=1}^m\, \sum_{i=1}^n\, x_ir_{ij}=0_j
$$ 
(read the conjunction symbol $\bigwedge$ as ``and") \\
or, more compactly, as $\overline{x}G=0$ where $G =(r_{ij})_{ij}$ is a matrix over $R$.  Here $x_i$ is a variable of sort $i$ and $r_{ij}$ a morphism from sort $j$ to sort $i$ (we are dealing with right modules, hence the contravariance).  If we denote this (quantifierfree) formula as $\theta(\overline{x})$, that is, $\theta(x_1, \dots, x_n)$, then its solution set in a module $M$ is denoted $\theta(M)$ and is a subgroup of $M_1\oplus \dots \oplus M_n$ where $M_i$ is the group of elements of $M$ of sort $i$, that is $(-,\bullet_i)(M) \simeq M(\bullet_i)$ where $\bullet_i$ is the object of $R$ corresponding to sort $i$.

A projection of the solution set for such a system of equations is defined by a formula of the form 
$$\exists x_{k+1},\dots, x_n \, \big(\bigwedge_{j=1}^m\, \sum_{i=1}^n\, x_ir_{ij}=0_j\big).
$$  
A formula (equivalent to one) of this form is a {\bf pp} (for ``positive primitive") {\bf formula} (the term {\bf regular formula} also is used).  We can write a pp formula more compactly as $\exists \overline{y} \,\, (\overline{x} \,\, \overline{y}) G =0$, or $\exists \overline{y} \,\, (\overline{x} \,\, \overline{y}) \left( \begin{array}{c} G' \\ G'' \end{array} \right) =0$, equivalently $\exists \overline{y} \,\, \overline{x}G' = \overline{y}G''$, if we want to partition the matrix $G$.  If we denote this formula by $\phi(x_1, \dots, x_k)$ then its {\bf solution set} $\phi(M)$ in $M$ is the subgroup of $M_1\oplus \dots \oplus M_k$ obtained by projecting $\theta(M)$ to the first $k$ components.  We refer to such a solution set as a {\bf pp-definable subgroup} of $M$ (the terminologies ``subgroup of finite definition" and ``finitely matrizable subgroup" also have been used).

\begin{example}\label{ppex}\marginpar{ppex}  Consider the quiver $A_4$ with orientation shown $1\xrightarrow{\alpha} 2 \xleftarrow{\beta} 3 \xrightarrow{\gamma} 4$ and let $R=KA_4$ be its path algebra with coefficients from a field $K$.  So left $R$-modules, equivalently $K$-representations of $A_4$ have the shape $V_1 \xrightarrow{T_\alpha} V_2 \xleftarrow{T_\beta} V_3 \xrightarrow{T_\gamma} V_4$ where the $V_i$ are $K$-vectorspaces and $T_\alpha, T_\beta, T_\gamma$ are $K$-linear maps.  In order to illustrate the definitions above, we think of these structures as {\em right} modules over the {\em opposite} of the 4-object $K$-linear path category of $A_4$, that is, over the $K$-linear category which has objects $\bullet_i$, $i=1,2,3,4$, and with ${\rm End}(\bullet_i) = K\cdot 1_i$, $(\bullet_2,\bullet_1) = K\alpha$, $(\bullet_2,\bullet_3) = K\beta$, $(\bullet_4,\bullet_3) = K\gamma$ and all other morphism groups $0$.

The corresponding language has four sorts, and the function symbols are, apart from the additions in each sort, the $\lambda f$ where $\lambda\in K$ and $f$ is one of the identity maps or $\alpha$, $\beta$ or $\gamma$.  An example of a system of linear equations is 
$$x_2 -x_1\alpha - x_3\beta = 0_2$$
$$x_3\gamma =0_3$$
where sorts are shown by subscripts to variables and zeroes.  Note that all terms in a given equation must have the same sort.

We may quantify out the variables $x_1$ and $x_3$ to obtain the pp formula $\phi(x_2)$ which is
$$ \exists x_1, x_3\, (x_2 - x_1\alpha - x_3\beta = 0_2 \, \wedge\, x_3\gamma =0_3)$$
which, in matrix format is
$$\exists x_1, x_3 \,\,  \big(\begin{array}{ccc}x_1 & x_2 & x_3\end{array} \big)  \big(\begin{array}{cc} -\alpha & 0 \\ 1 & 0 \\ -\beta & \gamma \end{array} \big) =  \big(\begin{array}{cc} 0 & 0\end{array} \big).$$
The solution set $\phi(M)$ in any module $M$ is the set $\alpha(M)+\beta({\rm ker}_M(\gamma))$ - a $K$ vector subspace of $M\bullet_2$ ($=V_2$ in the representation-of-quivers notation).
\end{example}

All this applies to ${\cal T}$ since the model theory of ${\cal T}$ is essentially that of right ${\cal T}^{\rm c}$-modules.  So Theorem \ref{ppeq} follows because, if $R$ is a (possibly many sorted) ring, then the theory of $R$-modules has pp-elimination of quantifiers\footnote{For the formal statement see, for instance \cite[A.1.1]{PreNBK}.  That is given for 1-sorted modules but the general case reduces to this, see \cite[\S 1]{KP1}, because each formula involves only finitely many sorts, corresponding to $A_1, \dots, A_n$ say, so is equivalent to a formula over a 1-sorted ring, namely ${\rm End}(A_1\oplus \dots \oplus A_n)$.} and so this applies to the theory of the image of the restricted Yoneda embedding which, as we have remarked, is the theory of ${\cal T}$.

It turns out, \cite[3.1/3.2]{GarkPre2} and see Section \ref{secelimq}, that, with this language, the theory of ${\cal T}$ has complete (positive) elimination of quantifiers - every (pp) formula is equivalent to a quantifier-free (pp) formula (see \ref{elimq}).  There is also a dual form of this - every pp formula is equivalent to a divisibility formula (\ref{ppisdiv}).  We will also see in Section \ref{secelimimag} that the theory of ${\cal T}$ has elimination of pp-imaginaries - every pp-pair is definably isomorphic to a (quantifier-free) formula.

As with any theory of modules, the initial category of sorts, in this case a small version of $({\cal T}^{\rm c})^{\rm op}$, may be completed to the full category ${\mathbb L}({\cal T})^{\rm eq+}$ of pp-definable sorts:  the objects are pp-pairs and the morphisms are the pp-definable maps between these pairs (see Section \ref{secppsorts}).  In our context, this completed category of sorts has two manifestations.  One is the category of coherent functors \cite{KraCoh} on ${\cal T}$.  The other is a certain localisation of the category $({\rm mod}\mbox{-}{\cal T}^{\rm c},{\bf Ab})^{\rm fp}$ of finitely presented functors from ${\rm mod}\mbox{-}{\cal T}^{\rm c}$ - the category of finitely presented right ${\cal T}^{\rm c}$-modules - to the category ${\bf Ab}$ of abelian groups.  In fact, \cite[7.1/7.2]{PreAxtFlat}, this localisation turns out to be equivalent to the opposite of ${\rm mod}\mbox{-}{\cal T}^{\rm c}$ which is, in turn, equivalent to ${\cal T}^{\rm c}\mbox{-}{\rm mod}$.  The latter equivalence, \ref{rtlmods}, reflects the fact that the absolutely pure = fp-injective ${\cal T}^{\rm c}$-modules coincide with the flat ${\cal T}^{\rm c}$-modules.  We will, in Section \ref{secppsorts}, give details of this, as well as the action of each of these manifestations of ${\mathbb L}({\cal T})^{\rm eq+}$ on ${\cal T}$, respectively on $y{\cal T}$.

Free realisations and pp-types are used a lot in the model theory of modules and applications, so in Section \ref{secfreereal} we point out how these look in ${\cal T}$.

In Section \ref{secdefsub} we present the various types of data which can specify a definable subcategory of ${\cal T}$.  In Section \ref{sectorsTc} we see the bijection between definable subcategories of ${\cal T}$ and hereditary torsion theories of finite type on ${\rm Mod}\mbox{-}{\cal T}^{\rm c}$ and in Section \ref{secdefabs} we explore that connection in more detail.  The category of imaginaries of a definable subcategory is described in Section \ref{secmoddef}.  Some connections between hom-orthogonal pairs in ${\cal T}$ and hereditary torsion theories on ${\rm Mod}\mbox{-}{\cal T}^{\rm c}$ are seen in Section \ref{sectors} and this is continued in Section \ref{sectriangdef} with the bijection between triangulated definable subcategories and smashing subcategories of ${\cal T}$.

Section \ref{secspec} describes spectra associated to ${\cal T}$ and this is continued for tensor-triangulated categories in Section \ref{secttspec}.

For definable subcategories of module categories there is a duality, elementary duality, which exists at a number of levels, in particular between definable subcategories of ${\rm Mod}\mbox{-}R$ and $R\mbox{-}{\rm Mod}$.  This carries over, at least to algebraic triangulated categories; we outline that in Section \ref{secelemdual}.  If ${\cal T}$ is tensor-triangulated with ${\cal T}^{\rm c}$ rigid, then there is also an {\em internal} duality, induced by the duality on ${\cal T}^{\rm c}$; that is described in Section \ref{secintdual}.

Tensor-closed definable subcategories are briefly considered in Section \ref{secttcats}, and in Section \ref{secinthom} there is some exploration of the wider possibilities for interpreting the model-theoretic language.

\vspace{4pt}

Background on the model theory of modules can be found in various references; we use \cite{PreNBK} as a convenient compendium of results and references to the original papers.  We give a few reminders in this paper.  The approach in \cite{PreNBK} is algebraic/functor-category-theoretic; readers coming from model theory might find \cite{PreBk} or \cite{PreMTM} a more approachable introduction.  For model theory of modules over many-sorted rings, see \cite{PreErice}.

\vspace{4pt}

Thanks to Isaac Bird and Jordan Williamson for a number of useful comments and for sharing their preprint \cite{BirWilDualPr}.

\subsection{The restricted Yoneda functor}\label{secyon} \marginpar{secyon}

The restricted Yoneda functor $y:{\cal T} \to {\rm Mod}\mbox{-}{\cal T}^{\rm c}$, $X \to (-,X)\upharpoonright {\cal T}^{\rm c}$ underlies most of what we do here.  Restricting its domain to the category ${\cal T}^{\rm c}$ of compact objects gives, by the Yoneda Lemma and because ${\cal T}$ is idempotent-complete (see \cite[1.6.8]{NeeBk}), an equivalence between ${\cal T}^{\rm c}$ and the category ${\rm proj}\mbox{-}{\cal T}^{\rm c}$ of finitely generated projective right ${\cal T}^{\rm c}$-modules.  The functor $y$ is, however, neither full nor faithful and one effect of this is that the image of ${\cal T}$ in ${\rm Mod}\mbox{-}{\cal T}^{\rm c}$ is not closed under elementary equivalence, indeed it is not a definable subcategory (see Section \ref{secdefsub}) of ${\rm Mod}\mbox{-}{\cal T}^{\rm c}$.  We do, however, have \ref{ybijpi} and \ref{ybijcpct} below (the second is just by the Yoneda Lemma). 

First we recall (see \cite[\S 2.1.1]{PreNBK}) that an embedding $M \to N$ of objects in a module category, more generally in a definable additive category, is {\bf pure} if, for every pp formula $\phi$, the (image of the) solution set $\phi(M)$ is the intersection of $\phi(N)$ ``with $M$", meaning with the product of sorts of $M$ corresponding to the free variables of $\phi$.  And $M$ is {\bf pure-injective} if every pure embedding with domain $M$ is split.  There are many equivalent definitions, see \cite[\S\S 4.3.1, 4.3.2]{PreNBK}.

The theory of purity - intimately connected with solution sets of pp formulas and so with the model theory of additive structures - was developed, in algebraic terms, in compactly generated triangulated categories in \cite{BeligHomTriang} and \cite{KraTel}.  Essentially, it is the theory of purity in ${\rm Mod}\mbox{-}{\cal T}^{\rm c}$, more precisely, in the definable subcategory generated by $y{\cal T}$, pulled back to ${\cal T}$.  For example, $X\in {\cal T}$ is pure-injective iff $yX$ is a pure-injective ${\cal T}^{\rm c}$-module.  Since $yX$ is absolutely pure (\cite[Lemma 1.6]{KraTel}) that is equivalent to it being an injective ${\cal T}^{\rm c}$-module.  The pure-injective objects of ${\cal T}$ play the same key role that they do in the model theory of modules.  For instance every ($\emptyset$-)saturated module is pure-injective and the pure-injective modules are exactly the direct summands of saturated modules (see \cite[21.1/21.2]{PreMAMS} or \cite[2.9]{PreBk}); this is equally true in compactly generated triangulated categories.\footnote{This comment, like a few others, is particularly directed to those coming from model theory.}  

\begin{prop}\label{ybijpi} \marginpar{ybijpi} \cite[1.8]{KraTel} If $X\in {\cal T}$ is pure-injective then, for every $Y\in {\cal T}$, the restricted Yoneda map $y: (Y,X) \to (yY, yX)$ is bijective.
\end{prop}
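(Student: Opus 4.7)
The plan is to reduce the statement to the trivial case where $Y$ is a coproduct of compact objects --- in which $y$ is fully faithful for obvious reasons --- and then to bootstrap using the exactness properties forced on $(-, X)$ by pure-injectivity of $X$.

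The trivial case is immediate: if $P = \bigoplus_\alpha A_\alpha$ with each $A_\alpha \in {\cal T}^{\rm c}$, applying the Yoneda lemma to each summand gives
$$(P, X) \;\cong\; \prod_\alpha (A_\alpha, X) \;\cong\; \prod_\alpha (yA_\alpha, yX) \;\cong\; (yP, yX),$$
and the composite identification is $y$ itself. No hypothesis on $X$ is needed here.

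For the general case I would build a pure-projective presentation of $Y$. Since ${\cal T}$ is compactly generated, the canonical map $\bigoplus_{A \to Y} A \to Y$ (sum over all morphisms from compact objects) is a pure epimorphism; complete it to a pure triangle $K \to P_0 \to Y \to \Sigma K$ with $P_0$ a coproduct of compacts, and repeat with $K$ to produce a second pure triangle $K' \to P_1 \to K \to \Sigma K'$. Applying $y$ and splicing then yields an exact presentation $yP_1 \to yP_0 \to yY \to 0$ in ${\rm Mod}\mbox{-}{\cal T}^{\rm c}$. On the ${\cal T}$ side, pure-injectivity of $X$ converts each pure triangle into a short exact sequence of Hom-groups, and splicing gives the left-exact sequence
$$0 \to (Y, X) \to (P_0, X) \to (P_1, X);$$
left-exactness of ${\rm Hom}(-, yX)$ applied to the presentation of $yY$ gives the parallel sequence
$$0 \to (yY, yX) \to (yP_0, yX) \to (yP_1, yX).$$
Via $y$, these form a commutative ladder whose middle and right vertical maps are bijective by the trivial case; a short five-lemma diagram chase then forces the leftmost vertical map --- which is $y: (Y, X) \to (yY, yX)$ --- to be bijective.

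The only step demanding real care is the claim that pure-injectivity of $X$ makes $(-, X)$ exact on pure triangles $A \to B \to C \to \Sigma A$. The six-term sequence from the triangle is automatically exact; the point is that the outer maps $(\Sigma A, X) \to (C, X)$ and $(A, X) \to (\Sigma^{-1} C, X)$ must vanish. For the first, pure triangles are stable under rotation so $\Sigma A \to \Sigma B$ is still a pure mono; pure-injectivity of $X$ then lets any morphism $\Sigma A \to X$ extend along it, killing the composite with the connecting map $C \to \Sigma A$ by the usual triangulated vanishing. The second follows dually.
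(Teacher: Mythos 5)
Your overall strategy is sound and it is genuinely different from the proof the paper points to. The paper simply cites \cite[1.8]{KraTel}, and Krause's argument compares the two functors $(-,X)$ and ${\rm Hom}(y-,yX)$: both are cohomological and send coproducts to products (the latter because $yX$ is injective), the comparison map is bijective on compact objects by Yoneda, and the full subcategory of those $Y$ on which it is bijective is localizing and contains ${\cal T}^{\rm c}$, hence is all of ${\cal T}$. You instead resolve $Y$ by a two-step pure-projective presentation $P_1 \to P_0 \to Y$ with the $P_i$ coproducts of compacts and run a five-lemma argument. The base case is right (and needs no hypothesis on $X$, consistent with the remark after \ref{ybijcpct}), the canonical map $\bigoplus A \to Y$ is a pure epimorphism for exactly the reason you give, and the ladder only needs left-exactness of both rows, which you have. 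So this is a legitimate ``resolution-style'' alternative to Krause's d\'evissage.

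The one genuine gap is at the step you yourself flag. You invoke ``pure-injectivity of $X$ lets any morphism $\Sigma A \to X$ extend along the pure monomorphism $\Sigma A \to \Sigma B$'', but that extension property is not the definition in force: the paper's working characterisation is that $yX$ is an injective ${\cal T}^{\rm c}$-module, and Krause's definition is that every pure monomorphism with domain $X$ splits. Starting from ``$yX$ injective'' your step is circular: the vanishing you need says exactly that phantom maps into $X$ are zero (the composite of the connecting map $Y \to \Sigma K$ with any $\Sigma K \to X$ is a typical phantom into $X$), and that is precisely the injectivity half of the proposition being proved --- extending the module-level map $y\Sigma A \to yX$ over $y\Sigma A \to y\Sigma B$ is easy, but lifting that extension back to a morphism in ${\cal T}$ is the surjectivity half. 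Starting from the splitting definition the step is true but needs an argument you do not give: complete $(\Sigma f,-u)\colon \Sigma A \to \Sigma B \oplus X$ to a triangle with cone $P$, check (using that $y$ is homological and $y\Sigma f$ is monic) that $X \to P$ is a pure monomorphism, split it by pure-injectivity, and compose the retraction with $\Sigma B \to P$ to obtain the desired extension. With that weak-pushout lemma inserted (or with the extension property taken as the definition of pure-injectivity), your proof is complete. One small slip: pure triangles are not stable under rotation (the rotated triangle $B \to C \to \Sigma A \to \Sigma B$ is pure only if $yA=0$); what you actually use, and what is true, is that the suspension of a pure monomorphism is again a pure monomorphism, since $\Sigma$ preserves compact objects.
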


\begin{prop}\label{ybijcpct} \marginpar{ybijcpct} If $A\in {\cal T}$ is a compact object then, for every $X\in {\cal T}$, the restricted Yoneda map $y: (A,X) \to (yA, yX)$ is bijective.
\end{prop}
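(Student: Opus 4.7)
The plan is to recognize this as a direct application of the enriched Yoneda lemma, applied to a small version of ${\cal T}^{\rm c}$ regarded as a preadditive category. Because $A$ is compact, it represents an object of (a skeleton of) ${\cal T}^{\rm c}$, and the ${\cal T}^{\rm c}$-module $yA = (-,A)\upharpoonright {\cal T}^{\rm c}$ is nothing other than the representable functor at $A$. The strategy is therefore to invoke Yoneda on the module category ${\rm Mod}\mbox{-}{\cal T}^{\rm c}$ and then identify the resulting bijection with the map induced by $y$.

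Concretely, I would first recall the Yoneda lemma in the additive setting: for any right ${\cal T}^{\rm c}$-module $F : ({\cal T}^{\rm c})^{\rm op} \to {\bf Ab}$, evaluation at the identity $1_A$ gives a natural bijection
$$\mathrm{Hom}_{{\rm Mod}\mbox{-}{\cal T}^{\rm c}}(yA, F) \;\longrightarrow\; F(A),\qquad \eta \mapsto \eta_A(1_A).$$
Specialising $F = yX = (-,X)\upharpoonright {\cal T}^{\rm c}$, the right-hand side is $yX(A) = (A,X)$, so $(yA, yX) \cong (A,X)$.

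It remains to check that this Yoneda bijection is precisely (the inverse of) the restricted Yoneda map of the statement. Given $f \in (A,X)$, its image $yf$ is the natural transformation $(-,f)\upharpoonright {\cal T}^{\rm c}$, whose component at $A$ sends $1_A$ to $f \circ 1_A = f$. Hence the composite of $y : (A,X) \to (yA, yX)$ with evaluation at $1_A$ is the identity on $(A,X)$, so $y$ agrees with the Yoneda bijection and is therefore itself bijective.

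There is no genuine obstacle: the only point requiring attention is the standard caveat that ${\rm Mod}\mbox{-}{\cal T}^{\rm c}$ is taken to be the modules over a small skeleton of ${\cal T}^{\rm c}$, so that Yoneda applies without set-theoretic trouble. Unlike \ref{ybijpi}, where pure-injectivity of $X$ is essential, here the result uses only the representability of $yA$ and places no hypothesis on $X$.
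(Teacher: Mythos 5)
Your proof is correct and is essentially the paper's own argument: the paper disposes of this proposition with the remark that it ``is just by the Yoneda Lemma'', and your write-up simply spells out that standard argument (identifying $yA$ as the representable module and checking the restricted Yoneda map inverts evaluation at $1_A$), including the harmless smallness caveat.
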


In fact there is symmetry here in that \ref{ybijcpct} holds more generally for $A$ pure-projective (that is, a direct summand of a direct sum of compact objects).

We will use the fact that the restricted Yoneda functor induces an equivalence between the category ${\rm Pinj}({\cal T})$ of pure-injective objects in ${\cal T}$ and the category ${\rm Inj}\mbox{-}{\cal T}^{\rm c}$ of injective right ${\cal T}^{\rm c}$-modules.

\begin{theorem}\label{pinjtoinj} \marginpar{pinjtoinj} (\cite[1.9]{KraTel})  The restricted Yoneda functor $y:{\cal T} \to {\rm Mod}\mbox{-}{\cal T}^{\rm c}$ induces an equivalence 
$${\rm Pinj}({\cal T}) \simeq {\rm Inj}\mbox{-}{\cal T}^{\rm c}.$$
\end{theorem}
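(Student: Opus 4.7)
The plan is to establish fully faithfulness by direct appeal to Proposition \ref{ybijpi}, verify that the image lies in the injectives, and then attack essential surjectivity via Brown representability for cohomological functors on a compactly generated triangulated category.

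For the fully faithful part, I take any $X, X' \in {\rm Pinj}({\cal T})$ and apply Proposition \ref{ybijpi} with $Y := X$ and with the pure-injective target being $X'$, immediately obtaining that $y : (X, X') \to (yX, yX')$ is bijective. To see that the image of ${\rm Pinj}({\cal T})$ under $y$ lands in ${\rm Inj}\mbox{-}{\cal T}^{\rm c}$, I invoke the equivalences recalled just before the theorem: $X$ is pure-injective in ${\cal T}$ iff $yX$ is pure-injective in ${\rm Mod}\mbox{-}{\cal T}^{\rm c}$, and since $yX$ is absolutely pure, iff $yX$ is injective.

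For essential surjectivity, given $I \in {\rm Inj}\mbox{-}{\cal T}^{\rm c}$, I would build a candidate preimage by defining $\widetilde{I} : {\cal T}^{\rm op} \to {\bf Ab}$ by $\widetilde{I}(Y) := {\rm Hom}_{{\cal T}^{\rm c}}(yY, I)$. This functor is cohomological: applying $(A,-)$ to a triangle $X \to Y \to Z$ for each $A \in {\cal T}^{\rm c}$ shows that $yX \to yY \to yZ$ is exact in the middle in ${\rm Mod}\mbox{-}{\cal T}^{\rm c}$, and then exactness of ${\rm Hom}(-, I)$ (since $I$ is injective) yields exactness of $\widetilde{I}(Z) \to \widetilde{I}(Y) \to \widetilde{I}(X)$. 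Moreover, $\widetilde{I}$ sends coproducts in ${\cal T}$ to products in ${\bf Ab}$: since every $A \in {\cal T}^{\rm c}$ is compact, $y$ preserves coproducts, so $\widetilde{I}(\bigoplus Y_i) = {\rm Hom}(\bigoplus yY_i, I) = \prod {\rm Hom}(yY_i, I)$. By the Brown representability theorem for cohomological functors on a compactly generated triangulated category, there exists $X \in {\cal T}$ with $\widetilde{I} \cong (-, X)$; restricting to ${\cal T}^{\rm c}$ and using the Yoneda lemma gives $(yX)(A) = (A, X) \cong \widetilde{I}(A) = {\rm Hom}(yA, I) \cong I(A)$ naturally in $A$, hence $yX \cong I$. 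Finally, since $yX \cong I$ is injective, hence pure-injective, $X$ is pure-injective by the correspondence recalled above, and uniqueness of $X$ up to isomorphism follows from the already-established fully faithfulness.

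The main obstacle is the essential surjectivity step, which rests on Brown representability; this is a nontrivial structural result, but one which is standard and available in our setting. An alternative route, more in keeping with the model-theoretic themes of the paper, would be to realise $I$ as the pure-injective hull (computed in $y{\cal T}$) of $yZ$ for some $Z$ built from the pp-type data determining $I$, but Brown representability appears to give the cleanest argument.
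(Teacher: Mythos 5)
Your argument is correct, and it is essentially the proof of the result in the cited source: the paper itself gives no proof of \ref{pinjtoinj}, quoting it from \cite[1.9]{KraTel}, and Krause's argument is exactly the one you reconstruct --- fully faithfulness on pure-injectives from \ref{ybijpi}, the identification of the essential image via ``$X$ pure-injective iff $yX$ injective'' (recalled in the paper as background, so you may use it), and essential surjectivity by applying Brown representability to the cohomological, product-preserving functor ${\rm Hom}_{{\cal T}^{\rm c}}(y-,I)$ and then using Yoneda on compacts to see $yX\cong I$.
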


\subsection{Definable subcategories of module categories}\label{secdefmod} \marginpar{secdefmod}

Very briefly, we recall the context of the model theory of modules and the principal associated structures.  Some of this is defined more carefully later in the paper but see the references for more detail.

In model theory in general, the context is typically the category of models of some complete theory, with elementary embeddings.  In the context of modules, it turns out to be more natural to work with {\bf definable subcategories}, meaning full subcategories of module categories which are closed under elementary equivalence and which are {\bf additive}, meaning closed under direct sums and direct summands.  These subcategories are equivalently characterised, without reference to model theory, as follows (see \cite[\S 3.4]{PreNBK} for this and various other characterisations by closure conditions).

\begin{theorem}\label{chardefmod} \marginpar{chardefmod} A subcategory ${\cal D}$ of a module category is a definable subcategory iff ${\cal D}$ is closed under direct products, directed colimits and pure submodules.
\end{theorem}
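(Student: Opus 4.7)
The plan is to prove both directions: the forward direction is handled by verifying each closure property directly, while the converse requires a compactness/ultraproduct argument. For the forward direction I would use the well-known fact (a consequence of pp-elimination of quantifiers for modules) that a subcategory closed under elementary equivalence and summands is precisely cut out by a family of pp-pair conditions $\phi(M) = \psi(M)$ for $\psi \leq \phi$ pp formulas. Closure under products then reduces to $\phi(\prod_i M_i) = \prod_i \phi(M_i)$, valid for any pp formula. Closure under directed colimits reduces to preservation of pp formulas under directed colimits: a witness to $\phi$ in $\varinjlim M_i$ comes from a witness in some $M_i$. Closure under pure submodules is immediate from the definition of pure embedding, since for $M$ pure in $N$ one has $\psi(M) \subseteq \phi(M) = \phi(N) \cap M = \psi(N) \cap M = \psi(M)$.

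For the converse, suppose ${\cal D}$ is closed under products, directed colimits, and pure submodules. I would define $T({\cal D})$ to be the collection of pp-pairs $(\phi, \psi)$ with $\psi \leq \phi$ satisfying $\phi(N) = \psi(N)$ for every $N \in {\cal D}$, and let ${\cal D}^*$ be the subcategory defined by imposing every pp-pair condition in $T({\cal D})$. By the forward direction ${\cal D}^*$ is definable and manifestly contains ${\cal D}$; the whole work is the reverse inclusion ${\cal D}^* \subseteq {\cal D}$.

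To get this, I would show that every $M \in {\cal D}^*$ admits a pure embedding into a module built from ${\cal D}$ by products and directed colimits. The key observation: for any finite tuple $\bar{a}$ from $M$, any pp formula $\phi$ satisfied by $\bar{a}$ in $M$, and any finite list of pp formulas $\psi_1,\dots,\psi_k$ not satisfied by $\bar{a}$ in $M$, the conjoined condition $\phi \wedge \neg\psi_1 \wedge \dots \wedge \neg\psi_k$ must be realised by some tuple in some $N \in {\cal D}$ -- otherwise the pair $(\phi,\, \psi_1 + \dots + \psi_k)$ would lie in $T({\cal D})$, forcing $\bar{a} \in \psi_j(M)$ for some $j$, contrary to hypothesis. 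A standard compactness argument indexed by finite fragments of the diagram of $M$ then yields a pure embedding $M \hookrightarrow \prod_{i \in I} N_i / U$ with all $N_i \in {\cal D}$. Since the ultraproduct $\prod N_i / U$ can be realised as a directed colimit of subproducts of the $N_i$ indexed by the ultrafilter, it lies in ${\cal D}$ by closure under products and directed colimits, and hence $M \in {\cal D}$ by closure under pure submodules.

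The main obstacle I anticipate is precisely this compactness/ultraproduct step: verifying that the constructed embedding is pure (not merely elementary) and that the many-sorted language is handled uniformly. In the one-sorted setting this is routine, and the multi-sorted extension is mechanical but must be executed carefully. A cleaner but more heavyweight alternative would pass through the correspondence between definable subcategories and hereditary finite-type torsion theories on $({\rm mod}\mbox{-}R,{\bf Ab})^{\rm fp}$, bypassing the explicit compactness argument at the cost of much more machinery.
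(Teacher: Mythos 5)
The paper itself does not prove this statement; it simply cites \cite[\S 3.4]{PreNBK}, and your overall plan --- forward direction from pp-pair axiomatisability plus the standard preservation properties of pp formulas under products, directed colimits and pure embeddings; converse by purely embedding any $M$ satisfying all the pp-pair conditions into an ultraproduct (viewed as a directed colimit of products) of members of ${\cal D}$ --- is essentially the standard argument from that source. However, there is one genuine flaw, and it sits exactly at the step you yourself identify as the key observation. You claim that if no $N\in {\cal D}$ realises $\phi \wedge \neg\psi_1 \wedge \dots \wedge \neg\psi_k$, then the pair $(\phi,\, \psi_1+\dots+\psi_k)$ lies in $T({\cal D})$, ``forcing $\overline{a}\in \psi_j(M)$ for some $j$''. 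The first half is fine (a union of subgroups is contained in their sum), but the final inference is false: $\overline{a}\in \psi_1(M)+\dots+\psi_k(M)$ does not place $\overline{a}$ in any single $\psi_j(M)$ --- in ${\mathbb Z}$, $5\in 2{\mathbb Z}+3{\mathbb Z}$ but lies in neither summand. As written, the lemma that feeds your ultraproduct/compactness step is therefore unproved.

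The lemma is nevertheless true, and the repair is short, using closure under finite products instead of the sum formula: since $M\in {\cal D}^*$ and $\overline{a}\in\phi(M)\setminus\psi_j(M)$, for each $j$ \emph{separately} the pair $(\phi,\,\phi\wedge\psi_j)$ cannot lie in $T({\cal D})$, so there are $N_j\in{\cal D}$ and $\overline{c}_j\in\phi(N_j)\setminus\psi_j(N_j)$; then $\overline{c}=(\overline{c}_1,\dots,\overline{c}_k)$ in $N_1\times\dots\times N_k\in{\cal D}$ satisfies $\phi$ and none of the $\psi_j$, because pp formulas are computed coordinatewise in finite products. (Alternatively one can invoke B.~H.~Neumann's lemma on covering a group by finitely many subgroups.) With that fixed, the rest of your converse --- building the index set from finite fragments of the positive and negative pp-diagram of $M$, the coordinatewise map into $\prod_i N_i/U$, purity of that map, the identification of the reduced product as a directed colimit of subproducts, and the final appeal to closure under pure submodules --- does go through. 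Two smaller points: the ``well-known fact'' you quote for the forward direction needs closure under direct \emph{sums} as well as summands and elementary equivalence (it fails without it), which is harmless here since the paper's definition of definable subcategory includes additivity; and the pair you write should be $(\phi,\,\phi\wedge(\psi_1+\dots+\psi_k))$ so as to respect the convention $\psi\leq\phi$ for pp-pairs.
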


If ${\cal X}$ is a set of modules, then we denote by $\langle {\cal X} \rangle$ the {\bf definable subcategory generated by} ${\cal X}$.  It is the closure of ${\cal X}$ under the above operations, equally it is the smallest additive subcategory containing ${\cal X}$ and closed under elementary equivalence.

It is the case, see \cite[3.4.8]{PreNBK}, that every definable subcategory is closed under pure-injective hulls where, if $M$ is a module, its {\bf pure-injective hull} $H(M)$, a minimal pure, pure-injective extension of $M$.\footnote{In fact, $M$ is an elementary submodule of $H(M)$, \cite[Cor.~4 to Thm.~4]{Sab2}.}  It follows that every definable subcategory is determined by the pure-injective modules in it.  If ${\cal T}$ is a compactly generated triangulated category and $X\in {\cal T}$, then the {\bf pure-injective hull} of $X$ may be defined to be the (unique-to-isomorphism over $X$, by \ref{pinjtoinj}) object $H(X)$ of ${\cal T}$ such that $yH(X) =E(yX)$, where $E$ denotes injective hull in the module category ${\rm Mod}\mbox{-}{\cal T}^{\rm c}$.

To each {\bf definable category} ${\cal D}$ - meaning a category equivalent to a definable subcategory of a module category - there is associated a skeletally small abelian category, ${\rm fun}({\cal D})$, of functors on ${\cal D}$.  This can be defined as the category of pp-imaginaries (see Section \ref{secppsorts}) for ${\cal D}$, or as a localisation of the free abelian category on $R$ where ${\cal D}$ is a definable subcategory of ${\rm Mod}\mbox{-}R$ ($R$ a possibly many-sorted ring), or as the category of {\bf coherent functors} - those that commute with direct products and directed colimits - from ${\cal D}$ to ${\bf Ab}$.  Each definable subcategory\footnote{The containing module category in \ref{chardefmod} may be replaced by any definable category.} ${\cal C}$ of ${\cal D}$ is determined by the Serre subcategory ${\cal S}_{\cal C}$ of ${\rm fun}({\cal D})$ which consists of those functors which are 0 on ${\cal C}$, and then ${\rm fun}({\cal C})$ is the (abelian) quotient category ${\rm fun}({\cal D})/{\cal S}_{\cal C}$ - the Serre localisation (see \cite[p.~30ff.]{KraBk}) of ${\rm fun}({\cal D})$ at ${\cal S}_{\cal C}$.

Also associated to a definable category ${\cal D}$ is its {\bf Ziegler spectrum} ${\rm Zg}({\cal D})$ (\cite{Zie}, see \cite[Chpt.~5]{PreNBK}) - a topological space whose points are the isomorphism classes of indecomposable pure-injective objects in ${\cal D}$ and whose open subsets are the complements of zero-sets of sets of coherent functors on ${\cal D}$.  The closed subsets of ${\rm Zg}({\cal D})$ are in natural bijection with the definable subcategories of ${\cal D}$, see \cite[5.1.6]{PreNBK}.  See Section \ref{secspec} for more on this.

\section{Model theory in compactly generated triangulated categories}\label{secmodth} \marginpar{secmodth}

We use formulas to specify the definable subsets of objects of ${\cal T}$.  In order to set these up, we choose a sub{\em set} ${\cal G}$ of ${\cal T}^{\rm c}$ which we will assume to be generating in the sense that, if $X\in {\cal T}$, then $(G,X)=0$ for every $G\in {\cal G}$ implies $X=0$, and we take the (opposite of the) full subcategory on ${\cal G}$ to be the category of sorts.  For convenience, we will assume that ${\cal G}$ is equivalent to ${\cal T}^{\rm c}$, that is, contains at least one isomorphic copy of each compact object of ${\cal T}$.  By ${\cal L}_{\cal G}$ we denote the resulting language, meaning the resulting set of formulas.  

We could take a smaller category of sorts, for instance, if ${\cal T}$ is monogenic, generated by a single compact object $S$, then we could consider the 1-sorted language based on $S$.  The obvious question is whether this would suffice, in the sense that every set definable in the larger language would also be definable in the 1-sorted language.  We don't pursue this here, but the relative approach and results in \cite{GarkPre1}, \cite{GarkPre2} should be helpful in answering this question.

In the other direction, we could make the maximal choice of sorts and use a language with the category ${\mathbb L}({\cal T})^{\rm eq+}$ of pp-imaginaries (see Section \ref{secppsorts}) for the sorts.  Since pp-imaginaries are already definable, this does not increase the collection of definable subsets.  For most purposes the choice of category of sorts does not matter provided the definable subsets are the same.  However, elimination of quantifiers and elimination of imaginaries are language-dependent, rather than structure-dependent.  Our choice of ${\cal G}$ as (essentially) ${\cal T}^{\rm c}$ is exactly analogous to basing a language for the model theory of $R$-modules ($R$ a 1-sorted ring) on the category ${\rm mod}\mbox{-}R$ of finitely presented modules, rather than using the 1-sorted language based on the single module $R_R$ (see \cite{PreMakVol} for more on choices of languages for additive categories).

\vspace{4pt}

Having chosen ${\cal G}$ we introduce a sort $s_A$ for each $A\in {\cal G}$ and a symbol for addition (and a symbol for the zero) on each sort and, for each $f:A\to B$ in ${\cal G}$, a corresponding function symbol from sort $B$ to sort $A$ to represent multiplication by $f$ = composition with $f$.  Note that the morphisms of ${\cal G}$ are the ``elements of the ring-with-many-objects ${\cal G}$".

Each object $X\in {\cal T}$ then becomes a structure for this language by taking its elements of sort $s_A$ to be the elements of $(A,X)$ and then interpreting the function symbols in the usual/obvious way.

\begin{remark}\label{rmkinthom} \marginpar{rmkinthom}  If ${\cal T}$ is tensor-triangulated and has an internal hom functor right adjoint to $\otimes$, then these sorts, which by definition are abelian groups, can be taken instead to be objects of ${\cal T}$, in the sense that we could interpret the sort $s_A(X)$ to be the internal hom object $[A,X]\in {\cal T}$.  In this ``internal" interpretation of the language, we have, since $(A,X) \simeq (\mathbbm{1}, [A,X])$ where $\mathbbm{1}$ is the tensor-unit, the (usual) elements of $X$ of sort $A$ identified with the morphisms $\mathbbm{1} \to [A,X]$.
\end{remark}

We will write ${\cal L}({\cal T})$, or just ${\cal L}$ for the language.  Since we assume that ${\cal G}$ is equivalent to ${\cal T}^{\rm c}$, the ${\cal L}({\cal T})$-structure $X\in {\cal T}$, which is literally a right ${\cal G}$-module, may be identified with the image, $yX=(-,X)\upharpoonright {\cal T}^{\rm c}$, of $X$ under the restricted Yoneda functor $y:{\cal T} \to {\rm Mod}\mbox{-}{\cal T}^{\rm c}$.  Therefore the model theory of $X$ as an object of ${\cal T}$ is exactly that of $yX$ as a right ${\cal T}^{\rm c}$-module.  Indeed, ${\cal L}({\cal T})$ is equally a language for ${\cal T}$ and for the module category ${\rm Mod}\mbox{-}{\cal T}^{\rm c}$, but bear in mind that there are more ${\cal T}^{\rm c}$-modules than those which are in the image of ${\cal T}$ in ${\rm Mod}\mbox{-}{\cal T}^{\rm c}$, more even than in the definable subcategory of ${\rm Mod}\mbox{-}{\cal T}^{\rm c}$ which is generated by that image.

Indeed, the definable subcategory, $\langle y{\cal T} \rangle$, of ${\rm Mod}\mbox{-}{\cal T}^{\rm c}$ generated by the image of ${\cal T}$ is exactly the subcategory, ${\rm Flat}\mbox{-}{\cal T}^{\rm c} = {\rm Abs}\mbox{-}{\cal T}^{\rm c}$, consisting of the flat = absolutely pure\footnote{In `most' module categories the flat and absolutely pure modules have little overlap; the fact that they are equal over the ring ${\cal T}^{\rm c}$ is a very characteristic feature here.} ${\cal T}^{\rm c}$-modules.

\begin{theorem} \label{abseqflat} \marginpar{abseqflat} \cite[8.11, 8.12]{BeligFreyd} \cite[2.7]{KraTel} If ${\cal T}$ is a compactly generated triangulated category and $y:{\cal T} \to {\rm Mod}\mbox{-}{\cal T}^{\rm c}$ is the restricted Yoneda functor, then $\langle y{\cal T}\rangle = {\rm Abs}\mbox{-}{\cal T}^{\rm c} ={\rm Flat}\mbox{-}{\cal T}^{\rm c}$
\end{theorem}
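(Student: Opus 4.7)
I would prove the chain in two stages: first the equality $\langle y{\cal T}\rangle = {\rm Flat}\mbox{-}{\cal T}^{\rm c}$, then the coincidence ${\rm Flat}\mbox{-}{\cal T}^{\rm c} = {\rm Abs}\mbox{-}{\cal T}^{\rm c}$. The key structural input is that ${\cal T}^{\rm c}$ is a thick triangulated subcategory of ${\cal T}$: any morphism $f:A\to B$ in ${\cal T}^{\rm c}$ sits in a triangle $A\to B\to C\to\Sigma A$ with $C\in{\cal T}^{\rm c}$, providing both a weak kernel and a weak cokernel for $f$. It follows that ${\rm mod}\mbox{-}{\cal T}^{\rm c}$ and ${\cal T}^{\rm c}\mbox{-}{\rm mod}$ are both abelian and that ${\cal T}^{\rm c}$ is two-sided coherent, so both ${\rm Flat}\mbox{-}{\cal T}^{\rm c}$ and ${\rm Abs}\mbox{-}{\cal T}^{\rm c}$ are definable subcategories of ${\rm Mod}\mbox{-}{\cal T}^{\rm c}$.

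For the first equality, $yX$ is canonically the filtered colimit of the representables $yA_i$ indexed by morphisms $A_i\to X$ from compact objects, and each $yA_i$ is finitely generated projective, so $yX$ is flat. Flat being definable, $\langle y{\cal T}\rangle\subseteq{\rm Flat}\mbox{-}{\cal T}^{\rm c}$. Conversely, Lazard writes any flat ${\cal T}^{\rm c}$-module as a directed colimit of finitely generated projectives which, by idempotent completeness of ${\cal T}^{\rm c}$, are all of the form $yA$ for $A\in{\cal T}^{\rm c}\subseteq{\cal T}$; closure of $\langle y{\cal T}\rangle$ under directed colimits then yields the reverse inclusion.

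For ${\rm Flat}\mbox{-}{\cal T}^{\rm c} = {\rm Abs}\mbox{-}{\cal T}^{\rm c}$ I would treat the two inclusions separately. First, each representable $yA$ is absolutely pure: any finitely presented right ${\cal T}^{\rm c}$-module $F$ has a presentation coming from some $f:A_0\to A_1$ in ${\cal T}^{\rm c}$, and iterating the triangle through $f$ both forward and backward produces a projective resolution of $F$ by representables. Applying $\mathrm{Hom}(-,yA)$ identifies the resulting cochain complex, via Yoneda, with part of the long exact sequence obtained by applying $(-,A)$ to the iterated triangle, which is exact; hence $\mathrm{Ext}^i(F,yA)=0$ for all $i\geq 1$. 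Right coherence then makes Abs closed under directed colimits, so every flat module (a directed colimit of representables) is absolutely pure. For ${\rm Abs}\subseteq{\rm Flat}$ I would invoke Theorem~\ref{pinjtoinj}: every injective right ${\cal T}^{\rm c}$-module has the form $yH$ for some pure-injective $H\in{\cal T}$, hence is flat by the first stage. Since any absolutely pure module embeds purely into its injective hull, and pure submodules of flat modules are flat, the inclusion follows.

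The main obstacle is the coincidence ${\rm Flat} = {\rm Abs}$: the first equality is essentially formal once definability is known, but this one is a genuinely triangulated-categorical phenomenon. It relies critically on the triangle long exact sequence to make representables absolutely pure, and on Theorem~\ref{pinjtoinj} to make injectives flat; remove either input and the equivalence collapses.
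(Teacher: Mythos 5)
Your argument is correct. There is, however, no in-paper proof to compare it with: the paper states this theorem as a citation to Beligiannis [8.11, 8.12] and Krause [2.7], and what you have written is essentially a reconstruction of the standard arguments from those sources — two-sided coherence of ${\cal T}^{\rm c}$ from the weak (co)kernels supplied by triangles, flatness of each $yX$ as a filtered colimit of representables over the comma category ${\cal T}^{\rm c}/X$ together with Lazard for the reverse inclusion, fp-injectivity of representables via the doubly infinite resolution obtained by rotating the triangle through $f$ and the long exact sequence for $(-,A)$, and the equivalence ${\rm Pinj}({\cal T})\simeq {\rm Inj}\mbox{-}{\cal T}^{\rm c}$ of \ref{pinjtoinj} to get ${\rm Abs}\mbox{-}{\cal T}^{\rm c}\subseteq {\rm Flat}\mbox{-}{\cal T}^{\rm c}$. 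Two points are worth making explicit in a full write-up: (i) the filteredness of ${\cal T}^{\rm c}/X$ needs a sentence — finite coproducts of compacts are compact, and weak coequalisers are obtained by completing the difference of two parallel maps to a triangle and using that $(-,X)$ sends triangles to exact sequences; (ii) your appeal to \ref{pinjtoinj} is legitimate and non-circular within the paper's layout, since that result is established (via Brown representability) independently of the flat/absolutely pure identification, and indeed using it for the inclusion ${\rm Abs}\subseteq{\rm Flat}$, rather than a direct Tor-vanishing computation for fp-injectives, is the one place where your route is slightly more economical than a purely module-theoretic proof.
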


Therefore the model theory of ${\cal T}$ is the same as the model theory of the flat = absolutely pure right ${\cal T}^{\rm c}$-modules\footnote{${\cal T}^{\rm c}$ is both right and left coherent as a ring with many objects (see \cite[\S 4]{ObRoh}), which is why the flat and the absolutely pure objects form definable subcategories (see \cite[3.4.24]{PreNBK}).}.  The one difference is that some structures are missing from ${\cal T}$:  except in the case that ${\cal T}$ is pure semisimple \cite[9.3]{BeligHomTriang}, there are structures in $\langle y{\cal T}\rangle$ which are not in $y{\cal T}$.  However, the equivalence, \ref{pinjtoinj}, of categories ${\rm Pinj}({\cal T}) \simeq {\rm Inj}\mbox{-}{\cal T}^{\rm c}$ between the pure-injective objects of ${\cal T}$ and the injective ${\cal T}^{\rm c}$-modules, implies that $y{\cal T}$ does contain all the pure-injective models, in particular all the saturated models, of its theory.  It follows from \ref{abseqflat} that implications and equivalences of pp-formulas on ${\cal T}$ and on ${\rm Flat}\mbox{-}{\cal T}^{\rm c} = {\rm Abs}\mbox{-}{\cal T}^{\rm c}$ are the same.

\vspace{4pt}

For convenience we will sometimes write $(-,X)$ instead of $(-,X)\upharpoonright {\cal T}^{\rm c}= yX$ when $X\in {\cal T}$.

\subsection{The category of pp-sorts}\label{secppsorts} \marginpar{secppsorts}

Let $R$ be a, possibly multisorted, ring and let ${\cal D}$ be a definable subcategory of ${\rm Mod}\mbox{-}R$.  We recall how to define the {\bf category} ${\mathbb L}({\cal D})^{\rm eq+}$ {\bf of pp sorts} (or {\bf pp-imaginaries}) for ${\cal D}$.

First, for ${\cal D} ={\rm Mod}\mbox{-}R$, the category ${\mathbb L}({\rm Mod}\mbox{-}R)^{\rm eq+}$, more briefly denoted ${\mathbb L}_R^{\rm eq+}$, has, for its objects, the {\bf pp-pairs} $\phi/\psi$, that is pairs $(\phi,\psi)$ of pp formulas for $R$-modules with $\phi \geq \psi$, meaning $\phi(M) \geq \psi(M)$ for all $M\in {\rm Mod}\mbox{-}R$.  For its arrows, we take the pp-definable maps between these pairs.  See \cite[\S 1]{HerzDual} or \cite[\S 3.2.2]{PreNBK} for details and the fact that this category is abelian.  Each such pp-pair defines a coherent functor $M \mapsto \phi(M)/\psi(M)$ from ${\rm Mod}\mbox{-}R$ to ${\bf Ab}$ and every coherent functor has this form, see, for instance, \cite[\S 10.2]{PreNBK}.

For general ${\cal D}$, a definable subcategory of ${\rm Mod}\mbox{-}R$, we let $\Phi_{\cal D}$ be the Serre subcategory of ${\mathbb L}_R^{\rm eq+}$ consisting of those pp-pairs $\phi/\psi$ which are {\bf closed on}, that is $0$ on, every $M\in {\cal D}$ (that is, $\phi(M)=\psi(M)$ for every $M\in {\cal D}$).  Then ${\mathbb L}({\cal D})^{\rm eq+}$ is defined to be the quotient = Serre-localisation ${\mathbb L}_R^{\rm eq+}/\Phi_{\cal D}$.  So ${\mathbb L}({\cal D})^{\rm eq+}$ has the same objects as ${\mathbb L}_R^{\rm eq+}$ - the pp-pairs - and the morphisms in ${\mathbb L}({\cal D})^{\rm eq+}$ are given by pp formulas which on every $M\in {\cal D}$ define a function.  In particular the pp-pairs closed on ${\cal D}$ are isomorphic to $0$ in ${\mathbb L}({\cal D})^{\rm eq+}$.  The localised category ${\mathbb L}({\cal D})^{\rm eq+}$ also is abelian; in fact, see \cite[2.3]{PreRajShv}, every skeletally small abelian category arises in this way.

An equivalent \cite[12.10]{PreMAMS}, but less explicit, definition is that ${\mathbb L}({\cal D})^{\rm eq+} = ({\cal D}, {\bf Ab})^{\prod \rightarrow}$ - the category of functors\footnote{additive, as always assumed in this paper} from ${\cal D}$ to ${\bf Ab}$ which commute with direct products and directed colimits (that is, coherent functors, equivalently \cite[25.3]{PreMAMS} interpretation functors in the model-theoretic sense).

It is well-known, see \cite[10.2.37, 10.2.30]{PreNBK}, and much-used, that, for ${\cal D} = {\rm Mod}\mbox{-}R$, the category of pp-pairs is equivalent to the free abelian category on $R^{\rm op}$ and, also, that it can be realised as the category $({\rm mod}\mbox{-}R, {\bf Ab})^{\rm fp}$ of finitely presented functors on finitely presented modules (see \cite[10.2.30, 10.2.37]{PreNBK}) equivalently, as just said, it is equivalent to the category of coherent functors on all modules (see \cite[\S 10.2.8]{PreNBK}).  Then, for a general definable subcategory ${\cal D}$ of ${\rm Mod}\mbox{-}R$, we obtain ${\mathbb L}({\cal D})^{\rm eq+}$ as the Serre-quotient $({\rm mod}\mbox{-}R, {\bf Ab})^{\rm fp}/{\cal S}_{\cal D}$ where ${\cal S}_{\cal D}$ is the Serre subcategory of those functors $F\in ({\rm mod}\mbox{-}R, {\bf Ab})^{\rm fp}$ with $\overrightarrow{F}{\cal D}=0$.  Here $\overrightarrow{F}$ is the unique extension of (a finitely presented) $F:{\rm mod}\mbox{-}R \to {\bf Ab}$ to a (coherent) functor from ${\rm Mod}\mbox{-}R$ to ${\bf Ab}$ which commutes with directed colimits.  Often we simplify notation by retaining the notation $F$ for this extension $\overrightarrow{F}$.

Under the identification of ${\mathbb L}_R^{\rm eq+}$ and $({\rm mod}\mbox{-}R, {\bf Ab})^{\rm fp}$ the Serre subcategory $\Phi_{\cal D}$ is identified with ${\cal S}_{\cal D}$.

In applying this in our context, we use the following result, where ${\rm Flat}\mbox{-}R$ denotes the category of flat right $R$-modules and ${\rm Abs}\mbox{-}R$ denotes the category of absolutely pure = fp-injective right $R$-modules.  For the notion of a left coherent multisorted ring - one whose category of left modules is locally coherent - see \cite[4.1]{ObRoh}.

\begin{theorem} \label{funcatabsflat} \marginpar{funcatabsflat} \cite[7.1/7.2]{PreAxtFlat}  If $ R $ is any left coherent (multisorted) ring, then ${\rm Flat}\mbox{-}R$ is a definable subcategory of ${\rm Mod}\mbox{-}R$ and 
$${\mathbb L}({\rm Flat}\mbox{-}R)^{\rm eq+} \simeq R\mbox{-}{\rm mod}.$$ 
If $R$ is a right coherent ring, then ${\rm Abs}\mbox{-}R$ is a definable subcategory of ${\rm Mod}\mbox{-}R$ and
$${\mathbb L}({\rm Abs}\mbox{-}R)^{\rm eq+} \simeq ({\rm mod}\mbox{-}R)^{\rm op}.$$
\end{theorem}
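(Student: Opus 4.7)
The plan is to establish both statements in parallel by directly identifying the category of coherent functors on ${\rm Flat}\mbox{-}R$ (resp.\ ${\rm Abs}\mbox{-}R$) through the alternative description ${\mathbb L}({\cal D})^{\rm eq+} = ({\cal D}, {\bf Ab})^{\prod \rightarrow}$ recalled above.  I focus on the flat case; the absolutely pure case is entirely dual.

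For definability, when $R$ is left coherent any presentation $R^m \xrightarrow{H} R^n \to N \to 0$ of $N \in R\mbox{-}{\rm mod}$ extends one step further with finitely generated next term, which makes ${\rm Tor}_1^R(-, N) : {\rm Mod}\mbox{-}R \to {\bf Ab}$ a coherent functor.  Hence ${\rm Flat}\mbox{-}R$ is the common zero locus of coherent functors and so is definable; dually, right coherence makes each ${\rm Ext}^1(A, -)$ coherent, giving definability of ${\rm Abs}\mbox{-}R$.

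Next, to construct the equivalence, define $\bar{\Psi} : R\mbox{-}{\rm mod} \to {\mathbb L}({\rm Flat}\mbox{-}R)^{\rm eq+}$ by $N \mapsto (-\otimes_R N)\vert_{{\rm Flat}}$.  The crucial lemma is the natural isomorphism $P \otimes_R A^* \cong (A, P)$ for $A$ finitely presented and $P$ flat, true for $P$ finitely generated projective by the standard tensor-hom computation and extended to all flats via $P = \varinjlim P_i$ using that $(A, -)$ commutes with directed colimits.  Applying this to any presentation $(B, -) \xrightarrow{(f, -)} (A, -) \to F \to 0$ of a finitely presented $F$ converts it into $- \otimes B^* \to - \otimes A^* \to F\vert_{{\rm Flat}} \to 0$, so $F\vert_{{\rm Flat}} \cong -\otimes {\rm coker}(f^*)$; left coherence of $R$ makes $A^*$, $B^*$ and hence ${\rm coker}(f^*)$ lie in $R\mbox{-}{\rm mod}$, so $\bar{\Psi}$ is essentially surjective.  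Full faithfulness follows by evaluation at $R_R$: any natural transformation $-\otimes N_1 \to -\otimes N_2$ on ${\rm Flat}$ is, by naturality applied to arbitrary $p : R \to P$, determined as ${\rm id}_P \otimes \tau_R$ for $\tau_R : N_1 \to N_2$ left $R$-linear, yielding the bijection ${\rm Hom}_R(N_1, N_2) \cong {\rm Hom}_{{\mathbb L}({\rm Flat})^{\rm eq+}}(\bar{\Psi} N_1, \bar{\Psi} N_2)$.

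For the absolutely pure statement I would proceed dually via the contravariant Yoneda embedding $A \mapsto (A, -)$: essential surjectivity comes from the computation that for $M \in {\rm Abs}$ and $f : A \to B$ in ${\rm mod}\mbox{-}R$ with image $I$ (finitely presented by right coherence) and kernel $K$, the vanishing of ${\rm Ext}^1(I, M)$ gives a natural identification $F(M) \cong (K, M)$, while full faithfulness is immediate from Yoneda once one notes that ${\rm Abs}$ contains all injective hulls (in fact all pure-injective hulls).  The main technical hurdle in either case is the tensor-hom isomorphism $P \otimes_R A^* \cong (A, P)$ for flat $P$ (resp.\ its ${\rm Ext}^1$-vanishing counterpart for ${\rm Abs}$): this is where the coherence hypothesis on $R$ does all the work, ensuring that dualised syzygies remain in $R\mbox{-}{\rm mod}$ (resp.\ ${\rm mod}\mbox{-}R$) so that the claimed equivalences actually land in the target category.
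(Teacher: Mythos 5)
The paper gives no proof of this theorem — it is imported verbatim from the cited source \cite[7.1/7.2]{PreAxtFlat} — so there is nothing internal to compare against; judged on its own merits, your argument is essentially a correct direct proof. The definability claims are fine (coherence makes ${\rm Tor}_1(-,N)$, resp.\ ${\rm Ext}^1(A,-)$, into pp-pair functors for finitely presented $N$, resp.\ $A$, and these detect flatness, resp.\ absolute purity), and the flat case is complete: the isomorphism $P\otimes_R A^*\cong (A,P)$ for $A$ finitely presented and $P$ flat is correct (your Lazard-plus-directed-colimits proof works, as does dualising a presentation of $A$ and using exactness of $P\otimes -$); left coherence is used exactly where you say, to keep $A^*$, $B^*$ and ${\rm coker}(f^*)$ finitely presented; and the evaluation argument gives full faithfulness, provided that for multisorted $R$ you evaluate at the representable projectives of each sort rather than at a single $R_R$. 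Note that you are implicitly using two facts the paper recalls: that ${\mathbb L}({\cal D})^{\rm eq+}\simeq ({\cal D},{\bf Ab})^{\prod\rightarrow}$, so that morphisms of imaginaries may be computed as arbitrary natural transformations, and that every object of ${\mathbb L}({\rm Flat}\mbox{-}R)^{\rm eq+}$ is the restriction of a finitely presented functor on ${\rm Mod}\mbox{-}R$ (true, since the Serre quotient has the same objects); both are legitimate inputs but should be cited explicitly.

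The one genuinely compressed step is the claim that full faithfulness in the absolutely pure case is ``immediate from Yoneda'': it is not, because $A$ itself need not lie in ${\rm Abs}\mbox{-}R$, so the Yoneda lemma does not apply verbatim to the restricted functors. What your parenthetical about injective hulls is really pointing at is a short relative-Yoneda argument: given $\eta:(A,-)\upharpoonright{\rm Abs}\to (A',-)\upharpoonright{\rm Abs}$, put $u=\eta_{E(A)}(\iota)$ where $\iota:A\to E(A)$; naturality against the composite $E(A)\to E(A)/A\to E\big(E(A)/A\big)$ (whose target is absolutely pure) forces $u$ to factor as $u=\iota g$ for a unique $g:A'\to A$; then for any $h:A\to M$ with $M$ absolutely pure, extending $jh$ (where $j:M\to E(M)$) over $\iota$ and using naturality twice gives $j\eta_M(h)=jhg$, whence $\eta=(g,-)\upharpoonright{\rm Abs}$. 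Faithfulness, by contrast, really is immediate from $A\hookrightarrow E(A)\in{\rm Abs}\mbox{-}R$. Finally, in your essential-surjectivity computation you need, besides ${\rm Ext}^1(I,M)=0$, also ${\rm Ext}^1(B/I,M)=0$ in order to identify the image of $(f,M)$ with that of $(I,M)\to(A,M)$; this is harmless since $B/I$ is always finitely presented, but it should be said. With these points filled in, the proof is complete.
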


Because ${\cal T}^{\rm c}$ is right and left coherent, \cite[8.11, 8.12]{BeligFreyd}, and since ${\rm Abs}\mbox{-}{\cal T}^{\rm c} = {\rm Flat}\mbox{-}{\cal T}^{\rm c}$, we have the following corollary.

\begin{cor} \label{rtlmods} \marginpar{rtlmods}  If ${\cal T}$ is a compactly generated triangulated category, then there is an equivalence
$$d:{\cal T}^{\rm c}\mbox{-}{\rm mod} \simeq ({\rm mod}\mbox{-}{\cal T}^{\rm c})^{\rm op}$$
and this category is equivalent to the category ${\mathbb L}({\cal T})^{\rm eq+}$ of pp-imaginaries for ${\cal T}$.
\end{cor}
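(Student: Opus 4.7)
The plan is to assemble this directly from Theorem \ref{funcatabsflat} and Theorem \ref{abseqflat}, together with the fact (cited just before the corollary) that $\mathcal{T}^{\rm c}$ is both right and left coherent as a many-sorted ring. No new constructions are needed; the content is that the two halves of \ref{funcatabsflat}, which ordinarily describe \emph{different} definable subcategories, here describe the \emph{same} subcategory of ${\rm Mod}\text{-}\mathcal{T}^{\rm c}$, and hence yield equivalent categories of pp-imaginaries.

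Concretely, I would proceed as follows. First, since the language $\mathcal{L}(\mathcal{T})$ coincides with the language for right $\mathcal{T}^{\rm c}$-modules, and the model theory of $\mathcal{T}$ is identical to that of its image $y\mathcal{T}$ inside the definable hull $\langle y\mathcal{T}\rangle$, we have
$$\mathbb{L}(\mathcal{T})^{\rm eq+} \;\simeq\; \mathbb{L}(\langle y\mathcal{T}\rangle)^{\rm eq+}.$$
By \ref{abseqflat}, $\langle y\mathcal{T}\rangle = {\rm Flat}\text{-}\mathcal{T}^{\rm c} = {\rm Abs}\text{-}\mathcal{T}^{\rm c}$. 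Applying the first clause of \ref{funcatabsflat} with $R = \mathcal{T}^{\rm c}$ (using left coherence of $\mathcal{T}^{\rm c}$) gives
$$\mathbb{L}({\rm Flat}\text{-}\mathcal{T}^{\rm c})^{\rm eq+} \;\simeq\; \mathcal{T}^{\rm c}\text{-}{\rm mod},$$
and applying the second clause (using right coherence) gives
$$\mathbb{L}({\rm Abs}\text{-}\mathcal{T}^{\rm c})^{\rm eq+} \;\simeq\; ({\rm mod}\text{-}\mathcal{T}^{\rm c})^{\rm op}.$$
Composing these two equivalences via the common middle term $\mathbb{L}(\mathcal{T})^{\rm eq+}$ yields the desired equivalence $d:\mathcal{T}^{\rm c}\text{-}{\rm mod} \simeq ({\rm mod}\text{-}\mathcal{T}^{\rm c})^{\rm op}$, and simultaneously identifies this common category with $\mathbb{L}(\mathcal{T})^{\rm eq+}$.

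There is essentially no obstacle beyond verifying that the identifications are coherent; the work has already been done in \ref{funcatabsflat} and \ref{abseqflat}. The only point worth flagging is the conceptual content: the resulting equivalence $d$ is a transpose-style duality between finitely presented left and right $\mathcal{T}^{\rm c}$-modules (analogous to the Auslander--Bridger transpose) arising precisely because flat and absolutely pure coincide over $\mathcal{T}^{\rm c}$. If desired, one could make $d$ explicit by sending a finite presentation $(-,B) \to (-,A) \to F \to 0$ of $F \in \mathcal{T}^{\rm c}\text{-}{\rm mod}$ to the cokernel of the dual map $(A,-) \to (B,-)$ in ${\rm mod}\text{-}\mathcal{T}^{\rm c}$, but for the purpose of this corollary the existence of the equivalence via the two-step localisation description is already all that is claimed.
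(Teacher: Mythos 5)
Your proposal is correct and is essentially the paper's own argument: the corollary is deduced exactly by combining the two halves of \ref{funcatabsflat} (using left and right coherence of ${\cal T}^{\rm c}$ from \cite{BeligFreyd}) with the identification $\langle y{\cal T}\rangle = {\rm Abs}\mbox{-}{\cal T}^{\rm c} = {\rm Flat}\mbox{-}{\cal T}^{\rm c}$ of \ref{abseqflat}, so that the two descriptions of the category of pp-imaginaries coincide and yield $d$. Your closing remark on the explicit form of $d$ matches the description the paper gives later (the exact sequences (\ref{eq2}) and (\ref{eq3})), so nothing is missing.
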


We write $d$ for the (anti-)equivalence in each direction.

There is another description of the category appearing in \ref{rtlmods}.  We say that a {\bf coherent} functor on ${\cal T}$ is one which is the cokernel of a map between representable functors $(A,-):{\cal T} \to {\bf Ab}$ with $A\in {\cal T}^{\rm c}$.  Explicitly, if $f:A\to B$ is in ${\cal T}^{\rm c}$ then we obtain an exact sequence of functors on ${\cal T}$:
$$(B,-) \xrightarrow{(f,-)} (A,-) \to F_f \to 0;$$
and the cokernel $F_f$ is a typical coherent functor on ${\cal T}$. 

In module categories having a presentation of this form, with $A$ and $B$ finitely presented, is equivalent to commuting with products and directed colimits but triangulated categories don't have directed colimits.  There is the following analogous characterisation.

\begin{theorem}\label{cohtriang} \marginpar{cohtriang} \cite[5.1]{KraCoh}  Suppose that ${\cal T}$ is a compactly generated triangulated category.  Then $F:{\cal T} \to {\bf Ab}$ is a coherent functor iff $F$ commutes with products and sends homology colimits to colimits.
\end{theorem}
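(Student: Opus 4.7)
\emph{Forward direction.} Suppose $F$ is coherent, fitting into $(B,-) \xrightarrow{(f,-)} (A,-) \to F \to 0$ for some $f:A\to B$ in ${\cal T}^{\rm c}$. Each covariant representable $(C,-):{\cal T}\to{\bf Ab}$ commutes with arbitrary products, since ${\rm Hom}$ preserves limits in its second argument; cokernels in ${\bf Ab}$ preserve products (${\bf Ab}$ is AB4*), so $F$ commutes with products. For homology colimits, applying $(A,-)$ with $A$ compact to the defining triangle $\bigoplus X_n \xrightarrow{1-\sigma} \bigoplus X_n \to {\rm hocolim}\,X_n \to \Sigma\bigoplus X_n$ converts each coproduct to a direct sum of abelian groups. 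As $1-\sigma$ is injective on $\bigoplus (A, X_n)$ with cokernel ${\rm colim}\,(A, X_n)$ and similarly on $\bigoplus (A, \Sigma X_n)$, the long exact sequence collapses to $(A, {\rm hocolim}\,X_n) \cong {\rm colim}\,(A, X_n)$. Directed colimits being exact in ${\bf Ab}$, the cokernel functor preserves them, so $F$ commutes with homology colimits.

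\emph{Reverse direction.} Suppose $F$ preserves products and homology colimits. The plan is first to produce a finite presentation of the restriction $F^{\rm c} := F\upharpoonright {\cal T}^{\rm c}$ in the functor category $({\cal T}^{\rm c}, {\bf Ab})$, and then to extend this presentation to all of ${\cal T}$. The extension is the easier half: every $X \in {\cal T}$ is a homology colimit of compact objects (via the standard cellular tower construction available in any compactly generated triangulated category), so a presentation $(B,-) \to (A,-) \to F \to 0$ with $A, B \in {\cal T}^{\rm c}$ that is exact on ${\cal T}^{\rm c}$ is automatically exact on all of ${\cal T}$, since each of the three functors preserves homology colimits --- the representables by the forward direction, and $F$ by hypothesis.

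\emph{Main obstacle.} The real work is constructing the finite presentation of $F^{\rm c}$, and this is where product preservation enters. The cleanest route is to factor through the restricted Yoneda $y:{\cal T}\to{\rm Mod}\mbox{-}{\cal T}^{\rm c}$, whose image generates the definable subcategory ${\rm Flat}\mbox{-}{\cal T}^{\rm c} = {\rm Abs}\mbox{-}{\cal T}^{\rm c}$ (Theorem~\ref{abseqflat}). Product and homology-colimit preservation of $F$ on ${\cal T}$ should translate, through $y$, into product and directed-colimit preservation of an induced functor on ${\rm Flat}\mbox{-}{\cal T}^{\rm c}$. The classical module-theoretic fact that such a functor is coherent --- of the form ${\rm coker}((yB,-)\to (yA,-))$ with $A,B$ compact --- then yields the required presentation, which pulls back to ${\cal T}$ via Proposition~\ref{ybijcpct} (since $(yA,-)\circ y = (A,-)$ on compacts). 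The delicate step is verifying the clean transfer of the preservation properties across $y$, despite $y$ being neither full nor faithful in general; here the equivalence ${\rm Pinj}({\cal T}) \simeq {\rm Inj}\mbox{-}{\cal T}^{\rm c}$ of Theorem~\ref{pinjtoinj} is useful for controlling the situation on pure-injective objects, where the model-theoretic content is concentrated.
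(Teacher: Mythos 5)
The paper gives no proof of this statement---it is quoted from \cite[5.1]{KraCoh}---so your argument has to stand on its own, and the reverse direction has a genuine gap at exactly the point you yourself flag as ``delicate''. Your plan is to transfer $F$ across $y$ to a functor on ${\rm Flat}\mbox{-}{\cal T}^{\rm c}$ commuting with products and directed colimits and then invoke the classical characterisation of such functors as coherent. But $F$ is not known in advance to factor through $y$ (that is clear only a posteriori, once $F$ is known to be coherent; $y$ is neither full nor faithful), so the only available construction is to restrict $F$ to ${\cal T}^{\rm c}$ and take the directed-colimit (Kan) extension to flat modules. That extension preserves directed colimits by construction, but there is no formal reason why it should preserve products of arbitrary flat modules: product preservation of $F$ is given only on objects of ${\cal T}$, hence controls products of modules of the form $yX$, while a general flat module is merely a pure submodule of such a $yX$. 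Converting product preservation of $F$ into finite presentability of $F\upharpoonright {\cal T}^{\rm c}$ is precisely the substance of Krause's proof, and neither the ``classical module-theoretic fact'' nor \ref{pinjtoinj} supplies it; as written, the crux of the theorem is assumed rather than proved.

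Two smaller points. The justification that every $X\in {\cal T}$ is a homology colimit of compact objects ``via the standard cellular tower'' is incorrect: the stages of a cellular tower are extensions of arbitrary coproducts of compacts and are not themselves compact. The statement you need is nevertheless true, but the correct route is through \ref{abseqflat}: $yX$ is flat, hence a directed colimit of finitely generated projective ${\cal T}^{\rm c}$-modules; lift that diagram through the equivalence ${\cal T}^{\rm c}\simeq {\rm proj}\mbox{-}{\cal T}^{\rm c}$ and lift the colimit cone to ${\cal T}$ using \ref{ybijcpct}. Finally, in the forward direction you treat only countable telescopes, whereas a homology colimit is a cone over an arbitrary filtered diagram which the functors $(C,-)$, $C$ compact, see as a colimit; with that definition your telescope computation is unnecessary---apply the presentation $(B,-)\to (A,-)\to F\to 0$ to the cone and use exactness of filtered colimits in ${\bf Ab}$.
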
 

We denote the category of coherent functors on ${\cal T}$, with the natural transformations between them, by ${\rm Coh}({\cal T})$.  This category is abelian; in fact we have the following.

\begin{theorem} \label{cohfpdual} \marginpar{cohfpdual} \cite[7.2]{KraCoh} There is a duality 
$$({\rm mod}\mbox{-}{\cal T}^{\rm c})^{\rm op} \simeq {\rm Coh}({\cal T})$$ and hence $${\rm Coh}({\cal T}) \simeq \, {\cal T}^{\rm c}\mbox{-}{\rm mod}.$$
\end{theorem}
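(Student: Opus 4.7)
The plan is to construct the duality $D : {\rm mod}\mbox{-}{\cal T}^{\rm c} \to {\rm Coh}({\cal T})^{\rm op}$ directly from $2$-term presentations. Any $M\in{\rm mod}\mbox{-}{\cal T}^{\rm c}$ admits, by definition, a projective presentation $(-,A)\xrightarrow{(-,f)}(-,B)\to M\to 0$ with $A,B\in{\cal T}^{\rm c}$, which by the Yoneda Lemma is encoded by a single morphism $f:A\to B$ in ${\cal T}^{\rm c}$. I set $D(M):=F_f={\rm coker}\!\bigl((B,-)\xrightarrow{(f,-)}(A,-)\bigr)$, which is a coherent functor on ${\cal T}$ in the sense preceding the theorem. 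Essential surjectivity of $D$ is then immediate: a coherent $F$ has, by definition, a presentation $(B,-)\xrightarrow{\xi}(A,-)\to F\to 0$ with $A,B\in{\cal T}^{\rm c}$, and \ref{ybijcpct} identifies $\xi$ with some $f:A\to B$, so $F\cong D({\rm coker}(-,f))$.

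For well-definedness on objects, two presentations of $M$ can be compared using projectivity of representables in ${\rm Mod}\mbox{-}{\cal T}^{\rm c}$ (Yoneda): lifting the identity of $M$ gives a pair of chain maps whose composites are chain-homotopic to the identity, and by Yoneda (applied both to ${\cal T}^{\rm c}$-modules and to functors on ${\cal T}$) each such chain map is encoded by a commutative square of morphisms of ${\cal T}^{\rm c}$, which dualises into a chain map between the corresponding $(-,\,)$--presentations and hence an isomorphism of the two candidate coherent functors. Functoriality of $D$ is handled similarly: a morphism $\alpha:M_1\to M_2$ lifts to a chain map between chosen presentations, encoded by a commutative square with rows $f_1,f_2$ in ${\cal T}^{\rm c}$; applying the contravariant Yoneda $A\mapsto(A,-)$ to this square produces a chain map between the presentations of $D(M_2)$ and $D(M_1)$, and thus a morphism $D(\alpha):D(M_2)\to D(M_1)$.

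The content of full faithfulness is that the chain-homotopy equivalence relations on the two sides agree. On the module side, two lifts of $\alpha$ differ by a morphism $h:(-,B_1)\to(-,A_2)$ between representables, which, by Yoneda in ${\rm Mod}\mbox{-}{\cal T}^{\rm c}$, is an element of ${\cal T}^{\rm c}(B_1,A_2)$. On the coherent side, two lifts of $D(\alpha)$ differ by a morphism $(A_2,-)\to(B_1,-)$, which, by Yoneda in ${\cal T}$ (using $A_2\in{\cal T}^{\rm c}$ as in \ref{ybijcpct}), is again an element of ${\cal T}^{\rm c}(B_1,A_2)$; and one checks that these homotopies modify the presenting squares in the same way. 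Hence morphisms on both sides biject with commutative squares of the stated shape in ${\cal T}^{\rm c}$ modulo the same equivalence, so $D$ is an anti-equivalence, giving $({\rm mod}\mbox{-}{\cal T}^{\rm c})^{\rm op}\simeq{\rm Coh}({\cal T})$. The second equivalence ${\rm Coh}({\cal T})\simeq{\cal T}^{\rm c}\mbox{-}{\rm mod}$ then follows by composing with the equivalence $({\rm mod}\mbox{-}{\cal T}^{\rm c})^{\rm op}\simeq{\cal T}^{\rm c}\mbox{-}{\rm mod}$ from \ref{rtlmods}.

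The main obstacle is not conceptual but bookkeeping: one has to track variances through two applications of Yoneda (once for ${\cal T}^{\rm c}$-modules, once for functors on ${\cal T}$) and verify that the dualisation $f\mapsto(f,-)$ carries the homotopy relation on $2$-term presentations in ${\rm mod}\mbox{-}{\cal T}^{\rm c}$ to the corresponding relation on presentations of coherent functors. Once that is unpacked, the proof amounts to the observation that both ${\rm mod}\mbox{-}{\cal T}^{\rm c}$ and ${\rm Coh}({\cal T})$ are presented as the category of $2$-term complexes in ${\cal T}^{\rm c}$ modulo chain homotopy, with opposite conventions on the direction of the differential, which the duality exchanges.
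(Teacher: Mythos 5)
Your construction is not well defined at the very first step, and the argument you give to justify it is exactly where it breaks. Lifting ${\rm id}_M$ between two projective presentations of $M\in{\rm mod}\mbox{-}{\cal T}^{\rm c}$ gives chain maps whose composites induce the identity on the cokernel $M$, but this does \emph{not} make those composites chain-homotopic to the identity of the two-term complexes: one only gets homotopies up to maps landing in the kernel of the differential. This is the classical phenomenon behind the Auslander--Bridger transpose, which is well defined only up to projective (here: representable) direct summands, i.e.\ only on stable categories. Concretely, the zero module has the presentations $(-,0)\to(-,0)$ and $(-,D)\xrightarrow{(-,0)}(-,0)$ for any nonzero $D\in{\cal T}^{\rm c}$ (take $f={\rm id}_0$, respectively $f:D\to 0$); your recipe $G_f\mapsto F_f={\rm coker}(f,-)$ assigns to these the coherent functors $0$ and $(D,-)\neq 0$. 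The same defect reappears in your full-faithfulness step: for squares $(a,b)$ with $f_2a=bf_1$, equality of the induced maps $G_{f_1}\to G_{f_2}$ means $b-b'=f_2h$ for some $h:B_1\to A_2$, whereas equality of the induced maps $F_{f_2}\to F_{f_1}$ means $a-a'=hf_1$; these are different equivalence relations on squares, so "the homotopies modify the presenting squares in the same way" is not true as stated.

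The duality of \ref{cohfpdual} is not the naive transpose of presentations. As recorded in the discussion following the theorem (after Krause), on objects it sends $G\in{\rm mod}\mbox{-}{\cal T}^{\rm c}$ to $G^\circ=(G,y-)$, i.e.\ $G_f\mapsto\ker\bigl((f,-):(B,-)\to(A,-)\bigr)$, which is manifestly independent of the chosen presentation; the substantive point is that this kernel is again \emph{coherent}, and that is where the triangulated structure enters: completing $f$ to a triangle $A\xrightarrow{f}B\xrightarrow{g}C\xrightarrow{h}\Sigma A$ identifies $\ker(f,-)$ with ${\rm im}(g,-)\cong F_h$ (compare \ref{anneqdiv} and the computation before \ref{ppprfmla}). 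Your argument never uses a triangle, nor that the functors are evaluated on all of ${\cal T}$; a purely formal transpose argument of the kind you propose would yield a duality $({\rm mod}\mbox{-}R)^{\rm op}\simeq R\mbox{-}{\rm mod}$ for an arbitrary ring $R$, which is false (e.g.\ for $R={\mathbb Z}$). So the gap is not bookkeeping of variances: the correct object assignment differs from yours by the rotation $F_f\rightsquigarrow F_h$, and the inputs that make the theorem true are the ones the paper points to -- full faithfulness of restriction ${\rm Coh}({\cal T})\to{\cal T}^{\rm c}\mbox{-}{\rm mod}$ from \cite[7.2]{KraCoh}, together with \ref{rtlmods}, which rests on the two-sided coherence of ${\cal T}^{\rm c}$ and ${\rm Abs}\mbox{-}{\cal T}^{\rm c}={\rm Flat}\mbox{-}{\cal T}^{\rm c}$.
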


Indeed, to go from ${\rm Coh}({\cal T})$ to ${\cal T}^{\rm c}\mbox{-}{\rm mod}$ we just restrict the action of $F\in {\rm Coh}({\cal T})$ to ${\cal T}^{\rm c}$ and, in the other direction, we apply the projective presentation $(B,-) \to (A,-) \to G \to 0$ of a finitely presented left ${\cal T}^{\rm c}$-module in ${\cal T}$ and we get a coherent functor.  The category ${\mathbb L}({\cal T})^{\rm eq+} $ of pp-definable sorts and pp-definable maps for ${\cal T}$ is defined just as for a module category.  Since the model theories of ${\cal T}$ and ${\rm Flat}\mbox{-}{\cal T}^{\rm c}$ are identical, it is exactly ${\mathbb L}({\rm Flat}\mbox{-}{\cal T}^{\rm c})^{\rm eq+}$.

\begin{cor}\label{ppcat} \marginpar{ppcat}  The category of pp-imaginaries for a compactly generated triangulated category ${\cal T}$ can be realised in the following forms
$${\mathbb L}({\cal T})^{\rm eq+} \simeq \, {\rm Coh}({\cal T})\,  \simeq \, {\cal T}^{\rm c}\mbox{-}{\rm mod}.$$
\end{cor}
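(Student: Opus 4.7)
The plan is that this corollary is essentially a bookkeeping statement, assembling equivalences already in hand. The strategy is to exhibit a chain of equivalences linking the three named incarnations via two auxiliary categories, namely $({\rm mod}\mbox{-}{\cal T}^{\rm c})^{\rm op}$ (which is dual to both sides) and $({\rm mod}\mbox{-}{\cal T}^{\rm c},{\bf Ab})^{\rm fp}/{\cal S}_{\langle y{\cal T}\rangle}$ (the presentation of ${\mathbb L}({\cal T})^{\rm eq+}$ as a Serre quotient).

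First I would invoke Corollary \ref{rtlmods}, which directly supplies the equivalence ${\mathbb L}({\cal T})^{\rm eq+} \simeq ({\rm mod}\mbox{-}{\cal T}^{\rm c})^{\rm op}$ together with the duality $d: {\cal T}^{\rm c}\mbox{-}{\rm mod} \simeq ({\rm mod}\mbox{-}{\cal T}^{\rm c})^{\rm op}$. That already yields ${\mathbb L}({\cal T})^{\rm eq+} \simeq {\cal T}^{\rm c}\mbox{-}{\rm mod}$. Tracing through the construction, a pp-pair $\phi/\psi$ is sent to the functor $X\mapsto \phi(X)/\psi(X)$ on ${\cal T}$, and one may describe the composite equivalence to ${\cal T}^{\rm c}\mbox{-}{\rm mod}$ concretely via projective presentations: a pp-pair defined by a matrix $f:A\to B$ in ${\cal T}^{\rm c}$ corresponds to the cokernel of $(A,-)\to (B,-)$ in ${\cal T}^{\rm c}\mbox{-}{\rm mod}$ (this is the content of the duality in \ref{funcatabsflat} specialised to ${\cal T}^{\rm c}$, which is both left and right coherent, together with \ref{abseqflat}).

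Next, for the middle equivalence with ${\rm Coh}({\cal T})$, I would invoke Theorem \ref{cohfpdual}, which furnishes $({\rm mod}\mbox{-}{\cal T}^{\rm c})^{\rm op} \simeq {\rm Coh}({\cal T}) \simeq {\cal T}^{\rm c}\mbox{-}{\rm mod}$. Splicing this into the chain from the previous paragraph produces
$${\mathbb L}({\cal T})^{\rm eq+} \simeq ({\rm mod}\mbox{-}{\cal T}^{\rm c})^{\rm op} \simeq {\rm Coh}({\cal T}) \simeq {\cal T}^{\rm c}\mbox{-}{\rm mod},$$
which gives both asserted equivalences. Under these identifications, the equivalence ${\mathbb L}({\cal T})^{\rm eq+}\simeq {\rm Coh}({\cal T})$ sends a pp-pair to the coherent functor it defines on ${\cal T}$, while ${\rm Coh}({\cal T}) \simeq {\cal T}^{\rm c}\mbox{-}{\rm mod}$ is given by restriction along ${\cal T}^{\rm c}\hookrightarrow {\cal T}$, with inverse induced by taking a projective presentation $(B,-)\to (A,-)\to G\to 0$ in ${\cal T}^{\rm c}\mbox{-}{\rm mod}$ and extending via the same presentation over all of ${\cal T}$.

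There is no genuine obstacle here; the work was done in establishing \ref{rtlmods} and \ref{cohfpdual}, and the only thing to verify is that the natural functors in the two chains match up compatibly, which amounts to the remark following \ref{cohfpdual} that both directions are induced by the same projective presentations $(B,-)\to(A,-)$ with $A,B\in {\cal T}^{\rm c}$.
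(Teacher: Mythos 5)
Your proposal is correct and is essentially the paper's own argument: the corollary is stated there without separate proof precisely because it is the immediate combination of \ref{rtlmods} (giving ${\mathbb L}({\cal T})^{\rm eq+}\simeq({\rm mod}\mbox{-}{\cal T}^{\rm c})^{\rm op}\simeq{\cal T}^{\rm c}\mbox{-}{\rm mod}$) with Krause's duality \ref{cohfpdual} (giving $({\rm mod}\mbox{-}{\cal T}^{\rm c})^{\rm op}\simeq{\rm Coh}({\cal T})$), with the compatibility of the functors checked, as you say, via the common projective presentations $(B,-)\to(A,-)$. Only note the small directional slip: for $f:A\to B$ the relevant coherent functor is the cokernel of $(f,-):(B,-)\to(A,-)$, not of a map $(A,-)\to(B,-)$.
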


The duality in \ref{cohfpdual} respects the actions of those categories of functors on ${\cal T}$.  We give the details.

The action of ${\rm Coh}({\cal T})$ on ${\cal T}$ is given by the exact sequence above presenting $F_f$:  if $X\in {\cal T}$, then $F_f(X)$ is defined by exactness of the sequence
$$(B,X) \to (A,X) \to F_f(X) \to 0.$$  

The action of ${\rm mod}\mbox{-}{\cal T}^{\rm c}$ on ${\cal T}$ is given by Hom applied after the restricted Yoneda functor $y$.  Explicitly:  the typical finitely presented right ${\cal T}^{\rm c}$-module $G_f$ is given by an exact sequence (a projective presentation)
$$yA \xrightarrow{yf} yB \to G_f \to 0,$$
that is
$$(-,A) \xrightarrow{(-,f)} (-,B) \to G_f \to 0,$$
where $A\xrightarrow{f}B \in {\cal T}^{\rm c}$.
The action of $G_f$ on $X\in {\cal T}$ is induced by the action of $(-,yX)$ on it:  we have the exact sequence
$$0 \to (G_f,(-,X)) \to ((-,B),(-,X)) \xrightarrow{((-,f),(-,X))} ((-,A),(-,X)),$$
that is
$$0 \to G_f(X) \to (B,X) \xrightarrow{(f,X)} (A,X),$$
defining the value of $G_f$ on the typical object $X\in {\cal T}$.  So, if $G\in {\rm mod}\mbox{-}{\cal T}^{\rm c}$ and $X\in {\cal T}$, then the action of $G$ on $X$ is defined by
$$G(X) = (G,yX).$$

Notice that the morphism $f:A\to B$ in ${\cal T}^{\rm c}$ has given rise to the exact sequence of abelian groups:
\begin{equation}
\label{eq1} 0 \to G_f(X) \to (B,X) \xrightarrow{(f,X)} (A,X) \to F_f(X) \to 0.
\end{equation}

The duality $({\rm mod}\mbox{-}{\cal T}^{\rm c})^{\rm op} \simeq {\rm Coh}({\cal T})$ in \ref{cohfpdual} takes a finitely presented right ${\cal T}^{\rm c}$-module $G$ to the coherent functor
$$G^\circ:X\mapsto (G,yX) = (G,(-,X))$$
for $X\in {\cal T}$ - that is, the action we defined just above.  
This takes the representable functor $G=(-,A)$ where $A\in {\cal T}^{\rm c}$, to the representable coherent functor $(A,-):{\cal T} \to {\bf Ab}$.  Therefore, the 4-term exact sequence (\ref{eq1}) above can be read as the application of the following exact sequence of functors in ${\rm Coh}({\cal T})$ to $X$.
\begin{equation} \label{eq2}
0 \to G_f^\circ \to (B,-) \xrightarrow{(f,-)} (A,-) \to F_f \to 0.
\end{equation}

In the other direction, the duality $({\rm mod}\mbox{-}{\cal T}^{\rm c})^{\rm op} \simeq {\rm Coh}({\cal T})$ takes $F\in {\rm Coh}({\cal T})$ to the finitely presented ${\cal T}^{\rm c}$-module
$$F^\diamond: C \mapsto (F, (C,-))$$
for $C\in {\cal T}^{\rm c}$.  So $(A,-)^\diamond = (-,A)$.  If $F=F_f$, then applying $(-, (C,-))$ to the presentation (\ref{eq2}) of $F_f$ and using that $(C,-)$ is injective in ${\rm Coh}({\cal T})$ (by \ref{cohfpdual} and since $(-,C)$ is projective in ${\rm Mod}\mbox{-}{\cal T}^{\rm c}$) allows us to read the resulting 4-term exact sequence as the application, to $C \in {\cal T}^{\rm c}$, of the following exact sequence of functors in ${\rm mod}\mbox{-}{\cal T}^{\rm c}$.
\begin{equation}\label{eq3}
0 \to F_f^\diamond \to (-,A) \xrightarrow{(-,f)} (-,B) \to G_f \to 0
\end{equation}

Applying the duality-equivalences $(-)^\diamond:({\rm Coh}({\cal T}))^{\rm op} \to {\rm Mod}\mbox{-}{\cal T}^{\rm c}$ and $(-)^\circ:({\rm Mod}\mbox{-}{\cal T}^{\rm c})^{\rm op} \to {\rm Coh}({\cal T})$ interchanges (\ref{eq2}) - an exact sequence in ${\rm Coh}({\cal T})$ - and (\ref{eq3}) - an exact sequence in ${\rm mod}\mbox{-}{\cal T}^{\rm c}$.

\vspace{4pt}

The equivalences of these functor categories with the category ${\mathbb L}({\cal T})^{\rm eq+}$ of pp-pairs for ${\cal T}$ are given explicitly on objects as follows.  Let $f:A \to B$ be a morphism in ${\cal T}^{\rm c}$, so $F_f$ is a typical coherent functor.  
$\xymatrix{A \ar[r]^f \ar[d]_{x_A} & B \ar@{.>}[dl]^{y_B} \\
X}$

We have that $F_fX = (A,X)/{\rm im}(f,X)$ and hence $F_f$ is the functor given by the pp-pair $(x_A=x_A)/(\exists y_B \,\, x_A=y_Bf)$, that is 
$$F_f = (x_A=x_A)/(f|x_A).$$

\noindent We use subscripts on variables to show their sorts but might sometimes drop them for readability.  We also use variables (which really belong in formulas) to label morphisms (for which they are place-holders) in what we hope is a usefully suggestive way.

Also, from the exact sequence (\ref{eq1}), we see that $G_f^\circ (-)= {\rm ker}(f,-)$ and so is the functor given by the pp-pair 
$$G_f^\circ = (x_Bf=0)/(x_B=0).$$

Since the duality ${\rm Coh}({\cal T}) \simeq ({\rm mod}\mbox{-}{\cal T}^{\rm c})^{\rm op}$ preserves the actions on ${\cal T}$, these pp-pairs also give the actions of, respectively, $F_f^\diamond$ and $G_f$ on ${\cal T}$.

To go from pp-pairs to functors, we may use \ref{ppprfmla} below, which says that every pp-pair is isomorphic to one of a form seen above, namely $xf=0/x=0$.

\subsection{Elimination of quantifiers}\label{secelimq} \marginpar{secelimq}

If a ring $R$ is right coherent then every pp formula is equivalent on ${\rm Abs}\mbox{-}R$ to an annihilator formula and, if $R$ is left coherent, then every pp formula on ${\rm Flat}\mbox{-}R$ is equivalent to a divisibility formula (see \cite[2.3.20, 2.3.9+2.3.19]{PreNBK}).  These results are equally valid for rings with many objects (because any formula involves only finitely many sorts, so is equivalent to a formula over a ring with one object).  It follows that the theory of ${\cal T}$ has elimination of quantifiers, indeed it has the stronger property elim-q$^+$, meaning that each pp formula is equivalent to a quantifier-free pp formula, that is, to a conjunction of equations\footnote{Indeed, since our sorts are closed under finite direct sums, every pp formula is equivalent to a single equation}.  Also ${\cal T}$ has the elementary-dual elimination of pp formulas to divisibility formulas.  But it is instructive to see exactly how this works when the ring is the category ${\cal T}^{\rm c}$ of compact objects of a compactly generated triangulated category ${\cal T}$.  This is an expansion of \cite[3.1, 3.2]{GarkPre2}.  We write $0$ for any $n$-tuple $(0, \dots, 0)$.

\vspace{4pt}

Given $f:A\to B$ in ${\cal T}^{\rm c}$, form the distinguished triangle\footnote{We will often write ``triangle" meaning distinguished triangle.} as shown.
$$A\xrightarrow{f} B\xrightarrow{g} C \to \Sigma A$$  
Since ${\cal T}^{\rm c}$ is triangulated, $C\in {\cal T}^{\rm c}$.  Since representable functors on a triangulated category are exact (meaning that they take triangles to (long) exact sequences), for every $X\in {\cal T}$, $(C,X) \xrightarrow{(g,X)} (B,X) \xrightarrow{(f,X)} (A,X)$ is exact so, for $x_B\in (B,X)$, we have $x_B \in {\rm ker}(f,X)$ iff $x_B \in {\rm im}(g,X)$, that is, $x_Bf=0$ iff $g\mid x_B$ that is, iff $\exists y_C\, (x_B = y_Cg)$.  Thus 
$$x_Bf=0 \quad \Leftrightarrow \quad g\mid x_B.$$

Since ${\cal T}^{\rm c}$ has finite direct sums, tuples of variables may be wrapped up into single variables (we do this explicitly below), so these formulas are general annihilator and divisibility formulas.  Therefore every annihilator formula is equivalent to a divisibility formula and {\it vice versa}.  We record this.

\begin{prop}\label{anneqdiv} \marginpar{anneqdiv} If $A\xrightarrow{f} B\xrightarrow{g} C \to \Sigma A$ is a distinguished triangle, then the formula $x_Bf=0$ is equivalent to $g\mid x_B$.
\end{prop}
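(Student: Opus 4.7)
The plan is to verify the equivalence sortwise: show that for every $X\in{\cal T}$ and every element $x_B\in (B,X)$, we have $x_Bf=0$ if and only if there exists $y_C\in(C,X)$ with $x_B=y_Cg$. Once this is established for all $X\in{\cal T}$, the two pp formulas have the same solution set in every object, which is exactly what it means for them to be equivalent. This is already foreshadowed in the paragraph immediately preceding the statement, so the proof is essentially an unpacking of one basic fact.

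The key step is to apply the covariant representable functor $(-,X):{\cal T}\to{\bf Ab}$ to the given distinguished triangle
$$A\xrightarrow{f}B\xrightarrow{g}C\to \Sigma A.$$
Since representable functors on a triangulated category are cohomological (see, e.g., \cite[1.1.10]{NeeBk}), the result is a long exact sequence of abelian groups; extracting the relevant three-term segment gives
$$(C,X)\xrightarrow{(g,X)}(B,X)\xrightarrow{(f,X)}(A,X),$$
which is exact at $(B,X)$. Reading this exactness elementwise: an element $x_B\in(B,X)$ satisfies $(f,X)(x_B)=x_Bf=0$ precisely when it lies in ${\rm im}(g,X)$, i.e., precisely when there exists $y_C\in(C,X)$ with $x_B=y_Cg$. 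That is exactly the assertion that $\phi(x_B):=(x_Bf=0)$ and $\psi(x_B):=(\exists y_C\ x_B=y_Cg)$ have the same solution set in the sort $B$ of $X$.

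There is no real obstacle here; the entire content of the proposition is the cohomological property of representable functors, combined with the dictionary between exactness and the literal meanings of the formulas $xf=0$ and $g\mid x$. The only minor care needed is to keep track of sort labels (the existentially quantified variable must be declared of sort $C$, the cone of $f$) and the right-action convention for composition, under which $(f,X)$ sends $x_B$ to $x_Bf$. Note also that the argument is entirely symmetric: rotating the triangle one step and repeating the same argument would give the elementary-dual equivalence, which fits with the remark, immediately after the proposition, that every annihilator formula is equivalent to a divisibility formula and \emph{vice versa}.
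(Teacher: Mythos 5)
Your proof is correct and is essentially identical to the paper's own argument (given in the paragraph preceding the proposition): apply the cohomological functor $(-,X)$ to the triangle and read off exactness of $(C,X)\to(B,X)\to(A,X)$ at $(B,X)$ as the equality of the solution sets of $x_Bf=0$ and $g\mid x_B$. Nothing is missing.
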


Before continuing, note that, because ${\cal T}^{\rm c}$ is closed under finite direct sums, a finite sequence $(x_1, \dots, x_n)$ of variables, with $x_i$ of sort $A_i$, may be regarded as a single variable of sort $A_1 \oplus \dots \oplus A_n$.  That simplifies notation and allows us to treat a general pp formula as one of the form $\exists x_{B'} \, (x_Bf=x_{B'}f')$, that is, $f'|xf$ for short.
$\xymatrix{& B \ar[drr]^{x_B} \\ A\ar[ur]^f \ar[dr]_{f'} & & & (-)\\& B' \ar[urr]_{x_{B'}}}$
That is equivalent to 
$\quad$ $\exists x_{B'} \, \Big((x_B,x_{B'})\Big( \begin{array}{c}f \\ f'\end{array} \Big) =0\Big)$ $\quad\quad$ $\xymatrix{A \ar[r]^{\left( \begin{smallmatrix}f \\ f' \end{smallmatrix} \right)} & B\oplus B' \ar[d]^{(x_{B}, x_{B'})} \\ & (-)}$.

\noindent So form the triangle $A\xrightarrow{\left( \begin{smallmatrix}f \\ f' \end{smallmatrix} \right)} B\oplus B' \xrightarrow{\overline{g} = (g,g')} C \to \Sigma A$.  

\noindent By \ref{anneqdiv} above, the formula $\exists\,  x_{B'} \, \big((x_B,x_{B'})\Big( \begin{array}{c}f \\ f'\end{array} \Big) =0\big)$ is equivalent to $\exists \, x_{B'} \, \exists x_C \, \big((x_{B}, x_{B'}) = x_C\overline{g}\big)$, that is to 
$$\exists x_{B'}\, \exists x_C \, (x_B=x_Cg\, \wedge \, x_{B'}=x_Cg'),$$ 
and the $x_{B'}$ is irrelevant now (set $x_{B'} = x_Cg'$).  So the original formula is equivalent to $g\mid x_B$ where $g$ is, up to sign, the map which appears in the weak pushout $\xymatrix{A \ar[r]^f \ar[d]_{f'} & B \ar[d]^g \\ B' \ar[r]_{g'} & C}$.  Let us record that.  

\begin{lemma}\label{ppisdiv} \marginpar{ppisdiv}  Given morphisms $f,f':A\to B$ in ${\cal T}^{\rm c}$, the (typical pp) formula 
$$\exists \, x_{B'} \, (x_Bf=x_{B'}f')$$ 
is equivalent to the divisibility formula $g|x_B$, where $g$ is as in the distinguished triangle
$$A\xrightarrow{\left( \begin{smallmatrix}f \\ f' \end{smallmatrix} \right)} B\oplus B' \xrightarrow{(g, g')} C \to \Sigma A,$$ 
and hence is also equivalent to the annihilation formula $x_Bf''=0$ where 
$$A' \xrightarrow{f''} B' \xrightarrow{g} C \to \Sigma A'$$ 
is a distinguished triangle.
\end{lemma}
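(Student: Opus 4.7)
The plan is to unwind the existential into an annihilator condition on a direct-sum sort, apply \ref{anneqdiv} to pass to a divisibility formula, and then strip off the now-redundant existentially quantified piece. A second application of \ref{anneqdiv} in the opposite direction will give the annihilation form.

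First, I would observe that the formula $\exists x_{B'}\,(x_B f = x_{B'} f')$ is the same as an annihilator condition on a single variable of sort $B\oplus B'$: indeed $x_B f - x_{B'} f' = 0$ is exactly $(x_B,\, x_{B'})\bigl(\begin{smallmatrix} f \\ -f' \end{smallmatrix}\bigr)=0$, and absorbing the sign into $f'$ (which does not alter the definable subgroup) reduces this to the annihilator formula $(x_B,\,x_{B'})\bigl(\begin{smallmatrix} f \\ f' \end{smallmatrix}\bigr)=0$. Since ${\cal T}^{\rm c}$ is triangulated, the map $\bigl(\begin{smallmatrix} f \\ f' \end{smallmatrix}\bigr):A\to B\oplus B'$ fits into a distinguished triangle
$$A \xrightarrow{\left(\begin{smallmatrix} f \\ f' \end{smallmatrix}\right)} B\oplus B' \xrightarrow{(g,\,g')} C \to \Sigma A$$
with $C\in {\cal T}^{\rm c}$.

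Next, applying \ref{anneqdiv} to this triangle, the annihilator formula $(x_B,\,x_{B'})\bigl(\begin{smallmatrix} f \\ f' \end{smallmatrix}\bigr)=0$ becomes the divisibility formula $(g,\,g')\mid (x_B,\,x_{B'})$, namely $\exists x_C\,\bigl(x_B = x_C g \,\wedge\, x_{B'} = x_C g'\bigr)$. Since $x_{B'}$ was existentially quantified in the original formula and now only appears in the conjunct $x_{B'} = x_C g'$, that conjunct can always be satisfied by declaring $x_{B'}:=x_C g'$. Dropping the redundant clause leaves $\exists x_C\,(x_B = x_C g)$, i.e.\ $g\mid x_B$, which is the first claimed equivalence.

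For the annihilation form, I would complete the morphism $g:B\to C$ to a distinguished triangle $A'\xrightarrow{f''} B \xrightarrow{g} C \to \Sigma A'$ in ${\cal T}^{\rm c}$. Applying \ref{anneqdiv} once more, this time reading it in the divisibility-to-annihilator direction, yields $g\mid x_B \,\Leftrightarrow\, x_B f'' = 0$, completing the chain of equivalences. There is no substantial obstacle: the whole argument is a direct chaining of \ref{anneqdiv} with the direct-sum packaging of tuples, and the only point requiring brief care is verifying that the redundant existential witness $x_{B'}$ can be dropped cleanly after the divisibility step.
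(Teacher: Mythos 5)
Your proposal is correct and follows essentially the same route as the paper: package $(x_B,x_{B'})$ into a single variable of sort $B\oplus B'$, rewrite the formula as the annihilator condition $(x_B,x_{B'})\bigl(\begin{smallmatrix} f \\ f' \end{smallmatrix}\bigr)=0$, apply \ref{anneqdiv} to the triangle on $\bigl(\begin{smallmatrix} f \\ f' \end{smallmatrix}\bigr)$ to get divisibility by $(g,g')$, discard the redundant witness $x_{B'}$ by setting $x_{B'}=x_Cg'$, and then apply \ref{anneqdiv} once more to the triangle completing $g$ to obtain $x_Bf''=0$. Your explicit handling of the sign (replacing $f'$ by $-f'$, which the paper dismisses with ``up to sign'') is a small point of extra care but not a different argument.
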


Thus every pp formula is equivalent on ${\cal T}$ to a divisibility formula and hence also to an annihilator formula.  In particular:

\begin{theorem}\label{elimq} \marginpar{elimq} \cite[3.1, 3.2]{GarkPre2} If ${\cal T}$ is a compactly generated triangulated category and ${\cal L}$ is the language for ${\cal T}$ based on ${\cal T}^{\rm c}$, then (the theory of\footnote{Meaning that every completion of the theory of ${\cal T}$ has elimination of quantifiers and the elimination is uniform over these completions.}) ${\cal T}$ has elimination of quantifiers, indeed has elim-q$^+$.
\end{theorem}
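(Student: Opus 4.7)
The plan is to observe that Lemma \ref{ppisdiv} already carries the essential content: a typical pp formula is shown there to be equivalent to an annihilator formula $x_Bf''=0$, and such an annihilator formula is, by definition, quantifier-free (a conjunction of linear equations over ${\cal T}^{\rm c}$). So the substance of the theorem is the reduction of an \emph{arbitrary} pp formula to the shape treated in \ref{ppisdiv}, together with the passage from elim-q$^+$ to full quantifier elimination via \ref{ppeq}.

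First I would spell out that any pp formula $\phi(\overline{x}) \equiv \exists \overline{y}\, (\overline{x}\,\overline{y})G = 0$ can, using that ${\cal T}^{\rm c}$ is closed under finite direct sums, be repackaged into the shape $\exists x_{B'}\,(x_Bf = x_{B'}f')$ for appropriate $f,f':A\to B,B'$ in ${\cal T}^{\rm c}$: the finite tuple $\overline{x}$ of variables of sorts $A_1,\dots,A_n$ becomes a single variable $x_B$ of sort $B=A_1\oplus\cdots\oplus A_n$, the quantified tuple $\overline{y}$ similarly collapses to $x_{B'}$, and the equations encoded in $G$ become the two sides of $x_Bf = x_{B'}f'$. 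Applying \ref{ppisdiv} to this form produces an equivalent annihilator formula $x_Bf''=0$, which is quantifier-free. This establishes elim-q$^+$ directly: every pp formula is equivalent (on all of ${\cal T}$, hence uniformly over every completion of its theory) to a conjunction of equations.

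To deduce the stated elimination of quantifiers for \emph{arbitrary} formulas, I would then invoke \ref{ppeq}: every formula is equivalent to a sentence (about sizes of quotients of pp-definable subgroups) conjoined with a finite Boolean combination of pp formulas. Substituting each pp formula in that Boolean combination by the quantifier-free pp formula produced above gives the desired quantifier-free Boolean combination, modulo a sentence; the replacement is valid on all of ${\cal T}$, so the resulting elimination is uniform over the completions of the theory of ${\cal T}$.

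There is no real obstacle; the only point requiring care is the variable-repackaging step, where one must justify that gathering variables of different sorts into a single sort (via the biproduct in ${\cal T}^{\rm c}$) is harmless both at the level of syntax and at the level of solution sets. This is standard once one recalls that the sorts of our language are precisely indexed by objects of (a small skeleton of) ${\cal T}^{\rm c}$ and that $(A_1\oplus\cdots\oplus A_n,X)\cong(A_1,X)\oplus\cdots\oplus(A_n,X)$ naturally in $X$.
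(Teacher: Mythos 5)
Your proposal is correct and follows essentially the same route as the paper: collapse tuples of variables into single variables via finite direct sums in ${\cal T}^{\rm c}$ to bring an arbitrary pp formula into the form $\exists x_{B'}\,(x_Bf = x_{B'}f')$, apply Lemma \ref{ppisdiv} (which rests on the triangle argument of \ref{anneqdiv}) to replace it by a quantifier-free annihilator formula, and then combine with the pp-elimination of \ref{ppeq} to handle arbitrary formulas uniformly over the completions.
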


\subsection{Types and free realisations}\label{secfreereal} \marginpar{secfreereal}

We start with a little model theory but soon come back to the algebra.

If $A_1, \dots, A_n$ are compact objects of ${\cal T}$ and if $a_i:A_i \to X\in {\cal T}$ are elements of $X\in {\cal T}$, then the {\bf type} of $\overline{a} = (a_1, \dots, a_n)$ (in $X$) is the set of formulas $
\chi$ such that $\overline{a}\in \chi(X)$.  The {\bf pp-type} of $\overline{a} \in X$ is 
$${\rm pp}^X(\overline{a}) = \{ \phi \text{ pp}: \overline{a} \in \phi(X)\}.$$

Since we have pp-elimination of quantifiers (\ref{ppeq}) the type of $\overline{a}$ in $X$ is determined by its subset ${\rm pp}^X(\overline{a})$.  Indeed it is equivalent, modulo the theory of ${\cal T}$ (equivalently, the theory of absolutely pure = flat ${\cal T}^{\rm c}$-modules) to the set ${\rm pp}^X(\overline{a}) \, \cup\, \{\neg \psi: \psi \text{ pp and } \psi \notin {\rm pp}^X(\overline{a})\}$.\footnote{This is also true for types with parameters but we don't use these in this paper.  For more on this see, for instance, \cite[2.20]{PreBk}.}

As remarked already, because ${\cal T}^{\rm c}$ has finite direct sums, we can replace a tuple $(x_1, \dots, x_n)$ of variables $x_i$ of sort $A_i$ by a single variable of sort $A_1 \oplus \dots \oplus A_n$ (and, similarly, tuples of elements may be replaced by single elements).  So any pp-definable subgroup of an object $X \in {\cal T}$ - that is, the solution set $\phi(X)$ in $X$ of some pp formla $\phi$ - can be taken to be a subgroup of $(A,X)$ for some $A \in {\cal T}^{\rm c}$.

We say that two formulas are {\bf equivalent} (on ${\cal T}$) if they have the same solution set in every $X\in {\cal T}$.  There is an ordering on the set of (equivalence classes of) pp formulas:  if $\phi$, $\psi$ are pp formulas in the same free variables, then we set $\phi \leq \psi$ iff $\forall X \in {\cal T}$, $\phi(X) \leq \psi(X)$.  This (having fixed the free variables) is a lattice with meet given by conjunction $\phi \wedge \psi$ (defining the intersection of the solution sets) and join given by sum $\phi + \psi$ (defining the sum of the solution sets).

By a {\bf pp-type} (without parameters) we mean a deductively closed set of pp formulas, equivalently a filter (i.e.~meet- and upwards-closed) in the lattice of (equivalence classes of) pp formulas (always with some fixed sequence of free variables).  We note the following analogue of the module category case (see \cite[1.2.23]{PreNBK}).

\begin{lemma}  Suppose that ${\cal T}$ is a compactly generated triangulated category and $\phi$, $\psi$ are pp formulas with the same free variables.  Then $\phi \leq \psi$ iff for all $A\in {\cal T}^{\rm c}$ we have $\phi(A) \leq \psi(A)$.
\end{lemma}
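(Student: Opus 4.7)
The forward implication is immediate since ${\cal T}^{\rm c} \subseteq {\cal T}$. For the converse, assume $\phi(A) \leq \psi(A)$ for every $A \in {\cal T}^{\rm c}$. The plan is to exhibit a universal ``free realisation'' of $\phi$ lying inside some compact object, and then transport it forward along an arbitrary morphism of ${\cal T}$.

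First I would reduce to the single-variable case using the direct-sum trick noted just above the statement: a pp formula in variables $x_1, \dots, x_n$ with $x_i$ of sort $A_i$ is, up to rewriting, a pp formula in one variable of sort $A_1 \oplus \cdots \oplus A_n$. Next, by Lemma \ref{ppisdiv}, I would put $\phi(x_B)$ into divisibility form $g \mid x_B$ for some morphism $g \colon B \to C$ in ${\cal T}^{\rm c}$. This makes the solution set of $\phi$ transparent in every $X \in {\cal T}$, namely
$$\phi(X) = \{h \circ g : h \in (C,X)\}.$$
Specialising to $X = C$ and $h = 1_C$ shows $g \in \phi(C)$, providing the sought universal element of $\phi$ sitting in a compact object.

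Since $C \in {\cal T}^{\rm c}$, the hypothesis now gives $g \in \psi(C)$. Given any $X \in {\cal T}$ and any $x_B \in \phi(X)$, the divisibility description writes $x_B = h \circ g$ for some $h \colon C \to X$. The final ingredient is the (standard) fact that pp formulas are preserved under post-composition with morphisms of ${\cal T}$: the map $z \mapsto h z$ from $(B,C)$ to $(B,X)$ restricts to $\psi(C) \to \psi(X)$, because if $\psi(y_B) = \exists \bar{z}\, \eta(y_B,\bar{z})$ with $\eta$ a conjunction of linear equations, then composing witnesses on the left with $h$ preserves each equation. Hence $x_B = h g \in \psi(X)$, as required.

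The only step with real content is the divisibility presentation of $\phi$, and that is already supplied by \ref{ppisdiv}. The remaining functoriality of pp solution sets along morphisms of ${\cal T}$ is built into the very shape of the language (existentially quantified linear equations with ${\cal T}^{\rm c}$-scalars acting on the right), so I do not anticipate any serious obstacle.
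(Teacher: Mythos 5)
Your argument is correct, but it is not the proof the paper gives, so it is worth comparing the two. The paper works on the module side: it invokes \ref{abseqflat} to see that $yX$ is a flat ${\cal T}^{\rm c}$-module, writes it as a directed colimit of finitely generated projectives $yA$ with $A\in {\cal T}^{\rm c}$, and uses the fact that pp-solution sets commute with directed colimits (\cite[1.2.31]{PreNBK}) to pass the inclusion $\phi(A)\leq \psi(A)$ up to $X$. You instead stay inside ${\cal T}$: after the (legitimate, and sanctioned in the text just above the lemma) reduction to a single free variable, you use \ref{ppisdiv} to put $\phi$ in divisibility form $g\mid x_B$ with $g\colon B\to C$ in ${\cal T}^{\rm c}$, observe that $g\in\phi(C)$ with $C$ compact (since ${\cal T}^{\rm c}$ is closed under cones), apply the hypothesis to get $g\in\psi(C)$, and push forward along any $h\colon C\to X$ using the standard fact that post-composition is non-decreasing on pp-types (\cite[1.2.8]{PreNBK}; your verification via the shape of the equations, with ${\cal T}^{\rm c}$ acting by precomposition and $h$ acting by postcomposition, is exactly right). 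In effect you have constructed the free realisation $(C,g)$ of $\phi$ that the paper only introduces afterwards in \ref{existfreereal}; there is no circularity, since \ref{ppisdiv} is proved earlier and independently. What each route buys: the paper's proof is short given the module-theoretic background and needs no quantifier elimination, only flatness of $yX$ and continuity of pp functors; yours avoids the detour through ${\rm Mod}\mbox{-}{\cal T}^{\rm c}$ and directed colimits altogether, uses the triangulated structure directly through \ref{ppisdiv}, and makes transparent that the lemma is really the combination of ``every pp formula has a free realisation in a compact object'' with monotonicity of pp conditions along morphisms -- which is also a nice advance advertisement for the free-realisation machinery developed immediately after this lemma.
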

\begin{proof} Suppose that for all $A\in {\cal T}^{\rm c}$ we have $\phi(A) \leq \psi(A)$ and let $X\in {\cal T}$.  Since $yX$ is a flat object of ${\rm Mod}\mbox{-}{\cal T}^{\rm c}$, it is the direct limit of some directed diagram of finitely generated projective ${\cal T}^{\rm c}$-modules.  The latter all have the form $yA$ for some $A\in {\cal T}^{\rm c}$.  Since, for any pp formula $\phi$, $\phi(-)$ commutes with direct limits (see \cite[1.2.31]{PreNBK}), we conclude that $\phi(yX) \leq \psi(yX)$, and hence that $\phi(X) \leq \psi(X)$, as required.
\end{proof}

In the above proof we made the (harmless and useful) identification of pp formulas for objects of ${\cal T}$ and for right ${\cal T}^{\rm c}$-modules.

Suppose that $p$ is a pp-type, consisting of pp formulas with free variables $x_1, \dots, x_n$ where $x_i$ has sort (labelled by) $A_i \in {\cal T}^{\rm c}$.  Then, by \cite[3.3.6, 4.1.4]{PreNBK},  $p$ has a {\bf realisation} in some object $M$ in the definable subcategory $\langle y{\cal T} \rangle$ of ${\rm Mod}\mbox{-}{\cal T}^{\rm c}$, meaning there is a tuple $\overline{b}$ of elements in $M$ with ${\rm pp}^M(\overline{b}) =p$.  Pp-types are unchanged by pure embeddings and every such module $M$ is a pure, indeed elementary, subobject of its pure-injective (= injective) hull, which has the form $yX$ for some $X \in {\cal T}$.  So we obtain a realisation of $p$ in some object $X\in {\cal T}$:  there is $\overline{a} = (a_1, \dots, a_n)$ with $a_i:A_i \to X$ such that ${\rm pp}^X(\overline{a}) =p$.  The object $X$ is pure-injective in ${\cal T}$ (\ref{pinjtoinj}) and, moreover, may be chosen to be minimal such\footnote{Corresponding to the injective hull of the submodule of $M$ generated by the entries of $\overline{b}$.}, in which case it is denoted $H(p)$ - the {\bf hull} of $p$.  This is unique up to isomorphism in the sense that if $N$ is a pure-injective object of ${\cal T}$ and if $\overline{c}$ is a tuple from $N$ with ${\rm pp}^N(\overline{c}) =p$, then there is an embedding of $H(p)$ into $N$ as a direct summand, taking $\overline{a}$ to $\overline{c}$ and this will be an isomorphism if $N$ also is minimal over $\overline{c}$.  See \cite[\S 4.3.5]{PreNBK} for this and related results - these all apply to any compactly generated triangulated category ${\cal T}$ because its model theory is really just that of a definable subcategory of ${\rm Mod}\mbox{-}{\cal T}^{\rm c}$, and because all the pure-injective objects of that definable subcategory are images of objects of ${\cal T}$.

If $\phi$ is a pp formula, then we have the pp-type it generates:  
$$\langle \phi \rangle = \{ \psi: \phi \leq \psi\}.$$ 
We say that a pp-type is {\bf finitely generated} ({\bf by} $\phi$) if it has this form for some $\phi$.

If $\phi$ is a pp formula with free variable of sort $A$ (without loss of generality we may assume that there is just one free variable) then a {\bf free realisation} of $\phi$ is a pair $(C,c_A)$ where $C \in {\cal T}^{\rm c}$ and $c_A:A \to C$ is an element of $C$ of sort $A$ with ${\rm pp}^C(c_A) = \langle \phi \rangle$.  We have the following analogue to \cite[1.2.7]{PreNBK}.  In the statement of this result, we continue to overuse notation by allowing $x_A$ to denote an element of sort $A$ (in addition to our use of $x_A$ to denote a variable of sort $A$).

\begin{lemma} Suppose $\phi$ is a pp formula with free variable $x_A$ (for some $A\in {\cal T}^{\rm c})$.  Let $C\in {\cal T}^{\rm c}$ and suppose $c_A \in (A,C)$ with $c_A\in \phi(C)$.  Then $(C,c_A)$ is a free realisation of $\phi$ iff for every $x_A:A \to X\in {\cal T}$ such that $x_A \in \phi(X)$, there is a morphism $h:C \to A$ with $hc_A=x_A$.
\end{lemma}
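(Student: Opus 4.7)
My plan is to reduce both directions to a single observation: the expression $\psi(x_A) := \exists y_C \, (x_A = y_C c_A)$ is itself a pp formula with free variable of sort $A$ (a divisibility formula in the sense of \ref{ppisdiv}), and $c_A \in \psi(C)$, witnessed by $y_C = 1_C$ since $1_C \cdot c_A = c_A$. Moreover, unwinding the quantifier, for any $X \in {\cal T}$ the statement $x_A \in \psi(X)$ is literally the statement that there exists $h : C \to X$ with $hc_A = x_A$. Thus the factorisation condition in the lemma is equivalent to $\phi \leq \psi$.

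Given this, the forward direction is immediate. If $(C, c_A)$ is a free realisation of $\phi$, then $\psi \in {\rm pp}^C(c_A) = \langle \phi \rangle$ yields $\phi \leq \psi$, so every $x_A \in \phi(X)$ lies in $\psi(X)$ and hence factors as $hc_A$ for some $h : C \to X$.

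For the converse, assume the factorisation property. The inclusion $\langle \phi \rangle \subseteq {\rm pp}^C(c_A)$ is immediate from $c_A \in \phi(C)$. For the reverse inclusion, let $\chi$ be any pp formula with $c_A \in \chi(C)$; we must show $\phi \leq \chi$. Fix $X \in {\cal T}$ and $x_A \in \phi(X)$; the hypothesis supplies $h : C \to X$ with $x_A = hc_A$. The standard functoriality of pp-definable subgroups under morphisms of the ambient object—witnesses in $C$ for $c_A \in \chi(C)$ compose with $h$ to give witnesses in $X$ for $hc_A$—then yields $x_A = hc_A \in \chi(X)$, completing the argument.

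I do not anticipate a real obstacle: the key step is simply to recognise $\psi$ as the correct pp formula and to exploit that it is canonically satisfied by $c_A$ via the identity witness. The one sub-point worth spelling out in a final write-up is the functoriality used in the converse, which is standard but relies on the syntactic form of pp formulas, available here by \ref{elimq} and the discussion preceding \ref{ppeq}.
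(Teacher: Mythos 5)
Your proof is correct, and it takes a genuinely more self-contained route than the paper's. Your key move is to observe that the factorisation condition is itself the pp condition $\phi\leq\psi$, where $\psi(x_A)$ is the divisibility formula $\exists y_C\,(x_A=y_Cc_A)$, which $c_A$ satisfies in $C$ via the identity witness; both directions then drop out of the definition of $\langle\phi\rangle$ together with monotonicity of pp solution sets along morphisms (the \cite[1.2.8]{PreNBK}-type fact, which the paper itself invokes). The paper instead proves the direction ``factorisation property implies free realisation'' by appealing to the existence of free realisations (\ref{existfreereal}, a forward reference whose proof goes through \ref{ppisdiv}) and comparing $(C,c_A)$ with a free realisation $(B,b)$, and proves the other direction by transferring to ${\rm Mod}\mbox{-}{\cal T}^{\rm c}$, applying the module-category criterion \cite[1.2.7]{PreNBK}, and then lifting the resulting module morphism $yC\to yX$ back to ${\cal T}$ using \ref{ybijcpct}. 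Your argument avoids both the forward reference and the passage through $y{\cal T}$: the witness for $x_A\in\psi(X)$ is by definition already a morphism $C\to X$ in ${\cal T}$, so no lifting is needed; in effect you are redoing the classical proof of \cite[1.2.7]{PreNBK} internally to ${\cal T}$. What the paper's route buys is an explicit illustration of the dictionary between ${\cal T}$ and ${\rm Mod}\mbox{-}{\cal T}^{\rm c}$ and of how free realisations interact with $y$; what yours buys is brevity, independence from \ref{existfreereal}, and a proof that works verbatim from the axioms of the language. The one point to spell out in a final write-up is exactly the one you flag: monotonicity for $h:C\to X$ with $X$ an arbitrary object of ${\cal T}$, which is justified either by composing witnesses directly or by applying $y$ and \ref{ybijcpct}. (Also note the statement's ``$h:C\to A$'' is a typo for $h:C\to X$, as your reading assumes.)
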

\begin{proof}  Existence of free realisations in ${\cal T}$ (\ref{existfreereal} below) gives the direction ($\Leftarrow$) since, if $(B,b)$ is a free realisation of $\phi$, then there is a morphism $g:C \to B$ with $gc_A =b$, so ${\rm pp}^C(c_A) =\langle \phi\rangle$ (because morphisms are non-decreasing on pp-types - see \cite[1.2.8]{PreNBK}).  For the converse, if $a\in \phi(X)$, then $ya \in \phi(yX)$\footnote{For clarity, the language for ${\cal T}$ is exactly the language for ${\rm Mod}\mbox{-}{\cal T}^{\rm c}$ and the definition of the solution set  $\phi(X)$ is identical to the definition of the solution set of $\phi(yX)$.}  Since the pp-type of $yc_A$ in $yC$ is exactly that of $c_A$ in $C$, it is generated by $\phi$ and hence, since $ya\in \phi(yX)$, there is, by \cite[1.2.7]{PreNBK}, a morphism $f':yC \to yX$ with $f'\cdot yc_A =ya$.  Because $C\in {\cal T}^{\rm c}$, there is, by \ref{ybijcpct}, $f:C \to X$ with $f'=yf$.  Therefore $y(fc_A) = ya$ so, again by \ref{ybijcpct}, $fc_A =a$, as required.
\end{proof}

We show that every pp formula in the language for ${\cal T}$ has a free realisation in ${\cal T}$.  We use the fact that every formula is equivalent to a divisibility formula.

If a morphism $f$ {\bf factors initially through} a morphism $g$ - that is, $f=hg$ for some $h$ - then write $g \geq f$.

\begin{lemma} If $f:A \to B$ is a morphism in ${\cal T}^{\rm c}$ then the pp-type, $\langle f|x_A \rangle$, generated by the formula $f|x_A$ is, up to equivalence of pp formulas, $\{ g|x_A :  g \geq f \}$.
\end{lemma}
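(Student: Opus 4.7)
My plan is to invoke quantifier elimination (Lemma \ref{ppisdiv}) to reduce to divisibility formulas: since every pp formula in the single free variable $x_A$ is equivalent on $\mathcal{T}$ to some divisibility formula $g \mid x_A$ with $g$ a morphism out of $A$ in $\mathcal{T}^{\rm c}$, it suffices to characterise those $g$ for which $(f \mid x_A) \leq (g \mid x_A)$ holds on $\mathcal{T}$, and I claim this happens precisely when $g \geq f$.

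The forward direction is a routine composition. If $f = hg$, then in any $X \in \mathcal{T}$ a solution $x_A = y_B f$ of $f \mid x_A$ can be rewritten as $x_A = y_B(hg) = (y_B h)g$, so $g \mid x_A$ is witnessed by $y_B h$. Thus any $g \geq f$ gives a formula $g \mid x_A$ lying in $\langle f \mid x_A \rangle$.

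The converse is the crux, and uses the key trick of taking $B$ itself, regarded as a compact object of $\mathcal{T}$, as the test object, with the morphism $f \in (A,B)$ playing the role of an element of sort $A$. Since $f = 1_B \cdot f$, we trivially have $f \in (f \mid x_A)(B)$. If $(f \mid x_A) \leq (g \mid x_A)$ holds everywhere on $\mathcal{T}$, specialising to $B$ gives $f \in (g \mid x_A)(B)$, and unpacking the divisibility formula produces $h: C \to B$ (where $g: A \to C$) with $f = hg$, i.e., $g \geq f$. In effect the pair $(B, f)$ functions as a free realisation of $f \mid x_A$, and this same evaluation trick is what will underlie the construction of free realisations of general pp formulas in the next results.

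Combining the two directions, together with one more application of \ref{ppisdiv} to rewrite each $\psi \in \langle f \mid x_A \rangle$ as an equivalent divisibility formula $g \mid x_A$ (necessarily with $g \geq f$), yields the stated equality up to equivalence of pp formulas. I anticipate no serious obstacle; the only conceptual point worth flagging is the evaluation-at-$(B,f)$ step, which is what makes the divisibility description of pp-types so natural in the compactly generated triangulated setting.
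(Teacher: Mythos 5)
Your proposal is correct and follows essentially the same route as the paper: reduce to divisibility formulas via \ref{ppisdiv}, verify the easy composition direction, and for the converse evaluate at the object $X=B$ with the element $x_A=f$ (so that $(B,f)$ acts as a free realisation) to extract the factorisation $f=hg$. No gaps.
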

\begin{proof}  By \ref{ppisdiv} every pp formula is equivalent to a divisibility formula, so we need only consider formulas of the form $g|x_A$.

If $g \geq f$, say $g:A \to C$ and $f=hg$ with $h:C \to B$, then, for any $x_A:A \to X \in {\cal T}$ with $f|x_A$, say $x_A = x_Bf$, we have $x_A =x_Bhg = x_Cg$ with $x_C = x_Bg$, so we have $g|x_A$.  That is, $g|x_A \in \langle f|x_A \rangle$.

For the converse, if $g:A \to C$ is in ${\cal T}^{\rm c}$ and $g|x_A \in \langle f|x_A \rangle$, then, applying this with $X=B$ and $x_A =f$, we obtain that there is $h:C \to B$ such that $hg=f$, and $g \geq f$, as required.
\end{proof}

\begin{cor}\label{existfreereal} \marginpar{existfreereal}  Suppose that $\phi(x_A)$ is a pp formula for the language of ${\cal T}$.  Choose (by \ref{ppisdiv}) a morphism $f:A \to B$ in ${\cal T}^{\rm c}$ such that $\phi$ is equivalent to $f|x_A$.  Then $(B,f)$ is a free realisation of $\phi$.
\end{cor}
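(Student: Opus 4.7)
The plan is to argue directly from the two preceding lemmas, so the proof reduces to little more than unpacking the definition of divisibility.

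First I would check the easy half: that $f \in \phi(B)$. Since $\phi(x_A)$ is equivalent on $\mathcal{T}$ to the divisibility formula $f \mid x_A$, and since $f = 1_B \cdot f$ witnesses $f \mid f$, we immediately get $f \in (f\mid x_A)(B) = \phi(B)$. So the candidate free realisation does lie in the solution set of $\phi$, as required to even begin discussing its pp-type.

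Next I would invoke the characterisation lemma stated just above \ref{existfreereal}: to show that $(B,f)$ is a free realisation of $\phi$, it suffices to show that for every $X \in \mathcal{T}$ and every element $x_A : A \to X$ with $x_A \in \phi(X)$, there is a morphism $h : B \to X$ with $hf = x_A$. But under the equivalence $\phi \equiv (f \mid x_A)$ the hypothesis $x_A \in \phi(X)$ is literally the statement that such an $h$ exists: $x_A \in \phi(X)$ means there is some $y_B : B \to X$ with $x_A = y_B f$, and then $h := y_B$ is the morphism we need.

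The main obstacle is essentially bookkeeping rather than content: one must be careful that the equivalence $\phi \equiv (f\mid x_A)$ supplied by \ref{ppisdiv} is an equivalence of pp formulas on all of $\mathcal{T}$ (not merely on compact objects), so that both the verification $f \in \phi(B)$ and the extraction of $h$ from $x_A \in \phi(X)$ are legitimate. That this equivalence does hold on every $X \in \mathcal{T}$ is exactly what \ref{ppisdiv} delivers, via the weak pushout / distinguished-triangle construction in $\mathcal{T}^c$, so no further work is needed.
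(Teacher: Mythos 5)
Your two computational steps are fine: $f=1_B\cdot f$ does give $f\in(f\mid x_A)(B)=\phi(B)$, and for $x_A\in\phi(X)$ the witness $y_B$ of the divisibility is exactly the required factorisation $x_A=y_Bf$. The problem is the step where you ``invoke the characterisation lemma stated just above \ref{existfreereal}''. In the paper, the direction of that lemma you need (factorisation property $\Rightarrow$ free realisation) is proved by appealing to \ref{existfreereal} itself: the proof begins ``Existence of free realisations in ${\cal T}$ (\ref{existfreereal} below) gives the direction ($\Leftarrow$)\dots''. So, as written, your argument is circular relative to the paper's logical order: you prove the existence corollary from a lemma whose only given proof presupposes that corollary.

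The gap is easily repaired, in either of two ways. (a) Prove the direction you need directly, without existence: if $\psi\in{\rm pp}^B(f)$ and $a\in\phi(X)$ for any $X\in{\cal T}$, factor $a=hf$ with $h:B\to X$; since morphisms are non-decreasing on pp-types (pp formulas are preserved by the module map $yh$, cf.\ \cite[1.2.8]{PreNBK}), $a=hf\in\psi(X)$, so $\phi\leq\psi$; together with $f\in\phi(B)$ this gives ${\rm pp}^B(f)=\langle\phi\rangle$. (b) Argue as the paper intends, via the \emph{other} lemma preceding the corollary: by \ref{ppisdiv} every pp formula is equivalent to a divisibility formula, and $g\mid x_A$ holds of $f$ in $B$ iff $f$ factors initially through $g$, i.e.\ $g\geq f$; hence ${\rm pp}^B(f)=\{g\mid x_A:g\geq f\}$, which that lemma identifies with $\langle f\mid x_A\rangle=\langle\phi\rangle$, so $(B,f)$ is a free realisation by definition. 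Either route closes the gap; citing the characterisation lemma as stated does not.
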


\subsection{Elimination of imaginaries}\label{secelimimag} \marginpar{secelimimag}

Next we prove elimination of pp-imaginaries:  we show that every pp-pair is isomorphic, in the category ${\mathbb L}({\cal T})^{\rm eq+}$ of pp-pairs, to a pp formula, indeed by \ref{elimq}, to a quantifier-free pp formula if we identify a pp formula $\phi(\overline{x})$ with the pp-pair $\phi(\overline{x})/(\overline{x}=0)$ in ${\mathbb L}({\cal T})^{\rm eq+}$.

Recall, \ref{ppcat}, that the category of pp-imaginaries is equivalent to the category ${\rm Coh}({\cal T})$ of coherent functors on ${\cal T}$.  So let us take a coherent functor $F_g$ defined by the exact sequence $(C,-) \xrightarrow{(g,-)} (B,-) \to F_g \to 0$ for some $g:B\to C$ in ${\cal T}^{\rm c}$.  We have the distinguished triangle $A \xrightarrow{f} B \xrightarrow{g} C \xrightarrow{h} \Sigma A$ and extend it to  $\Sigma^{-1}C \xrightarrow{\Sigma^{-1}h} A \xrightarrow{f} B \xrightarrow{g} C \xrightarrow{h} \Sigma A$ then consider the exact sequence of functors on ${\cal T}$:
$$(\Sigma A,-) \xrightarrow{(h,-)} (C,-) \xrightarrow{(g,-)} (B,-) \xrightarrow{(f,-)} (A,-) \xrightarrow{(\Sigma^{-1}h,-)} (\Sigma^{-1}C,-)$$ 
where we have the factorisation 
$\xymatrix{(B,-) \ar[rr]^{(f,-)} \ar@{->>}[dr] && (A,-) \\ 
& F_g \ar@{^{(}->}[ur]}$.  

So $F_g\simeq {\rm im}(f,-)$ in $(A,-)$ and therefore $F_g$ is isomorphic to the functor given by the pp formula $f\mid x_A$ which, by \ref{anneqdiv}, is equivalent to the quantifier-free pp formula $x_A\cdot \Sigma^{-1}h=0$; that is $F_g \simeq G_{\Sigma^{-1}h}^\circ$ (this is also clear from the above exact sequence).  Thus we have the following.

\begin{theorem}\label{ppprfmla} \marginpar{ppprfmla} \cite[4.3]{GarkPre2} Every pp-pair is pp-definably isomorphic to a pp formula which may be taken to be quantifier-free (alternatively a divisibility formula).  Thus, (the theory of) ${\cal T}$ has elimination of pp imaginaries.

Explicitly, if $g:B\to C$ is in ${\cal T}^{\rm c}$ then the (typical)  pp-pair $F_g= {\rm coker}((g,-):(C,-) \to (B,-))$ is equivalent to the divisibility formula $f|x_A$ and to the annihilation formula $x_A\Sigma^{-1}h=0$ where $f$ and $h$ are such that $\Sigma^{-1}C \xrightarrow{\Sigma^{-1}h}  A \xrightarrow{f} B \xrightarrow{g} C ( \xrightarrow{h} \Sigma A)$ is a distinguished triangle.
\end{theorem}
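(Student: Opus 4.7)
The plan is to translate the statement into the language of coherent functors via Corollary \ref{ppcat}, and then exploit the long exact sequence obtained by applying representables to a distinguished triangle. By \ref{ppcat} every pp-pair is (pp-definably) isomorphic to some coherent functor $F_g$ given by an exact sequence $(C,-) \xrightarrow{(g,-)} (B,-) \to F_g \to 0$ with $g:B\to C$ a morphism in ${\cal T}^{\rm c}$. It therefore suffices to exhibit, for each such $F_g$, an isomorphism in ${\rm Coh}({\cal T})$ with a functor of the form $\phi(-)$ for $\phi$ a pp formula in a single free variable.

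The first step is to complete $g$ to a distinguished triangle $\Sigma^{-1}C \xrightarrow{\Sigma^{-1}h} A \xrightarrow{f} B \xrightarrow{g} C \xrightarrow{h} \Sigma A$; since ${\cal T}^{\rm c}$ is a triangulated subcategory of ${\cal T}$, all of $A, B, C$ (and their suspensions/desuspensions) remain compact. Applying the representable functor $(-,X)$ for $X\in {\cal T}$ and using the standard fact that representables on a triangulated category send triangles to long exact sequences, we obtain
$$(\Sigma A,X) \xrightarrow{(h,X)} (C,X) \xrightarrow{(g,X)} (B,X) \xrightarrow{(f,X)} (A,X) \xrightarrow{(\Sigma^{-1}h,X)} (\Sigma^{-1}C,X).$$
Exactness at $(B,X)$ identifies the cokernel of $(g,X)$, namely $F_g(X)$, with the image of $(f,X)$ inside $(A,X)$, and this identification is natural in $X$, hence functorial. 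Thus $F_g$ is isomorphic (in ${\rm Coh}({\cal T})$, equivalently in ${\mathbb L}({\cal T})^{\rm eq+}$) to the subfunctor of $(A,-)$ cut out by the divisibility pp formula $f\mid x_A$.

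Finally, exactness at $(A,X)$ identifies this image with the kernel of $(\Sigma^{-1}h,X)$, which is precisely the solution set of the quantifier-free pp formula $x_A\cdot\Sigma^{-1}h=0$. (Alternatively one can appeal directly to Proposition \ref{anneqdiv} applied to the triangle $\Sigma^{-1}C \xrightarrow{\Sigma^{-1}h} A \xrightarrow{f} B \to C$.) In the notation of Section \ref{secppsorts} this is the assertion $F_g \simeq G^{\circ}_{\Sigma^{-1}h}$. This proves the explicit form in the theorem, and since this holds for every pp-pair, ${\cal T}$ has elimination of pp-imaginaries.

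The argument is essentially a bookkeeping exercise once the right triangle is in hand; the only step that demands care is checking that the isomorphism produced above genuinely lives in the category of pp-imaginaries (that is, is given by a pp-definable map of pp-pairs) rather than just being an abstract isomorphism of functors. This is automatic from \ref{ppcat}, since every morphism in ${\rm Coh}({\cal T})$ corresponds under the equivalence to a pp-definable morphism of pp-pairs; the inclusion $F_g \hookrightarrow (A,-)$ and its factorization through $\mathrm{im}(f,-)$ are manifestly built from maps of representables, and so transport to pp-definable morphisms. No genuine obstacle arises beyond keeping track of suspensions and directions.
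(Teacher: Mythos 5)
Your argument is correct and follows essentially the same route as the paper's own proof: pass to coherent functors via \ref{ppcat}, extend $g$ to a distinguished triangle, and use the long exact sequence of representables to identify $F_g$ with ${\rm im}(f,-)$, i.e.\ with the divisibility formula $f\mid x_A$, and then (via \ref{anneqdiv} or exactness at $(A,-)$) with the quantifier-free formula $x_A\Sigma^{-1}h=0$, so that $F_g\simeq G^\circ_{\Sigma^{-1}h}$. Your closing remark that the isomorphisms are pp-definable because they are induced by maps of representables under the equivalence of \ref{ppcat} is exactly the point the paper leaves implicit.
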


\subsection{Enhancements, Ultraproducts}\label{secenhanultra} \marginpar{secenhanultra}

Arguments using reduced products, in particular ultraproducts, are often used in model theory.  In many cases their use can be replaced by arguments involving realising types in elementary extensions but in some cases the more algebraic and `explicit' (modulo use of the axiom of choice\footnote{needed to extend a filter to a non-principal ultrafilter}) ultraproduct construction is better.  At first sight we can't use ultraproducts in compactly generated triangulated categories because, even though typically they have direct products, they almost never have all directed colimits (recall, e.g. \cite[\S 3.3.1]{PreNBK}, that an ultraproduct is a directed colimit of direct products of its component structures).  Homotopy colimits along a countably infinite directed set are available but that is not enough to form ultraproducts.

In \cite{LakDer} Laking introduced ultraproducts in this context by using Grothendieck derivators.  We don't go into the details here but see \cite[\S 2]{LakDer} for the construction of coherent reduced products for derivators.  In \cite{LakVit} a different approach, using dg-categories and model categories, is taken. This gives, for algebraic compactly generated triangulated categories, a characterisation of definable subcategories (see Section \ref{secdefsub}) which is analogous to \ref{chardefmod}.  This extends to any triangulated category with a suitable enhancement, see \cite[8.8]{SaoStovTstr} and \cite[6.8]{BirWilDualPr} which has the following formulation.

\begin{theorem}\label{LVchardef} \marginpar{LVchardef} (\cite[3.11]{LakDer}, \cite[4.7]{LakVit},\cite[8.8]{SaoStovTstr}, \cite[6.8]{BirWilDualPr}) If ${\cal D}$ is a subcategory of a compactly generated triangulated category ${\cal T}$ which is the underlying category of a strong and stable derivator, then the following are equivalent.

\noindent (i) ${\cal D}$ is a definable subcategory of ${\cal T}$;

\noindent (ii) ${\cal D}$ is closed in ${\cal T}$ under pure subobjects, products and directed homotopy colimits;

\noindent (iii) ${\cal D}$ is closed in ${\cal T}$ under pure subobjects, products and pure quotients.
\end{theorem}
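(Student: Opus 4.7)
The plan is to establish the cyclic chain (i) $\Rightarrow$ (ii) $\Rightarrow$ (iii) $\Rightarrow$ (i), leveraging the module-theoretic characterisation (Theorem \ref{chardefmod}) via the restricted Yoneda embedding $y:{\cal T} \to {\rm Mod}\mbox{-}{\cal T}^{\rm c}$ and transferring the closure conditions back and forth using the enhancement.

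For (i) $\Rightarrow$ (ii): I would write ${\cal D}$ as the common zero-set of a family ${\cal S}$ of coherent functors on ${\cal T}$. By Theorem \ref{cohtriang}, every $F \in {\cal S}$ commutes with products and sends homology colimits to colimits; in a strong and stable derivator, directed homotopy colimits are homology colimits, so $F({\rm hocolim}\, X_i) = {\rm colim}\, F(X_i) = 0$ whenever each $F(X_i) = 0$, giving closure under directed homotopy colimits. Closure under products is immediate, and closure under pure subobjects is a direct consequence of the definition of purity, which forces pp-pair values to restrict correctly.

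For (ii) $\Rightarrow$ (iii): given a pure triangle $X' \to X \to X'' \to \Sigma X'$ with $X, X' \in {\cal D}$, I would realise $X''$ as a Milnor-telescope-style directed homotopy colimit of objects built from pure subobjects and finite powers of $X$ and $X'$, mirroring the module-theoretic fact that pure quotients of flat modules in $\langle y{\cal D}\rangle$ arise as directed colimits of pure subobjects of products of things already in $y{\cal D}$. The enhancement is what makes this telescope available inside ${\cal T}$, and the closure conditions of (ii) then place $X''$ in ${\cal D}$.

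For (iii) $\Rightarrow$ (i): set ${\cal S} = \{F \in {\rm Coh}({\cal T}) : F|_{\cal D} = 0\}$ and let ${\cal D}' = \{X \in {\cal T} : F(X) = 0 \text{ for all } F \in {\cal S}\}$, so that ${\cal D}'$ is definable and contains ${\cal D}$; I must prove ${\cal D}' \subseteq {\cal D}$. Given $X \in {\cal D}'$, the image $yX$ lies in the module-theoretic definable subcategory $\langle y{\cal D}\rangle$ of ${\rm Flat}\mbox{-}{\cal T}^{\rm c}$ and, by Theorem \ref{chardefmod}, is obtainable from $y{\cal D}$ using products, pure subobjects, and directed colimits. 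One then transfers this recipe back to ${\cal T}$: products lift through $y$, pure subobjects in the image of $y$ correspond to pure subobjects in ${\cal T}$, and directed colimits of $y$-images correspond, under the enhancement, to directed homotopy colimits in ${\cal T}$, which by (iii) can in turn be obtained from pure subobjects, products, and pure quotients of objects of ${\cal D}$. The hard part will be this lifting step: because $y$ is neither full nor faithful, passing from a presentation of $yX$ to an actual identification of $X$ with a corresponding construction inside ${\cal T}$ requires the strong-and-stable-derivator structure so that ``directed colimit in ${\rm Mod}\mbox{-}{\cal T}^{\rm c}$'' matches ``directed homotopy colimit in ${\cal T}$'' on the nose rather than merely after $y$, and it is reconciled by Theorem \ref{pinjtoinj} applied after pure-injective envelopes. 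This compatibility is exactly what the enhancement hypothesis provides, and is where the technical work of \cite{LakDer}, \cite{LakVit}, \cite{SaoStovTstr}, and \cite{BirWilDualPr} is concentrated.
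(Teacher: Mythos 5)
First, a point of comparison: the paper does not prove this theorem at all — it is imported verbatim from \cite[3.11]{LakDer}, \cite[4.7]{LakVit}, \cite[8.8]{SaoStovTstr} and \cite[6.8]{BirWilDualPr}, and Section \ref{secenhanultra} explicitly declines to reproduce the derivator/enhancement machinery. So your attempt cannot be measured against an internal argument; it has to stand on its own, and as written it does not. The most concrete gap is your (ii)$\Rightarrow$(iii) step. The claim that the pure quotient $X''$ of a pure triangle $X'\to X\to X''\to \Sigma X'$ can be realised as a ``Milnor-telescope-style directed homotopy colimit of objects built from pure subobjects and finite powers of $X$ and $X'$'' is unsubstantiated, and the module-theoretic fact you say it mirrors is not how pure quotients are handled there: in ${\rm Mod}\mbox{-}R$ one shows that definable subcategories are closed under pure-epimorphic images by observing that a pure epimorphism is pp-surjective (the solution set of any pp formula in the quotient is the image of the solution set in the source), not by an explicit colimit recipe starting from the given pure-exact sequence. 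The same cheap argument works here and shows (i)$\Rightarrow$(iii) directly: applying $y$ to a pure triangle gives a pure-exact sequence in ${\rm Mod}\mbox{-}{\cal T}^{\rm c}$, so every pp-pair closed on $X$ is closed on $X''$; you should route the cycle through that implication rather than attempt a construction that you cannot carry out.

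The remaining content is concentrated in two places, and in both you assert exactly what needs proving. In (i)$\Rightarrow$(ii), the statement that directed homotopy colimits in a strong stable derivator are homology colimits in the sense of \ref{cohtriang} — equivalently, that coherent functors (in particular each sort $(A,-)$, $A\in{\cal T}^{\rm c}$, so that $y$ commutes with directed homotopy colimits) send them to directed colimits — is itself a nontrivial result of \cite{LakDer}, not a formality. In (iii)$\Rightarrow$(i), the ``lifting step'' you flag is the entire theorem: an arbitrary directed colimit of modules $yD_i$ in ${\rm Mod}\mbox{-}{\cal T}^{\rm c}$ is not of the form $yZ$, and a diagram in the module category does not automatically lift to a coherent diagram over the enhancement; this is precisely what Laking's coherent reduced products (and the dg/model-category arguments of \cite{LakVit}, \cite{BirWilDualPr}) are built to fix, e.g.\ by exhibiting each object of the definable closure as a pure subobject of a coherent reduced product of objects of ${\cal D}$, which is a directed homotopy colimit of products. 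Saying ``this is where the technical work of the cited papers is concentrated'' concedes that your proposal is an outline of the reduction, not a proof; if you want a self-contained argument you must either reproduce that machinery or restructure the proof so that only the genuinely elementary implications ((i)$\Rightarrow$(ii), (i)$\Rightarrow$(iii), and (iii)$\Rightarrow$(ii) via ``hocolim is a pure quotient of the coproduct, which is pure in the product'') are claimed as easy, with the hard implication (ii)$\Rightarrow$(i) clearly isolated and proved.
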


Derived categories, derivators, dg-categories, model categories (in the sense of, say, \cite{HovBk}) and $\infty$-categories all provide ways of representing triangulated categories as the result of applying a process to a somewhat more amenable type of category.  In those additive categories with extra structure one can expect the model theory of (multisorted) modules to be directly applicable to the objects.  This gives the possibility of approaching the model theory of a triangulated category by developing model theory in such an enhancement and then passing this through a localisation-type functor to the triangulated category.  Examples include setting up elementary duality as done in \cite{AngHrbParam} and \cite{BirWilDualPr}, see Section \ref{secelemdual}.  We don't pursue this, so far relatively undeveloped, direction here.

\section{Definable subcategories}

\subsection{Definable subcategories of ${\cal T}$}\label{secdefsub} \marginpar{secdefsub}

A full subcategory ${\cal D}$ of ${\cal T}$ is {\bf definable} if its objects form the zero-class of a set of coherent functors, that is, if there is ${\cal A} \subseteq {\rm Coh}({\cal T})$ such that 
$${\cal D} = \{ X\in {\cal T}: FX=0 \, \,\forall F\in {\cal A}\}.$$
We will write ${\cal D} = {\rm Ann}({\cal A}) = {\rm Ann}_{\cal T}({\cal A}).$\footnote{We will also use this notation with a set of morphisms replacing ${\cal A}$ and hope this will not give rise to confusion.}  We will see in Section \ref{sectorsTc} how this is a natural extension of the notion of definable subcategory of a module category.
Also, if ${\cal X}$ is a subcategory of ${\cal T}$, set 
$${\rm Ann}_{{\rm Coh}({\cal T})}({\cal X}) = \{F\in {\rm Coh}({\cal T}) : FX=0 \,\, \forall X\in {\cal X}\}.$$
As for module categories, we denote by $\langle {\cal X} \rangle$ the definable subcategory of ${\cal T}$ {\bf generated by} ${\cal X}$ - that is, the smallest definable subcategory of ${\cal T}$ containing ${\cal X}$.

\vspace{4pt}

Given a set $\Phi$ of morphisms in ${\cal T}^{\rm c}$ we have its annihilator 
$${\rm Ann}_{\cal T}\Phi = \{ X\in {\cal T}: \forall A\xrightarrow{f} B \in \Phi, \,\,  \forall B \xrightarrow{b} X \mbox{ we have } bf=0\}.$$  
We write the condition $\forall B\, \xrightarrow{b} X \, (bf=0)$ succinctly as $Xf=0$ (this being directly analogous to the relation $Mr=0$ for a right module $M$ and ring element $r$).  Of course we can equally write this condition as $(f,X)=0$ or $(-,X)f=0$, according to our viewpoint.  Then, \cite[\S 7]{KraCoh}, ${\rm Ann}_{\cal T}\Phi$ is a (typical) definable subcategory of ${\cal T}$.

In the other direction, if ${\cal X}$ is a subcategory of ${\cal T}$, then we may set 
$${\rm Ann}_{{\cal T}^{\rm c}}{\cal X} = \{ A\xrightarrow{f}B \in {\cal T}^{\rm c}: Xf=0 \,\, \forall X\in {\cal X}\}.$$

The classes of morphisms of the form ${\rm Ann}_{{\cal T}^{\rm c}}{\cal X}$ are what Krause calls the {\bf cohomological ideals} of ${\cal T}^{\rm c}$; we will refer to them simply as {\bf annihilator ideals} in ${\cal T}^{\rm c}$.

\begin{lemma}\label{defcatann} \marginpar{defcatann} \cite[\S 7]{KraCoh} If $\Phi$ is a set of morphisms in ${\cal T}^{\rm c}$, then ${\rm Ann}_{\cal T}\Phi$ is a definable subcategory of ${\cal T}$.  If ${\cal X}$ is any subcategory of ${\cal T}$, then ${\rm Ann}_{\cal T}({\rm Ann}_{{\cal T}^{\rm c}}{\cal X}) = \langle {\cal X}\rangle$, the definable subcategory of ${\cal T}$ generated by ${\cal X}$.  In particular there is a natural bijection between the definable subcategories of ${\cal T}$ and the cohomological = annihilator ideals in ${\cal T}^{\rm c}$.
\end{lemma}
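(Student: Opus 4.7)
The linchpin is that representable functors on ${\cal T}$ are cohomological, so the two annihilation conditions ``$Xf=0$'' (for a morphism $f\in{\cal T}^{\rm c}$) and ``$F(X)=0$'' (for a coherent functor $F$) can be translated into one another via triangles. Explicitly, given $f:A\to B$ in ${\cal T}^{\rm c}$, complete to a distinguished triangle $A\xrightarrow{f}B\xrightarrow{g}C\to\Sigma A$ with $C\in{\cal T}^{\rm c}$ and apply $(-,X)$ to get the exact sequence $(C,X)\xrightarrow{(g,X)}(B,X)\xrightarrow{(f,X)}(A,X)$. Then $Xf=0$ is literally the statement that $(f,X)$ is zero, which by exactness means $(g,X)$ is surjective, i.e.\ $F_g(X)=0$. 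Conversely, by the discussion preceding \ref{cohtriang} every coherent functor on ${\cal T}$ has the form $F_g$ for some $g:B\to C$ in ${\cal T}^{\rm c}$, and the same argument (with $f$ taken to be the preceding map in a triangle) shows $F_g(X)=0\Leftrightarrow Xf=0$.

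Part (1) is then immediate: ${\rm Ann}_{\cal T}\Phi=\{X:F_{g_f}(X)=0\text{ for all }f\in\Phi\}$ is the zero-class of a set of coherent functors, so is a definable subcategory by definition. For part (2), the inclusion $\langle{\cal X}\rangle\subseteq{\rm Ann}_{\cal T}({\rm Ann}_{{\cal T}^{\rm c}}{\cal X})$ holds because the right-hand side is definable (by part (1)) and manifestly contains ${\cal X}$. For the reverse inclusion I would argue by contrapositive: if $Y\notin\langle{\cal X}\rangle$, then, since $\langle{\cal X}\rangle$ is the zero-class of ${\rm Ann}_{{\rm Coh}({\cal T})}({\cal X})$, there is a coherent functor $F$ with $F{\cal X}=0$ but $FY\neq 0$. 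Writing $F=F_g$ and letting $f$ be the preceding morphism in a triangle, the translation lemma gives $Xf=0$ for all $X\in{\cal X}$ (so $f\in{\rm Ann}_{{\cal T}^{\rm c}}{\cal X}$) while $Yf\neq 0$, witnessing $Y\notin{\rm Ann}_{\cal T}({\rm Ann}_{{\cal T}^{\rm c}}{\cal X})$.

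For part (3), the assignments are ${\cal D}\mapsto{\rm Ann}_{{\cal T}^{\rm c}}{\cal D}$ and $\Phi\mapsto{\rm Ann}_{\cal T}\Phi$. One composition is part (2): for definable ${\cal D}$, ${\rm Ann}_{\cal T}({\rm Ann}_{{\cal T}^{\rm c}}{\cal D})=\langle{\cal D}\rangle={\cal D}$. For the other, given an annihilator ideal $\Phi={\rm Ann}_{{\cal T}^{\rm c}}{\cal X}$, part (2) identifies ${\rm Ann}_{\cal T}\Phi$ with $\langle{\cal X}\rangle$, and I must check ${\rm Ann}_{{\cal T}^{\rm c}}\langle{\cal X}\rangle=\Phi$. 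The inclusion $\subseteq$ is automatic from ${\cal X}\subseteq\langle{\cal X}\rangle$; the reverse uses part (1) again, since for each $f\in\Phi$ the class ${\rm Ann}_{\cal T}\{f\}$ is a definable subcategory containing ${\cal X}$, hence contains $\langle{\cal X}\rangle$, so $f$ annihilates $\langle{\cal X}\rangle$.

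The only step with any real content is the translation lemma in the opening paragraph; everything else is a formal consequence. The point that most needs flagging is that ``definable subcategory'' in ${\cal T}$ is taken to mean zero-class of a set of coherent functors, so the description of $\langle{\cal X}\rangle$ as the zero-class of ${\rm Ann}_{{\rm Coh}({\cal T})}({\cal X})$ that is invoked in the contrapositive argument for part (2) is built into the definition rather than requiring a separate closure-theoretic characterisation.
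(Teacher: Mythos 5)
Your proposal is correct, and it is essentially the approach implicit in the paper: the paper defers the proof to Krause \cite[\S 7]{KraCoh}, but your key translation ($Xf=0$ iff $g\mid X$ iff $F_gX=0$ via rotating a triangle) is exactly the mechanism of \ref{anneqdiv} and equation (\ref{three}), and the rest is the routine Galois-connection bookkeeping. The only points needing a word (which you flag) are that ${\rm Coh}({\cal T})$ and the relevant morphism classes are skeletally small, so $\langle {\cal X}\rangle$ is indeed the zero-class of ${\rm Ann}_{{\rm Coh}({\cal T})}({\cal X})$ and part (1) applies to ${\rm Ann}_{{\cal T}^{\rm c}}{\cal X}$.
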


We have seen already that if
$$A\xrightarrow{f} B\xrightarrow{g} C \to \Sigma A$$ 
is a triangle, then 
$$bf=0 \quad \Leftrightarrow \quad g\mid b.$$
So we consider, given a set $\Psi$ of morphisms in ${\cal T}^{\rm c}$,
$${\rm Div}_{\cal T}\Psi = \{ X\in {\cal T}: \forall\, B\xrightarrow{g} C \in \Psi, \,\,  \forall\, B \xrightarrow{b} X, \, \exists \,C \xrightarrow{c} X \mbox{ such that } b=cg \}\quad \quad
\xymatrix{B \ar[r]^g \ar[d]_\forall & C \ar@{.>}[dl]^\exists \\
X
}$$
- the class of $\Psi$-divisible objects of ${\cal T}$.
We write $g|X$ as a succinct expression of the condition ``$\,\forall\,B \xrightarrow{b} X \, \exists \,C \xrightarrow{c} X \mbox{ such that } b=cg\,$" (being the analogue of the condition that every element of a module $M$ be divisible by an element $r$ of the ring\footnote{But the corresponding notation $Xg=X$ would be less appropriate than in the usual module case because $X$ has many sorts and that equation applies only to the $B$-sort of $X$.}).  Then ${\rm Div}_{\cal T}\Psi$ is a (typical) definable subcategory of ${\cal T}$.

And, in the other direction, given a  subcategory ${\cal X}$ of ${\cal T}$, we define\footnote{We are overworking the notations ${\rm Ann}$ and ${\rm Div}$ but they are useful.}
$${\rm Div}_{{\cal T}^{\rm c}}{\cal X} = \{ B\xrightarrow{g}C \in {\cal T}^{\rm c}: g|X \,\, \forall X\in {\cal X}\}.$$

\begin{lemma}\label{defcatdiv} \marginpar{defcatdiv} (\cite[2.2]{AngHrbParam}) If $\Psi$ is a set of morphisms in ${\cal T}^{\rm c}$, then ${\rm Div}_{\cal T}\Psi$ is a definable subcategory of ${\cal T}$.  If ${\cal X}$ is any subcategory of ${\cal T}$, then ${\rm Div}_{\cal T}({\rm Div}_{{\cal T}^{\rm c}}{\cal X}) = \langle {\cal X} \rangle$.
\end{lemma}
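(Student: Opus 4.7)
The plan is to reduce both claims to the annihilator version \ref{defcatann} via the equivalence of divisibility and annihilation formulas given in \ref{anneqdiv}.

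For the first statement, given $\Psi \subseteq {\rm mor}({\cal T}^{\rm c})$, I would, for each $g : B \to C$ in $\Psi$, complete it to a distinguished triangle $A \xrightarrow{f_g} B \xrightarrow{g} C \to \Sigma A$ (using the rotation axiom to extend $g$ in the opposite direction). Setting $\Phi = \{f_g : g \in \Psi\}$, the content of \ref{anneqdiv} is precisely that for any $Y \in {\cal T}$, the condition $g \mid Y$ is equivalent to $Yf_g = 0$. Therefore ${\rm Div}_{\cal T}\Psi = {\rm Ann}_{\cal T}\Phi$, which by \ref{defcatann} is a definable subcategory of ${\cal T}$.

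For the second statement, let $\Psi = {\rm Div}_{{\cal T}^{\rm c}}{\cal X}$ and $\Phi = {\rm Ann}_{{\cal T}^{\rm c}}{\cal X}$. The key observation is that the triangle completion sets up a correspondence between $\Psi$ and $\Phi$ as follows: given $g : B \to C$ with triangle $A \xrightarrow{f} B \xrightarrow{g} C \to \Sigma A$, applying \ref{anneqdiv} to each $X \in {\cal X}$ shows that $g \mid X$ for all $X \in {\cal X}$ if and only if $Xf = 0$ for all $X \in {\cal X}$; that is, $g \in \Psi$ iff $f \in \Phi$. Since every morphism in ${\cal T}^{\rm c}$ can be completed to a triangle in either direction (and ${\cal T}^{\rm c}$ is closed under the shift and cones), the assignment $g \mapsto f$ puts $\Psi$ and $\Phi$ into a correspondence that preserves the relevant closure conditions.

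Now for $Y \in {\cal T}$, applying \ref{anneqdiv} to $Y$ itself gives that $g \mid Y$ iff $Yf = 0$. Combining with the previous paragraph, $Y \in {\rm Div}_{\cal T}\Psi$ iff $Yf = 0$ for every $f$ that arises from some $g \in \Psi$ as above, i.e.~for every $f \in \Phi$. Hence ${\rm Div}_{\cal T}({\rm Div}_{{\cal T}^{\rm c}}{\cal X}) = {\rm Ann}_{\cal T}({\rm Ann}_{{\cal T}^{\rm c}}{\cal X})$, which equals $\langle {\cal X}\rangle$ by \ref{defcatann}.

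The main obstacle I anticipate is bookkeeping: one must check carefully that as $g$ ranges over $\Psi$, the corresponding $f_g$ in the chosen triangle really ranges over (a cofinal enough subset of) $\Phi$ — in particular that the non-uniqueness of triangle completion does not matter, since any two choices yield isomorphic $A$ and hence the same annihilator condition on objects of ${\cal T}$. The rest is a transcription of \ref{anneqdiv} plus an appeal to the already-established annihilator version.
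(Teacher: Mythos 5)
Your proposal is correct and takes essentially the same route as the paper: the paper's own (very terse) proof also converts divisibility by $g$ into annihilation of the fibre $f$ via the triangle equivalence \ref{anneqdiv} and then invokes \ref{defcatann}. You merely spell out explicitly the points the paper leaves implicit, namely both inclusions of the equality ${\rm Div}_{\cal T}({\rm Div}_{{\cal T}^{\rm c}}{\cal X}) = {\rm Ann}_{\cal T}({\rm Ann}_{{\cal T}^{\rm c}}{\cal X})$ and the harmlessness of the non-uniqueness of the triangle completion.
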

\begin{proof}  Take $Y\in {\rm Div}_{\cal T}({\rm Div}_{{\cal T}^{\rm c}}{\cal X})$.  If $g\in {\rm Div}_{{\cal T}^{\rm c}}{\cal X}$ then $g|Y$ so, if $f$ is as above, $Yf=0$.  This is so for all such $f$ (as $g$ varies) so, by \ref{defcatann}, $Y\in \langle {\cal X} \rangle$, as required.
\end{proof}

\begin{cor} \label{divandann} \marginpar{divandann} 
\noindent (1) If ${\cal D} = {\rm Ann}_{\cal T} \Phi$ is a definable subcategory of ${\cal T}$ then also ${\cal D} = {\rm Div}_{\cal T}\{g: A\xrightarrow{f}B \xrightarrow{g} C \to \Sigma A \text{ is a distinguished triangle and } f\in \Phi\}$.

\vspace{3pt}

\noindent (2) If ${\cal D} = {\rm Div}_{\cal T} \Psi$ is a definable subcategory of ${\cal T}$ then also ${\cal D} = {\rm Ann}_{\cal T}\{f: A\xrightarrow{f}B \xrightarrow{g} C \to \Sigma A \text{ is a distinguished triangle and } g\in \Psi\}$.
\end{cor}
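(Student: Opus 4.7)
The plan is to derive both parts as immediate consequences of Proposition \ref{anneqdiv}, which records the pointwise equivalence $Xf=0 \Leftrightarrow g\mid X$ whenever $A\xrightarrow{f} B\xrightarrow{g} C \to \Sigma A$ is a distinguished triangle. That proposition in turn rested on the exactness of the representable functor $(-,X)$ applied to triangles, giving exactness of $(C,X)\xrightarrow{(g,X)} (B,X)\xrightarrow{(f,X)} (A,X)$, so that the kernel of $(f,X)$ equals the image of $(g,X)$.

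For part (1), I would begin with ${\cal D}={\rm Ann}_{\cal T}\Phi$ and, for each $f:A\to B$ in $\Phi$, complete it to a distinguished triangle $A\xrightarrow{f} B\xrightarrow{g} C \to \Sigma A$ (any such completion; the third vertex $C$ lies in ${\cal T}^{\rm c}$ because ${\cal T}^{\rm c}$ is a triangulated subcategory of ${\cal T}$). Let $\Psi$ be the collection of all such $g$'s (ranging over $f\in\Phi$ and all choices of completion). Then for $X\in{\cal T}$, $X\in{\rm Ann}_{\cal T}\Phi$ means $Xf=0$ for every $f\in\Phi$; by \ref{anneqdiv} this is equivalent to $g\mid X$ for each corresponding $g$; collecting over all $f\in\Phi$ this is exactly $X\in{\rm Div}_{\cal T}\Psi$. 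Note that taking all completions rather than one-per-$f$ is harmless, since any two completions yield isomorphic $g$'s (at the target) and the divisibility condition depends only on the isomorphism class.

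For part (2), the argument is symmetric. Starting from ${\cal D}={\rm Div}_{\cal T}\Psi$ and $g:B\to C$ in $\Psi$, rotate/extend to obtain a distinguished triangle $A\xrightarrow{f} B\xrightarrow{g} C\to\Sigma A$ with $A\in{\cal T}^{\rm c}$ (again using that ${\cal T}^{\rm c}$ is triangulated). Collecting these $f$'s into a set $\Phi$, I would apply \ref{anneqdiv} in the reverse direction: $g\mid X$ for every $g\in\Psi$ iff $Xf=0$ for every associated $f\in\Phi$, giving ${\cal D}={\rm Ann}_{\cal T}\Phi$.

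There is no real obstacle here: the whole corollary is a bookkeeping translation of \ref{anneqdiv} from the level of a single triangle to the level of arbitrary sets of morphisms, with the only technical point being the (trivial) observation that completion of a morphism in ${\cal T}^{\rm c}$ to a distinguished triangle stays inside ${\cal T}^{\rm c}$ and that the class of $X$'s picked out does not depend on the choice of completion.
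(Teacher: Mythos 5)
Your argument is correct and is essentially the paper's own (the corollary is stated there as an immediate consequence of Proposition \ref{anneqdiv}, i.e.\ of the elementwise equivalence $bf=0 \Leftrightarrow g\mid b$ for a triangle $A\xrightarrow{f}B\xrightarrow{g}C\to\Sigma A$, quantified over all elements $b:B\to X$ and all members of $\Phi$, resp.\ $\Psi$). Your added remarks — that completions stay in ${\cal T}^{\rm c}$, that part (2) uses rotation to produce $f$ from $g$, and that the choice of completion is immaterial — are exactly the routine bookkeeping the paper leaves implicit.
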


Definable subcategories are so-called because they can be defined by closure of certain pairs of pp formulas, that is, by requiring that certain quotients of pp-definable subgroups be $0$.  For each of the annihilation and divisibility methods of specifying these subcategories, the pp-pairs needed are obvious, being respectively $\{ (x_B=x_B)/(x_Bf=0): f:A\to B \in \Phi\}$ and $\{ (x_B =x_B)/(g|x_B): g:B\to C \in \Psi\}$ with $\Phi$, $\Psi$ as above.

\vspace{4pt}

We have used that pp-pairs can be given in both annihilation and divisibility forms, but there is another, ``torsionfree" form that is not so obvious if we consider only formulas and their reduction to divisibility or annihilator forms, rather than pp-pairs.  Let us consider an extended triangle as before:
$$\Sigma^{-1}C \xrightarrow{\Sigma^{-1}h} A \xrightarrow{f} B \xrightarrow{g} C \xrightarrow{h} \Sigma A.$$
If $X\in {\cal T}$ then we obtain an exact sequence of abelian groups
$$(\Sigma A,X) \xrightarrow{(h,X)} (C,X) \xrightarrow{(g,X)} (B,X) \xrightarrow{(f,X)} (A,X) \xrightarrow{(\Sigma^{-1}h,X)} (\Sigma^{-1}C,X).$$
Then $X\in {\rm Div}_{\cal T}(g)$ iff $(g,X)$ is epi iff $(f,X)=0$ iff $(\Sigma^{-1}h,X)$ is monic.  If we denote by ${\rm ann}_X(\Sigma^{-1}h)$ the set $\{ a: A \to X: a.\Sigma^{-1}h=0\}$, then we have 
\begin{equation}\label{three}Xf=0 \quad \mbox{ iff } \quad g|X \quad \mbox{ iff } \quad {\rm ann}_X(\Sigma^{-1}h)=0.
\end{equation}
That is, $X\in {\cal T}$ annihilates $f$ iff it is $g$-divisible iff it is $\Sigma^{-1}h$-torsionfree.  This gives us a third way of using morphisms in ${\cal T}^{\rm c}$ to cut out definable subcategories of ${\cal T}$.  We set, given ${\cal X} \subseteq {\cal T}$
$${\cal X}\mbox{-}{\rm Reg} = \{ \ell\in {\cal T}^{\rm c}: {\rm ann}_X(\ell) =0 \,\, \forall X\in {\cal X}\}$$ 
and call such classes, for want of a better word, {\em regularity} classes (of morphisms of ${\cal T}^{\rm c}$).

In the other direction, given a set $\Xi$ of morphisms in ${\cal T}^{\rm c}$, we define 
$$\Xi\mbox{-}{\rm TF} = \{ X\in {\cal T}: {\rm ann}_X(\ell) =0 \,\, \forall \ell \in \Xi\}.$$

\begin{lemma}\label{defcatdiv1} \marginpar{defcatdiv1} If $\Xi$ is a set of morphisms in ${\cal T}^{\rm c}$, then $\Xi\mbox{-}{\rm TF}$ is a definable subcategory of ${\cal T}$.  If ${\cal X}$ is any subcategory of ${\cal T}$, then $({\cal X}\mbox{-}{\rm Reg})\mbox{-}{\rm TF} = \langle {\cal X} \rangle$.
\end{lemma}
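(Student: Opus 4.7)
The plan is to exploit the three-way equivalence just recorded in display (\ref{three}), which says that for any distinguished triangle $\Sigma^{-1}C \xrightarrow{\Sigma^{-1}h} A \xrightarrow{f} B \xrightarrow{g} C$ in ${\cal T}^{\rm c}$ and any $X\in {\cal T}$, the three conditions $Xf=0$, $g|X$, and ${\rm ann}_X(\Sigma^{-1}h)=0$ are equivalent. In other words, the ``torsionfree'' regularity condition is simply one of the other two conditions on the other edges of the same triangle, so this lemma should follow from \ref{defcatann}/\ref{defcatdiv} by rotating triangles.

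First I would show that $\Xi\mbox{-}{\rm TF}$ is definable. Given $\ell\in \Xi$, say $\ell:D\to A$, complete it to a triangle $D\xrightarrow{\ell} A\xrightarrow{f} B\to \Sigma D$, and set $C=\Sigma D$ so that $\ell$ sits in the position of $\Sigma^{-1}h$ (the sign is irrelevant for the condition ${\rm ann}_X(\ell)=0$). By (\ref{three}), the condition ${\rm ann}_X(\ell)=0$ is equivalent to $g|X$ for the resulting $g:B\to C$. Hence
$$\Xi\mbox{-}{\rm TF} = {\rm Div}_{\cal T}\{g : g \text{ arises from some } \ell\in \Xi \text{ as above}\},$$
which is definable by \ref{defcatdiv}. (Equivalently, the pp-pair $(x_A=x_A \wedge x_A\ell=0)/(x_A=0)$ cuts out this condition directly.)

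Next I would prove $({\cal X}\mbox{-}{\rm Reg})\mbox{-}{\rm TF} = \langle {\cal X} \rangle$. The inclusion $\langle {\cal X} \rangle \subseteq ({\cal X}\mbox{-}{\rm Reg})\mbox{-}{\rm TF}$ is immediate: by definition every $X\in {\cal X}$ lies in $\ell\mbox{-}{\rm TF}$ for each $\ell\in {\cal X}\mbox{-}{\rm Reg}$, and the right-hand side is definable by the first part, so it contains $\langle {\cal X}\rangle$. For the reverse inclusion, I would invoke \ref{defcatdiv}, which says $\langle {\cal X}\rangle = {\rm Div}_{\cal T}({\rm Div}_{{\cal T}^{\rm c}}{\cal X})$. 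So let $Y\in ({\cal X}\mbox{-}{\rm Reg})\mbox{-}{\rm TF}$ and pick any $g:B\to C$ in ${\rm Div}_{{\cal T}^{\rm c}}{\cal X}$. Complete $g$ to a distinguished triangle $\Sigma^{-1}C\xrightarrow{\Sigma^{-1}h} A \xrightarrow{f} B\xrightarrow{g} C$. Because $g|X$ for all $X\in {\cal X}$, the equivalence (\ref{three}) gives ${\rm ann}_X(\Sigma^{-1}h)=0$ for all such $X$, i.e.\ $\Sigma^{-1}h\in {\cal X}\mbox{-}{\rm Reg}$. Then, since $Y\in ({\cal X}\mbox{-}{\rm Reg})\mbox{-}{\rm TF}$, we have ${\rm ann}_Y(\Sigma^{-1}h)=0$, which by (\ref{three}) again yields $g|Y$. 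As $g$ was arbitrary in ${\rm Div}_{{\cal T}^{\rm c}}{\cal X}$, we conclude $Y\in {\rm Div}_{\cal T}({\rm Div}_{{\cal T}^{\rm c}}{\cal X}) = \langle {\cal X}\rangle$.

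There is no real obstacle: once one views (\ref{three}) as the dictionary between the annihilator, divisibility, and torsionfree descriptions, the lemma is just \ref{defcatann} and \ref{defcatdiv} read through that dictionary. The only bookkeeping to be careful about is the rotation of triangles so that a given $\ell\in \Xi$ occupies the $\Sigma^{-1}h$ position, and noting that signs on morphisms are invisible to the conditions ${\rm ann}_X(-)=0$, $(-)|X$ and $X(-)=0$.
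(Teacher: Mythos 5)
Your proposal is correct and is essentially the paper's own argument: the paper proves this lemma by saying ``the argument is as for \ref{defcatdiv}'', i.e.\ by the same rotation-of-triangles dictionary (\ref{three}) reducing the torsionfree condition to the annihilation/divisibility conditions and then citing \ref{defcatann}/\ref{defcatdiv}, which is exactly what you do (including the correct observation that signs arising from rotation are invisible to ${\rm ann}_X(-)=0$).
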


The argument is as for \ref{defcatdiv}.

The set of pp-pairs corresponding to $\Xi$ is $\{ (x_A\ell=0)/(x_A=0): D\xrightarrow{\ell} A \in \Xi\}$.

The next result summarises some of this; see \cite[8.6]{SaoStovTstr} and, for the case where ${\cal T}$ is the derived category of modules over a ring,  \cite[2.2]{AngHrbParam}.

\begin{theorem}\label{defequivs} \marginpar{defequivs} A definable subcategory ${\cal D}$ of ${\cal T}$ may be specified by any of the following means:

\noindent ${\cal D} = \{ X\in {\cal T}: \phi(X)/\psi(X) =0 \,\, \forall \phi/\psi \in \Phi \}$ where $\Phi$ is a set of pp-pairs in ${\cal L}({\cal T})$;

\noindent ${\cal D} ={\rm Ann}_{\cal T}({\cal A})$ where ${\cal A} \subseteq {\rm Coh}({\cal T})$;

\noindent ${\cal D} = {\rm Ann}_{\cal T}\Phi$ where $\Phi$ is a set of morphisms in ${\cal T}^{\rm c}$;

\noindent ${\cal D} = {\rm Div}_{\cal T}\Psi$ where $\Psi$ is a set of morphisms in ${\cal T}^{\rm c}$;

\noindent ${\cal D} = \Xi\mbox{-}{\rm TF}$ where $\Xi$ is a set of morphisms in ${\cal T}^{\rm c}$.

The subcategories of ${\rm Coh}({\cal T})$ of the form ${\rm Ann}_{{\rm Coh}({\cal T})}({\cal D})$ are the Serre subcategories, the classes of morphisms of ${\cal T}^{\rm c}$ of the form ${\rm Ann}_{{\cal T}^{\rm c}}({\cal D})$ are the annihilator =  cohomological ideals\footnote{The classes ${\rm Div}_{{\cal T}^{\rm c}}{\cal D}$ and ${\cal D}\mbox{-}{\rm Reg}$ are described indirectly, in terms of the functors they present, at the end of Section \ref{secdefabs}.}.  

Moving between the last three specifications is described by (\ref{three}) above.
\end{theorem}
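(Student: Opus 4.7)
The plan is to assemble the theorem from ingredients already in place: the identification \ref{ppcat} of pp-imaginaries with coherent functors, the reduction \ref{ppprfmla} of every pp-pair to an annihilation (or divisibility) formula, and the triangle-rotation identity (\ref{three}). Each of the five presentations has in effect already been shown to cut out a definable subcategory (\ref{defcatann}, \ref{defcatdiv}, \ref{defcatdiv1}); what remains is to verify that these presentations cut out \emph{the same} subcategories and to pin down the closure data on each side.

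First I would establish (1) $\Leftrightarrow$ (2) $\Leftrightarrow$ (3). Under the equivalence ${\mathbb L}({\cal T})^{\rm eq+} \simeq {\rm Coh}({\cal T})$ of \ref{ppcat}, a pp-pair $\phi/\psi$ corresponds to the coherent functor $X \mapsto \phi(X)/\psi(X)$, and vanishing on $X$ is the same condition on either side, giving (1) $\Leftrightarrow$ (2). By \ref{ppprfmla} every coherent functor is isomorphic in ${\rm Coh}({\cal T})$ to one presented by an annihilation formula $x_A\ell=0$ for some $\ell$ in ${\cal T}^{\rm c}$, and conversely every such formula presents a coherent functor. Hence for any ${\cal A} \subseteq {\rm Coh}({\cal T})$ there is a set $\Phi$ of morphisms in ${\cal T}^{\rm c}$ with ${\rm Ann}({\cal A}) = {\rm Ann}_{\cal T}\Phi$, and vice versa; combined with \ref{defcatann}, this gives (2) $\Leftrightarrow$ (3) and the bijection with cohomological ideals.

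Next I would treat (3) $\Leftrightarrow$ (4) $\Leftrightarrow$ (5), which is exactly the content of (\ref{three}): any morphism $f:A \to B$ in ${\cal T}^{\rm c}$ completes to a distinguished triangle $\Sigma^{-1}C \xrightarrow{\Sigma^{-1}h} A \xrightarrow{f} B \xrightarrow{g} C \xrightarrow{h} \Sigma A$, and then for every $X \in {\cal T}$ one has $Xf=0 \Leftrightarrow g|X \Leftrightarrow {\rm ann}_X(\Sigma^{-1}h)=0$. Starting from a set $\Phi$, triangle completion and rotation thus produce sets $\Psi$ and $\Xi$ with ${\rm Ann}_{\cal T}\Phi = {\rm Div}_{\cal T}\Psi = \Xi\mbox{-}{\rm TF}$, and the process is reversible, as made explicit for the $\Phi/\Psi$ pair in \ref{divandann} and handled identically for the $\Xi$ case by rotating once further. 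The definability of each form is guaranteed by \ref{defcatann}, \ref{defcatdiv}, and \ref{defcatdiv1}.

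The main point requiring care is not any single computation but rather pinning down the closure conditions on each side that promote the passage from a surjection of describing data onto definable subcategories to a bijection. On the ${\rm Coh}({\cal T})$ side, the zero class of a family of coherent functors is automatically closed under subobjects, quotients, and extensions, so the stable subsets are exactly the Serre subcategories, and standard Serre-localisation arguments show that the Serre subcategory generated by any ${\cal A}$ annihilates the same objects as ${\cal A}$ itself. On the ${\cal T}^{\rm c}$ side of (3), the corresponding closure is by definition that of the cohomological ideals, and \ref{defcatann} supplies the bijection. The stable subsets governing (4) and (5) are those obtained from cohomological ideals by triangle rotation through (\ref{three}); I would not attempt an intrinsic closure-theoretic description of them here, in line with the footnote accompanying the statement.
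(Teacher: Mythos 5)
Your assembly is essentially the paper's own: Theorem \ref{defequivs} is stated there as a summary of \ref{defcatann}, \ref{defcatdiv}, \ref{defcatdiv1}, \ref{divandann}, the pp-pair/coherent-functor dictionary of \ref{ppcat} and \ref{ppprfmla}, and the rotation equivalences (\ref{three}), and that is exactly the route you take. One small mismatch of labels: the functor presented by the annihilation formula $x_A\ell=0$ is $X\mapsto {\rm ann}_X(\ell)$, so its zero-class is the torsionfree form $\Xi\mbox{-}{\rm TF}$, not ${\rm Ann}_{\cal T}\Phi$; the ${\rm Ann}_{\cal T}\Phi$ form comes from reading $F_g$ as the pair given by the divisibility formula $f|x_A$ (so $F_gX=0$ iff $Xf=0$), or from ${\rm coker}(g,-)$ for the divisibility form. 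Since you immediately pass everything through (\ref{three}), this conflation is harmless, but the bookkeeping should be stated in the right order.

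The one genuine thin spot is the claim that the Galois-correspondence-stable subsets of ${\rm Coh}({\cal T})$ are exactly the Serre subcategories. Your argument gives one direction: annihilators of object classes are Serre (evaluation is exact), and replacing ${\cal A}$ by the Serre subcategory it generates does not change ${\rm Ann}({\cal A})$; hence every stable subset is Serre and the describing data may be taken Serre. But the converse -- that every Serre subcategory ${\cal S}$ of ${\rm Coh}({\cal T})$ is stable, i.e.\ ${\cal S}={\rm Ann}_{{\rm Coh}({\cal T})}({\rm Ann}_{\cal T}{\cal S})$ -- is not a formal Serre-localisation fact: given $F\notin {\cal S}$ one must produce an object $X\in{\cal T}$ with ${\cal S}X=0$ but $FX\neq 0$, and this realisation step uses the restricted Yoneda functor and \ref{pinjtoinj} (an injective module in the corresponding finite-type torsionfree class, pulled back to a pure-injective object of ${\cal T}$), i.e.\ the material of Section \ref{sectorsTc} summarised in \ref{defhered}. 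The paper defers precisely this point to those sections; your proposal should either do the same explicitly or supply that argument rather than attributing it to ``standard Serre-localisation arguments''.
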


In Section \ref{secdefabs} we will say this in torsion-theoretic terms with ${\rm mod}\mbox{-}{\cal T}^{\rm c}$ in place of ${\rm Coh}({\cal T})$.  In Section \ref{sectorsTc} we give the relevant background.

\subsection{Torsion theories on ${\rm Mod}\mbox{-}{\cal T}^{\rm c}$}\label{sectorsTc} \marginpar{sectorsTc}

A {\bf torsion pair} in a Grothendieck category, such as ${\rm Mod}\mbox{-}{\cal T}^{\rm c}$, consists of two classes:  ${\cal G}$ - the {\bf torsion} class, and ${\cal F}$ - the {\bf torsionfree} class, with $({\cal G}, {\cal F})=0$ and with ${\cal G}$, ${\cal F}$ maximal such.  Such a torsion pair, or {\bf torsion theory}, is {\bf hereditary} if ${\cal G}$ is closed under subobjects, equivalently if ${\cal F}$ is closed under injective hulls and, if so, it is {\bf of finite type} if ${\cal G}$ is generated, as a hereditary torsion class, by finitely presented objects, equivalently if ${\cal F}$ is closed under directed colimits (see, for instance, \cite[11.1.12, 11.1.14]{PreNBK}).  We also use without further comment that, for a hereditary torsion theory, if $F$ is a torsionfree module then the injective hull $E(F)$ of $F$ is torsionfree (and conversely, since the torsionfree class is closed under subobjects).  For background on torsion theories, see \cite{Ste}.

The restricted Yoneda functor from ${\cal T}$ to ${\rm Mod}\mbox{-}{\cal T}^{\rm c}$ allows us to realise the definable subcategories of ${\cal T}$ as the inverse images of finite-type torsionfree classes on ${\rm Mod}\mbox{-}{\cal T}^{\rm c}$, as follows.

Suppose that ${\cal D}$ is a definable subcategory of ${\cal T}$.  Then ${\cal D}$ is determined by the class ${\cal D} \,\cap \,{\rm Pinj}({\cal T})$ of pure-injectives in it, being the closure of that class under pure subobjects (by the comments after \ref{chardefmod}).  By \ref{pinjtoinj} the image ${\cal E} = y({\cal D} \,\cap\, {\rm Pinj}({\cal T}))$ is a class of injective ${\cal T}^{\rm c}$-modules which is closed under direct products and direct summands, hence (e.g.~\cite[11.1.1]{PreNBK}) which is of the form ${\cal F} \cap {\rm Inj}\mbox{-}{\cal T}^{\rm c}$ for some hereditary torsionfree class ${\cal F} = {\cal F}_{\cal D}$ of ${\cal T}^{\rm c}$-modules.

We recall, \cite[3.3]{PreThes} see \cite[11.1.20]{PreNBK}, that a hereditary torsionfree class of modules is of finite type exactly if it is definable.  So we have to show that definability of ${\cal D}$ corresponds to definability of ${\cal F}_{\cal D}$, equivalently to definability of the class of absolutely pure objects in ${\cal F}_{\cal D}$ (``equivalently" because ${\rm Mod}\mbox{-}{\cal T}^{\rm c}$ is locally coherent, so the absolutely pure objects form a definable subcategory, see \cite[3.4.24]{PreNBK}, hence so is their intersection with any other definable subcategory; in the other direction, if ${\cal F}_{\cal D} \, \cap\, {\rm Abs}\mbox{-}{\cal T}^{\rm c}$ is definable then so also, by e.g.~\ref{chardefmod}, is its class of subobjects, which is precisely ${\cal F}_{\cal D}$).  So we have to show that the torsionfree class ${\cal F}_{\cal D}$ above is of finite type and that every finite type torsionfree class arises in this way.

To see, this, note that, if $X\in {\cal T}$ and $F\in {\rm Coh}({\cal T})$, then (Section \ref{secppsorts}) $FX =0$ iff $(F^\diamond, yX)=0$.  Set ${\cal A} = {\rm Ann}_{{\rm Coh}({\cal T})}({\cal D})$.  We have the duality from Section \ref{secppsorts} between ${\rm Coh}({\cal T})$ and ${\rm mod}\mbox{-}{\cal T}^{\rm c}$, so consider the corresponding set ${\cal A}^\diamond = \{ F^\diamond: F\in {\cal A}\}$ of finitely presented ${\cal T}^{\rm c}$-modules.  Since ${\cal A}$ is a Serre subcategory of ${\rm Coh}({\cal T})$, this is a Serre subcategory of ${\rm mod}\mbox{-}{\cal T}^{\rm c}$; we set ${\cal S}_{\cal D} = {\cal A}^\diamond$.  The $\varinjlim$-closure\footnote{If ${\cal S}$ is a subcategory of a module category, then we will denote its $\varinjlim${\bf -closure} - its closure under directed colimits - by $\overrightarrow{\cal S}$.}, $\overrightarrow{{\cal S}_{\cal D}}$, in ${\rm Mod}\mbox{-}{\cal T}^{\rm c}$ of ${\cal S}_{\cal D}$ is a typical hereditary torsion class of finite type in ${\rm Mod}\mbox{-}{\cal T}^{\rm c}$ (see \cite[11.1.36]{PreNBK}).  The corresponding hereditary torsionfree class ${\cal F}= \{ M\in {\rm Mod}\mbox{-}{\cal T}^{\rm c}: (\overrightarrow{{\cal S}_{\cal D}},M)=0\}$ is just the hom-perp of ${\cal S}_{\cal D}$:  ${\cal F} = \{ M\in {\rm Mod}\mbox{-}{\cal T}^{\rm c}: ({\cal S}_{\cal D},M)=0\}$.  If $M \in {\cal F}$ is injective, hence (\ref{pinjtoinj}) of the form $yN$ for some pure-injective $N\in {\cal T}$, then the condition $({\cal S}_{\cal D}={\cal A}^\diamond,M)=0$ is exactly the condition $FN=0$ for every $F \in {\cal A}$, that is, the condition that $N$ be in ${\cal D}$.  Thus ${\cal F} = {\cal F}_{\cal D}$ and we have the correspondence between classes of pure-injectives in ${\cal T}$ of the form ${\cal D}\, \cap\, {\rm Pinj}({\cal T})$ and classes of injectives in ${\rm Mod}\mbox{-}{\cal T}^{\rm c}$ of the form ${\cal F} \cap {\rm Inj}\mbox{-}{\cal T}^{\rm c}$ for some hereditary torsionfree class ${\cal F}$.  (For, note that given such a class ${\cal E}$ of injectives, the class of pure submodules of modules in ${\cal E}$ is the class of absolutely pure modules in ${\cal F}$ which, by finite type, is definable and hence has definable inverse image in ${\cal T}$).  Therefore we have shown the following.

\begin{theorem}\label{defcattf} \marginpar{defcattf}  A subcategory ${\cal D}$ of a compactly generated triangulated category ${\cal T}$ is definable iff it has any of the following equivalent forms, where $y:{\cal T} \to {\rm Mod}\mbox{-}{\cal T}^{\rm c}$ is the restricted Yoneda functor:

\noindent  ${\cal D} = y^{-1}{\cal F}$ where ${\cal F}$ is a finite-type hereditary torsionfree class in ${\rm Mod}\mbox{-}{\cal T}^{\rm c}$

\noindent ${\cal D} = y^{-1}{\cal E}$ where ${\cal E}$ is the class of absolutely pure objects in a hereditary torsionfree class of finite type;

\noindent  ${\cal D} =  y^{-1}{\cal E}$ where ${\cal E}$ is a definable class of absolutely pure objects in ${\rm Mod}\mbox{-}{\cal T}^{\rm c}$.
\end{theorem}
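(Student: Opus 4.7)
The plan is to move back and forth between the definable subcategory ${\cal D}$ of ${\cal T}$ and its image under the restricted Yoneda functor $y$, exploiting three already-established tools: the equivalence ${\rm Pinj}({\cal T}) \simeq {\rm Inj}\mbox{-}{\cal T}^{\rm c}$ of \ref{pinjtoinj}; the duality $({\rm mod}\mbox{-}{\cal T}^{\rm c})^{\rm op} \simeq {\rm Coh}({\cal T})$ of \ref{cohfpdual}, together with its explicit description of the action of $G \in {\rm mod}\mbox{-}{\cal T}^{\rm c}$ on $X \in {\cal T}$ as $G(X) = (G, yX)$; and the standard criterion that a hereditary torsionfree class in the locally coherent Grothendieck category ${\rm Mod}\mbox{-}{\cal T}^{\rm c}$ is definable (equivalently, closed under directed colimits) exactly when it is of finite type.

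For the forward direction, I start with a definable ${\cal D} \subseteq {\cal T}$ and set ${\cal A} := {\rm Ann}_{{\rm Coh}({\cal T})}({\cal D})$; by definition ${\cal D} = \{X \in {\cal T} : FX = 0 \ \forall F \in {\cal A}\}$, and ${\cal A}$ is a Serre subcategory of ${\rm Coh}({\cal T})$. Applying the duality \ref{cohfpdual} yields a Serre subcategory ${\cal S}_{\cal D} := \{F^\diamond : F \in {\cal A}\}$ of ${\rm mod}\mbox{-}{\cal T}^{\rm c}$, whose directed-colimit closure in ${\rm Mod}\mbox{-}{\cal T}^{\rm c}$ is a typical finite-type hereditary torsion class, with torsionfree class ${\cal F}_{\cal D} = \{M : ({\cal S}_{\cal D},M) = 0\}$. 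The essential identity
$$FX = 0 \ \iff \ (F^\diamond, yX) = 0$$
(for $F \in {\rm Coh}({\cal T})$, $X \in {\cal T}$), which is immediate from the formula $F^\diamond(X) = (F^\diamond, yX)$ recalled above, then gives ${\cal D} = y^{-1}({\cal F}_{\cal D})$.

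For the reverse direction I would run the same duality backwards: a finite-type hereditary torsion class is the colimit closure of a Serre subcategory ${\cal S} \subseteq {\rm mod}\mbox{-}{\cal T}^{\rm c}$, and the corresponding ${\cal S}^\circ$ is a Serre subcategory of ${\rm Coh}({\cal T})$ whose annihilator in ${\cal T}$ coincides, by the same identity, with $y^{-1}({\cal F})$; this manifests $y^{-1}({\cal F})$ as a definable subcategory. To see that the two assignments are mutually inverse it is cleanest to reduce to pure-injectives: since definable subcategories are determined by the pure-injectives they contain (the remark after \ref{chardefmod}) and hereditary torsionfree classes are determined by the injectives they contain (closure under injective hulls), \ref{pinjtoinj} promotes the bijection at this level to a bijection at the level of the full subcategories. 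The three formulations in the theorem then collapse into one: local coherence makes ${\rm Abs}\mbox{-}{\cal T}^{\rm c}$ itself definable, so ${\cal F}_{\cal D} \cap {\rm Abs}\mbox{-}{\cal T}^{\rm c}$ is a definable class of absolutely pure modules, and conversely any definable class of absolutely pure modules sits inside a unique finite-type hereditary torsionfree class (its closure under pure submodules). Since $y{\cal T} \subseteq {\rm Abs}\mbox{-}{\cal T}^{\rm c}$ by \ref{abseqflat}, the three inverse images under $y$ coincide. The main technical point to be careful with is precisely this last transition: one must check that passing from a ``definable class of absolutely pure modules'' to its generated finite-type torsionfree class does not alter which $X \in {\cal T}$ satisfy $yX \in {\cal F}$; this reduces, via $yX \in {\cal F} \Leftrightarrow E(yX) \in {\cal F}$ for hereditary ${\cal F}$ together with $E(yX) = y(H(X))$ and \ref{pinjtoinj}, to the evident statement that membership is tested on injective (equivalently, pure-injective) objects.
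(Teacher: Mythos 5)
Your argument is essentially the paper's own proof: both reduce to pure-injectives (definable subcategories being determined by them) and use \ref{pinjtoinj} to identify these with the injectives in a hereditary torsionfree class, with the duality of \ref{cohfpdual} and the identity $FX=0 \Leftrightarrow (F^\diamond,yX)=0$ translating annihilation by coherent functors into torsionfreeness for the finite-type torsion theory generated by ${\cal S}_{\cal D}$, and with local coherence of ${\rm Mod}\mbox{-}{\cal T}^{\rm c}$ plus \ref{abseqflat} collapsing the three formulations. The one small slip is the parenthetical claim that the torsionfree class attached to a definable class ${\cal E}$ of absolutely pure modules is its closure under pure submodules --- that closure is just ${\cal E}$ itself, and the correct description is the closure under all submodules (as stated in \ref{deftf}) --- but this does not damage the proof, since your final injective-hull reduction ($yX\in {\cal F}$ iff $E(yX)=yH(X)\in{\cal F}$) is what actually carries the verification.
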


We denote by $\tau_{\cal D} = ({\mathscr T}_{\cal D}, {\cal F}_{\cal D})$ the finite-type hereditary torsion theory on ${\rm Mod}\mbox{-}{\cal T}^{\rm c}$ corresponding to ${\cal D}$.

\begin{cor}\label{deftf} \marginpar{deftf} The definable subcategories ${\cal D}$ of ${\cal T}$ are in natural bijection with the definable (= finite-type) hereditary torsionfree classes in ${\rm Mod}\mbox{-}{\cal T}^{\rm c}$ and also with the definable subcategories of ${\rm Abs}\mbox{-}{\cal T}^{\rm c}$.  

Explicitly, to ${\cal D}$ correspond respectively the closure ${\cal F}_{\cal D}$ of $\langle y{\cal D} \rangle$ under submodules, and ${\cal F}_{\cal D} \, \cap \, {\rm Abs}\mbox{-}{\cal T}^{\rm c}$.  In the other direction, we simply apply $y^{-1}$, where $y$ is the restricted Yoneda functor.
\end{cor}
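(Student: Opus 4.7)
The plan is to derive this as a structural consequence of Theorem \ref{defcattf} (together with the pure-injective correspondence \ref{pinjtoinj}), since the existence of the threefold description is already in hand. There are three things left to pin down: (i) that the correspondence ${\cal D} \mapsto {\cal F}_{\cal D}$ is a bijection with inverse $y^{-1}$; (ii) that ${\cal F}_{\cal D}$ admits the explicit description as the closure of $\langle y{\cal D}\rangle$ under submodules; and (iii) the analogous bijection onto definable subcategories of ${\rm Abs}\mbox{-}{\cal T}^{\rm c}$, realised by intersection with ${\rm Abs}\mbox{-}{\cal T}^{\rm c}$.

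First I would establish (ii), which is the technical core. One inclusion is immediate: $y{\cal D} \subseteq {\cal F}_{\cal D}$ by construction, and since finite-type hereditary torsionfree classes are definable (\cite[11.1.20]{PreNBK} in the background) we get $\langle y{\cal D}\rangle \subseteq {\cal F}_{\cal D}$; hereditariness then forces the submodule closure of $\langle y{\cal D}\rangle$ into ${\cal F}_{\cal D}$. For the reverse inclusion, given $M \in {\cal F}_{\cal D}$, I take the injective hull $E(M)$ in ${\rm Mod}\mbox{-}{\cal T}^{\rm c}$; it remains in ${\cal F}_{\cal D}$ by hereditariness, and Theorem \ref{pinjtoinj} then supplies a pure-injective $N \in {\cal T}$ with $yN = E(M)$. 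Because $yN \in {\cal F}_{\cal D}$, we have $N \in y^{-1}({\cal F}_{\cal D}) = {\cal D}$, so $E(M) \in y{\cal D}$ and $M$ embeds in an object of $\langle y{\cal D}\rangle$ as required.

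With (ii) in hand, (i) is almost free: the map ${\cal D} \mapsto {\cal F}_{\cal D}$ is surjective by \ref{defcattf}, and injective because a hereditary torsionfree class is determined by its injective members (being closed under both injective hulls and submodules), and by \ref{pinjtoinj} those injective members correspond bijectively with the pure-injective objects lying in ${\cal D}$. For (iii), ${\rm Abs}\mbox{-}{\cal T}^{\rm c}$ is itself a definable subcategory of ${\rm Mod}\mbox{-}{\cal T}^{\rm c}$ (by left coherence of ${\cal T}^{\rm c}$), so ${\cal F}_{\cal D} \cap {\rm Abs}\mbox{-}{\cal T}^{\rm c}$ is definable as a subcategory of ${\rm Abs}\mbox{-}{\cal T}^{\rm c}$; the relation $y{\cal T} \subseteq {\rm Abs}\mbox{-}{\cal T}^{\rm c}$ from \ref{abseqflat} gives $y^{-1}({\cal F}_{\cal D} \cap {\rm Abs}\mbox{-}{\cal T}^{\rm c}) = y^{-1}({\cal F}_{\cal D}) = {\cal D}$, and the third clause of \ref{defcattf} shows every definable subcategory of ${\rm Abs}\mbox{-}{\cal T}^{\rm c}$ arises this way. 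The main obstacle is the reverse inclusion in (ii): without the equivalence \ref{pinjtoinj} between ${\rm Pinj}({\cal T})$ and ${\rm Inj}\mbox{-}{\cal T}^{\rm c}$ there would be no route back from an arbitrary injective of ${\cal F}_{\cal D}$ into ${\cal T}$, and hence no way to witness $M$ as a submodule of something already in $\langle y{\cal D}\rangle$.
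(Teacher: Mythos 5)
Your proposal is correct and is essentially the paper's own (implicit) argument: the corollary is extracted from Theorem \ref{defcattf} together with the construction of ${\cal F}_{\cal D}$, using \ref{pinjtoinj}, the closure properties of hereditary torsion theories, and the definability of finite-type torsionfree classes, exactly as you do. The only point you gloss is surjectivity — e.g.\ that a definable ${\cal E}\subseteq {\rm Abs}\mbox{-}{\cal T}^{\rm c}$ is actually recovered as ${\cal F}_{y^{-1}{\cal E}}\,\cap\,{\rm Abs}\mbox{-}{\cal T}^{\rm c}$ — but this follows from the same ``determined by its injective members'' observation (via \ref{pinjtoinj}) that you already invoke for injectivity, so no new idea is missing.
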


Note the almost complete analogy of this with the bijection (see \cite[12.3.2]{PreNBK}) between definable subcategories of a module category ${\rm Mod}\mbox{-}R$ and the finite type (= definable) hereditary torsionfree classes in $(R\mbox{-}{\rm mod})\mbox{-}{\rm Mod} = (R\mbox{-}{\rm mod}, {\bf Ab})$, equivalently with the definable classes of absolutely pure objects in $(R\mbox{-}{\rm mod})\mbox{-}{\rm Mod} = (R\mbox{-}{\rm mod}, {\bf Ab})$.  One notable difference is that the image of a definable subcategory of a triangulated category is `most' of the definable subcategory $\langle y{\cal D} \rangle \, \subseteq \, {\rm Abs}\mbox{-}{\cal T}^{\rm c}$ of modules, whereas in the module case it is all of the corresponding class of modules.  This reflects the lack of directed colimits in triangulated categories, but see \cite{LakDer}, \cite{LakVit} for some replacement using Grothendieck derivators for the triangulated case.

The other notable difference is that the module case uses tensor product to embed (fully and faithfully) ${\rm Mod}\mbox{-}R$ in $(R\mbox{-}{\rm mod}, {\bf Ab})$.  Here we have somehow avoided that.

We also record the equivalence at the level of pure-injectives.

\begin{cor}\label{pinjtoinjD} \marginpar{pinjtoinjD} If ${\cal D}$ is a definable subcategory of ${\cal T}$ and ${\cal F}_{\cal D}$ is the corresponding hereditary torsionfree class in ${\rm Mod}\mbox{-}{\cal T}^{\rm c}$, then the restricted Yoneda functor $y$ induces an equivalence 
$${\rm Pinj}({\cal D}) \simeq {\cal F} \, \cap \,{\rm Inj}\mbox{-}{\cal T}^{\rm c}$$
between the category  ${\rm Pinj}({\cal D})$ of pure-injective objects of ${\cal T}$ which lie in ${\cal D}$ and the category $ {\cal F} \, \cap \,{\rm Inj}\mbox{-}{\cal T}^{\rm c}$ of ${\cal T}^{\rm c}$-injective modules which lie in ${\cal F}$.
\end{cor}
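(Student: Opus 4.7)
The plan is to obtain the claimed equivalence as a direct restriction of the Krause equivalence $y:{\rm Pinj}({\cal T}) \simeq {\rm Inj}\mbox{-}{\cal T}^{\rm c}$ from Theorem \ref{pinjtoinj}. Since full faithfulness on ${\rm Pinj}({\cal D}) \subseteq {\rm Pinj}({\cal T})$ is inherited automatically from that theorem, the only work is to identify the essential image of $y$ when restricted to ${\rm Pinj}({\cal D})$.

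First I would verify that $y$ sends ${\rm Pinj}({\cal D})$ into ${\cal F}_{\cal D} \cap {\rm Inj}\mbox{-}{\cal T}^{\rm c}$. If $X \in {\rm Pinj}({\cal D})$, then $yX$ is injective by Theorem \ref{pinjtoinj}, and by Theorem \ref{defcattf} (applied to the characterisation ${\cal D} = y^{-1}({\cal F}_{\cal D})$) we have $yX \in {\cal F}_{\cal D}$. Conversely, for essential surjectivity, take $N \in {\cal F}_{\cal D} \cap {\rm Inj}\mbox{-}{\cal T}^{\rm c}$. By Theorem \ref{pinjtoinj} there exists a pure-injective $X \in {\cal T}$, unique up to isomorphism, with $yX \simeq N$. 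Because $yX \in {\cal F}_{\cal D}$, Theorem \ref{defcattf} forces $X \in {\cal D}$, hence $X \in {\rm Pinj}({\cal D})$.

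Full faithfulness on the restricted subcategory is immediate: for $X, Y \in {\rm Pinj}({\cal D})$ the map $y:(Y,X) \to (yY, yX)$ is bijective by \ref{ybijpi} (which underlies \ref{pinjtoinj}), and this bijection is just the restriction of the one given by Theorem \ref{pinjtoinj}. Combining the three ingredients yields the asserted equivalence ${\rm Pinj}({\cal D}) \simeq {\cal F}_{\cal D} \cap {\rm Inj}\mbox{-}{\cal T}^{\rm c}$.

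There is no real obstacle; the corollary is essentially a bookkeeping exercise combining \ref{pinjtoinj} with the identification ${\cal D} = y^{-1}({\cal F}_{\cal D})$ of \ref{defcattf}. The only subtlety worth flagging is that one uses the equivalence ${\cal D} \cap {\rm Pinj}({\cal T}) = y^{-1}({\cal F}_{\cal D}) \cap {\rm Pinj}({\cal T})$, whose image under $y$ coincides with ${\cal F}_{\cal D} \cap y({\rm Pinj}({\cal T})) = {\cal F}_{\cal D} \cap {\rm Inj}\mbox{-}{\cal T}^{\rm c}$ precisely because $y$ is already known to give a bijection on objects between ${\rm Pinj}({\cal T})$ and ${\rm Inj}\mbox{-}{\cal T}^{\rm c}$.
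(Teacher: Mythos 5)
Your argument is correct and is essentially the paper's own: the corollary is obtained by restricting the equivalence ${\rm Pinj}({\cal T}) \simeq {\rm Inj}\mbox{-}{\cal T}^{\rm c}$ of \ref{pinjtoinj}, using the identification ${\cal D} = y^{-1}({\cal F}_{\cal D})$ from \ref{defcattf} to match up the objects on each side, exactly as in the discussion in Section \ref{sectorsTc} preceding \ref{defcattf}.
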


This gives some justification for our saying that the Yoneda image of a definable subcategory ${\cal D}$ in ${\rm Mod}\mbox{-}{\cal T}^{\rm c}$ constitutes `most of' the flat = absolutely pure objects of the corresponding hereditary torsionfree class of finite type.  For, every injective in the class is in the image and every absolutely pure object in the class is a pure (even elementary) submodule of an object in the image.

Note that the fact that the objects of ${\cal D}$ are the pure subobjects of the pure-injectives in ${\cal D}$ exactly corresponds to the fact that the absolutely pure modules in ${\cal F}$ are the pure submodules of the injective modules in ${\cal F}$.

\subsection{Definable subcategories of ${\rm Abs}\mbox{-}{\cal T}^{\rm c}$} \label{secdefabs} \marginpar{secdefabs}

In Section \ref{secdefsub} we associated to a definable subcategory ${\cal D}$ of ${\cal T}$ three sets of morphisms, ${\rm Ann}_{{\cal T}^{\rm c}}({\cal D})$, ${\rm Div}_{{\cal T}^{\rm c}}({\cal D})$ and ${\cal D}\mbox{-}{\rm Reg}$, each of which determines ${\cal D}$.  In this section we identify the corresponding sets of morphisms in ${\rm mod}\mbox{-}{\cal T}^{\rm c}$ and the ways in which they cut out the hereditary finite type torsion theory $\tau_{\cal D}$ cogenerated by $\langle y{\cal D}\rangle$ in ${\rm Mod}\mbox{-}{\cal T}^{\rm c}$.

We have the following from Section \ref{sectorsTc}.

\begin{cor}\label{defhered} \marginpar{defhered}  If ${\cal T}$ is a compactly generated triangulated category, then the following are in natural bijection:

\noindent (i) the definable subcategories of ${\cal T}$;

\noindent (ii) the definable subcategories of ${\rm Mod}\mbox{-}{\cal T}^{\rm c}$ which are contained in (so are definable subcategories of) ${\rm Abs}\mbox{-}{\cal T}^{\rm c} = {\rm Flat}\mbox{-}{\cal T}^{\rm c}$;

\noindent (iii) the hereditary torsion theories on ${\rm Mod}\mbox{-}{\cal T}^{\rm c}$ of finite type;

\noindent (iv) the Serre subcategories of ${\rm mod}\mbox{-}{\cal T}^{\rm c}$.
\end{cor}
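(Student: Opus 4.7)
The plan is to assemble the corollary from bijections already established earlier in the paper, together with one classical result about locally coherent Grothendieck categories. The equivalence (i) $\Leftrightarrow$ (iii) is the content of Theorem \ref{defcattf} and Corollary \ref{deftf}: to a definable subcategory $\mathcal{D}$ of $\mathcal{T}$ one associates the finite-type hereditary torsion theory $\tau_\mathcal{D}$ on $\mathrm{Mod}\mbox{-}\mathcal{T}^{\rm c}$ whose torsionfree class $\mathcal{F}_\mathcal{D}$ is obtained by closing $\langle y\mathcal{D}\rangle$ under submodules, with inverse $\mathcal{F} \mapsto y^{-1}(\mathcal{F})$.

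For (ii) $\Leftrightarrow$ (iii) I would use the fact (cited in the text from \cite[3.3]{PreThes} and \cite[11.1.20]{PreNBK}) that a hereditary torsionfree class in $\mathrm{Mod}\mbox{-}\mathcal{T}^{\rm c}$ is of finite type precisely when it is a definable subcategory. Given $\tau$ of finite type, the intersection $\mathcal{F}_\tau \cap \mathrm{Abs}\mbox{-}\mathcal{T}^{\rm c}$ is a definable subcategory of $\mathrm{Abs}\mbox{-}\mathcal{T}^{\rm c}$ (the intersection of two definable subcategories, using that $\mathrm{Abs}\mbox{-}\mathcal{T}^{\rm c}$ is itself definable because $\mathcal{T}^{\rm c}$ is coherent). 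Conversely, a definable subcategory $\mathcal{E}$ of $\mathrm{Abs}\mbox{-}\mathcal{T}^{\rm c}$ is closed under pure-injective hulls, so contains $\mathcal{E} \cap \mathrm{Inj}\mbox{-}\mathcal{T}^{\rm c}$, a class of injectives closed under direct products and summands; by \cite[11.1.1]{PreNBK} this class is the injective part of a unique hereditary torsionfree class $\mathcal{F}$, and the definability of $\mathcal{E}$ forces $\mathcal{F}$ to be of finite type. The two assignments are mutually inverse because a finite-type torsionfree class is determined by its injective (equivalently: its absolutely pure) objects, and a definable class of absolutely pure modules is determined by the injectives it contains together with closure under pure submodules.

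For (iii) $\Leftrightarrow$ (iv) I invoke the standard Gabriel-type correspondence in the locally coherent Grothendieck category $\mathrm{Mod}\mbox{-}\mathcal{T}^{\rm c}$ (locally coherent because $\mathcal{T}^{\rm c}$ is coherent as a ring-with-many-objects, by \cite[8.11, 8.12]{BeligFreyd}): finite-type hereditary torsion theories correspond bijectively to Serre subcategories of the subcategory $\mathrm{mod}\mbox{-}\mathcal{T}^{\rm c}$ of finitely presented objects, via $(\mathscr{T},\mathcal{F}) \mapsto \mathscr{T} \cap \mathrm{mod}\mbox{-}\mathcal{T}^{\rm c}$ in one direction and $\mathcal{S} \mapsto (\overrightarrow{\mathcal{S}}, \overrightarrow{\mathcal{S}}^{\perp})$ in the other. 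No step here should be the main obstacle, since the substantive work---associating the torsion theory $\tau_\mathcal{D}$ to $\mathcal{D}$ and proving it has finite type---has already been carried out in Theorem \ref{defcattf}; the remaining task is simply checking that the four explicit assignments compose consistently, which is routine given the dictionary between $\mathrm{mod}\mbox{-}\mathcal{T}^{\rm c}$, $\mathrm{Coh}(\mathcal{T})$, and pp-pairs developed in Section \ref{secppsorts}.
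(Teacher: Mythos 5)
Your proposal is correct and matches the paper's own treatment: the paper simply derives this corollary from Section \ref{sectorsTc}, i.e.\ from \ref{defcattf} and \ref{deftf} (which already give (i)$\leftrightarrow$(ii)$\leftrightarrow$(iii)) together with the standard correspondence, in the locally coherent category ${\rm Mod}\mbox{-}{\cal T}^{\rm c}$, between finite-type hereditary torsion theories and Serre subcategories of ${\rm mod}\mbox{-}{\cal T}^{\rm c}$ via ${\mathscr T}\mapsto {\mathscr T}\cap{\rm mod}\mbox{-}{\cal T}^{\rm c}$ and ${\cal S}\mapsto \overrightarrow{{\cal S}}$. Your assembly of these ingredients is exactly the intended argument, just spelled out in more detail than the paper's one-line reference.
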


Given a definable subcategory ${\cal D}$ of ${\cal T}$, let 
$${\cal S}_{\cal D} = \{ G\in {\rm mod}\mbox{-}{\cal T}^{\rm c}: (G,yX)=0 \,\, \forall \, X\in {\cal D}\}$$
be the corresponding Serre subcategory of ${\rm mod}\mbox{-}{\cal T}^{\rm c}$.  As noted in Section \ref{sectorsTc}, this is the Serre subcategory $({\rm Ann}_{{\cal T}^{\rm c}}({\cal D}))^\diamond$ of ${\rm mod}\mbox{-}{\cal T}^{\rm c}$, it $\varinjlim$-generates the finite type hereditary torsion class ${\mathscr T}_{\cal D}$ and $\tau_{\cal D} = ({\mathscr T}_{\cal D}, {\cal F}_{\cal D})$ is the torsion theory corresponding to ${\cal D}$ under (i)$\leftrightarrow$(iii) of \ref{defhered}.

If $\tau$ is any hereditary torsion theory then a submodule $L$ of a module $M$ is $\tau${\bf -dense in} $M$ if $M/L$ is torsion.  Also, the $\tau${\bf -closure}, ${\rm cl}_\tau^M(L)$, of a submodule $L$ of a module $M$ is the maximal submodule of $M$ in which $L$ is $\tau$-dense, also characterised as the smallest submodule $L'$ of $M$ which contains $L$ and is such that $M/L'$ is $\tau$-torsionfree. See \cite{Ste} or \cite[\S 11.1]{PreNBK} for details.

First we see that the annihilation, divisibility and regularity conditions with respect to ${\cal D}$ translate directly to ${\rm Mod}\mbox{-}{\cal T}^{\rm c}$.

\begin{prop}\label{translate} \marginpar{translate} Suppose that ${\cal D}$ is a definable subcategory of ${\cal T}$ and $f:A\to B$ is in ${\cal T}^{\rm c}$.  Then:

\noindent (1) $f\in {\rm Ann}_{{\cal T}^{\rm c}}({\cal D})$ iff $yX.yf=0$ for all $X\in {\cal D}$;

\noindent (2) $f \in {\rm Div}_{{\cal T}^{\rm c}}({\cal D})$ iff, for every $X\in {\cal D}$, $yX$ is $yf$-divisible;

\noindent (3) $f \in {\cal D}\mbox{-}{\rm Reg}$ iff, for every $X\in {\cal D}$, if $b':yB \to yX$ is such that $b'.yf=0$ then $b'=0$.
\end{prop}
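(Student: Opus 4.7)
The plan is to show that each part is an immediate translation of definitions, using the fact that the ${\cal T}^{\rm c}$-module $yX$ has, by construction, its $A$-sort given by $(A,X)$ and the action of a morphism $f:A\to B$ of ${\cal T}^{\rm c}$ on it given by precomposition: $b\in(B,X)$ maps to $bf\in(A,X)$. Once this is recorded, the only non-bookkeeping input is \ref{ybijcpct}, which is needed in part (3) to identify ${\cal T}^{\rm c}$-module maps $yB\to yX$ with morphisms $B\to X$ in ${\cal T}$.

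For (1), the condition $f\in{\rm Ann}_{{\cal T}^{\rm c}}({\cal D})$ unfolds to the condition $bf=0$ for every $X\in{\cal D}$ and every $b:B\to X$. But $b\mapsto bf$ is precisely right-multiplication by $yf$ on the $B$-sort of $yX$, so this is the same as $yX\cdot yf=0$ in the module sense.

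For (2), $f\in{\rm Div}_{{\cal T}^{\rm c}}({\cal D})$ asks that for every $X\in{\cal D}$ and every $a:A\to X$ there be some $b:B\to X$ with $a=bf$. Translating into $yX$, this is the statement that every element of sort $A$ lies in the image of right-multiplication by $yf$ from the $B$-sort to the $A$-sort, i.e.\ that $yX$ is $yf$-divisible.

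For (3), fix $X\in{\cal D}$. By \ref{ybijcpct} (equivalently, by Yoneda, since $yB=(-,B)\upharpoonright{\cal T}^{\rm c}$ is representable on the compact object $B$), every module map $b':yB\to yX$ has the form $b'=yb$ for a unique $b:B\to X$, and $b'=0$ iff $b=0$. Functoriality of $y$ gives $b'\cdot yf=(yb)\cdot(yf)=y(bf)$, and applying \ref{ybijcpct} at the compact object $A$ shows $y(bf)=0$ iff $bf=0$. Hence ``$b'\cdot yf=0\Rightarrow b'=0$'' is equivalent to ``$bf=0\Rightarrow b=0$'' for $b:B\to X$, which is exactly ${\rm ann}_X(f)=0$ in the sense defined before the proposition. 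Since each of the three parts is a routine unwinding of definitions combined with the faithfulness of $y$ on morphisms out of compacts, there is no real obstacle to surmount.
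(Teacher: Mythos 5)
Your proof is correct, but it takes a more direct route than the paper's. The paper's own argument begins by reducing the ($\Leftarrow$) direction of each part to the case of pure-injective $X\in {\cal D}$ (using pure-injective hulls and the fact that each condition is expressed by closure of a pp-pair), and then lifts module maps $yA\to yX$ or $yB\to yX$ to morphisms in ${\cal T}$ {\it via} \ref{ybijpi}, which requires $X$ pure-injective. You instead observe that both sorts involved are represented by the compact objects $A$ and $B$, so \ref{ybijcpct} already identifies $(A,X)\to (yA,yX)$ and $(B,X)\to(yB,yX)$ bijectively for arbitrary $X\in{\cal T}$; with that, each of (1)--(3) becomes an objectwise unwinding of definitions, with no need for hulls, purity, or the pp-pair transfer step. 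This is a genuine simplification for this particular statement (the paper's hull-plus-\ref{ybijpi} pattern is the one that is needed later, e.g.\ in \ref{torsequiv1}, where the modules involved, such as ${\rm im}(yf)$ and ${\rm ker}(yf)$, are no longer representable by compacts, so it is worth knowing both arguments). One small caveat: in (1) and (2) you read the conditions ``$yX.yf=0$'' and ``$yX$ is $yf$-divisible'' sort-wise, i.e.\ as statements about the multiplication map $(B,X)\to (A,X)$, whereas the statement of (3) (and the paper's usage) quantifies over module homomorphisms $b':yB\to yX$; these two readings agree precisely because of the Yoneda identification \ref{ybijcpct}, which you record in your preamble, so your remark that \ref{ybijcpct} is needed only in (3) slightly understates its role --- it is implicitly what makes your translations in (1) and (2) match the morphism-level formulation as well. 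This is a presentational point, not a gap.
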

\begin{proof}  First we note that, in all three cases, it is enough for the direction ($\Leftarrow$) to prove that $f$ has the property (annihilation, divisibility, regularity) for $X\in {\cal D}$ pure-injective.  That is because, if $X\in {\cal D}$, then $f$ satisfies, say, $Xf=0$ if (indeed iff) $H(X)f=0$, where $H(X)$ is the pure-injective hull of $X$.  That is because $X$ is pure in (indeed is an elementary substructure of) its pure-injective hull so, if a pp-pair is closed on $H(X)$, then it will be closed on $X$ (and {\it vice versa}). 

(1) The defining condition for $f$ to be in ${\rm Ann}_{{\cal T}^{\rm c}}({\cal D})$, namely that $Xf=0$ for all $X\in {\cal D}$, certainly implies $yX.yf=0$ for all $X\in {\cal D}$.    
If, conversely, $yX.yf=0$ for all $X \in {\cal D}$, then take $X\in {\cal D}$ and suppose we have $b:B\to X$.  Then $y(bf) =yb.yf=0$ so, by \ref{ybijcpct}, $bf=0$.  Therefore $Xf=0$, as required. 

(2) If $f \in {\rm Div}_{{\cal T}^{\rm c}}({\cal D})$ and we have $a':yA \to yX$, then we compose with the inclusion of $yX$ into its injective hull $E(yX)=yH(X)$ (by \ref{pinjtoinj}) to get a morphism $a'':yA \to yH(X)$ which, by \ref{ybijpi}, has the form $ya$ for some $a:A\to H(X)$.  By assumption, and since $H(X) \in {\cal D}$, $a$ factors through $f$, say $a = bf$ with $b:B\to H(X)$; therefore $a''=yb.yf$.
Thus $\exists x_{yB} (a'' = x_{yB}.yf)$ is true in $yH(X)$.  Since $yX$ is a pure submodule of $yH(X)$ we deduce that $\exists x_{yB} (a' = x_{yB}.yf)$ is true in $yX$, that is, $yX$ is $yf$-divisible.  This gives ($\Rightarrow$).

For the converse, suppose that, for every $X\in {\cal D}$, $yX$ is $yf$-divisible and take $X\in {\cal D}$ pure-injective and $a:A \to X$.  Then we have $ya: yA \to yX$ so, by hypothesis, there is $b':yB \to yX$ with $b'.yf = ya$.  Since $X$ is pure-injective, by \ref{ybijpi} there is $b:B \to X$ such that $b' = yb$, giving $y(bf) =ya$.  By \ref{ybijcpct} it follows that $bf=a$, showing that every pure-injective object in ${\cal D}$ is $f$-injective.  By the comments at the beginning of the proof and the fact that the divisibility condition is expressed by closure of a pp-pair, it follows that every object of ${\cal D}$ is $f$-injective, as required.

(3) The direction ($\Leftarrow$) follows immediately from \ref{ybijcpct}.  For the converse, if $f\in {\cal D}\mbox{-}{\rm Reg}$ then take $X\in {\cal D}$ to be pure-injective, and suppose $b':yB \to yX$ is such that $b'.yf=0$.  By \ref{ybijpi}, $b'=yb$ for some $b:B \to X$.  That gives $y(bf)=0$ hence, by \ref{ybijcpct}, $bf=0$, hence, by assumption, $b=0$, so that $b'=0$.  Thus $f$ is regular on every pure-injective in ${\cal D}$ and so, since that is expressed by closure of a pp-pair, $f$ is regular on every $X\in {\cal D}$, as required.
\end{proof}

Set ${\cal S}_{\cal D}^\circ = \{ G^\circ: G\in {\cal S}_{\cal D}\}$ to be the image of ${\cal S}_{\cal D} \subseteq {\rm mod}\mbox{-}{\cal T}^{\rm c}$ in ${\rm Coh}({\cal T})$ under the anti-equivalence \ref{cohfpdual}.  Note that, by definition of $G\mapsto G^\circ$, ${\cal S}_{\cal D}^\circ$ consists exactly of the coherent functors $F$ such that $FX=0$ for every $X\in {\cal D}$, that is $({\cal S}_{\cal D})^\circ = {\rm Ann}_{{\cal T}^{\rm c}}({\cal D})$.

\begin{prop} \label{torsequiv1} \marginpar{torsequiv1} Suppose that ${\cal D}$ is a definable subcategory of ${\cal T}$, let ${\cal S}_{\cal D}$ be the corresponding Serre subcategory of ${\rm mod}\mbox{-}{\cal T}^{\rm c}$.  Denote by $\tau_{\cal D}$ the corresponding hereditary (finite-type) torsion theory in ${\rm Mod}\mbox{-}{\cal T}^{\rm c}$.  Let $f:A\to B$ be a morphism in ${\cal T}^{\rm c}$.  Then the following hold.

\noindent (1)  $f\in {\rm Ann}_{{\cal T}^{\rm c}}({\cal D})$ iff ${\rm im}(yf) \in {\cal S}_{\cal D}$.

\noindent (2)  $f \in {\rm Div}_{{\cal T}^{\rm c}}({\cal D})$ iff ${\rm ker}(yf) \in {\cal S}_{\cal D}$ iff $F_f\in {\cal S}_{\cal D}^\circ$.

\noindent (3)  $f\in {\cal D}\mbox{-}{\rm Reg}$ iff $G_f ={\rm coker}(yf) \in {\cal S}_{\cal D}$, that is, iff ${\rm im}(yf)$ is $\tau_{\cal D}$-dense in $yB$.
\end{prop}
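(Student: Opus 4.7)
My plan is to prove all three equivalences by decoding each condition into a vanishing statement for the associated functor $F_f$, $G_f$, or ${\rm im}(yf)$ evaluated on $yX$ for $X\in{\cal D}$. The ingredients I would use are: the action formula $G(X)=(G,yX)$ for $G\in{\rm mod}\mbox{-}{\cal T}^{\rm c}$ together with the four-term exact sequences (\ref{eq1}) and (\ref{eq3}); the duality \ref{cohfpdual} that identifies ${\cal S}_{\cal D}$ with the Serre subcategory ${\cal S}_{\cal D}^\circ={\rm Ann}_{{\rm Coh}({\cal T})}({\cal D})$; Proposition \ref{translate}; and crucially the fact \ref{abseqflat} that every $yX$ is absolutely pure in ${\rm Mod}\mbox{-}{\cal T}^{\rm c}$.

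Part (2) is the cleanest. The presentation of the coherent functor $F_f$ gives $F_f(X)=(A,X)/{\rm im}(f,X)$, so $F_f\in{\cal S}_{\cal D}^\circ$ is exactly the statement that $(f,X)$ is surjective for every $X\in{\cal D}$, i.e.\ that $f\in{\rm Div}({\cal D})$. The equivalence with ${\rm ker}(yf)\in{\cal S}_{\cal D}$ then reads off from sequence (\ref{eq3}), which identifies $F_f^\diamond={\rm ker}(yf)$, combined with the fact that the (anti-)equivalence $(-)^\diamond$ preserves Serre subcategories and sends ${\cal S}_{\cal D}^\circ$ back to ${\cal S}_{\cal D}$.

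For part (3) I would use sequence (\ref{eq1}) to identify $G_f(X)=\{b:B\to X : bf=0\}={\rm ann}_X(f)$, and combine this with the action formula $G_f(X)=(G_f,yX)$ to translate $f\in{\cal D}\mbox{-}{\rm Reg}$ directly into $G_f\in{\cal S}_{\cal D}$. For the final clause, $G_f=yB/{\rm im}(yf)$ by definition of $G_f$, so ${\rm im}(yf)$ is $\tau_{\cal D}$-dense in $yB$ iff $G_f\in{\mathscr T}_{\cal D}$; since $G_f$ is finitely presented and $\tau_{\cal D}$ is of finite type (so ${\mathscr T}_{\cal D}$ is the directed-colimit closure of ${\cal S}_{\cal D}$), the intersection ${\mathscr T}_{\cal D}\cap{\rm mod}\mbox{-}{\cal T}^{\rm c}$ equals ${\cal S}_{\cal D}$, closing the loop.

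Part (1) is where a bit of care is required, and will be the main (small) obstacle. By Proposition \ref{translate}(1), $f\in{\rm Ann}_{{\cal T}^{\rm c}}({\cal D})$ is equivalent to $(yf,yX)=0$ for every $X\in{\cal D}$. I would factor this map as $(yB,yX)\twoheadrightarrow({\rm im}(yf),yX)\hookrightarrow(yA,yX)$; the injection on the right is automatic from $yA\twoheadrightarrow{\rm im}(yf)$, and the surjection on the left comes from applying ${\rm Hom}(-,yX)$ to the short exact sequence $0\to{\rm im}(yf)\to yB\to G_f\to 0$ and noting ${\rm Ext}^1(G_f,yX)=0$ because $G_f$ is finitely presented (using coherence of ${\cal T}^{\rm c}$) and $yX$ is absolutely pure (\ref{abseqflat}). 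Thus $(yf,yX)=0$ iff $({\rm im}(yf),yX)=0$, and the latter condition holding for all $X\in{\cal D}$ is exactly ${\rm im}(yf)\in{\cal S}_{\cal D}$. The only subtlety anywhere in the proof is this promotion of the restriction map to a surjection via fp-injectivity; everything else is bookkeeping in the correspondences already established earlier in the paper.
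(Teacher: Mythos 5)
Your proof is correct, but for parts (2) and (3) it takes a noticeably different route from the paper's. The paper argues torsion-theoretically throughout: it first passes through Proposition \ref{translate} in all three parts and then, for the forward directions, runs contrapositive arguments of the form ``if the relevant finitely presented module were not torsion, it would admit a nonzero map to a torsionfree object, hence (passing to the injective hull) to some $yX$ with $X\in{\cal D}$ pure-injective, contradicting \ref{translate}''. You instead exploit the identifications already set up in Section \ref{secppsorts}: for (3) the action formula $G_f(X)=(G_f,yX)={\rm ann}_X(f)$ turns the first equivalence into an unwinding of the definitions of ${\cal D}\mbox{-}{\rm Reg}$ and ${\cal S}_{\cal D}$, with the density clause handled by ${\mathscr T}_{\cal D}\cap{\rm mod}\mbox{-}{\cal T}^{\rm c}={\cal S}_{\cal D}$ (a fact the paper itself uses later, in the discussion of the Telescope Property); and for (2) you get ${\rm Div}({\cal D})\Leftrightarrow F_f\in{\cal S}_{\cal D}^\circ$ directly from $F_f(X)={\rm coker}(f,X)$ and then transport across the duality \ref{cohfpdual}, using $F_f^\diamond\cong{\rm ker}(yf)$ from sequence (\ref{eq3}), rather than repeating an injective-hull argument for ${\rm ker}(yf)$. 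This buys shorter, more ``definitional'' proofs of (2) and (3), at the cost of leaning on the compatibility of the dualities $(-)^\circ$, $(-)^\diamond$ with the Serre subcategories, which the paper has indeed established. For (1) your argument is essentially the paper's: your surjectivity of $(yB,yX)\to({\rm im}(yf),yX)$ via ${\rm Ext}^1(G_f,yX)=0$ is exactly the paper's extension of maps from ${\rm im}(yf)$ to $yB$ by fp-injectivity of $yX$. One small correction: coherence of ${\cal T}^{\rm c}$ is not needed to see that $G_f$ is finitely presented (it is so by construction, being the cokernel of a map of representables); where local coherence is genuinely used is in knowing that the finitely generated subobjects ${\rm im}(yf)$ and ${\rm ker}(yf)$ of finitely presented modules are themselves finitely presented, so that membership in ${\cal S}_{\cal D}\subseteq{\rm mod}\mbox{-}{\cal T}^{\rm c}$ even makes sense.
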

\begin{proof}  We use that $X\in {\cal D}$ iff $yX$ is ($\tau_{\cal D}$-)torsionfree, that is iff $({\cal S}_{\cal D}, yX)=0$.

(1) If the image ${\rm im}(yf)$ is in ${\cal S}_{\cal D}$ then, for every $X\in {\cal D}$, we have $({\rm im}(yf),yX)=0$ because $yX$ is torsionfree.  Therefore $yX.yf=0$, for all $X\in {\cal D}$ giving, by \ref{translate}, the implication ($\Leftarrow$).  For the other direction, first note that any morphism from ${\rm im}(yf)$ to $yX$ extends to a morphism from $yB$ to $yX$ by absolute purity = fp-injectivity of $yX$.  If ${\rm im}(yf)$ were not torsion, there would be a nonzero morphism from ${\rm im}(yf)$ to some torsionfree object which, for instance replacing the object by its injective hull, we may assume to be of the form $yX$ with $X\in {\cal D}$.  This would give a morphism $a: yB \to yX$ with $af\neq 0$, contradicting \ref{translate}.

(2) ($\Rightarrow$) By \ref{translate} we have that $yX$ is $yf$-divisible for every $X\in {\cal D}$.  If ${\rm ker}(yf)$ were not torsion (that is, since, by local coherence of ${\rm Mod}\mbox{-}{\cal T}^{\rm c}$, it is finitely presented, not in ${\cal S}_{\cal D}$) then it would have a nonzero torsionfree quotient $M$.  The (torsionfree) injective hull of $M$ would have the form $yX$ for some pure-injective $X\in {\cal D}$, yielding a morphism $yA \to yX$ which is not zero on the kernel of $yf$, hence which cannot factor through $yf$ - a contradiction.

For the converse, assume that ${\rm ker}(yf) \in {\cal S}_{\cal D}$.  Then any morphism $a':yA \to yX$ with $X\in {\cal D}$ must be zero on ${\rm ker}(yf)$, since $yX$ is torsionfree.  Therefore $a'$ factors through ${\rm im}(yf)$.  But $yX$ is absolutely pure so, since ${\rm im}(yf)$ is a finitely generated subobject of $yB$, that factorisation extends to a morphism $b':yB \to yX$.  Thus we have a factorisation of $a'$ through $yf$, and so $yX$ is $yf$-divisible.  By \ref{translate} that is enough.

For the part involving ${\cal S}_{\cal D}^\circ$, we have $f\in {\rm Div}_{{\cal T}^{\rm c}}({\cal D})$ iff $(f,X):(B,X) \to (A,X)$ is epi for every $X\in {\cal D}$ iff ${\rm coker}(f,X) =0$ for every $X\in {\cal D}$, that is, iff $F_fX=0$ for every $X\in {\cal D}$ and that, as noted above, is the case iff $F_f\in {\cal S}_{\cal D}^\circ$.

(3)  If ${\rm im}(yf)$ is not $\tau_{\cal D}$-dense in $yB$, there will be a nonzero morphism from $yB$ and with kernel containing ${\rm im}(yf)$ to a torsionfree object, hence to an object of the form $yX$ with $X\in {\cal D}$.  Therefore $yf$ is not $y{\cal D}$-regular and so, by \ref{translate}, $f$ is not ${\cal D}$-regular.

For the converse, suppose that ${\rm im}(yf)$ is $\tau_{\cal D}$-dense in $yB$.  Then, if $b'$ is a morphism from $yB$ to a torsionfree object and the kernel of $b'$ contains ${\rm im}(yf)$ then, since the image of $b'$ is torsion, we have $b'=0$.  Therefore every object in $y{\cal D}$ is $yf$-torsionfree which, by \ref{translate}, is as required.
\end{proof}

From this, \ref{defequivs} and the equivalences (4), we have the following, where we apply the notations ${\rm Ann}$, ${\rm Div}$ and ${\rm Reg}$ and their definitions to ${\rm Mod}\mbox{-}{\cal T}^{\rm c}$ with, of course, ${\rm mod}\mbox{-}{\cal T}^{\rm c}$ replacing ${\cal T}^{\rm c}$ as the subcategory of ``small" objects.  This is mostly \cite[5.1.4]{WagThesis}.

\begin{theorem}\label{morsinmodcat} \marginpar{morsinmodcat} Suppose that ${\cal D}$ is a definable subcategory of ${\cal T}$, let $\tau_{\cal D}$ be the corresponding finite-type hereditary torsion theory in ${\rm Mod}\mbox{-}{\cal T}^{\rm c}$ and let ${\cal S}_{\cal D}$ denote the Serre subcategory of $\tau_{\cal D}$-torsion finitely presented ${\cal T}^{\rm c}$-modules.  

Suppose that 
$$A\xrightarrow{f} B\xrightarrow{g} C \xrightarrow{h} \Sigma A$$ 
is a distinguished triangle.  Then: 

\noindent (i) $f\in {\rm Ann}_{{\cal T}^{\rm c}}({\cal D})$ iff $yf \in {\rm Ann}_{{\rm mod}\mbox{-}{\cal T}^{\rm c}}(y{\cal D})$ iff ${\rm im}(yf) \in {\cal S}_{\cal D}$;

\noindent (ii) $g\in {\rm Div}_{{\cal T}^{\rm c}}({\cal D})$ iff $yg \in {\rm Div}_{{\rm mod}\mbox{-}{\cal T}^{\rm c}}(y{\cal D})$ iff ${\rm ker}(yg) \in {\cal S}_{\cal D}$, that is, iff $F_g\in {\cal S}_{\cal D}^\circ$;

\noindent (iii) $\Sigma^{-1}h\in {\cal D}\mbox{-}{\rm Reg}$ iff the image of $y(\Sigma^{-1}h)$ is $\tau_{\cal D}$-dense in $y(\Sigma^{-1}C)$, that is, iff $G_{\Sigma^{-1}h}\in {\cal S}_{\cal D}$.

Furthermore, the conditions (i), (ii) and (iii) are equivalent.
\end{theorem}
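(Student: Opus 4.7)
The strategy is to leverage the results already established, namely Proposition \ref{torsequiv1} and Proposition \ref{translate}, together with the equivalences (\ref{three}) that were deduced from the long exact sequence of a distinguished triangle. Essentially, the theorem simply packages these earlier observations once we feed in the three successive morphisms $f$, $g$, $\Sigma^{-1}h$ from a single distinguished triangle.

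First I would establish each row (i), (ii), (iii) separately, by direct application of Proposition \ref{torsequiv1} together with Proposition \ref{translate}. Specifically, (i) is Proposition \ref{torsequiv1}(1) applied to the morphism $f:A\to B$, combined with the translation of $\mathrm{Ann}({\cal D})$ to $\mathrm{Ann}(y{\cal D})$ from Proposition \ref{translate}(1). Similarly (ii) is Proposition \ref{torsequiv1}(2) applied to $g:B\to C$, using Proposition \ref{translate}(2); the equivalence with the condition $F_g\in {\cal S}_{\cal D}^\circ$ is immediate from the defining exact sequence $(C,-)\xrightarrow{(g,-)}(B,-)\to F_g\to 0$, since $F_g X=0$ precisely when $(g,X)$ is surjective, i.e.~when $g|X$. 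Finally, (iii) is Proposition \ref{torsequiv1}(3) applied to $\Sigma^{-1}h:\Sigma^{-1}C\to A$, with Proposition \ref{translate}(3) providing the translation to the module category.

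The remaining claim -- that (i), (ii), and (iii) are mutually equivalent -- is exactly the content of (\ref{three}). Indeed, for any single $X\in{\cal T}$, taking the long exact sequence produced by applying $(-,X)$ to the extended triangle $\Sigma^{-1}C \xrightarrow{\Sigma^{-1}h} A \xrightarrow{f} B \xrightarrow{g} C$ yields
\[
Xf=0 \quad \Longleftrightarrow \quad g\mid X \quad \Longleftrightarrow \quad \mathrm{ann}_X(\Sigma^{-1}h)=0.
\]
Quantifying each of these conditions universally over $X\in{\cal D}$ gives respectively the defining conditions for $f\in\mathrm{Ann}({\cal D})$, $g\in\mathrm{Div}({\cal D})$, and $\Sigma^{-1}h\in{\cal D}\text{-}\mathrm{Reg}$, so all three are equivalent.

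There is no real obstacle here; the work was already done in Sections \ref{secdefsub} and \ref{secdefabs}. The only care needed is bookkeeping: ensuring the identification $F_g \simeq G_{\Sigma^{-1}h}^\circ$ is consistent with the presentation given in Theorem \ref{ppprfmla} (so that part (ii) genuinely matches the Serre-subcategory condition stated in terms of ${\cal S}_{\cal D}^\circ$ and not ${\cal S}_{\cal D}$), and that in (iii) we use $\Sigma^{-1}h$ rather than $h$ itself, consistently with how the shift arises when rewriting the triangle to put the regularity morphism in the initial position.
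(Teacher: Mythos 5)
Your proposal is correct and takes essentially the same route as the paper, which obtains the theorem directly by applying \ref{translate} and \ref{torsequiv1} to the morphisms $f$, $g$, $\Sigma^{-1}h$ and then invoking the equivalences (\ref{three}) for the mutual equivalence of (i), (ii) and (iii). No gaps.
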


\subsection{Model theory in definable subcategories}\label{secmoddef} \marginpar{secmoddef}

If ${\cal D}$ is a definable category, meaning a category equivalent to a definable subcategory of a module category (over a ring possibly with many objects), then the model theory of ${\cal D}$ is intrinsic to ${\cal D}$, in the following senses.  

First, the notion of pure-exact sequence is intrinsic to ${\cal D}$ because an exact sequence is pure-exact iff some ultraproduct of it is split-exact, see \cite[4.2.18]{PreNBK}.  Ultraproducts are obtained as directed colimits of products, so definable categories have ultraproducts.  Definable subcategories of compactly generated triangulated categories do not in general have directed colimits, so they are not (quite) ``definable categories" in this sense, though they are quite close, see \ref{LVchardef}.  Nevertheless, as we have seen, the restricted Yoneda functor associates, to a definable subcategory ${\cal D}$ of a compactly generated triangulated category, a definable subcategory of a module category which has the same model theory.

\vspace{4pt}

\noindent {\bf Question:}  Is the model theory of a definable subcategory ${\cal D}$ of a compactly generated triangulated category intrinsic, meaning definable just from the structure of ${\cal D}$ as a category?

\vspace{4pt}

Second, the category ${\mathbb L}({\cal D})^{\rm eq+}$ of pp-imaginaries for a definable subcategory ${\cal D}$ of a module category ${\rm Mod}\mbox{-}R$ is equivalent to the Serre localisation ${\mathbb L}^{\rm eq+}_R /{\cal S}_{\cal D}$, where ${\cal S}_{\cal D}$ is the Serre subcategory of coherent functors which annihilate ${\cal D}$.  We have the same description for a definable subcategory of a compactly generated triangulated category, {\it via} the restricted Yoneda functor.  But neither of those descriptions is intrinsic because both refer to a containing (module, or triangulated) category.  In the module case, there is an intrinsic description of ${\mathbb L}({\cal D})^{\rm eq+}$ as the category $({\cal D}, {\bf Ab})^{\prod \rightarrow}$ of functors from ${\cal D}$ to ${\bf Ab}$ which commute with direct products and directed colimits.  For ${\cal T}$ itself, there is a similar description in \cite[5.1]{KraCoh} but we may ask whether this extends to definable subcategories.

\vspace{4pt}

In any case, if ${\cal D}$ is a definable subcategory of a compactly generated triangulated category ${\cal T}$, then the category, ${\mathbb L}({\cal D})^{\rm eq+}$, of pp-imaginaries for ${\cal D}$ is the quotient of ${\mathbb L}({\cal T})^{\rm eq+}$ by its Serre subcategory consisting of those pp-pairs which are closed on ${\cal D}$.  In terms of the other forms of the category of pp-imaginaries given by \ref{ppcat}, ${\mathbb L}({\cal D})^{\rm eq+}$ also has the following descriptions.  

\begin{prop}\label{LeqD} \marginpar{LeqD} If ${\cal D}$ is a definable subcategory of a compactly generated triangulated category ${\cal T}$, then the following categories are equivalent:

\noindent (i) the category, ${\mathbb L}({\cal D})^{\rm eq+}$, of pp-imaginaries for ${\cal D}$;

\noindent (ii) ${\rm Coh}({\cal T})/{\rm Ann}_{{\rm Coh}({\cal T})}({\cal D})$;

\noindent (iii) ${\rm mod}\mbox{-}{\cal T}^{\rm c}/{\cal S}_{\cal D}$.
\end{prop}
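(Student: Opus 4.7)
The plan is to derive both equivalences by transporting the Serre subcategory $\Phi_{\cal D}$ of pp-pairs closed on ${\cal D}$ through the realisations of ${\mathbb L}({\cal T})^{\rm eq+}$ given by \ref{ppcat} and then passing to Serre quotients.

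For (i)$\,\Leftrightarrow\,$(ii): by the definition immediately preceding the proposition, ${\mathbb L}({\cal D})^{\rm eq+} = {\mathbb L}({\cal T})^{\rm eq+}/\Phi_{\cal D}$, where $\Phi_{\cal D}$ consists of the pp-pairs $\phi/\psi$ with $\phi(X) = \psi(X)$ for every $X \in {\cal D}$. The equivalence ${\mathbb L}({\cal T})^{\rm eq+} \simeq {\rm Coh}({\cal T})$ of \ref{ppcat} sends a pp-pair $\phi/\psi$ to the coherent functor $X \mapsto \phi(X)/\psi(X)$; under this identification, $\Phi_{\cal D}$ is carried precisely to the Serre subcategory $\{F \in {\rm Coh}({\cal T}) : FX = 0 \,\,\forall X\in {\cal D}\} = {\rm Ann}_{{\rm Coh}({\cal T})}({\cal D})$. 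Since Serre localisation is functorial with respect to equivalences of abelian categories, (i)$\,\simeq\,$(ii) follows.

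For (ii)$\,\Leftrightarrow\,$(iii), I would invoke the duality ${\rm Coh}({\cal T}) \simeq ({\rm mod}\mbox{-}{\cal T}^{\rm c})^{\rm op}$ of \ref{cohfpdual}, implemented by $F \leftrightarrow F^\diamond$ with the key identity $FX = (F^\diamond, yX)$ recorded in Section~\ref{secppsorts}. This identity makes it immediate that $F \in {\rm Ann}_{{\rm Coh}({\cal T})}({\cal D})$ if and only if $(F^\diamond, yX) = 0$ for all $X\in {\cal D}$, i.e.\ iff $F^\diamond \in {\cal S}_{\cal D}$. Hence the duality identifies ${\rm Ann}_{{\rm Coh}({\cal T})}({\cal D})$ with ${\cal S}_{\cal D}$, and combining this with the standard fact $({\cal A}/{\cal B})^{\rm op} \simeq {\cal A}^{\rm op}/{\cal B}^{\rm op}$ for Serre quotients gives (ii)$\,\simeq\,$(iii).

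No step should present a genuine obstacle: both the equivalence in \ref{ppcat} and the duality in \ref{cohfpdual} are already explicit, and everything reduces to the matching of Serre subcategories, which follows directly from $FX = (F^\diamond, yX)$. The only real care required is the bookkeeping around the opposite category coming from \ref{cohfpdual}, and this is precisely the interpretation built into the definition of ${\cal S}_{\cal D}$ via the vanishing condition $(G, yX) = 0$ for $X \in {\cal D}$.
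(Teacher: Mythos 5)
Your route is the paper's own: Proposition \ref{LeqD} is stated there as an immediate consequence of the description of ${\mathbb L}({\cal D})^{\rm eq+}$ as the Serre quotient of ${\mathbb L}({\cal T})^{\rm eq+}$ by the pp-pairs closed on ${\cal D}$, transported through the identifications of \ref{ppcat} and \ref{cohfpdual}, and your matching of Serre subcategories via $FX=(F^\diamond,yX)$ is exactly the intended mechanism. The step (i)$\,\simeq\,$(ii) is fine as you give it.

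The one point to repair is your final claim that the identity $({\cal A}/{\cal B})^{\rm op}\simeq {\cal A}^{\rm op}/{\cal B}^{\rm op}$ ``gives (ii)$\,\simeq\,$(iii)''. What the duality $F\leftrightarrow F^\diamond$ of \ref{cohfpdual} actually yields, after checking (correctly, as you do) that it identifies ${\rm Ann}_{{\rm Coh}({\cal T})}({\cal D})$ with ${\cal S}_{\cal D}$, is an anti-equivalence
$\bigl({\rm Coh}({\cal T})/{\rm Ann}_{{\rm Coh}({\cal T})}({\cal D})\bigr)^{\rm op}\simeq {\rm mod}\mbox{-}{\cal T}^{\rm c}/{\cal S}_{\cal D}$,
that is, ${\mathbb L}({\cal D})^{\rm eq+}\simeq ({\rm mod}\mbox{-}{\cal T}^{\rm c}/{\cal S}_{\cal D})^{\rm op}$; the op does not cancel, and the appeal to the definition of ${\cal S}_{\cal D}$ via the vanishing condition $(G,yX)=0$ does not change the variance. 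Indeed a covariant equivalence between (ii) and (iii) would force $({\rm mod}\mbox{-}{\cal T}^{\rm c}/{\cal S}_{\cal D})$ to be self-dual, which there is no reason to expect unless $({\cal T}^{\rm c})^{\rm op}\simeq{\cal T}^{\rm c}$ (as in the rigidly-compactly generated tensor-triangulated case of Section \ref{secintdual}). This looseness is already present in the wording of the proposition itself: compare \ref{rtlmods} and \ref{funcatabsflat}, which give ${\mathbb L}({\cal T})^{\rm eq+}\simeq({\rm mod}\mbox{-}{\cal T}^{\rm c})^{\rm op}$, and the paragraph immediately after \ref{LeqD}, where the action of ${\rm mod}\mbox{-}{\cal T}^{\rm c}/{\cal S}_{\cal D}$ is explicitly described as the localisation of the \emph{contravariant} action via $({\mathbb L}({\cal T})^{\rm eq+})^{\rm op}\simeq{\rm mod}\mbox{-}{\cal T}^{\rm c}$, $G(X)=(G,yX)$ (cf.\ also \ref{elemdualimag}). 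So state your (ii)--(iii) comparison as a duality rather than an equivalence; with that adjustment your argument is complete and agrees with how the paper uses the result.
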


Note that the contravariant action of ${\mathbb L}({\cal T})^{\rm eq+}$ {\it via} $({\mathbb L}({\cal T})^{\rm eq+})^{\rm op} \simeq {\rm mod}\mbox{-}{\cal T}^{\rm c}$ acting by $G(X) = (G,yX)$ for $G\in {\rm mod}\mbox{-}{\cal T}^{\rm c}$ and $X\in {\cal T}$ localises as the action of ${\rm mod}\mbox{-}{\cal T}^{\rm c}/{\cal S}_{\cal D}$ on $ \langle Q_{\cal D}(y{\cal D}) \rangle = \langle Q_{\cal D}(y{\cal T}) \rangle$ where $Q_{\cal D}: {\rm Mod}\mbox{-}{\cal T}^{\rm c} \to {\rm Mod}\mbox{-}{\cal T}^{\rm c}/\overrightarrow{{\cal S}_{\cal D}}$ is the corresponding Gabriel localisation and the action is given by the same formula.  This places both the category of models and the category of imaginaries (the latter contravariantly) into the same Grothendieck abelian category, just as in the module case where we can use the tensor embedding, see \cite[\S 12.1.1]{PreNBK}.

\subsection{Hom-orthogonal pairs on ${\cal T}$ and torsion theories on ${\rm Mod}\mbox{-}{\cal T}^{\rm c}$}\label{sectors} \marginpar{sectors}

A {\bf hom-orthogonal pair}\footnote{In the context of triangulated categories, the term ``torsion pair" is used for a stronger concept, see \cite[\S 3]{StoPop}.} on ${\cal T}$ is a pair $({\cal U}, {\cal V})$ of subcategories with ${\cal U} = {}^\perp{\cal V}$ the {\bf torsion} class and ${\cal V} = {\cal U}^\perp$ the {\bf torsionfree} class.  Such a pair $({\cal U}, {\cal V})$ is said to be {\bf compactly generated} if there is ${\cal A} \subseteq {\cal T}^{\rm c}$ such that ${\cal V} = {\cal A}^\perp = \{ Y\in {\cal T}: (A, Y)=0 \, \, \forall A\in {\cal A} \}$, in which case ${\cal U} = {}^\perp({\cal A}^\perp) = \{Z\in {\cal T}: (Z,{\cal A}^\perp)=0\}$; we say that ${\cal A}$ {\bf generates} the hom-orthogonal pair.  Note that ${\cal V}$ is in this case definable, being given by the conditions that each sort $(A,-)$ for $A\in {\cal A}$ is $0$, that is, all the pp-pairs $x_A=x_A/x_A=0$ for $A\in {\cal A}$ are closed on ${\cal V}$.

\begin{prop}  Suppose that $({\cal U}, {\cal V})$ is a hom-orthogonal pair in ${\cal T}$, compactly generated by ${\cal A} \subseteq {\cal T}^{\rm c}$.  Let $\tau_{\cal V} = ({\mathscr T}_{\cal V}, {\cal F}_{\cal V})$ denote the finite-type hereditary torsion theory on ${\rm Mod}\mbox{-}{\cal T}^{\rm c}$ corresponding (\ref{deftf}) to the definable subcategory ${\cal V}$.  Let ${\rm Ser}(y{\cal A})$ denote the Serre subcategory of ${\rm mod}\mbox{-}{\cal T}^{\rm c}$ generated by $y{\cal A}$.  

Then ${\mathscr T}_{\cal V} = \overrightarrow{{\rm Ser}(y{\cal A})}$ and ${\cal F}_{\cal V} = (y{\cal A})^\perp =  \{M\in {\rm Mod}\mbox{-}{\cal T}^{\rm c}: (yA,M)=0 \,\, \forall A\in {\cal A}\}$.
\end{prop}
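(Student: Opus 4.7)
My plan is to recast ${\cal V}$ as a definable subcategory of ${\cal T}$ cut out by explicit pp-pairs, and then transport this description through the dictionary of Corollary~\ref{ppcat} to identify the corresponding Serre subcategory and torsion theory on ${\rm Mod}\mbox{-}{\cal T}^{\rm c}$. The key starting observation is that, since ${\cal V} = {\cal A}^\perp$, the subcategory ${\cal V}$ is the zero-locus in ${\cal T}$ of the family of pp-pairs $\{(x_A = x_A)/(x_A = 0) : A \in {\cal A}\}$: the solution set of such a pair on $X \in {\cal T}$ is the sort $(A,X)$, which vanishes precisely when $X \in {\cal V}$.

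Under the equivalences $({\rm mod}\mbox{-}{\cal T}^{\rm c})^{\rm op} \simeq {\rm Coh}({\cal T}) \simeq {\mathbb L}({\cal T})^{\rm eq+}$ of Theorem~\ref{cohfpdual} and Corollary~\ref{ppcat}, each pp-pair $(x_A = x_A)/(x_A = 0)$ is identified, as a coherent functor on ${\cal T}$, with the representable functor $(A,-)$, and then under the duality with the representable right ${\cal T}^{\rm c}$-module $yA = (-,A) \upharpoonright {\cal T}^{\rm c}$. By the correspondence of Corollary~\ref{defhered}, the Serre subcategory ${\cal S}_{\cal V}$ of ${\rm mod}\mbox{-}{\cal T}^{\rm c}$ attached to ${\cal V}$ is then the Serre closure of $\{yA : A \in {\cal A}\}$, i.e.\ ${\cal S}_{\cal V} = {\rm Ser}(y{\cal A})$; its directed-colimit closure in ${\rm Mod}\mbox{-}{\cal T}^{\rm c}$ is $\mathscr{T}_{\cal V} = \overrightarrow{{\rm Ser}(y{\cal A})}$, giving the first assertion.

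For the torsionfree class, the inclusion ${\cal F}_{\cal V} \subseteq (y{\cal A})^\perp$ is immediate, since each $yA$ lies in ${\cal S}_{\cal V}$ and so $(yA,M) = 0$ for every $M \in {\cal F}_{\cal V}$. For the reverse inclusion, I would first handle injectives: by Theorem~\ref{pinjtoinj} every injective $E \in {\rm Mod}\mbox{-}{\cal T}^{\rm c}$ has the form $yN$ for a pure-injective $N \in {\cal T}$, and Proposition~\ref{ybijcpct} then yields $E \in (y{\cal A})^\perp$ iff $(A,N) = 0$ for all $A \in {\cal A}$, i.e.\ iff $N \in {\cal V}$, so any such injective lies in $y{\cal V} \subseteq {\cal F}_{\cal V}$. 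Passing from injectives to an arbitrary $M \in (y{\cal A})^\perp$ reduces, using the closure of ${\cal F}_{\cal V}$ under submodules (Corollary~\ref{deftf}), to the claim that the injective hull $E(M)$ still lies in $(y{\cal A})^\perp$; I expect this last step---that essential extension cannot create new elements in the sorts indexed by ${\cal A}$---to be the main obstacle. My attack would be to assume a nonzero $yA \to E(M)$, compute its image as a cyclic quotient of $yA$, and use the essentiality of $M \subseteq E(M)$ together with the triangulated structure of ${\cal T}^{\rm c}$ to reach a contradiction with $M \in (y{\cal A})^\perp$.
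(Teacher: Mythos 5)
Your first half is essentially sound, though it needs one more observation than a bare appeal to \ref{defhered}: to conclude that the Serre subcategory attached to ${\cal V}$ is exactly ${\rm Ser}(y{\cal A})$ you must know that ${\rm Ser}(y{\cal A})$ and $y{\cal A}$ have the same zero-class in ${\cal T}$, which holds because, for $X\in{\cal T}$, $yX$ is absolutely pure, so $\{G\in{\rm mod}\mbox{-}{\cal T}^{\rm c}:(G,yX)=0\}$ is a Serre subcategory.  Granting that, your route to ${\mathscr T}_{\cal V}=\overrightarrow{{\rm Ser}(y{\cal A})}$ differs from the paper's, which deduces it \emph{from} the torsionfree-class identity; yours is independent of that identity, which turns out to matter.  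Your easy inclusion ${\cal F}_{\cal V}\subseteq(y{\cal A})^\perp$ and your treatment of injectives via \ref{pinjtoinj} and \ref{ybijcpct} are both fine and agree with the paper's argument.

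The genuine gap is the step you flag yourself: that $M\in(y{\cal A})^\perp$ forces $E(M)\in(y{\cal A})^\perp$.  Your sketched attack cannot close it: from a nonzero map $yA\to E(M)$, essentiality only yields some compact $C$ and $a\colon C\to A$ with a nonzero element of $M(C)$ in the image; since $C$ need not lie in ${\cal A}$, this contradicts nothing, and there is no mechanism for converting the factorisation through $A$ into a nonzero element of $M$ at a sort from ${\cal A}$.  In fact no argument will close it, because the claim fails in general.  Take ${\cal T}={\cal D}({\rm Mod}\mbox{-}kA_2)$, let $S$ be the simple projective module, $S'$ the other simple, ${\cal A}=\{S\}$, and choose $0\neq w\in(S'[-1],S)\cong{\rm Ext}^1_{kA_2}(S',S)$.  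Then $M:={\rm im}(yw)\leq yS$ is nonzero (it contains $w$ in sort $S'[-1]$) but $M(S)=w\cdot(S,S'[-1])=0$, so $0\neq M\in(y{\cal A})^\perp$; on the other hand every module in ${\cal F}_{\cal V}$ vanishes at sort $S$, so $yS\in{\mathscr T}_{\cal V}$ and its nonzero submodule $M$ is $\tau_{\cal V}$-torsion, hence $M\notin{\cal F}_{\cal V}$.  Moreover $yS$ is injective ($S$ is endofinite, hence pure-injective) and $M$ is essential in it (the only indecomposable sorts where $yS$ is nonzero are $S$ and $S'[-1]$, and any nonzero element of sort $S$ restricts along $w$ to a nonzero multiple of $w\in M$), so $E(M)=yS\notin(y{\cal A})^\perp$: the class $(y{\cal A})^\perp$ need not be closed under injective hulls.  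Note that this is precisely the point which the paper's own proof passes over without argument (``if $M\in(y{\cal A})^\perp$ then so is $E(M)$''), so the obstacle you identified is real and not an artefact of your approach: what your argument (Serre-subcategory identification plus the computation on injectives) actually establishes is ${\cal F}_{\cal V}=({\rm Ser}(y{\cal A}))^\perp$ together with ${\cal F}_{\cal V}\cap{\rm Inj}\mbox{-}{\cal T}^{\rm c}=(y{\cal A})^\perp\cap{\rm Inj}\mbox{-}{\cal T}^{\rm c}$, and the displayed equality ${\cal F}_{\cal V}=(y{\cal A})^\perp$ should be read in that amended form (or under an extra hypothesis ensuring that nonzero submodules of the $yA$ have nonzero values at sorts from ${\cal A}$).
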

\begin{proof}  This follows from what we have seen already; we give the details.  Since $({\cal A}, {\cal V})=0$, it follows by \ref{ybijcpct} that $(y{\cal A}, y{\cal V})=0$, so $\overrightarrow{{\rm Ser}(y{\cal A})} \subseteq {\mathscr T}_{\cal V}$.  Hence ${\cal F}_{\cal V} = ({\mathscr T}_{\cal V})^\perp \subseteq (\overrightarrow{{\rm Ser}(y{\cal A})})^\perp = (y{\cal A})^\perp$ (equality since $\tau_{\cal V}$ is of finite type).  If, conversely, $M \in (y{\cal A})^\perp$, then so is $E(M)$, which has the form $yN$ for some pure-injective $N\in {\cal T}$.  By \ref{ybijpi} (or \ref{ybijcpct}), $({\cal A}, N)=0$ and hence $N\in {\cal V}$, so $E(M)$, and hence $M$ is in ${\cal F}_{\cal V}$.  Thus ${\cal F}_{\cal V} = (y{\cal A})^\perp$ and hence also ${\mathscr T}_{\cal V} = \overrightarrow{{\rm Ser}(y{\cal A})}$.
\end{proof}

By \ref{deftf}, every finite-type hereditary torsion theory $({\mathscr T}, {\cal F})$ on ${\rm Mod}\mbox{-}{\cal T}^{\rm c}$ gives rise to a hom-orthogonal pair in ${\cal T}$, namely $({}^\perp{\cal D}, ({}^\perp{\cal D})^\perp)$ where ${\cal D} = y^{-1}{\cal F}$.  If this hom-orthogonal pair is compactly generated, by ${\cal A}$ say, so $ ({}^\perp{\cal D})^\perp = {\cal A}^\perp$ is definable, then it follows from the above that ${\cal F} = (y{\cal A})^\perp$ and hence ${\cal D}  = y^{-1}{\cal F} = y^{-1}((y{\cal A})^\perp) = {\cal A}^\perp$ (by the bijection \ref{deftf}) $=({}^\perp{\cal D})^\perp$.  But in general not every finite-type hereditary torsion class in ${\rm Mod}\mbox{-}{\cal T}^{\rm c}$ arises from a hom-orthogonal pair in ${\cal T}$ in this way.  Indeed, since, for $A \in {\cal T}^{\rm c}$, $yA$ is a projective ${\cal T}^{\rm c}$-module, and all of the finitely generated projectives in ${\rm Mod}\mbox{-}{\cal T}^{\rm c}$ are of this form, we have the following, where we denote by $\gamma_{\cal X}$ the hereditary (finite type) torsion theory generated by (that is, with torsion class generated by) $y{\cal X}$.

\begin{cor}\label{cpctgentors} \marginpar{cpctgentors} There is a natural injection $({\cal U},{\cal V}) \mapsto \gamma_{\cal U}$ from the set of compactly generated hom-orthogonal pairs in ${\cal T}$ to the set of hereditary torsion theories of finite type on ${\rm Mod}\mbox{-}{\cal T}^{\rm c}$.

The image is the set of hereditary torsion theories where the torsion class is generated by a set of finitely generated projectives.
\end{cor}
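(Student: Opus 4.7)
The plan is to verify three claims in order: the map lands in finite-type hereditary torsion theories; it is injective; and its image is exactly as described. Throughout, given a compactly generated torsion pair $({\cal U},{\cal V})$ in ${\cal T}$, I fix a set ${\cal A}\subseteq{\cal T}^{\rm c}$ with ${\cal V}={\cal A}^\perp$ and ${\cal U}={}^\perp{\cal V}$. For well-definedness, I would observe that each $yA$ with $A\in{\cal A}$ is a finitely generated projective right ${\cal T}^{\rm c}$-module, by the equivalence ${\cal T}^{\rm c}\simeq{\rm proj}\mbox{-}{\cal T}^{\rm c}$ recalled in Section \ref{secyon}. Hence the hereditary torsion theory whose torsion class is generated by $\{yA:A\in{\cal A}\}$ is automatically of finite type, and by the preceding proposition it coincides with the torsion theory $\tau_{\cal V}$ attached to the definable subcategory ${\cal V}$. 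In particular $\gamma_{\cal U}$ does not depend on the choice of generating set, so the map is well-defined into the codomain.

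For injectivity, if $\gamma_{{\cal U}_1}=\gamma_{{\cal U}_2}$ for two compactly generated torsion pairs, then by the preceding proposition $\tau_{{\cal V}_1}=\tau_{{\cal V}_2}$, and by the bijection in Corollary \ref{deftf} between definable subcategories of ${\cal T}$ and finite-type hereditary torsion theories on ${\rm Mod}\mbox{-}{\cal T}^{\rm c}$, this gives ${\cal V}_1={\cal V}_2$; since a torsion pair is determined by either half, the two pairs coincide.

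The image inclusion $\subseteq$ is built into the construction, since $\gamma_{\cal U}$ has torsion class generated by the finitely generated projectives $\{yA:A\in{\cal A}\}$. For the reverse, given a finite-type hereditary torsion theory $\tau=({\mathscr T},{\cal F})$ whose torsion class is generated, as such, by a set $\{P_i\}$ of finitely generated projectives, I would apply ${\cal T}^{\rm c}\simeq{\rm proj}\mbox{-}{\cal T}^{\rm c}$ to write $P_i=yA_i$, form the compactly generated torsion pair ${\cal V}={\cal A}^\perp$, ${\cal U}={}^\perp{\cal V}$ with ${\cal A}=\{A_i\}$, and conclude via the preceding proposition that $\gamma_{\cal U}=\tau_{\cal V}$ has torsion class $\overrightarrow{{\rm Ser}(y{\cal A})}={\mathscr T}$. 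The only step with real content is the identification $\gamma_{\cal U}=\tau_{\cal V}$, which is supplied by the preceding proposition, so no further obstacle is anticipated.
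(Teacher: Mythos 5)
Your overall route is the paper's: read the corollary off from the preceding proposition (which identifies the torsion theory with torsion class $\overrightarrow{{\rm Ser}(y{\cal A})}$ and torsionfree class $(y{\cal A})^\perp$ as $\tau_{\cal V}$), use the Yoneda equivalence ${\cal T}^{\rm c}\simeq{\rm proj}\mbox{-}{\cal T}^{\rm c}$ to identify the finitely generated projective ${\cal T}^{\rm c}$-modules with the $yA$, $A\in{\cal T}^{\rm c}$, and get injectivity from the bijection \ref{deftf} together with the fact that ${\cal U}$ is recovered as ${}^\perp{\cal V}$. Those parts are fine.

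The genuine gap is at the start, where you conflate $\gamma_{\cal U}$ with the torsion theory generated by $\{yA:A\in{\cal A}\}$; your sentence ``$\gamma_{\cal U}$ does not depend on the choice of generating set'' shows you are reading $\gamma_{\cal U}$ as defined from a chosen ${\cal A}$, whereas by definition it is the hereditary torsion theory whose torsion class is generated by $y{\cal U}$, the image of the whole class ${\cal U}={}^\perp({\cal A}^\perp)$, which in general properly contains $y{\cal A}$. The preceding proposition only identifies the $y{\cal A}$-generated torsion theory with $\tau_{\cal V}$, so you still owe the identification $\gamma_{\cal U}=\tau_{\cal V}$. One inclusion is trivial since ${\cal A}\subseteq{\cal U}$; for the other you must show $y{\cal U}\subseteq{\mathscr T}_{\cal V}$, and this is not purely formal: since the hereditary torsion class ${\mathscr T}_{\cal V}$ equals ${}^\perp({\cal F}_{\cal V}\cap{\rm Inj}\mbox{-}{\cal T}^{\rm c})$ and every injective in ${\cal F}_{\cal V}$ has, by \ref{pinjtoinj} (cf.\ \ref{pinjtoinjD}), the form $yN$ with $N\in{\cal V}$ pure-injective, one gets $(yU,yN)\simeq(U,N)=0$ for every $U\in{\cal U}$ by \ref{ybijpi}, hence $y{\cal U}\subseteq{\mathscr T}_{\cal V}$ and so $\gamma_{\cal U}=\tau_{\cal V}$. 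This is exactly the argument the paper gives in the paragraph following the corollary; without it, your well-definedness, injectivity and image claims are statements about $\tau_{\cal V}$ rather than about the map $({\cal U},{\cal V})\mapsto\gamma_{\cal U}$ actually named in the statement.
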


Thus we have an embedding of the lattice of compactly generated hom-orthogonal pairs in ${\cal T}$ into the lattice of finite type hereditary torsion theories on ${\rm Mod}\mbox{-}{\cal T}^{\rm c}$ (the ordering in each case being by inclusion of torsion classes), and the latter is isomorphic to the lattice of definable subcategories of ${\cal T}$.  The definable subcategories, ${\cal D}$, of ${\cal T}$ occurring as ${\cal V}$ in a compactly generated hom-orthogonal pair $({\cal U}, {\cal V})$, are, by \ref{torsequiv1}(1), those for which the corresponding annihilator ideal ${\rm Ann}_{{\cal T}^{\rm c}}({\cal D})$ of ${\cal T}^{\rm c}$ is generated as such by objects (that is, by identity morphisms of some compact objects).

Note also that, if ${\cal D}$ is a definable subcategory of ${\cal T}$ which occurs as ${\cal V}$ in a compactly generated hom-orthogonal pair $({\cal U}, {\cal V})$, and if $({\mathscr T}, {\cal F})$ is the corresponding, in the sense of \ref{deftf}, torsion theory $\tau_{\cal D}$, then we always have ${\cal U} \subseteq y^{-1}{\mathscr T}$.  That is because ${\mathscr T} = {^\perp}({\cal F} \cap {\rm Inj}\mbox{-}{\cal T}^{\rm c})$ and because each object of ${\cal F} \cap {\rm Inj}\mbox{-}{\cal T}^{\rm c}$ has the form $yN$ for some pure-injective $N\in {\cal V}$ and then $({\cal U},N) =0$ implies, by \ref{ybijpi}, that $(y{\cal U}, yN)=0$, so $y{\cal U} \subseteq {\mathscr T}$.  For equality, ${\cal U} \subseteq y^{-1}{\mathscr T}$ - that is $\gamma_{\cal U} = \tau_{\cal D}$ - we need, by the argument just given, that ${\cal U} = {^\perp}({\cal V}\cap {\rm Pinj}({\cal T}))$.  That is, equality holds iff the hom-orthogonal pair $({\cal U}, {\cal V})$ is {\bf cogenerated by pure-injectives}.  For instance, if $({\cal U}, {\cal V})$ is a t-structure with ${\cal V}$ definable, then this will be the case, \cite[2.10]{AngHrbParam}, \cite[8.20]{SaoStovTstr}, also see \ref{smash2} below.

For more about this and TTF-classes in compactly generated triangulated categories, see \cite[Chpt.~8]{WagThesis}.

\subsection{Spectra}\label{secspec} \marginpar{secspec}

By a definable (additive) category we mean a category which is equivalent to a definable subcategory of the category of modules over some (possibly multi-sorted) ring.  Every definable additive category ${\cal C}$ is determined by its full subcategory of pure-injective objects (by \cite[5.1.4]{PreNBK} or, more intrinsically, by \cite[\S 3.2]{PreDefAddCat}).  Indeed, every definable category is determined by the indecomposable pure-injective objects in it (e.g. see \cite[5.3.50, 5.3.52]{PreNBK}).  The Ziegler spectrum, ${\rm Zg}({\cal C})$, also written ${\rm Zg}_R$ in the case ${\cal C} = {\rm Mod}\mbox{-}R$, is the set, ${\rm pinj}({\cal C})$, of isomorphism classes of indecomposable pure-injectives in ${\cal C}$ endowed with the topology which has, for a basis of open sets, the 
$$(\phi/\psi) = \{ N\in {\rm pinj}({\cal C}): \phi(N) > \psi(N)\}$$
as $\phi/\psi$ ranges over pp-pairs (in any suitable language for ${\cal C}$).  These are exactly the compact open sets in ${\rm Zg}({\cal C})$, see \cite[5.1.22]{PreNBK}.

Every definable subcategory ${\cal D}$ of a definable category ${\cal C}$ is determined by the set ${\rm pinj}({\cal D}) = {\cal D} \, \cap \, {\rm pinj}({\cal C})$ of indecomposable pure-injectives in ${\cal D}$, hence by the closed subset ${\rm Zg}({\cal D}) = {\cal D} \cap {\rm Zg}({\cal C})$ of ${\rm Zg}({\cal C})$, and every closed set in ${\rm Zg}({\cal C})$ is of the form ${\rm Zg}({\cal D})$ for some definable subcategory ${\cal D}$ of ${\cal C}$, see \cite[5.1.1]{PreNBK}.  

Krause \cite{KraTel} showed how this carries over to compactly generated triangulated categories ${\cal T}$.  The {\bf Ziegler spectrum}, ${\rm Zg}({\cal T})$, of ${\cal T}$ is defined to have, for its points, the (isomorphism classes of) indecomposable pure-injectives.  As for definable subcategories of module categories, there are many equivalent ways of specifying a basis of (compact) open sets on this set of points, including the following (the second by \ref{ppprfmla}):

\noindent $(\phi/\psi) =\{ N\in {\rm pinj}({\cal T}): \phi(N)/\psi(N) \neq 0\}$ for $\phi/\psi$ a pp-pair;

\noindent $\{ N\in {\rm pinj}({\cal T}): {\rm ann}_N(f) \neq 0\}$ for $f$ a morphism in ${\cal T}^{\rm c}$;

\noindent $(F) =\{ N\in {\rm pinj}({\cal T}): FN \neq 0\}$ for $F\in {\rm Coh}({\cal T})$.

\vspace{4pt}

There are other topologies of interest here.  First consider the case where $R$ is commutative noetherian.  Then the subcategory, ${\rm Inj}\mbox{-}R$, of injectives in ${\rm Mod}\mbox{-}R$ is definable (see \cite[3.4.28]{PreNBK}) and the corresponding closed subset of ${\rm Zg}_R$ is just the set, ${\rm inj}_R$, of indecomposable injective $R$-modules.  For such a ring the set ${\rm inj}_R$ may be identified \cite{Gab}, see \cite[\S 14.1.1]{PreNBK}, with ${\rm Spec}(R)$ {\it via} $P \mapsto E(R/P)$ where $P$ is any prime ideal of $R$ and $E(-)$ denotes injective hull.  However, the Ziegler topology restricted from ${\rm Zg}_R$ to ${\rm inj}_R$ induces, {\it via} the above bijection, not the Zariski topology on ${\rm Spec}(R)$ but its Hochster dual (\cite[pp.~104/5]{PreBk}).  Recall that the {\bf Hochster dual} of a topology has, as a basis (on the same set of points), the complements of the compact open sets in the original topology.

That fact inspired the general definition \cite[pp.~200-202]{PreRem} of the {\bf dual-Ziegler} (or ``rep-Zariski") topology on ${\rm pinj}({\cal C})$ for any definable category ${\cal C}$, as the Hochster-dual of the Ziegler topology\footnote{These spaces are, however, unlike those in Hochster's original definition, not spectral, and it is not always that case that the Ziegler topology is returned as the dual of the dual-Ziegler topology, \cite[3.1]{BurPre2}}.  So this dual topology has the same underlying set, ${\rm pinj}({\cal C})$, and has, for a basis of open sets, the complements 
$$[\phi/\psi] = {\rm Zg}({\cal C}) \setminus (\phi/\psi)$$
of the compact Ziegler-open sets. 

If ${\cal C}$ is a locally coherent category, in particular if it is ${\rm Mod}\mbox{-}R$ for a right coherent ring (possibly with many objects), then\footnote{For module categories, this goes back to \cite{EkSab}, see \cite[3.4.24]{PreNBK}; the general case is proved the same way and also follows from, for example, \cite[Chpt.~6]{PreMAMS}.} the absolutely pure objects form a definable subcategory with corresponding closed subset of ${\rm Zg}({\cal C})$ again being the set ${\rm inj}({\cal C})$ of (isomorphism types of) indecomposable injectives in ${\cal C}$.  This set carries a ({\bf Gabriel}-){\bf Zariski} topology which has, for a basis of open sets, those of the form
$$[A]= \{ E\in {\rm inj}({\cal C}): (A,C)=0\}$$
for $A$ a finitely presented object of ${\cal C}$.  Thus we extend the domain of applicability of the category-theoretic reformulation (\cite{Gab}, \cite{Roolim}) of the definition of the Zariski topology on a commutative coherent ring.   For such a category ${\cal C}$ the Gabriel-Zariski topology coincides with the dual-Ziegler topology restricted to ${\rm inj}({\cal C})$ (\cite[14.1.6]{PreNBK}).

\vspace{4pt}

We may compare these topologies over a commutative coherent ring $R$ where, in general, the map $P\mapsto E(R/P)$ is only an inclusion of ${\rm Spec}(R)$ into ${\rm inj}_R$, because there may be indecomposable injectives not of the form $E(R/P)$, e.g.~\cite[14.4.1]{PreNBK}.  The inclusion, nevertheless, is a topological equivalence - an isomorphism of frames of open subsets:  every indecomposable injective is elementarily equivalent to, hence topologically equivalent to, a module of the form $E(R/P)$ with $P$ a prime, see \cite[14.4.5]{PreNBK}.  So, for commutative coherent rings, we may consider these various topologies as topologies on ${\rm Spec}(R)$ and, so considered, the Ziegler topology coincides with the {\bf Thomason} topology, which is defined to be the Hochster-dual of the Gabriel-Zariski topology, \cite{GarkPre3}.  That is, the Ziegler topology has, for its open sets, those of the form $\bigcup_\lambda \, (R/I_\lambda)$ with the $I_\lambda$ finitely generated ideals of $R$, where
$$(R/I_\lambda) = \{ N \in {\rm pinj}_R: (R/I_\lambda, N) \neq 0\} = (xI_\lambda=0/x=0).$$
In terms of sets of primes, the Ziegler-open sets have the form $\bigcup_\lambda \, V(I_\lambda)$ with the $I_\lambda$ finitely generated\footnote{For a general commutative ring, the Ziegler topology on ${\rm inj}_R$ is finer, having open sets of a similar form but with pp-definable ideals replacing finitely generated ideals; in coherent rings the pp-definable ideals coincide with the finitely generated ideals, see \cite[\S 6]{PreSpecAb}.}.  These various topologies are compared in \cite[\S 6]{PreSpecAb}.

\vspace{4pt}

The discussion above applies to the locally coherent category ${\rm Mod}\mbox{-}{\cal T}^{\rm c}$.  As we have seen in \ref{pinjtoinj}, the restricted Yoneda functor $y$ induces an equivalence between the category, ${\rm Pinj}({\cal T})$, of pure-injective objects of ${\cal T}$ and the category, ${\rm Inj}\mbox{-}{\cal T}^{\rm c}$, of injective right ${\cal T}^{\rm c}$-modules.  Indeed, this gives a homeomorphism of spectra.

\begin{theorem}\label{pinjinj} \marginpar{pinjinj} Suppose that ${\cal T}$ is a compactly generated triangulated category.  Then $y:{\cal T} \to {\rm Mod}\mbox{-}{\cal T}^{\rm c}$ induces a bijection between ${\rm pinj}({\cal T})$ and ${\rm inj}{{\cal T}^{\rm c}}$. This is a homeomorphism between ${\rm Zg}({\cal T})$ and ${\rm Zg}({\rm Abs}\mbox{-}{\cal T}^{\rm c} = {\rm Flat}\mbox{-}{\cal T}^{\rm c})$ (the latter can also be regarded as ${\rm inj}_{{\cal T}^{\rm c}}$ with the Thomason topology) and is also a homeomorphism between the dual-Ziegler spectrum ${\rm Zar}({\cal T})$ of ${\cal T}$ and ${\rm inj}_{\cal T}^{\rm c}$ if the latter is equipped with the Gabriel-Zariski topology which has, for a basis of open sets, the sets $[G] = \{ E\in {\rm Inj}\mbox{-}{\cal T}^{\rm c} : (G,E)=0\}$ for $G\in {\rm mod}\mbox{-}{\cal T}^{\rm c}$.
\end{theorem}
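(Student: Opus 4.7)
The plan is to deduce the bijection of points directly from Theorem \ref{pinjtoinj} and then handle each of the two topological identifications by checking that basic open sets correspond. The essential input is that the language ${\cal L}({\cal T})$ \emph{is} the language of right ${\cal T}^{\rm c}$-modules, so the pp-pair formulations of the Ziegler basis transport across $y$ with no work.

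For the bijection, \ref{pinjtoinj} gives an equivalence ${\rm Pinj}({\cal T}) \simeq {\rm Inj}\mbox{-}{\cal T}^{\rm c}$, which restricts to a bijection between isomorphism classes of indecomposables in each. For the Ziegler homeomorphism, I would argue that the two spaces share a common description. By \ref{abseqflat} the model theory of objects of ${\cal T}$ is literally the model theory of their images in the definable subcategory ${\rm Abs}\mbox{-}{\cal T}^{\rm c} = {\rm Flat}\mbox{-}{\cal T}^{\rm c}$ of ${\rm Mod}\mbox{-}{\cal T}^{\rm c}$, so for any pp-pair $\phi/\psi$ and any $N\in {\rm Pinj}({\cal T})$ we have $\phi(N)/\psi(N) = \phi(yN)/\psi(yN)$. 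Thus $y$ takes the basic Ziegler-open set $(\phi/\psi) \subseteq {\rm Zg}({\cal T})$ bijectively onto the basic Ziegler-open set $(\phi/\psi) \subseteq {\rm Zg}({\rm Abs}\mbox{-}{\cal T}^{\rm c})$, and conversely. Since the indecomposable pure-injectives of ${\rm Abs}\mbox{-}{\cal T}^{\rm c}$ are exactly the indecomposable injective ${\cal T}^{\rm c}$-modules (an absolutely pure pure-injective is injective), the underlying set of ${\rm Zg}({\rm Abs}\mbox{-}{\cal T}^{\rm c})$ is ${\rm inj}_{{\cal T}^{\rm c}}$.

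The step I expect to require the most care is the identification of this subspace topology with the Thomason topology generated by $(G) = \{E : (G,E)\neq 0\}$ for $G\in {\rm mod}\mbox{-}{\cal T}^{\rm c}$. Because ${\cal T}^{\rm c}$ is right coherent (\cite[8.11]{BeligFreyd}), every pp formula on absolutely pure modules is equivalent to an annihilator formula, so every pp-pair is (by \ref{ppprfmla}, or directly) equivalent on ${\rm Abs}\mbox{-}{\cal T}^{\rm c}$ to one of the form $(x_Bf=0)/(x_B=0)$ for some $f:A\to B$ in ${\cal T}^{\rm c}$. For $E\in {\rm inj}_{{\cal T}^{\rm c}}$, the evaluation $(x_Bf=0)/(x_B=0)(E)$ is precisely $\ker\bigl((B,E) \xrightarrow{f\cdot} (A,E)\bigr) = (G_f,E)$, where $G_f = {\rm coker}\bigl((-,A)\xrightarrow{(-,f)} (-,B)\bigr)$, via the 4-term exact sequence (\ref{eq1}) and injectivity of $E$. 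As $f$ ranges over morphisms of ${\cal T}^{\rm c}$, the modules $G_f$ range over all of ${\rm mod}\mbox{-}{\cal T}^{\rm c}$, so these open sets form a basis for the Thomason topology.

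Finally, the dual-Ziegler homeomorphism is a formal consequence. The compact Ziegler-open sets on each side are generated by the bases just identified, so Hochster dualising exchanges Ziegler-basic $(\phi/\psi)$ with dual-Ziegler-basic $[\phi/\psi]$ on the left, and Thomason-basic $(G)$ with Gabriel--Zariski-basic $[G] = \{E : (G,E)=0\}$ on the right; the bijection then carries the dual-Ziegler basis of ${\rm Zar}({\cal T})$ onto the Gabriel--Zariski basis of ${\rm inj}_{{\cal T}^{\rm c}}$ by the correspondence established in the previous paragraph.
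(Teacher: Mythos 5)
Your proposal is correct and takes essentially the same route as the paper: there the bijection comes from \ref{pinjtoinj} and the homeomorphisms from the identification of the model theory of ${\cal T}$ with that of ${\rm Abs}\mbox{-}{\cal T}^{\rm c}={\rm Flat}\mbox{-}{\cal T}^{\rm c}$ combined with the general Ziegler/Gabriel-Zariski (Hochster-dual) correspondence for locally coherent categories, which is exactly your argument. Your explicit reduction of pp-pairs to annihilator pairs $(x_Bf=0)/(x_B=0)$ and the identification of their value on an injective $E$ with $(G_f,E)$ merely fills in details the paper delegates to \ref{ppprfmla} and its cited references.
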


Since closed subsets of the Ziegler spectrum are in natural correspondence with definable subcategories, this homeomorphism underlies the bijection \ref{deftf} between definable subcategories of ${\cal T}$ and finite-type hereditary torsionfree classes in ${\rm Mod}\mbox{-}{\cal T}^{\rm c}$.  That also reflects the fact that a finite-type hereditary torsion theory is determined by (it is the torsionfree class cogenerated by) the set of indecomposable torsionfree injectives (see \cite[11.1.29]{PreNBK}).  We have already, in Section \ref{sectors}, considered the part of this correspondence coming from compactly generated hom-orthogonal pairs in ${\cal T}$, and we will also, in Section \ref{secttspec}, look at how the Balmer spectrum fits into this picture in the case that ${\cal T}$ is tensor-triangulated.

\subsection{Triangulated definable subcategories}\label{sectriangdef} \marginpar{sectriangdef}

In this section we consider the definable subcategories ${\cal D}$ of ${\cal T}$ which are {\bf triangulated}, that is, {\bf shift-closed} (if $X\in {\cal D}$, then $\Sigma^\pm X \in {\cal D}$) and extension-closed, where by {\bf extension-closed} we mean that, if $X\to Y\to Z \to \Sigma X$ is a distinguished triangle with both $X$ and $Z$ in ${\cal D}$, then also $Y\in {\cal D}$.  First, some remarks on extending definable subcategories to shift-closed definable subcategories.

\vspace{4pt}

If ${\cal D}$ is a definable subcategory of ${\cal T}$ then each shift $\Sigma^i {\cal D}$ is definable, (e.g.~see \cite[6.1.1]{WagThesis}).  We can define the shift-closure of ${\cal D}$ to be the definable closure of $\bigcup_{i\in {\mathbb Z}} \, \Sigma^i {\cal D}$.  That this is, in general, larger than ${\rm Add}^+(\bigcup_{i\in {\mathbb Z}} \, \Sigma^i {\cal D})$ ($^+$ denoting closure under pure submodules) is shown by the following example.

\begin{example}\label{kepsilonshift} \marginpar{kepsilonshift}  Consider the derived category ${\cal D}_{k[\epsilon]} = {\cal D}({\rm Mod}\mbox{-}k[\epsilon])$, of the category of modules over $k[\epsilon]= k[x]/(x^2)$.  Let ${\cal D}$ be the subcategory of ${\cal D}_{k[\epsilon]}$ consisting of complexes which are $0$ in every degree $i < 0$.  Then ${\cal D}$ is a definable subcategory, defined by the conditions $(k[\epsilon][i], -)=0$ ($i<0$) where $k[\epsilon]$ here denotes the complex with $k[\epsilon]$ in degree 0 and zeroes elsewhere.

The union of the (left) shifts of ${\cal D}$ contains only complexes which are bounded below and so the additive closure of the union $\bigcup_i \, {\rm Zg}(\Sigma^i {\cal D})$ of the Ziegler-spectra of these shifts does not contain, for example, the doubly infinite complex which has $k[\epsilon]$ in each degree and multiplication by $\epsilon$ for each of its maps.  But that indecomposable pure-injective complex belongs to the Ziegler-closure of $\bigcup_i \, {\rm Zg}(\Sigma^i {\cal D})$, indeed it is in the Ziegler-closure of the set of complexes obtained from it by replacing $k[\epsilon]$ by $0$ in every degree $\leq i$ for some $i$; this is proved in \cite[\S 3.4]{HanThesis} and, in greater generality, in \cite[\S 6, \S 4]{ALPP}.

In contrast, if we were to take ${\cal D}$ to be the image of ${\rm Mod}\mbox{-}k[\epsilon]$ consisting of complexes concentrated in degree $0$, then the additive closure of the union of the shifts of ${\cal D}$ is definable.  That follows because every object in the definable category generated by that union is finite endolength, so the Ziegler closure contains no new indecomposable pure-injectives (e.g.~see \cite[4.4.30]{PreNBK}).
\end{example}

Thus, if $X$ is a closed subset of the Ziegler spectrum of ${\cal T}$, it may be that $\bigcup_i \, \Sigma^iX$ is not Ziegler-closed.

It is the case, see \cite[6.1.10]{WagThesis}, that, if points of ${\rm Zg}({\cal T})$ are identified with their shifts and the set of equivalence classes is given the quotient topology, then this is topologically equivalent to the space based on ${\rm pinj}({\cal T})$ which has, for its closed sets, those of the form ${\cal D} \cap {\rm pinj}({\cal T})$ where ${\cal D}$ is a shift-closed definable subcategory of ${\cal T}$.  The first example in \ref{kepsilonshift} shows that the projection map taking a point of the Ziegler spectrum of ${\cal T}$ to its shift equivalence class need not be closed (the complexes in that example are endofinite, hence Ziegler-closed points).

Further Ziegler-type topologies on ${\rm pinj}({\cal T})$ are obtained by using positively- (alternatively, negatively-) shift-closed definable subcategories of ${\cal T}$, see \cite[\S 6.1]{WagThesis}).

\vspace{4pt}

A triangulated subcategory ${\cal B}$ of ${\cal T}$ is {\bf smashing} if it is the kernel of a Bousfield localisation $q:{\cal T} \to {\cal T}'$ for which the left adjoint to $q$, including ${\cal T}' = {\cal T}/{\cal B}$ into ${\cal T}$, preserves coproducts.  Hom-orthogonality gives a bijection between the definable subcategories which are triangulated and the smashing subcategories of ${\cal T}$.

\begin{theorem}\label{smash1} \marginpar{smash1} (\cite{KraCohQuot}, see \cite[5.2.10]{WagThesis}) If ${\cal D}$ is a triangulated definable subcategory of the compactly generated triangulated category ${\cal T}$, then ${\cal B}=\, ^\perp {\cal D}$ is a smashing subcategory of ${\cal T}$ and ${\cal D} = {\cal B}^\perp$, so $({\cal B}, {\cal D})$ is a torsion pair.  Every smashing subcategory of ${\cal T}$ arises in this way.
\end{theorem}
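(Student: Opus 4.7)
The plan is to realise $({\cal B}, {\cal D})$ as the torsion pair arising from a Bousfield localisation of ${\cal T}$ with essential image ${\cal D}$; the smashing condition will then correspond exactly to coproduct-closure of ${\cal D}$, which \ref{LVchardef} supplies.

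First I verify that ${\cal B} = {}^\perp {\cal D}$ is a localising subcategory. Shift-closure is inherited from $\Sigma^{\pm 1} {\cal D} = {\cal D}$ via $(\Sigma^{\pm 1} B, D) \cong (B, \Sigma^{\mp 1} D) = 0$. For extension-closure, a triangle $X \to Y \to Z \to \Sigma X$ with $X, Z \in {\cal B}$ gives a long exact sequence under $(-, D)$ whose outer terms $(\Sigma X, D), (Z, D), (X, D)$ all vanish (using shift-closure of ${\cal B}$), so $(Y, D) = 0$. Coproduct-closure is $(\bigoplus B_i, D) \cong \prod (B_i, D) = 0$.

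The core step is constructing the reflection $L: {\cal T} \to {\cal D}$. By \ref{LVchardef}, ${\cal D}$ is closed under products, pure subobjects, and directed homotopy colimits; an arbitrary coproduct in ${\cal T}$ is the directed homotopy colimit of its finite sub-sums (which are finite products), so ${\cal D}$ is also closed under coproducts. The inclusion $j: {\cal D} \hookrightarrow {\cal T}$ therefore preserves both products and coproducts, and Brown representability on the compactly generated ${\cal T}$ furnishes a left adjoint $L$. Completing the unit $\eta_X: X \to LX$ to a triangle $B_X \to X \to LX \to \Sigma B_X$, I claim $B_X \in {\cal B}$: for any $D \in {\cal D}$, the isomorphisms $(L \Sigma^n X, D) \xrightarrow{\sim} (\Sigma^n X, D)$ coming from the reflection (and the fact that $L$ commutes with $\Sigma$) collapse the long exact sequence obtained from $(-, D)$ so as to force $(B_X, D) = 0$. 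This is the torsion triangle for $({\cal B}, {\cal D})$. For $Y \in {\cal B}^\perp$, applying $(-, Y)$ to its own torsion triangle $B_Y \to Y \to LY \to \Sigma B_Y$ makes $(LY, Y) \to (Y, Y)$ an isomorphism (since $(B_Y, Y) = (\Sigma B_Y, Y) = 0$); lifting $\mathrm{id}_Y$ splits the triangle, so $LY \cong Y \oplus \Sigma B_Y$ with both summands in ${\cal D}$ (as ${\cal D}$ is closed under direct summands), whence $\Sigma B_Y \in {\cal B} \cap {\cal D}$ and so $\Sigma B_Y = 0$, giving $Y \cong LY \in {\cal D}$. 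Hence ${\cal B}^\perp = {\cal D}$, and the smashing condition that $L$ preserves coproducts is just the coproduct-closure of ${\cal D}$ already established.

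For the converse, given smashing ${\cal B}$ set ${\cal D} = {\cal B}^\perp$. This is triangulated by the usual perpendicular-class argument. Products are automatic; coproducts are the smashing hypothesis; directed homotopy colimits are preserved because the left adjoint $L$ commutes with colimits, so a directed homotopy colimit in ${\cal T}$ of a diagram in ${\cal D}$ is fixed by $L$ and hence lies in ${\cal D}$. Closure under pure subobjects is the subtler step, handled in \cite{KraCohQuot} by identifying ${\cal D}$ as the zero locus of a set of coherent functors on ${\cal T}$, after which \ref{LVchardef} delivers definability. The main obstacle throughout is the reflection step in the forward direction: Brown representability demands both product- and coproduct-preservation of $j$, and although product-preservation follows from definability alone, coproduct-preservation needs the full force of \ref{LVchardef}, where the triangulated hypothesis on ${\cal D}$ enters essentially.
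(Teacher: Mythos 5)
Your core step fails where you write that ``Brown representability on the compactly generated ${\cal T}$ furnishes a left adjoint $L$''. Brown representability (and its dual) lets you construct adjoints for functors whose \emph{source} satisfies the relevant representability theorem: to get a left adjoint to the inclusion $j:{\cal D}\hookrightarrow{\cal T}$ you must represent, for each $X\in{\cal T}$, the product-preserving homological functor $(X,j-)$ \emph{on ${\cal D}$}, i.e.\ you need (the dual of) Brown representability for ${\cal D}$ itself -- equivalently, Brown representability for ${\cal B}={}^\perp{\cal D}$ if you instead build the coreflection -- and neither ${\cal D}$ nor ${\cal B}$ is known at this point to be compactly or well generated. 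The existence of the approximation triangles $B_X\to X\to D_X\to\Sigma B_X$ is precisely the hard content of the theorem; in the cited sources it is obtained not by an adjoint-functor argument inside ${\cal T}$ but by passing through ${\rm Mod}\mbox{-}{\cal T}^{\rm c}$, using the finite-type hereditary torsion theory $\tau_{\cal D}$ of \ref{deftf} (whose associated annihilator ideal of ${\cal T}^{\rm c}$ is exact/idempotent exactly because ${\cal D}$ is triangulated), equivalently the fact that the pair $({}^\perp{\cal D},{\cal D})$ is cogenerated by the pure-injectives of ${\cal D}$ (cf.\ \ref{smash2} and the end of Section \ref{sectors}, and \cite[2.10]{AngHrbParam}).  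Once existence of the torsion pair is granted, your arguments that ${\cal B}^\perp={\cal D}$ and that smashing is equivalent to coproduct-closure of the local objects are fine.

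A second problem is your reliance on \ref{LVchardef}: that theorem carries an enhancement hypothesis (a strong and stable derivator) which is not part of \ref{smash1}, so it is not available in the stated generality; and it is also unnecessary for the purpose you use it for. Every definable subcategory of ${\cal T}$, triangulated or not, is closed under coproducts, simply because a coherent functor $F_f={\rm coker}\big((B,-)\to(A,-)\big)$ with $A,B\in{\cal T}^{\rm c}$ commutes with coproducts. So your closing diagnosis -- that coproduct-closure ``needs the full force of \ref{LVchardef}, where the triangulated hypothesis on ${\cal D}$ enters essentially'' -- is mistaken on both counts: the triangulated hypothesis enters instead through the extension- and shift-closure needed for $({\cal B},{\cal D})$ to be a torsion pair and through the exactness/idempotence of the corresponding ideal in \cite{KraCohQuot}. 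Likewise, in the converse direction, once ${\cal B}^\perp$ is exhibited as the zero-class of a set of coherent functors it is definable by definition (no appeal to \ref{LVchardef} is needed), but producing that set of functors from the smashing hypothesis is the substance of Krause's theorem, so your converse amounts to a citation rather than a proof.
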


\begin{prop}\label{smash2} \marginpar{smash2} \cite[3.9, Thm.~C]{KraTel}  Suppose that ${\cal B}$ is a smashing subcategory of ${\cal T}$ and ${\cal D}= {\cal B}^\perp$ is the corresponding triangulated definable subcategory.  Then ${\cal B} = y^{-1}{\mathscr T}_{\cal D}$, where ${\mathscr T}_{\cal D} = \overrightarrow{{\cal S}_{\cal D}}$ is the torsion class for the torsion theory $\gamma_{\cal B} = \tau_{\cal D}$ generated by $y{\cal B}$, equivalently cogenerated by $y{\cal D}$.
\end{prop}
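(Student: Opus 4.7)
The plan is to prove the two equalities separately: first the set-theoretic identity ${\cal B} = y^{-1}{\mathscr T}_{\cal D}$ by a double inclusion, then the identity $\gamma_{\cal B} = \tau_{\cal D}$ of finite-type hereditary torsion theories on ${\rm Mod}\mbox{-}{\cal T}^{\rm c}$.

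For the inclusion ${\cal B} \subseteq y^{-1}{\mathscr T}_{\cal D}$, I would reuse the argument already sketched immediately before the statement (after \ref{cpctgentors}): since $({\cal B}, {\cal D}) = 0$ and every injective object of ${\cal F}_{\cal D}$ is of the form $yN$ for some pure-injective $N \in {\cal D}$ (by \ref{pinjtoinjD}), the adjunction-free bijection \ref{ybijpi} gives $(yB, yN) = (B, N) = 0$ for every $B \in {\cal B}$ and every such $N$. Thus $y{\cal B}$ is ${\rm Hom}$-orthogonal to every torsionfree injective, hence lies in the torsion class ${\mathscr T}_{\cal D} = {}^\perp({\cal F}_{\cal D} \cap {\rm Inj}\mbox{-}{\cal T}^{\rm c})$.

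The reverse inclusion $y^{-1}{\mathscr T}_{\cal D} \subseteq {\cal B}$ is the main obstacle. Given $X \in {\cal T}$ with $yX \in {\mathscr T}_{\cal D}$, I would use the Bousfield localisation associated to the smashing subcategory ${\cal B}$ to produce a distinguished triangle $B \to X \to D \to \Sigma B$ with $B \in {\cal B}$ and $D \in {\cal D}$. Applying the (cohomological) restricted Yoneda functor to this triangle and the one obtained by rotating once gives the exact fragments $yB \to yX \to yD \to y\Sigma B$ in ${\rm Mod}\mbox{-}{\cal T}^{\rm c}$. Since smashing subcategories are triangulated, $\Sigma B \in {\cal B}$, so by the first inclusion both $yB$ and $y\Sigma B$ are $\tau_{\cal D}$-torsion. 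But $yD \in {\cal F}_{\cal D}$ is torsionfree, so the map $yD \to y\Sigma B$ must vanish, which forces $yX \to yD$ to be surjective; similarly its kernel, being a quotient of the torsion module $yB$, is torsion, and the hypothesis $yX \in {\mathscr T}_{\cal D}$ then makes $yD$ simultaneously torsion and torsionfree, hence $yD = 0$. By \ref{ybijcpct} we obtain $(A, D) = 0$ for every compact $A$, and compact generation of ${\cal T}$ forces $D = 0$, so $X \cong B \in {\cal B}$.

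For the second equality, I would compare the two finite-type hereditary torsion theories $\gamma_{\cal B}$ and $\tau_{\cal D}$ through their classes of indecomposable torsionfree injectives, which determine such torsion theories. Under the equivalence ${\rm Pinj}({\cal T}) \simeq {\rm Inj}\mbox{-}{\cal T}^{\rm c}$ of \ref{pinjtoinj}, an indecomposable injective $yN$ lies in the torsionfree class of $\tau_{\cal D}$ iff $N \in {\rm Pinj}({\cal D})$ (by \ref{pinjtoinjD}), while it lies in the torsionfree class of $\gamma_{\cal B}$ iff $(yB, yN) = 0$ for every $B \in {\cal B}$, which by \ref{ybijpi} is equivalent to $({\cal B}, N) = 0$, i.e., to $N \in {\cal B}^\perp = {\cal D}$. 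The two conditions coincide, so $\gamma_{\cal B} = \tau_{\cal D}$, and the ${\mathscr T}_{\cal D} = \overrightarrow{{\cal S}_{\cal D}}$ presentation follows from the general description of a finite-type hereditary torsion class in Section \ref{sectorsTc}.
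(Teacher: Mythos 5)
The paper itself gives no proof of \ref{smash2} (it is quoted from \cite[3.9, Thm.~C]{KraTel}), so your argument has to stand on its own. Most of it does: the inclusion ${\cal B}\subseteq y^{-1}{\mathscr T}_{\cal D}$ via ${\mathscr T}_{\cal D}={}^\perp({\cal F}_{\cal D}\cap {\rm Inj}\mbox{-}{\cal T}^{\rm c})$, \ref{pinjtoinjD} and \ref{ybijpi} is exactly the argument the paper sketches before the statement, and the identification $\gamma_{\cal B}=\tau_{\cal D}$ by comparing torsionfree injectives is sound. But there is a genuine flaw at the crux of the reverse inclusion $y^{-1}{\mathscr T}_{\cal D}\subseteq {\cal B}$: you assert that the map $yD\to y\Sigma B$ vanishes ``since $yD$ is torsionfree and $y\Sigma B$ is torsion''. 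That is the wrong direction of orthogonality. A torsion pair gives $({\mathscr T}_{\cal D},{\cal F}_{\cal D})=0$, i.e.\ maps \emph{from} torsion \emph{to} torsionfree vanish; a map from a torsionfree module to a torsion module need not be zero (think of ${\mathbb Z}\to {\mathbb Z}/p$). So that step, as justified, fails.

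The repair is short and uses what you already have: work with the other map in the exact fragment $yB\to yX\to yD\to y\Sigma B$. The image of $yX\to yD$ is a quotient of the torsion module $yX$, hence torsion, and it is a submodule of the torsionfree module $yD$, hence zero; exactness then embeds $yD$ into $y\Sigma B$, which is torsion by your first inclusion, and \emph{heredity} of $\tau_{\cal D}$ makes $yD$ torsion, so $yD=0$; compact generation gives $D=0$ and $X\cong B\in{\cal B}$, as you conclude. (A minor further point: in your last paragraph it is safer to compare \emph{all} torsionfree injectives rather than only the indecomposable ones -- determinacy by indecomposables uses finite type, which for $\gamma_{\cal B}$ is not known in advance -- and by \ref{pinjtoinj} every injective ${\cal T}^{\rm c}$-module is of the form $yN$ with $N$ pure-injective, so the same computation goes through and hereditary torsion theories are determined by their torsionfree injectives.)
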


\begin{cor}\label{smash3} \marginpar{smash3}  If ${\cal D}$ is a triangulated definable subcategory of ${\cal T}$, and ${\mathscr T}_{\cal D}$ is the corresponding hereditary torsion class in ${\rm Mod}\mbox{-}{\cal T}^{\rm c}$, then $y^{-1}{\mathscr T}_{\cal D} =\, ^\perp{\cal D}$ is a (typical) smashing subcategory of ${\cal T}$.
\end{cor}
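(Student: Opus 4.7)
The plan is to obtain this corollary by directly chaining Theorem \ref{smash1} and Proposition \ref{smash2}. Starting from the hypothesis that ${\cal D}$ is a triangulated definable subcategory of ${\cal T}$, I would first invoke Theorem \ref{smash1} to conclude that ${\cal B} := {}^\perp{\cal D}$ is a smashing subcategory and that $({\cal B}, {\cal D})$ is a torsion pair, in particular ${\cal D} = {\cal B}^\perp$. This produces the smashing subcategory to which the next step is to be applied.

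Next, I would apply Proposition \ref{smash2} to this smashing ${\cal B}$, taking its associated triangulated definable subcategory to be ${\cal B}^\perp$, which by the previous step is exactly our original ${\cal D}$. Proposition \ref{smash2} then gives ${\cal B} = y^{-1}{\mathscr T}_{\cal D}$, where ${\mathscr T}_{\cal D} = \overrightarrow{{\cal S}_{\cal D}}$ is the torsion class of the finite-type hereditary torsion theory $\tau_{\cal D}$ on ${\rm Mod}\mbox{-}{\cal T}^{\rm c}$ associated to ${\cal D}$ under \ref{deftf}. Chaining the two identifications yields $y^{-1}{\mathscr T}_{\cal D} = {\cal B} = {}^\perp{\cal D}$, which is smashing.

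The adjective ``typical'' is accounted for by the second half of Theorem \ref{smash1}: every smashing subcategory of ${\cal T}$ arises as ${}^\perp{\cal D}$ for some triangulated definable ${\cal D}$, so the description $y^{-1}{\mathscr T}_{\cal D}$ exhausts all smashing subcategories as ${\cal D}$ ranges over triangulated definable subcategories. There is no real obstacle: the corollary is a bookkeeping consequence of the preceding two results, and the only thing to check is that the notation is compatible, namely that the torsion theory $\gamma_{\cal B}$ produced by \ref{smash2} from ${\cal B} = {}^\perp {\cal D}$ is the same as $\tau_{\cal D}$ appearing in the statement — but this compatibility $\gamma_{\cal B} = \tau_{\cal D}$ is explicitly recorded in the statement of \ref{smash2}.
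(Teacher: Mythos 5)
Your argument is correct and is exactly the intended one: the paper states \ref{smash3} as an immediate consequence of \ref{smash1} and \ref{smash2} without separate proof, and your chaining of the two (including the identification $\gamma_{\cal B}=\tau_{\cal D}$ and the ``typical'' clause from the surjectivity statement in \ref{smash1}) is precisely that bookkeeping.
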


One says that ${\cal T}$ has the {\bf Telescope Property} if, for each smashing subcategory ${\cal B}$, the torsion pair $({\cal B}, {\cal D})$ is compactly generated, equivalently, \ref{cpctgentors}, if the Serre subcategory ${\cal S}_{\cal D} = {\mathscr T}_{\cal D} \, \cap \, {\rm mod}\mbox{-}{\cal T}^{\rm c}$ is generated by projective (= representable) objects, see \cite[Introduction]{KraTel}.

\subsection{Elementary duality}\label{secelemdual} \marginpar{secelemdual}

If $R$ is any skeletally small preadditive category (= multisorted ring), then there is a duality - {\em elementary duality}, \cite{PreDual}, \cite{HerzDual}, see \cite[\S \S 1.3, 10.3]{PreNBK} - between the category of pp-pairs for right $R$-modules and the category of pp-pairs for left $R$-modules.  This duality induces a natural bijection between the definable subcategories of ${\rm Mod}\mbox{-}R$ and $R\mbox{-}{\rm Mod}$, \cite[6.6]{HerzDual} see \cite[\S 3.4.2]{PreNBK}.

In particular this applies with $R={\cal T}^{\rm c}$.  Because the model theory of ${\cal T}$ is essentially that of ${\rm Flat}\mbox{-}{\cal T}^{\rm c} = {\rm Abs}\mbox{-}{\cal T}^{\rm c}$ inside ${\rm Mod}\mbox{-}{\cal T}^{\rm c}$, it follows that we have a version of elementary duality between ${\cal T}$ and the definable subcategory ${\cal T}^{\rm c}\mbox{-}{\rm Abs} = {\cal T}^{\rm c}\mbox{-}{\rm Flat}$ of ${\cal T}^{\rm c}\mbox{-}{\rm Mod}$.  In particular, elementary duality gives a natural bijection between the definable subcategories of ${\cal T}$ and those of ${\cal T}^{\rm c}\mbox{-}{\rm Flat}$.

With the module situation in mind, it is natural to ask whether there is a compactly triangulated category ${\cal T}_1$ such that ${\cal T}_1^{\rm c} \simeq ({\cal T}^{\rm c})^{\rm op}$ and hence an elementary duality between the model theory of ${\cal T}$ and the model theory of ${\cal T}_1$ {\it via} ${\rm Mod}\mbox{-}{\cal T}_1^{\rm c} \simeq {\cal T}^{\rm c}\mbox{-}{\rm Mod}$.  This situation is considered in \cite[\S 7]{GarkPre2}.  In particular, if ${\cal T}$ is the derived category of modules over a ring then this is so, \cite[7.5]{GarkPre2}, see also \cite{AngHrbParam}; more generally it is so if ${\cal T}$ is an algebraic triangulated category, \cite{BirWilDualPr}. 

\vspace{4pt}

\noindent {\bf Question:}  If ${\cal T}$ is a compactly generated triangulated category, is there a triangulated category ${\cal T}_1$ and an elementary duality between ${\cal T}$ and ${\cal T}_1$?  If such a category ${\cal T}_1$ exists, is it essentially unique?

By ``an elementary duality" we mean at least a natural bijection between definable subcategories, probably also an anti-equivalence between the respective categories of pp-sorts, perhaps also a duality at the level of pp formulas.  See the remarks in Section \ref{secenhanultra} about enhancements.

\vspace{4pt}

This also raises the further general questions.

\vspace{4pt}

\noindent {\bf Questions:}  What is a characterisation of the categories which arise as ${\cal T}^{\rm c}$ where ${\cal T}$ is compactly generated triangulated?  Given such a category, does it come from a unique compactly generated triangulated category ${\cal T}$? and, if so, how can ${\cal T}$ be constructed from it?  In particular is $({\cal T}^{\rm c})^{\rm op}$ of the form ${\cal T}_1^{\rm c}$ for some compactly generated triangulated category ${\cal T}_1$?

These seem to be hard questions to answer; they include the, only partly resolved, Margolis Conjecture in the case that ${\cal T}$ is the stable homotopy category of spectra.

\vspace{4pt}

If ${\cal T}$ is the derived category ${\cal D}_R = {\cal D}({\rm Mod}\mbox{-}R)$ of some ring $R$, we do get a good elementary duality between ${\cal D}_R$ and ${\cal D}_{R^{\rm op}} = {\cal D}(R\mbox{-}{\rm Mod})$.  This follows because the duality $({\rm proj}\mbox{-}R)^{\rm op} \to {\rm proj}\mbox{-}R^{\rm op}$ between the categories of finitely generated projectives given by $P \mapsto (P,R)$ extends to the respective categories of perfect complexes, that is, to a duality $(-)^{\rm t}:({\cal D}_R^{\rm c})^{\rm op} \simeq {\cal D}_{R^{\rm op}}^{\rm c}$, see \cite[\S 7]{GarkPre2}, also \cite[\S 2.2]{AngHrbParam}. In these papers, $R$ is a 1-sorted ring but the arguments also apply if $R$ is a skeletally small preadditive category.  In \cite[\S 3.2]{BirWilDualPr} this is extended to algebraic triangulated categories {\it via} dg-enhancements.  We will, in Section \ref{secintdual}, describe an internal duality, from \cite[Chpt.~7]{WagThesis} in the tensor-triangulated case.  If $R$ is commutative, so ${\cal D}_R \simeq {\cal D}_{R^{\rm op}}$, the duality in \cite{AngHrbParam} does coincide (\cite[7.3.5]{WagThesis}) with the internal duality described in Section \ref{secintdual}.

For details, we refer the reader to those papers; in particular, the generalisation in \cite{BirWilDualPr} to algebraic triangulated categories uses enhancements (see Section \ref{secenhanultra}), which we don't go into here (also see \cite{LakVit} for related use of enhancements).  For an abstract approach to dualities between triangulated categories, see \cite{BirWilDualPr}.

\vspace{4pt}

We continue a little further in the case that ${\cal T}$ is the derived category ${\cal D}_R$ of a module category.  If ${\cal D}$ is a definable subcategory of ${\cal D}_R$, then we have the corresponding annihilator ideal ${\rm Ann}_{{\cal D}_R^{\rm c}}({\cal D})$.  Set $({\rm Ann}_{{\cal D}_R^{\rm c}}({\cal D}))^{\rm t} = \{ f^{\rm t}: f\in {\rm Ann}_{{\cal D}_R^{\rm c}}({\cal D}) \}$, where $(-)^{\rm t}:({\cal D}_R^{\rm c})^{\rm op} \simeq {\cal D}_{R^{\rm op}}^{\rm c}$ is the duality from the previous paragraph.  Then, \cite[2.3]{AngHrbParam}, $({\rm Ann}_{{\cal D}_R^{\rm c}}({\cal D}))^{\rm t}$ is an annihilator ideal of ${\cal D}_{R^{\rm op}}^{\rm c}$.  We set ${\cal D}^{\rm d} = {\rm Ann}_{{\cal D}_{R^{\rm op}}}(({\rm Ann}_{{\cal D}_R^{\rm c}}({\cal D}))^{\rm t})$ and refer to this as the definable subcategory of ${\cal D}_{R^{\rm op}}$ {\bf elementary dual} to ${\cal D}$.  The terminology is further justified by the following, which refers, using the obvious notations, to the other ways of specifying definable subcategories.

\begin{prop}\label{extdualdefn} \marginpar{extdualdefn} (\cite[2.2-2.5]{AngHrbParam}) If ${\cal D}$ is a definable subcategory of ${\cal D}_R$ and ${\cal D}^{\rm d}$ is its elementary dual definable subcategory of ${\cal D}_{R^{\rm op}}$, then:

${\rm Ann}_{{\cal D}_R^{\rm c}}({\cal D}^{\rm t}) = ({\rm Ann}_{{\cal D}_R^{\rm c}}({\cal D}))^{\rm t}$

${\rm Div}_{{\cal D}_R^{\rm c}}({\cal D}^{\rm t}) = ({\cal D}\mbox{-}{\rm TF})^{\rm t}$

${\cal D}^{\rm t} \mbox{-}{\rm TF} = ({\rm Div}_{{\cal D}_R^{\rm c}}({\cal D}))^{\rm t}$.
\end{prop}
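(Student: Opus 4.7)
The plan is to handle (a) directly from the construction of ${\cal D}^{\rm d}$, and then leverage the fact that $(-)^{\rm t}\colon({\cal D}_R^{\rm c})^{\rm op} \simeq {\cal D}_{R^{\rm op}}^{\rm c}$ is a triangulated duality to swap the Div and TF conditions for (b) and (c). Throughout I write ${\cal D}^{\rm d}$ for the elementary dual (the proposition uses the superscript ${\rm t}$ interchangeably).

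For (a) I would observe that by construction ${\cal D}^{\rm d} = {\rm Ann}_{{\cal D}_{R^{\rm op}}}(({\rm Ann}({\cal D}))^{\rm t})$, and the cited result \cite[2.3]{AngHrbParam} asserts that $({\rm Ann}({\cal D}))^{\rm t}$ is itself an annihilator (cohomological) ideal in ${\cal D}_{R^{\rm op}}^{\rm c}$. The Galois correspondence of \ref{defcatann} between definable subcategories of ${\cal D}_{R^{\rm op}}$ and annihilator ideals of ${\cal D}_{R^{\rm op}}^{\rm c}$ then forces ${\rm Ann}({\cal D}^{\rm d}) = ({\rm Ann}({\cal D}))^{\rm t}$.

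For (b) and (c) the engine is the three-way equivalence (\ref{three}) of \ref{defequivs}: in any distinguished triangle $A \xrightarrow{f} B \xrightarrow{g} C \xrightarrow{h} \Sigma A$, the conditions $f \in {\rm Ann}({\cal D})$, $g \in {\rm Div}({\cal D})$ and $\Sigma^{-1}h \in {\cal D}\mbox{-}{\rm TF}$ are equivalent, and all three are preserved under triangle rotation. Since $(-)^{\rm t}$ reverses arrows and exchanges $\Sigma \leftrightarrow \Sigma^{-1}$, applying it to the triangle above produces, after one rotation, a distinguished triangle
$$C^{\rm t} \xrightarrow{g^{\rm t}} B^{\rm t} \xrightarrow{f^{\rm t}} A^{\rm t} \xrightarrow{-\Sigma h^{\rm t}} \Sigma C^{\rm t}$$
in ${\cal D}_{R^{\rm op}}^{\rm c}$ (and signs are inert for the three membership conditions). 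Applying \ref{defequivs} directly to this dual triangle, and then once more after a rotation, yields in particular $f^{\rm t} \in {\rm Ann}({\cal D}^{\rm d})$ iff $g^{\rm t} \in {\cal D}^{\rm d}\mbox{-}{\rm TF}$, and $m^{\rm t} \in {\rm Ann}({\cal D}^{\rm d})$ iff $\ell^{\rm t} \in {\rm Div}({\cal D}^{\rm d})$ for any triangle whose first two morphisms are $\ell$ and $m$. This is exactly how duality interchanges the ``Div position'' and the ``TF position''.

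With this in hand, (c) reduces to a chain: given $g$ in ${\cal D}_R^{\rm c}$, complete it to a triangle as above, and read off
$$g \in {\rm Div}({\cal D}) \;\Leftrightarrow\; f \in {\rm Ann}({\cal D}) \;\Leftrightarrow\; f^{\rm t} \in {\rm Ann}({\cal D}^{\rm d}) \;\Leftrightarrow\; g^{\rm t} \in {\cal D}^{\rm d}\mbox{-}{\rm TF},$$
using \ref{defequivs} on the original triangle, then (a), then the dual-triangle analysis. For (b), given $\ell\colon D \to A$, complete it to $D \xrightarrow{\ell} A \xrightarrow{m} N \to \Sigma D$ and chain
$$\ell \in {\cal D}\mbox{-}{\rm TF} \;\Leftrightarrow\; m \in {\rm Ann}({\cal D}) \;\Leftrightarrow\; m^{\rm t} \in {\rm Ann}({\cal D}^{\rm d}) \;\Leftrightarrow\; \ell^{\rm t} \in {\rm Div}({\cal D}^{\rm d}),$$
where the first step rotates the original triangle, the second is (a), and the third reads off the dual triangle. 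Since $(-)^{\rm t}$ is a bijection on morphisms (up to the identification ${\cal T}^{\rm c}{}^{\rm op}\simeq {\cal T}_1^{\rm c}$), each chain of equivalences lifts to the claimed equality of sets of morphisms.

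The main obstacle I anticipate is the bookkeeping: carefully tracking which morphism in the dual triangle inherits the Ann/Div/TF roles under arrow reversal and the $\Sigma \leftrightarrow \Sigma^{-1}$ swap, verifying that the sign twists on the connecting morphisms are inert for all three membership conditions, and being explicit about which rotations are being applied on each side. A secondary concern is to invoke only what is needed about the triangulated-duality properties of $(-)^{\rm t}$, namely that it sends triangles to triangles and that it is essentially surjective on morphisms; beyond that, the argument is a mechanical combination of the Galois correspondence \ref{defcatann} and the triangle equivalences of \ref{defequivs}.
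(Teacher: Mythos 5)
Your proposal is correct and follows essentially the same route as the paper: part (a) from the definition of ${\cal D}^{\rm d}$ together with \cite[2.3]{AngHrbParam}, and parts (b), (c) by forming a distinguished triangle, dualising it under $(-)^{\rm t}$ (which reverses arrows and swaps $\Sigma \leftrightarrow \Sigma^{-1}$), and applying the three-way equivalence (\ref{three}) to both the original and the dual triangle. Your write-up just makes explicit the rotation/sign bookkeeping and the surjectivity of $(-)^{\rm t}$ on morphisms, which the paper leaves implicit.
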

\begin{proof} The first is by definition and \cite[2.3]{AngHrbParam}.  For the others consider $f\in {\rm Ann}_{{\cal D}_R^{\rm c}}({\cal D})$ and form the extended triangle
$$\Sigma^{-1}B \xrightarrow{\Sigma^{-1}g} \Sigma^{-1}C \xrightarrow{\Sigma^{-1}h} A \xrightarrow{f} B \xrightarrow{g} C \xrightarrow{h} \Sigma A$$
then dualise it:
$$  (\Sigma A)^{\rm t} = \Sigma^{-1}A^{\rm t} \xrightarrow{h^{\rm t}} C^{\rm t}  \xrightarrow{g^{\rm t}} B^{\rm t} \xrightarrow{f^{\rm t} } A^{\rm t} \xrightarrow{\Sigma \, h^{\rm t} } \Sigma \, C^v \xrightarrow{\Sigma \, g^{\rm t}} \Sigma \, B^{\rm t} .$$
Then we use the equivalences (\ref{three}) from Section \ref{secdefsub}, namely:
$$Xf=0 \quad \mbox{ iff } \quad g|X \quad \mbox{ iff } \quad {\rm ann}_X(\Sigma^{-1}h)=0.$$
From that we directly obtain the other two equalities.
\end{proof}

We also have, just as for definable subcategories of module categories, that the category of pp-pairs for ${\cal D}^{\rm d}$ is the opposite to that for ${\cal D}$.  The latter is equivalent to ${\rm mod}\mbox{-}{\cal D}_R^{\rm c}/{\cal S}_{\cal D}$, where ${\cal S}_{\cal D} = \{ G: (G,yX) =0 \,\, \forall X\in {\cal D} \}$.  We set $d{\cal S}_{\cal D} = \{ dG: G\in {\cal S}_{\cal D}\}$, where $d$ is the duality of \ref{rtlmods}.\footnote{One can set up duality at the level of pp formulas but it's duality of pp-pairs which we really need.  Also see Section \ref{secintdual} for the issues re well-definedness/independence of enhancements which arise.}

\begin{prop}\label{elemdualimag} \marginpar{elemdualimag} If ${\cal D}$ is a definable subcategory of ${\cal D}_R$ and ${\cal D}^{\rm d}$ is its elementary dual definable subcategory of ${\cal D}_{R^{\rm op}}$, then 
$${\cal S}_{{\cal D}^{\rm d}} = d{\cal S}_{\cal D}.$$  
Hence 
$${\mathbb L}^{\rm eq+}({\cal D}^{\rm d}) = ({\cal D}^{\rm c}_{R^{\rm op}})\mbox{-}{\rm mod} /{\cal S}_{{\cal D}^{\rm d}} \simeq ({\rm mod}\mbox{-}{\cal D}_R^{\rm c}/{\cal S}_{\cal D})^{\rm op} = ({\mathbb L}^{\rm eq+}({\cal D}))^{\rm op}.$$
\end{prop}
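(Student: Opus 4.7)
The plan is to prove the set identity ${\cal S}_{{\cal D}^{\rm d}}=d{\cal S}_{\cal D}$ object-by-object, using Theorem~\ref{morsinmodcat} to detect membership in each Serre subcategory via an annihilator condition on a morphism of compacts, and then transporting that condition using the transpose duality $(-)^{\rm t}$ together with Proposition~\ref{extdualdefn}. The ``hence'' assertion then follows formally from Proposition~\ref{LeqD}.

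Take an arbitrary $G\in {\rm mod}\mbox{-}{\cal D}_R^{\rm c}$ and write $G=G_\psi$ for some $\psi:P\to Q$ in ${\cal D}_R^{\rm c}$ via a projective presentation $(-,P)\xrightarrow{(-,\psi)}(-,Q)\to G\to 0$. Complete $\psi$ to a distinguished triangle $P\xrightarrow{\psi}Q\xrightarrow{\alpha}T\xrightarrow{\beta}\Sigma P$ and rotate once to $Q\xrightarrow{\alpha}T\xrightarrow{\beta}\Sigma P\xrightarrow{-\Sigma\psi}\Sigma Q$, in which $-\psi$ plays the role of $\Sigma^{-1}h$. The equivalence of clauses~(i) and~(iii) of Theorem~\ref{morsinmodcat} applied to this rotated triangle then gives $G_\psi\in {\cal S}_{\cal D}$ iff $\alpha\in {\rm Ann}({\cal D})$.

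Now I would compute $dG_\psi$. The duality $d$ of~\ref{rtlmods}, unwound via $G\mapsto G^\circ$ and restriction to ${\cal T}^{\rm c}$ as in Section~\ref{secppsorts}, is an anti-equivalence sending the right representable $(-,A)$ to the left representable $(A,-)$ and sending the morphism $(-,\psi)$ to $(\psi,-)$; applying it to the presentation of $G_\psi$ therefore yields the exact sequence $0\to dG_\psi\to (Q,-)\xrightarrow{(\psi,-)}(P,-)$ in ${\cal D}_R^{\rm c}\mbox{-}{\rm mod}$. Exactness of the contravariant representables on the above triangle identifies $dG_\psi=\ker((\psi,-))={\rm im}((\alpha,-):(T,-)\to(Q,-))$; under the identification ${\cal D}_R^{\rm c}\mbox{-}{\rm mod}\simeq {\rm mod}\mbox{-}{\cal D}_{R^{\rm op}}^{\rm c}$ induced by $(-)^{\rm t}:({\cal D}_R^{\rm c})^{\rm op}\simeq {\cal D}_{R^{\rm op}}^{\rm c}$ this becomes ${\rm im}((-,\alpha^{\rm t}):(-,T^{\rm t})\to(-,Q^{\rm t}))$ in ${\rm mod}\mbox{-}{\cal D}_{R^{\rm op}}^{\rm c}$. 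Then clause~(i) of Theorem~\ref{morsinmodcat} in ${\cal D}_{R^{\rm op}}$, combined with the first equality of Proposition~\ref{extdualdefn} (so that $(-)^{\rm t}$ carries ${\rm Ann}({\cal D})$ bijectively onto ${\rm Ann}({\cal D}^{\rm d})$), gives $dG_\psi\in {\cal S}_{{\cal D}^{\rm d}}$ iff $\alpha^{\rm t}\in {\rm Ann}({\cal D}^{\rm d})$ iff $\alpha\in {\rm Ann}({\cal D})$. Chaining the biconditionals, $G_\psi\in {\cal S}_{\cal D}$ iff $dG_\psi\in {\cal S}_{{\cal D}^{\rm d}}$; the reverse containment follows symmetrically by running the same argument with $d^{-1}$ and ${\cal D}$, ${\cal D}^{\rm d}$ swapped.

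The ``hence'' statement is then immediate: Proposition~\ref{LeqD} realises both ${\mathbb L}^{\rm eq+}({\cal D})$ and ${\mathbb L}^{\rm eq+}({\cal D}^{\rm d})$ as Serre quotients of the respective finitely presented module categories by ${\cal S}_{\cal D}$ and ${\cal S}_{{\cal D}^{\rm d}}$, so the anti-equivalence $d$ --- now matching ${\cal S}_{\cal D}$ with ${\cal S}_{{\cal D}^{\rm d}}$ --- descends to an anti-equivalence of these Serre quotients, giving ${\mathbb L}^{\rm eq+}({\cal D}^{\rm d})\simeq ({\mathbb L}^{\rm eq+}({\cal D}))^{\rm op}$. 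The main technical obstacle is the middle computation of $dG_\psi$: checking that the concrete duality $d$ of~\ref{rtlmods} composed with the transpose sends the chosen presentation to the image claimed, which requires tracking carefully how representables and the morphism $(-,\psi)$ transform under each functor. Signs arising from triangle rotations are harmless because Serre subcategory membership is preserved by isomorphism.
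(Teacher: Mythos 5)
Your argument is correct. Note that the paper gives no internal proof of \ref{elemdualimag}: it obtains it as a special case of \cite[7.4]{GarkPre2}, which treats arbitrary pairs ${\cal T}$, ${\cal T}_1$ of compactly generated triangulated categories with ${\cal T}_1^{\rm c} \simeq ({\cal T}^{\rm c})^{\rm op}$ and also yields a frame isomorphism between ${\rm Zg}({\cal T})$ and ${\rm Zg}({\cal T}_1)$. What you give instead is a self-contained derivation from results stated in the paper: writing an arbitrary $G \in {\rm mod}\mbox{-}{\cal D}_R^{\rm c}$ as $G_\psi$, rotating the triangle on $\psi$ so that $\psi$ (up to an irrelevant sign) becomes the desuspended connecting map and invoking the equivalence of clauses (i) and (iii) of \ref{morsinmodcat} to translate $G_\psi \in {\cal S}_{\cal D}$ into $\alpha \in {\rm Ann}({\cal D})$; then computing $dG_\psi \cong \ker(\psi,-) = {\rm im}(\alpha,-)$ from (\ref{eq2}) and the long exact sequence, transporting along $(-)^{\rm t}$, and applying \ref{morsinmodcat}(i) in ${\cal D}_{R^{\rm op}}$ together with ${\rm Ann}({\cal D}^{\rm d}) = ({\rm Ann}({\cal D}))^{\rm t}$ from \ref{extdualdefn}. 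This is the same triangle-rotation device the paper itself uses to prove \ref{extdualdefn} and \ref{intdualdefn}, so your proof sits naturally alongside the surrounding material and makes the proposition independent of the external reference, whereas the citation route buys the more general ${\cal T}$, ${\cal T}_1$ statement and the Ziegler frame isomorphism mentioned immediately after the proposition. Two small remarks: the comparison of ${\cal S}_{{\cal D}^{\rm d}}$ with $d{\cal S}_{\cal D}$ only makes sense after identifying ${\cal D}_R^{\rm c}\mbox{-}{\rm mod}$ with ${\rm mod}\mbox{-}{\cal D}_{R^{\rm op}}^{\rm c}$ via $(-)^{\rm t}$, which you rightly make explicit though the statement leaves it implicit; and your closing symmetric step is not needed, since the established biconditional $G \in {\cal S}_{\cal D} \Leftrightarrow dG \in {\cal S}_{{\cal D}^{\rm d}}$, essential surjectivity of $d$ and isomorphism-closure of Serre subcategories already give both containments.
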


This is a special case of \cite[7.4]{GarkPre2} which deals with the general case of pairs, ${\cal T}$, ${\cal T}_1$, of compactly generated triangulated categories with ${\cal T}_1^{\rm c} \simeq ({\cal T}^{\rm c})^{\rm op}$, also showing that, in this situation, we have a frame isomorphism between ${\rm Zg}({\cal T})$ and ${\rm Zg}({\cal T}_1)$.

It is shown in \cite{AngHrbParam} that, for derived categories of module categories, elementary duality has the same relation to algebraic Hom-dualities as in the case of definable subcategories of module categories. In \cite{BirWilDualPr} this is treated in a very general way and a variety of specific examples, from algebra and topology, are given.

\section{Tensor-triangulated categories} \label{secttcats} \marginpar{secttcats}

Suppose now that the compactly generated triangulated category ${\cal T}$ has a monoidal, that is a tensor, structure.  So we have $\otimes:{\cal T} \times {\cal T} \to {\cal T}$, which we assume to be commutative as well as associative, for which we have a tensor-unit $\mathbbm{1}$ - so $\mathbbm{1} \otimes X \simeq X$ for every $X\in {\cal T}$.  {\bf We assume $\otimes$ to be exact in each variable.}  We drop explicit mention of associators {\it et cetera}, see for instance \cite[Part II]{Lev} for more background.

We will suppose that ${\cal T}$ is {\bf rigidly-compactly generated}.  That is, we assume in addition:  

\noindent that the tensor structure is {\bf closed}, meaning that there is an internal hom $[-,-]:{\cal T}\times {\cal T} \to {\cal T}$ which is right adjoint to $\otimes$:  $(X\otimes Y,Z) \simeq (X,[Y,Z])$ for $X,Y,Z \in {\cal T}$, in particular $(Y,Z) \simeq (\mathbbm{1}, [Y,Z])$;  

\noindent and, writing $X^\vee = [X,\mathbbm{1}]$ for the {\bf dual} of an object $X\in {\cal T}$, we assume that every compact object $A$ is {\bf rigid}, meaning that the natural map $A^\vee \otimes B \to [A,B]$ is an isomorphism for every $B\in {\cal T}^{\rm c}$.  

\noindent It follows that ${\cal T}^{\rm c}$ is a {\bf tensor-subcategory} of ${\cal T}$ (i.e.~is closed under $\otimes$), that $(A^\vee)^\vee \simeq A$, that $A^\vee \otimes X \simeq [A,X]$ for $X\in {\cal T}$ and $A\in {\cal T}^{\rm c}$, and that the duality functor $(-)^\vee$ is exact (e.g.~see \cite[\S 1, 2.12]{StevTour}).

The monoidal structure on ${\cal T}^{\rm c}$ induces, by Day convolution (see \cite[Appx.]{BKSFrame}), a right-exact monoidal structure on ${\rm mod}\mbox{-}{\cal T}^{\rm c}$ and hence on ${\rm Mod}\mbox{-}{\cal T}^{\rm c}$.  By definition we have $y(A\otimes B) \simeq yA \otimes yB$ for $A, B \in {\cal T}^{\rm c}$ and, see \cite[A.14]{BKSFrame}, the restricted Yoneda functor $y:{\cal T} \to {\rm Mod}\mbox{-}{\cal T}^{\rm c}$ is monoidal.  The duality \ref{cohfpdual} between ${\rm mod}\mbox{-}{\cal T}^{\rm op}$ and ${\rm Coh}({\cal T})$ is monoidal if the latter is given the natural tensor structure (see \cite[\S 5.1]{WagThesis}).

\vspace{4pt}

We say that a definable subcategory ${\cal D}$ of ${\cal T}$ is {\bf tensor-closed} if, for every $X\in {\cal D}$ and $Y\in {\cal T}$, we have $X\otimes Y \in {\cal D}$.  It is sufficient, see below, that this be so for every $Y\in {\cal T}^{\rm c}$.  The theorem below says that this tensor-closed condition is equivalent to corresponding requirements on the associated data.  We write $f\otimes A$ for $f\otimes {\rm id}_A$ if $f$ is a morphism and $A$ an object.

\begin{theorem}\label{tenscorr} \marginpar{tenscorr} \cite[5.1.8]{WagThesis} Suppose that ${\cal T}$ is a rigidly-compactly generated tensor-triangulated category.  Then the following conditions on a definable subcategory ${\cal D}$ are equivalent:

\noindent (i) ${\cal D}$ is tensor-closed;

\noindent (ii) $X\in {\cal D}$ and $A \in {\cal T}^{\rm c}$ implies $X\otimes A \in {\cal D}$;

\noindent (iii) if $f\in {\rm Ann}_{{\cal T}^{\rm c}}({\cal D})$ and $A\in {\cal T}^{\rm c}$, then $f\otimes A \in {\rm Ann}_{{\cal T}^{\rm c}}({\cal D})$;

\noindent (iv) the corresponding Serre subcategory ${\cal S}_{\cal D}$ of ${\rm mod}\mbox{-}{\cal T}^{\rm c}$ is a tensor-ideal of ${\rm mod}\mbox{-}{\cal T}^{\rm c}$ (it is enough that it be closed under tensoring with representable functors $yA$ with $A\in {\cal T}^{\rm c}$);

\noindent (v) the corresponding Serre subcategory ${\rm Ann}_{{\rm Coh}({\cal T})}({\cal D}) = {\cal S}_{\cal D}^\circ$ of ${\rm Coh}({\cal T})$ is a tensor-ideal of ${\rm Coh}({\cal T})$ (it is enough that it be closed under tensoring with representable functors $(A,-)$ with $A\in {\cal T}^{\rm c}$).
\end{theorem}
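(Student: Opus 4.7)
The plan is to establish the chain (i) $\Rightarrow$ (ii) $\Leftrightarrow$ (iii) $\Leftrightarrow$ (iv) $\Leftrightarrow$ (v) and then close with (ii) $\Rightarrow$ (i). The first implication is immediate from ${\cal T}^{\rm c}\subseteq {\cal T}$.

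For (ii) $\Leftrightarrow$ (iii) the key tool is the rigid tensor-Hom adjunction: for $C\in{\cal T}^{\rm c}$, rigidity gives $[C,X]\simeq C^{\vee}\otimes X$, whence a natural isomorphism $(-\otimes C, X) \simeq (-, X\otimes C^{\vee})$. Applied to $f:A' \to B'$ in ${\cal T}^{\rm c}$, this identifies the induced map $(f\otimes C, X)$ with $(f, X\otimes C^{\vee})$, so $X(f\otimes C)=0$ iff $(X\otimes C^{\vee})f=0$. Since $(-)^{\vee}$ is an anti-autoequivalence of ${\cal T}^{\rm c}$, this transposes the two universal statements in (ii) and (iii) directly into one another.

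For (iii) $\Leftrightarrow$ (iv) $\Leftrightarrow$ (v), the key observation is that $-\otimes A$ is exact for each $A\in{\cal T}^{\rm c}$, so tensoring any distinguished triangle $A'\xrightarrow{f} B'\xrightarrow{g} C'\xrightarrow{h} \Sigma A'$ in ${\cal T}^{\rm c}$ with $A$ again yields a distinguished triangle in ${\cal T}^{\rm c}$. By Corollary \ref{divandann} and the equivalences (\ref{three}), membership of $f$, $g$ and $\Sigma^{-1}h$ in ${\rm Ann}_{{\cal T}^{\rm c}}({\cal D})$, ${\rm Div}_{{\cal T}^{\rm c}}({\cal D})$ and ${\cal D}\mbox{-}{\rm Reg}$ respectively are equivalent, so closure of any one of these three sets under $-\otimes A$ for $A\in{\cal T}^{\rm c}$ is equivalent to the analogous closure of each of the other two. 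To pass to the Serre-subcategory formulations, combine Proposition \ref{torsequiv1} with the right-exactness of the Day convolution applied to the canonical presentations: one obtains $yA\otimes G_f \simeq G_{A\otimes f}$ in ${\rm mod}\mbox{-}{\cal T}^{\rm c}$ and, dually, $(A,-)\otimes F_g \simeq F_{A\otimes g}$ in ${\rm Coh}({\cal T})$. The ``it is enough'' clauses follow because every finitely presented (resp.\ coherent) functor is a cokernel of a map between representables and Serre subcategories are closed under quotients.

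Finally, for (ii) $\Rightarrow$ (i) I would pass to ${\rm Mod}\mbox{-}{\cal T}^{\rm c}$, using that the restricted Yoneda functor is monoidal. Given $X\in{\cal D}$ and $Y\in{\cal T}$, we have $y(X\otimes Y) \simeq yX\otimes yY$. By Theorem \ref{abseqflat}, $yY$ is flat, hence by Govorov--Lazard (valid over many-sorted rings) a directed colimit $\varinjlim_i yA_i$ of representables with $A_i\in{\cal T}^{\rm c}$. Since $yX\otimes -$ preserves directed colimits, $y(X\otimes Y)\simeq \varinjlim_i y(X\otimes A_i)$. By (ii) each $X\otimes A_i\in{\cal D}$, so each $y(X\otimes A_i)$ lies in the finite-type hereditary torsionfree class ${\cal F}_{\cal D}$ of Corollary \ref{deftf}, which is closed under directed colimits. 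Therefore $y(X\otimes Y) \in {\cal F}_{\cal D}$ and $X\otimes Y \in y^{-1}({\cal F}_{\cal D}) = {\cal D}$. The main obstacle is the bookkeeping in the middle step: one must check carefully that the Day-convolution tensor structures on ${\rm mod}\mbox{-}{\cal T}^{\rm c}$ and on ${\rm Coh}({\cal T})$ interact with the projective presentations of $G_f$ and $F_g$ as indicated, and that the duality ${\rm Coh}({\cal T}) \simeq ({\rm mod}\mbox{-}{\cal T}^{\rm c})^{\rm op}$ is monoidal; the rest is then formal.
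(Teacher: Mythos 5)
The paper does not actually prove this theorem: it is quoted from \cite[5.1.8]{WagThesis}, so there is no in-text argument to compare yours against. Judged on its own terms, your proof is correct and is assembled entirely from tools the paper makes available. The cycle (i)$\Rightarrow$(ii)$\Leftrightarrow$(iii)$\Leftrightarrow$(iv)$\Leftrightarrow$(v), (ii)$\Rightarrow$(i) works: for (ii)$\Leftrightarrow$(iii) the adjunction $(-\otimes C,X)\simeq(-,X\otimes C^\vee)$ (rigidity) does transpose the two statements, but you should say explicitly that you recover ${\cal D}$ from its annihilator ideal via Lemma \ref{defcatann}, ${\cal D}={\rm Ann}_{\cal T}({\rm Ann}_{{\cal T}^{\rm c}}({\cal D}))$ -- that is where (iii)$\Rightarrow$(ii) actually lives, and one also uses that every compact is a dual, $(A^\vee)^\vee\simeq A$. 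For (iii)$\Leftrightarrow$(iv)$\Leftrightarrow$(v), your scheme (rotate the triangle, tensor it with $A$ using exactness of $\otimes$, invoke \ref{morsinmodcat}/(\ref{three}), and compute $G_f\otimes yA\simeq G_{f\otimes A}$ from right-exactness of Day convolution) is sound; note that the ${\rm Coh}({\cal T})$ statement (v) can be obtained purely formally from (iv) using the monoidality of the duality \ref{cohfpdual}, which the paper asserts (citing \cite[\S 5.1]{WagThesis}), so the ``careful bookkeeping'' you flag reduces to quoting that fact plus the observation that tensoring with a representable is the exact re-indexing functor $M\mapsto M(-\otimes A^\vee)$. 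For (ii)$\Rightarrow$(i), the inputs you need -- $y$ strong monoidal on all of ${\cal T}$ (the paper cites \cite[A.14]{BKSFrame}), $yY$ flat and hence a directed colimit of representables (\ref{abseqflat} plus the Lazard-type fact the paper itself uses in Section \ref{secfreereal}), cocontinuity of $yX\otimes-$, closure of the finite-type torsionfree class ${\cal F}_{\cal D}$ under directed colimits, and ${\cal D}=y^{-1}({\cal F}_{\cal D})$ (\ref{deftf}) -- are all available, and the argument closes as you say. So the only genuine obligations beyond what you wrote are to cite \ref{defcatann} in the (iii)$\Rightarrow$(ii) step and to make explicit that monoidality of $y$ is being used for non-compact arguments (it is a stated fact in the paper, not something you need to reprove).
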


A stronger condition on a definable subcategory ${\cal D}$ of ${\cal T}$ is that it be a {\bf tensor-ideal} of ${\cal T}$, meaning that it is tensor-closed and triangulated.  The corresponding, in the sense of \ref{tenscorr}, annihilator ideals and Serre subcategories are characterised in \cite[5.2.14]{WagThesis}.  The additional condition on ${\rm Ann}_{{\cal T}^{\rm c}}({\cal D})$ is that it be exact and the additional condition on ${\cal S}_{\cal D}$ is that it be perfect; these conditions come from \cite{KraCohQuot}, see \cite[\S 5.2]{WagThesis} for the detailed statements.  Furthermore, the tensor version of \ref{smash1} is true:  the triangulated tensor-closed definable subcategories of ${\cal T}$ are in bijection, {\it via} torsion pairs, with the smashing tensor-ideals of ${\cal T}$ ( \cite[5.2.14]{WagThesis}).

In \cite[Chpt.~6]{WagThesis}, Wagstaffe defines and investigates various coarsenings of the Ziegler topology on ${\rm pinj}({\cal T})$, in particular, the tensor-closed Ziegler spectrum, ${\rm Zg}^\otimes({\cal T})$, which is obtained by taking the closed subsets to be those of the form ${\cal D} \cap {\rm pinj}({\cal T})$ where ${\cal D}$ is a tensor-closed definable subcategory of ${\cal T}$.

\subsection{Spectra in tensor-triangulated categories}\label{secttspec} \marginpar{secttspec}

A {\bf prime} of the tensor-triangulated category ${\cal T}$ is a (thick) tensor-ideal ${\cal P}$ of ${\cal T}^{\rm c}$ such that if $A,B\in {\cal T}^{\rm c}$ and $A\otimes B\in {\cal P}$, then $A$ or $B$ is in ${\cal P}$.  The {\bf Balmer spectrum} \cite{BalSpec}, ${\rm Spc}({\cal T}^{\rm c})$ or just ${\rm Spc}({\cal T})$, consists of these primes, with the topology which has, for a basis of open sets, those of the form
$$U(A) = \{ {\cal P}\in {\rm Spc}({\cal T}): A\in {\cal P}\}$$
for $A\in {\cal T}^{\rm c}$.  This is a spectral space and we may also consider, as in Section \ref{secspec}, the Hochster-dual, or {\bf Thomason}, topology on the same set, which is defined by declaring that the $U(A)$ generate, under finite union and arbitrary intersection, the {\em closed} sets.  Both these topologies are natural and have their uses in various contexts, see, for instance, \cite{BalGuide}.

There are various routes by which ${\rm Spc}({\cal T})$ and ${\rm inj}\mbox{-}{\cal T}^{\rm c}$, and also the homological spectrum, ${\rm Spc}^{\rm h}({\cal T})$, from \cite{BalNilp}, with their various topologies, may be connected, see in particular \cite{BirWilHomSpec} and references therein.  We also have the following.

To a point ${\cal P}$ of ${\rm Spc}({\cal T})$ we can associate the finite type hereditary torsion theory $\gamma_{\cal P} = (\overrightarrow{{\cal S}_{y{\cal P}}}, (y{\cal P})^\perp)$ on ${\rm Mod}\mbox{-}{\cal T}^{\rm c}$(see Section \ref{sectors}) whose torsion class is generated as such by $y{\cal P}$, that is, the torsion class is the $\varinjlim$-closure of the Serre subcategory ${\cal S}_{y{\cal P}}$ generated by $y{\cal P}$.  

By \cite[3.9]{BalNilp} this gives an injection of the lattice of Balmer primes into the lattice of finite-type hereditary torsion theories, the latter ordered by inclusion of torsion classes.  For, if ${\cal P} \subset {\cal Q}$ is a proper inclusion of Balmer primes, then, by Balmer's result, there is a maximal Serre tensor-ideal ${\cal B}$ of ${\rm mod}\mbox{-}{\cal T}^{\rm c}$ such that ${\cal P} = y^{-1}{\cal B}$.  Certainly ${\cal S}_{y{\cal P}} \subseteq {\cal B}$ so, if we had ${\cal S}_{y{\cal P}} = {\cal S}_{y{\cal Q}}$, then we would have $y{\cal Q} \subseteq {\cal B}$ and hence a contradiction.

Further, each finite type hereditary torsionfree class ${\cal F}$ is determined by its intersection with ${\rm inj}_{{\cal T}^{\rm c}}$, see \cite[11.1.29]{PreNBK}, and the resulting sets ${\cal F} \, \cap \, {\rm inj}_{{\cal T}^{\rm c}}$ are the closed sets in the Ziegler topology on ${\rm inj}_{{\cal T}^{\rm c}}$ (see \cite[\S 14.1.3]{PreNBK}).  So, to a Balmer prime ${\cal P}$, we also have the associated Ziegler-closed set $(y{\cal P})^\perp \, \cap \, {\rm inj}_{{\cal T}^{\rm c}}$.  Note that this association is inclusion-reversing.

If $A\in {\cal T}^{\rm c}$ then we have 
$${\cal P} \in U(A) \text{ iff } A\in {\cal P} \text{ iff } yA \in \overrightarrow{{\cal S}_{y{\cal P}}} \text{ iff } (y{\cal P})^\perp \subseteq (yA)^\perp.$$
The second equivalence is by the argument just made.  Note that $(yA)^\perp \, \cap \, {\rm inj}_{{\cal T}^{\rm c}}$ is the complement of the basic Ziegler-open subset of ${\rm inj}_{{\cal T}^{\rm c}}$ that is defined by $(yA,-) \neq 0$, hence it is basic open in the dual-Ziegler topology.

For instance, if $R$ is commutative noetherian, then the above essentially gives the embedding (see \cite{BalSpec}, \cite{GarkPre3}) of ${\rm Spc}({\cal D}_R^{\rm perf})$ with the Thomason topology into the frame of Ziegler-open subsets of ${\rm Spec}(R)$, the latter being isomorphic, as a lattice, to the opposite of the lattice of finite type hereditary torsionfree classes of $R$-modules.

\subsection{Internal duality in tensor-triangulated categories}\label{secintdual} \marginpar{secintdual}

In \cite[Chpt.~7]{WagThesis} an {\em internal} duality for rigidly-compactly generated tensor-triangulated categories ${\cal T}$ is defined.  In this respect it is somewhat similar to elementary duality in the case that $R$ is a commutative ring, since then the categories of right and left $R$-modules are naturally identified and so, in that particular context, elementary duality is an internal duality on ${\rm Mod}\mbox{-}R$.  Indeed, for a commutative ring $R$ and the derived-tensor structure on the derived category ${\cal D}_R$, this internal duality coincides with elementary duality, \cite[7.3.5]{WagThesis}.

The internal duality for rigidly-compactly generated tensor-triangulated ${\cal T}$ comes from the second author's thesis \cite{WagThesis} and it was also discovered independently by Bird and Williamson \cite{BirWilDualPr}.  In \cite{WagThesis} it is defined in terms of cohomological ideals, Serre subcategories and definable subcategories; here we note that it can also be defined at the level of formulas and pp-pairs.  We continue to assume that ${\cal T}$ is a rigidly-compactly generated tensor-triangulated category.

\vspace{4pt}

Just as for the ``external" duality, we can define the duality using a hom functor to an object but, in this case, we use the internal hom functor:  for $A\in {\cal T}^{\rm c}$, consider $A \mapsto [A,\mathbbm 1] \simeq A^\vee \otimes {\mathbbm 1} \simeq A^\vee$.  Similarly, internal duality $(-)^\vee = [-,{\mathbbm 1}]$ applied to a morphism $A \xrightarrow{f} B$ in ${\cal T}^{\rm c}$ gives the morphism $B^\vee \xrightarrow{f^\vee} A^\vee$ in ${\cal T}^{\rm c}$.  Since ${\cal T}$ is rigidly-compactly generated, we have that $(-)^\vee$ is an anti-equivalence $({\cal T}^{\rm c})^{\rm op} \simeq {\cal T}^{\rm c}$ with ${(-)^\vee}^\vee$ naturally equivalent to the identity functor on ${\cal T}^{\rm c}$ (see \cite[1.4]{StevTour}).  We also apply these notations to arbitary objects and morphisms of ${\cal T}$.

Given a definable subcategory ${\cal D}$ of ${\cal T}$, with associated annihilator ideal ${\rm Ann}_{{\cal T}^{\rm c}}({\cal D})$, we define its {\bf internal dual} definable subcategory of ${\cal T}$ to be ${\cal D}^\vee = {\rm Ann}_{\cal T}({\rm Ann}_{{\cal T}^{\rm c}}({\cal D})^\vee)$, where we set ${\cal A}^\vee = \{ f^\vee: f\in {\cal A}\}$ for ${\cal A}$ a collection of morphisms in ${\cal T}^{\rm c}$.

\begin{prop}\label{intdualdefn} \marginpar{intdualdefn} (mostly \cite[\S 7.1]{WagThesis}) Suppose that ${\cal T}$ is a rigidly-compactly generated tensor-triangulated category, let ${\cal D}$ be a definable subcategory and consider its elementary dual definable subcategory ${\cal D}^\vee$.  Then $({\rm Ann}_{{\cal T}^{\rm c}}({\cal D}))^\vee$ is an annihilator ideal, $({\cal D}^\vee)^\vee = {\cal D}$ and we have the following:

${\rm Ann}_{{\cal T}^{\rm c}}({\cal D}^\vee) = ({\rm Ann}_{{\cal T}^{\rm c}}({\cal D})^\vee)$

${\rm Div}_{{\cal T}^{\rm c}}({\cal D}^\vee) = ({\cal D}\mbox{-}{\rm TF})^\vee$

${\cal D}^\vee \mbox{-}{\rm TF} = ({\rm Div}_{{\cal T}^{\rm c}}({\cal D}))^\vee$.
\end{prop}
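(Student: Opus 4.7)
The plan is to follow the blueprint of Proposition \ref{extdualdefn}, substituting the internal duality $(-)^\vee = [-,\mathbbm{1}]$ for the external duality $(-)^{\rm t}$. The key structural input is that, because ${\cal T}$ is rigidly-compactly generated, $(-)^\vee$ is an exact contravariant equivalence $({\cal T}^{\rm c})^{\rm op} \simeq {\cal T}^{\rm c}$ with $(-)^{\vee\vee}$ naturally equivalent to the identity. Consequently a distinguished triangle
$$A \xrightarrow{f} B \xrightarrow{g} C \xrightarrow{h} \Sigma A$$
in ${\cal T}^{\rm c}$ maps, via $(-)^\vee$ and the identification $(\Sigma X)^\vee \simeq \Sigma^{-1} X^\vee$, to a distinguished triangle
$$C^\vee \xrightarrow{g^\vee} B^\vee \xrightarrow{f^\vee} A^\vee \xrightarrow{\Sigma h^\vee} \Sigma C^\vee$$
(up to sign). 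This will be the workhorse for all the equalities.

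First I would handle the ``formal'' parts. That $({\rm Ann}({\cal D}))^\vee$ is an annihilator ideal is the main substantive step, taken from \cite[\S 7.1]{WagThesis}; the point is that the closure conditions characterising annihilator ideals (cohomological closure, tied to presentations by exact sequences of representables) are preserved under the exact contravariant equivalence $(-)^\vee$. Granting this, the identity ${\rm Ann}({\cal D}^\vee) = ({\rm Ann}({\cal D}))^\vee$ is essentially the definition of ${\cal D}^\vee$ together with the Galois correspondence of Lemma \ref{defcatann}. The involution property $({\cal D}^\vee)^\vee = {\cal D}$ then drops out: using $(-)^{\vee\vee} = {\rm id}$ we get ${\rm Ann}(({\cal D}^\vee)^\vee) = (({\rm Ann}({\cal D}))^\vee)^\vee = {\rm Ann}({\cal D})$, and definable subcategories are determined by their annihilator ideals (\ref{defcatann}).

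For ${\rm Div}({\cal D}^\vee) = ({\cal D}\mbox{-}{\rm TF})^\vee$ I would chain the equivalences (\ref{three}) applied to both the original triangle and its dual. Fixing $f$ with triangle as above, (\ref{three}) applied to the dual triangle gives $f^\vee | X \iff X g^\vee = 0$, while (\ref{three}) applied to a rotation of the original triangle (putting $g$ in the annihilator slot) gives $Yg = 0 \iff {\rm ann}_Y(f) = 0$. Combined with the already-proved $g^\vee \in {\rm Ann}({\cal D}^\vee) \iff g \in {\rm Ann}({\cal D})$, this yields $f^\vee \in {\rm Div}({\cal D}^\vee) \iff f \in {\cal D}\mbox{-}{\rm TF}$, i.e.~the required equality of sets of morphisms. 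The third equality ${\cal D}^\vee\mbox{-}{\rm TF} = ({\rm Div}({\cal D}))^\vee$ then follows formally by applying the second equality to the definable subcategory ${\cal D}^\vee$ and invoking $({\cal D}^\vee)^\vee = {\cal D}$. The main obstacle is the annihilator ideal claim for $({\rm Ann}({\cal D}))^\vee$; everything else is a symmetric application of the triangle equivalences and the Galois correspondence.
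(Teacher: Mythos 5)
Your proposal is correct and matches the paper's own argument: the first statements are delegated to \cite[\S 7.1]{WagThesis}/\cite[\S 7]{GarkPre2} together with the Galois correspondence of \ref{defcatann}, and the divisibility and torsionfree equalities are read off by dualising the extended triangle through the exact anti-equivalence $(-)^\vee$ and applying the equivalences (\ref{three}). Your only deviation -- deducing the third equality formally from the second via $({\cal D}^\vee)^\vee = {\cal D}$ rather than reading it directly off the dualised triangle -- is an inessential variant of the same computation.
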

\begin{proof} The proof is very similar to that of \ref{extdualdefn}, using \cite[\S 7]{GarkPre2} to get the first statements.  For the last two, consider $f\in {\rm Ann}_{{\cal T}^{\rm c}}({\cal D})$ and form the extended triangle
$$\Sigma^{-1}B \xrightarrow{\Sigma^{-1}g} \Sigma^{-1}C \xrightarrow{\Sigma^{-1}h} A \xrightarrow{f} B \xrightarrow{g} C \xrightarrow{h} \Sigma A$$
then dualise it:
$$  (\Sigma A)^\vee = \Sigma^{-1}A^\vee \xrightarrow{h^\vee} C^\vee  \xrightarrow{g^\vee} B^\vee \xrightarrow{f^\vee } A^\vee \xrightarrow{\Sigma \, h^\vee } \Sigma \, C^\vee \xrightarrow{\Sigma \, g^\vee} \Sigma \, B^\vee .$$
Then apply Equation (\ref{three}) from Section \ref{secdefsub}.
\end{proof}

This internal duality can also be given by a duality operation on pp formulas and pp-pairs.  This is defined exactly as one would expect from the abelian/modules case.  Namely, if $\phi(x_B)$, being $\exists x_{B'} \, (x_Bf=x_{B'}f')$, is a typical pp formula, where $f:A\to B$ and $f':A\to B'$ are in ${\cal T}^{\rm c}$, then we define the {\bf dual} pp formula, $\phi^\vee(x_{B^\vee})$ to be $\exists y_{A^\vee} \, (y_{A^\vee}f^\vee =x_{B^\vee} \, \wedge \, y_{A^\vee}f'^\vee =0_{B'^\vee})$.  In particular, the dual of the pp formula $x_Bf=0$, where $f:A \to B$, is $f^\vee | x_{B^\vee}$ and the dual of $f'|x_B$ is $x_{B^\vee}f'^\vee=0$.

The {\bf dual} of a pp-pair $\phi/\psi$ is then defined to be $\psi^\vee/\phi^\vee$.  

Note that what we have defined here is an internal duality on pp formulas in the language for (right) ${\cal T}^{\rm c}$-modules.  There is a subtlety, which is pointed out in \cite{WagThesis}.  Namely, two pp formulas might be equivalent on ${\cal T}$ - that is, have the same solution set on every object of ${\cal T}$ - yet their duals might not be equivalent.  Indeed, we might have pp formulas $\phi$, $\phi_1$ with $\phi(X) = \phi_1(X)$ for every $X\in {\cal T}$, yet with $\phi^\vee(X) \neq \phi_1^\vee(X)$ perhaps even for every $X \in {\cal T}$ since these might be definable subgroups of distinct sorts - see \cite[Example 7.1.4]{WagThesis}.  Nevertheless $\phi^\vee$ and $\phi_1^\vee$ will define isomorphic coherent functors, meaning that the pairs $\phi^\vee(x)/(x=0)$ and $\phi_1^\vee(x_1)/(x_1=0)$ will be isomorphic in the category ${\mathbb L}({\cal T})^{\rm eq+}$ of pp-imaginaries for ${\cal T}$.  More generally, if $\phi/\psi$ is a pp-pair with $\phi_1$ equivalent to $\phi$ and $\psi_1$ equivalent to $\psi$, then the pp-pairs $\psi^\vee/\phi^\vee$ and $\psi_1^\vee/\phi_1^\vee$ might be distinct but they will be isomorphic; in particular for every $X\in {\cal T}$, we will have $\psi^\vee(X)/\phi^\vee(X) =0$ iff $\psi_1^\vee(X)/\phi_1^\vee(X)=0$.  That follows from \cite[7.4]{GarkPre2}, cf.~\ref{elemdualimag}, indeed it follows that there is an induced anti-isomorphism of the category ${\mathbb L}({\cal T})^{\rm eq+}$ with itself.

\vspace{4pt}

We give some more detail; see also \cite[Chpt.~7]{WagThesis}.  Since we have a duality $(-)^\vee: ({\cal T}^{\rm c})^{\rm op} \to {\cal T}^{\rm c}$ we have, by \cite[7.4]{GarkPre2}, an equivalence ${\rm mod}\mbox{-}{\cal T}^{\rm c} \to {\cal T}^{\rm c}\mbox{-}{\rm mod}$ which is given by taking $G_f = {\rm coker}((-,f): (-,A) \to (-,B))$, where $f:A \to B$, to $F_{f^\vee} = {\rm coker}((f^\vee, -): (A^\vee,-) \to (B^\vee, -))$.  We also have the duality ${\cal T}^{\rm c}\mbox{-}{\rm mod} (\simeq {\rm Coh}({\cal T})) \to ({\rm mod}\mbox{-}{\cal T}^{\rm c})^{\rm op}$ which takes $F_{f^\vee}$ to $(F_{f^\vee})^\diamond: C \mapsto (F_{f^\vee}, (C,-))$ for $C\in {\cal T}^{\rm c}$.

Composing these, we have a duality ${\rm mod}\mbox{-}{\cal T}^{\rm c} \to {\rm mod}\mbox{-}{\cal T}^{\rm c}$ which takes $G_f$ to $(F_{f^\vee})^\diamond$.  In view of the exact sequence (\ref{eq3})
$$0 \to (F_{f^\vee})^\diamond \to (-, B^\vee) \xrightarrow{(-,f^\vee)} (-, A^\vee) \to G_{f^\vee} \to 0$$
we can formulate this as follows.

\begin{prop} Suppose that ${\cal T}$ is a rigidly-compactly generated tensor-triangulated category.  Then there is a duality on ${\rm mod}\mbox{-}{\cal T}^{\rm c}$ which is given on objects by $G_f \mapsto {\rm ker}(-,f^\vee)$, where $(-)^\vee$ is the duality on ${\cal T}^{\rm c}$.
\end{prop}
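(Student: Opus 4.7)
The plan is to realise the claimed duality as a two-step composition and then chase the representation of a typical finitely presented module through it. The first step is a covariant equivalence $\Phi:{\rm mod}\mbox{-}{\cal T}^{\rm c} \xrightarrow{\sim} {\cal T}^{\rm c}\mbox{-}{\rm mod}$ coming from rigidity; the second is the contravariant equivalence $d$ of \ref{rtlmods}/\ref{cohfpdual}.

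First I would use rigidity of ${\cal T}^{\rm c}$ to promote $(-)^\vee = [-,\mathbbm{1}]$ to an anti-equivalence $({\cal T}^{\rm c})^{\rm op} \xrightarrow{\sim} {\cal T}^{\rm c}$ (with $(-)^{\vee\vee}$ naturally isomorphic to the identity). Precomposition with this anti-equivalence yields a covariant equivalence $\Phi:{\rm mod}\mbox{-}{\cal T}^{\rm c} \xrightarrow{\sim} {\cal T}^{\rm c}\mbox{-}{\rm mod}$ sending a right module $M$ to $X\mapsto M(X^\vee)$. Using the natural isomorphism $(X^\vee,A)\simeq (\mathbbm{1}, X\otimes A) \simeq (A^\vee,X)$, which follows formally from rigidity together with the tensor-hom adjunction, one checks that $\Phi$ sends the representable $(-,A)$ to $(A^\vee,-)$ and the composition map $(-,f):(-,A)\to (-,B)$ to $(f^\vee,-):(A^\vee,-)\to (B^\vee,-)$. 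Taking cokernels gives $\Phi(G_f)\simeq F_{f^\vee}$.

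Composing with $d$ produces the candidate duality $D:= d\circ \Phi$ on ${\rm mod}\mbox{-}{\cal T}^{\rm c}$. Under the identification ${\cal T}^{\rm c}\mbox{-}{\rm mod} \simeq {\rm Coh}({\cal T})$, $d$ is the functor $F\mapsto F^\diamond$ described in Section \ref{secppsorts}, so $D(G_f) = F_{f^\vee}^\diamond$. Substituting $f^\vee$ for $f$ in the exact sequence (\ref{eq3}) yields
$$0 \to F_{f^\vee}^\diamond \to (-, B^\vee) \xrightarrow{(-, f^\vee)} (-, A^\vee) \to G_{f^\vee} \to 0$$
in ${\rm mod}\mbox{-}{\cal T}^{\rm c}$, from which $F_{f^\vee}^\diamond = {\rm ker}\bigl((-,f^\vee)\bigr)$, exactly the stated formula.

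The main subtlety to keep track of is that a finitely presented $G$ admits many presentations $G\simeq G_f$, so the object-level recipe $G_f \mapsto {\rm ker}(-,f^\vee)$ is \emph{a priori} only well-defined up to choice of presentation. This issue dissolves once the recipe is recognised as the object part of the honest functor $D = d\circ \Phi$: functoriality — and hence independence of presentation — is inherited from the two (anti-)equivalences, whose global definitions require no choice of presentation at all.
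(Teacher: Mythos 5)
Your proposal is correct and follows essentially the same route as the paper: compose the equivalence ${\rm mod}\mbox{-}{\cal T}^{\rm c}\simeq {\cal T}^{\rm c}\mbox{-}{\rm mod}$ induced by the anti-equivalence $(-)^\vee$ (which sends $G_f$ to $F_{f^\vee}$) with the duality $(-)^\diamond:{\cal T}^{\rm c}\mbox{-}{\rm mod}\simeq ({\rm mod}\mbox{-}{\cal T}^{\rm c})^{\rm op}$, and read off $F_{f^\vee}^\diamond={\rm ker}(-,f^\vee)$ from the exact sequence (\ref{eq3}). The only difference is cosmetic: you verify the first equivalence by hand via $(X^\vee,A)\simeq (A^\vee,X)$, where the paper cites \cite[7.4]{GarkPre2}, and your closing remark on independence of the presentation is a correct (if implicit in the paper) observation.
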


The next result follows directly from \cite[6.12]{BirWilDualPr} (also \cite[2.3]{AngHrbParam} in the case ${\cal T} ={\cal D}_R$, $R$ commutative).

\begin{prop} Suppose that ${\cal T}$ is a rigidly-compactly generated tensor-triangulated category and let ${\cal D}$ be a definable subcategory.  Then the definable subcategory of ${\cal T}$ generated by the collection of objects $\{ X^\vee: X\in {\cal D}\}$ is exactly the dual definable subcategory ${\cal D}^\vee$.
\end{prop}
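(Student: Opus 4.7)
The plan is to prove both inclusions between ${\cal E} := \langle\{X^\vee : X \in {\cal D}\}\rangle$ and ${\cal D}^\vee$. By the involutivity of the internal duality on definable subcategories (\ref{intdualdefn} gives $({\cal D}^\vee)^\vee = {\cal D}$), it suffices to establish just the forward inclusion ${\cal E} \subseteq {\cal D}^\vee$ for an arbitrary definable ${\cal D}$; applying this with ${\cal D}^\vee$ in place of ${\cal D}$ and then dualising produces the reverse inclusion ${\cal D}^\vee \subseteq {\cal E}$.

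For the forward inclusion, since ${\cal D}^\vee$ is itself definable, it is enough to show $X^\vee \in {\cal D}^\vee$ for each $X \in {\cal D}$. Using ${\cal D}^\vee = {\rm Ann}_{\cal T}(({\rm Ann}({\cal D}))^\vee)$ from \ref{intdualdefn}, this reduces to verifying $X^\vee f^\vee = 0$ whenever $f : A \to B$ lies in ${\rm Ann}({\cal D})$. I would embed $f$ in a distinguished triangle $A \xrightarrow{f} B \xrightarrow{g} C \xrightarrow{h} \Sigma A$ in ${\cal T}^{\rm c}$ and apply the exact contravariant functor $(-)^\vee$ to obtain the dual triangle $C^\vee \xrightarrow{g^\vee} B^\vee \xrightarrow{f^\vee} A^\vee \to \Sigma C^\vee$. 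Rotating and applying Equation~(\ref{three}), the condition $X^\vee f^\vee = 0$ becomes equivalent to ${\rm ann}_{X^\vee}(g^\vee) = 0$. The canonical rigidity isomorphism $(Z^\vee, X^\vee) \simeq (X \otimes Z, \mathbbm{1}) \simeq (X, Z)$, natural in $Z \in {\cal T}^{\rm c}$ (coming from $X^\vee = [X, \mathbbm{1}]$ via the $\otimes$-$[-,-]$ adjunction and the symmetry of $\otimes$), then translates this into vanishing of $f_* : (X, A) \to (X, B)$, i.e., every morphism $\tilde{c} : X \to A$ satisfies $f\tilde{c} = 0$.

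The main obstacle is then to derive this last condition, which concerns morphisms $X \to A$, from the hypothesis $Xf = 0$, which concerns morphisms $B \to X$. These conditions are expressed in dual ``sort'' languages on $X$ and are not pointwise equivalent, so the argument must be carried out globally on the category of pp-imaginaries. Specifically, by \ref{morsinmodcat}, $f \in {\rm Ann}({\cal D})$ corresponds to the finitely presented module ${\rm Im}_f$ lying in ${\cal S}_{\cal D}$; the internal duality $D$ on ${\rm mod}\mbox{-}{\cal T}^{\rm c}$ from Section~\ref{secintdual} sends ${\rm Im}_f$ to ${\rm Im}_{f^\vee}$ and, by the first identity in \ref{intdualdefn}, carries ${\cal S}_{\cal D}$ onto ${\cal S}_{{\cal D}^\vee}$. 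Using the rigidity isomorphism to evaluate ${\rm Im}_{f^\vee}$ on $X^\vee$ then yields precisely the required vanishing of $f_*$. This coherent bridging between the object-level duality $(-)^\vee$ on ${\cal T}$ and the duality $D$ on pp-imaginaries is the technical heart of the argument and is established in \cite[6.12]{BirWilDualPr} using dg-enhancements (and, in the commutative derived case, in \cite[2.3]{AngHrbParam}).
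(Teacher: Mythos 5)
Your translation of the forward inclusion is correct: using $(A^\vee,X^\vee)\simeq (X\otimes A^\vee,\mathbbm{1})\simeq (X,A)$ for $A\in {\cal T}^{\rm c}$, the condition $X^\vee f^\vee=0$ is exactly the vanishing of $(X,f):(X,A)\to (X,B)$, so $X^\vee\in {\cal D}^\vee$ for all $X\in{\cal D}$ amounts to the implication ``$(f,X)=0$ for all $X\in{\cal D}$ implies $(X,f)=0$ for all $X\in{\cal D}$''.  But your opening reduction is where the proof breaks.  You claim that, by $({\cal D}^\vee)^\vee={\cal D}$, the reverse inclusion ${\cal D}^\vee\subseteq {\cal E}$ follows from the forward inclusion applied to ${\cal D}^\vee$.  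Applying the forward inclusion to ${\cal D}^\vee$ yields only $\langle\{Y^\vee: Y\in{\cal D}^\vee\}\rangle\subseteq ({\cal D}^\vee)^\vee={\cal D}$, and to turn this into ${\cal D}^\vee\subseteq\langle\{X^\vee:X\in{\cal D}\}\rangle$ you would need $Y\simeq Y^{\vee\vee}$ for an arbitrary object $Y$ of ${\cal D}^\vee$; rigidity gives this only for compact objects, not for general objects of ${\cal T}$.  The failure is also visible at the level of annihilator ideals: since ${\cal E}\subseteq{\cal D}^\vee$ iff ${\rm Ann}_{{\cal T}^{\rm c}}({\cal E})\supseteq ({\rm Ann}({\cal D}))^\vee$ and ${\cal D}^\vee\subseteq{\cal E}$ iff ${\rm Ann}_{{\cal T}^{\rm c}}({\cal E})\subseteq ({\rm Ann}({\cal D}))^\vee$, the adjunction isomorphism shows the forward inclusion is the implication ``$(h,X)=0\ \forall X\in{\cal D}\ \Rightarrow\ (X,h)=0\ \forall X\in{\cal D}$'' (as $h$ ranges over morphisms of ${\cal T}^{\rm c}$), while the reverse inclusion is precisely the converse implication, with the same quantifier domain ${\cal D}$.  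Substituting ${\cal D}^\vee$ for ${\cal D}$ changes that domain, so neither implication formally yields the other.  The reverse inclusion --- that the duals $X^\vee$ really generate all of ${\cal D}^\vee$ --- is the substantive half of the proposition and needs its own argument (for instance, that every pure-injective of ${\cal D}^\vee$ is a direct summand of some $X^\vee$ with $X\in{\cal D}$, or a computation with the corresponding Serre subcategories).

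There is also an issue of circularity.  The paper offers no proof of this proposition: it records it as following directly from \cite[6.12]{BirWilDualPr} (and \cite[2.3]{AngHrbParam} in the commutative derived case).  Your ``technical heart'' defers to exactly the same result, but \cite[6.12]{BirWilDualPr} is essentially the full statement being proved, not merely the bridging lemma you present it as; so, once the flawed involution step is removed, what remains of your argument is the citation itself, which is the paper's approach, plus an unproved reduction.  To make this an actual proof you would need to supply the content of that citation: both the global argument for the forward implication and, separately, the generation statement for the reverse inclusion.
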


There is potential ambiguity in the notation ${\cal D}^\vee$ - we have defined it to be the dual definable subcategory but it would also be a natural notation for $\{ X^\vee: X\in {\cal D}\}$ but the latter, a subclass of ${\cal D}^\vee$, is not in general all of the definable category ${\cal D}^\vee$ (it might not be closed under pure subobjects).

\vspace{4pt}

Tensor-closed definable subcategories are self-dual.

\begin{theorem}\label{tenscloseddual} \marginpar{tenscloseddual} \cite[7.2.2]{WagThesis} If ${\cal D}$ is a tensor-closed definable subcategory of a rigidly-compactly generated tensor-triangulated category, then ${\cal D}$ is self-dual: ${\cal D}^\vee = {\cal D}$.
\end{theorem}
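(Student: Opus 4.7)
My plan is to prove the stronger statement ${\rm Ann}({\cal D})^\vee = {\rm Ann}({\cal D})$ as annihilator ideals of ${\cal T}^{\rm c}$, from which ${\cal D}^\vee = {\cal D}$ will follow by combining Proposition \ref{intdualdefn} with the bijection of Lemma \ref{defcatann} between definable subcategories of ${\cal T}$ and annihilator ideals of ${\cal T}^{\rm c}$. Rigidity gives $(-)^{\vee\vee} \simeq {\rm id}$ on ${\cal T}^{\rm c}$, so the operation $\Phi \mapsto \Phi^\vee$ is an involution on collections of morphisms; it therefore suffices to establish just the single inclusion ${\rm Ann}({\cal D})^\vee \subseteq {\rm Ann}({\cal D})$, since dualising a second time will automatically yield the reverse.

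For this inclusion, I would fix $f:A\to B$ in ${\rm Ann}({\cal D})$ and write $f^\vee$ as the standard composition
\[
f^\vee \;=\; (ev_B \otimes 1_{A^\vee}) \circ (1_{B^\vee} \otimes f \otimes 1_{A^\vee}) \circ (1_{B^\vee} \otimes coev_A),
\]
where $coev_A : \mathbbm{1} \to A \otimes A^\vee$ and $ev_B : B^\vee \otimes B \to \mathbbm{1}$ are the (co)evaluation structure maps, which sit in ${\cal T}^{\rm c}$ because that subcategory is closed under $\otimes$ and $(-)^\vee$. Two applications of Theorem \ref{tenscorr}(iii) -- tensoring $f$ first on the right with $A^\vee$ and then on the left with $B^\vee$ -- place the middle factor $1_{B^\vee} \otimes f \otimes 1_{A^\vee}$ in ${\rm Ann}({\cal D})$. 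Now ${\rm Ann}({\cal D})$ is a two-sided ideal of ${\cal T}^{\rm c}$ directly from its definition: if $(f,X)=0$ for every $X\in{\cal D}$, then $(gfh,X)=0$ for any $g,h$ in ${\cal T}^{\rm c}$ and all such $X$. So pre- and post-composing with the two structure maps keeps the result in ${\rm Ann}({\cal D})$, giving $f^\vee \in {\rm Ann}({\cal D})$.

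The only subtlety is verifying that the formula above indeed computes $f^\vee$ in the symmetric monoidal structure on ${\cal T}$, which is a standard consequence of the triangle identities for duals and involves no hard diagrammatics. With this, ${\rm Ann}({\cal D})^\vee = {\rm Ann}({\cal D})$, and then Proposition \ref{intdualdefn}'s identity ${\rm Ann}({\cal D}^\vee) = {\rm Ann}({\cal D})^\vee$ together with Lemma \ref{defcatann} delivers ${\cal D}^\vee = {\cal D}$. The main conceptual point of the argument is that rigidity allows the dual of a morphism to be realised as an internal ${\cal T}^{\rm c}$-composition built from a tensored copy of the morphism together with structure maps, and tensor-closure of ${\cal D}$ (via Theorem \ref{tenscorr}(iii)) is precisely the hypothesis needed to make this construction preserve ${\rm Ann}({\cal D})$.
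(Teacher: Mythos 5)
Your argument is correct. There is nothing in the text to compare it with line by line, since the paper states \ref{tenscloseddual} with only a citation to \cite[7.2.2]{WagThesis}, but your route is a clean, self-contained derivation inside the paper's framework. The reduction is sound: by definition ${\cal D}^\vee={\rm Ann}_{\cal T}\big(({\rm Ann}({\cal D}))^\vee\big)$, so the equality $({\rm Ann}({\cal D}))^\vee={\rm Ann}({\cal D})$ together with ${\rm Ann}_{\cal T}({\rm Ann}_{{\cal T}^{\rm c}}({\cal D}))={\cal D}$ from \ref{defcatann} gives the theorem, and the involution trick legitimately reduces this to one inclusion, using $f^{\vee\vee}\cong f$ and the (evident) stability of annihilator ideals under composition with isomorphisms. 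The key step is also right: for $f\in{\rm Ann}({\cal D})$ the mate $f^\vee=({\rm ev}_B\otimes 1_{A^\vee})\circ(1_{B^\vee}\otimes f\otimes 1_{A^\vee})\circ(1_{B^\vee}\otimes {\rm coev}_A)$ has its middle factor in ${\rm Ann}({\cal D})$ by \ref{tenscorr}(iii) (left tensoring is covered by composing with the symmetry isomorphisms, under which the ideal is stable), the two structure maps are morphisms between compact objects (their sources and targets are tensor products of duals of compacts, with the unit absorbed by the unitor), and ${\rm Ann}({\cal D})$ is a two-sided ideal by its definition; the identification of the mate composite with $[f,\mathbbm{1}]$ is indeed the standard consequence of the triangle identities for the duality data that rigidity supplies. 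What your approach buys is that it needs only \ref{tenscorr}(iii) and the ideal property, with no passage through ${\rm mod}\mbox{-}{\cal T}^{\rm c}$ or the Serre-subcategory formulation, and it is consonant with the adjunction manipulations the paper itself performs in the proof of \ref{4ideals}; note that \ref{4ideals} quotes \ref{tenscloseddual}, so it is important (and true) that your argument does not rely on it.
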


\subsection{Internal Hom interpretation} \label{secinthom} \marginpar{secinthom}

We finish by pointing out some more ideals of ${\cal T}^{\rm c}$ associated to a definable category ${\cal D}$ in the rigidly-compactly generated tensor-triangulated context.  They appear (along with their rather provisional names) in the statement of the next result.

\begin{prop}\label{4ideals} \marginpar{4ideals}  Suppose that ${\cal T}$ is a rigidly-compactly generated tensor-triangulated category and let ${\cal X}\subset {\cal T}$.  We define the {\bf tensor-annihilator} of ${\cal X}$:
$$\otimes\mbox{-}{\rm ann}_{{\cal T}^{\rm c}}{\cal X} = \{ f:a\to b \in {\cal T}^{\rm c}: f\otimes X=0: a\otimes X \to b\otimes X \,\, \forall X\in {\cal X} \},
$$
the {\bf internal-hom-annihilator} of ${\cal X}$:
$$[{\rm ann}]_{{\cal T}^{\rm c}}{\cal X} = \{ f:a\to b \in {\cal T}^{\rm c}: [f,X]=0: [b,X] \to [a,X] \,\, \forall X\in {\cal X} \},
$$
the {\bf tensor phantomiser} of ${\cal X}$:
$$\otimes\mbox{-}{\rm phan}_{{\cal T}^{\rm c}}{\cal X} = \{ f:a\to b \in {\cal T}^{\rm c}: f\otimes X: a\otimes X \to b\otimes X \text{ is phantom } \forall X\in {\cal X} \},
$$
and the {\bf internal-hom-phantomiser} of ${\cal X}$:
$$[{\rm phan}]_{{\cal T}^{\rm c}}{\cal X} = \{ f:a\to b \in {\cal T}^{\rm c}: [f,X]: [b,X] \to [a,X] \text{ is phantom } \forall X\in {\cal X} \}.
$$
All these are ideals of ${\cal T}^{\rm c}$ and the tensor-annihilator and internal-hom-annihilator are dual ideals:  
$$(\otimes\mbox{-}{\rm ann}_{{\cal T}^{\rm c}}{\cal X})^\vee = [{\rm ann}]_{{\cal T}^{\rm c}}{\cal X}.
$$
Moreover, the tensor phantomiser and internal-hom-phantomiser coincide (we could call this the {\bf phantomiser}) and are equal to the annihilator ideal of the smallest tensor-closed definable subcategory $\langle {\cal X} \rangle^\otimes$ containing ${\cal X}$:
$$\otimes\mbox{-}{\rm phan}_{{\cal T}^{\rm c}}{\cal X} = [{\rm phan}]_{{\cal T}^{\rm c}}{\cal X} = {\rm Ann}_{{\cal T}^{\rm c}} \,\langle {\cal X} \rangle^\otimes.$$
Therefore this is also the annihilator ideal generated by each of $\otimes\mbox{-}{\rm ann}_{{\cal T}^{\rm c}}{\cal X}$ and $[{\rm ann}]_{{\cal T}^{\rm c}}{\cal X}$.
\end{prop}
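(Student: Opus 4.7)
My plan is to deduce the statement from five successive observations, each building on the previous.

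First I would check that each of the four classes is an ideal of ${\cal T}^{\rm c}$.  For the tensor- and internal-hom-annihilators this follows routinely from the bifunctoriality and additivity of $\otimes$ and $[-,-]$; for the phantomisers it follows because phantom morphisms (those killed by the additive functor $y$) form an ideal of ${\cal T}$, and pre-/post-composition through $f\otimes -$ and $[f,-]$ preserves this ideal structure by bifunctoriality.

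Next I would establish the duality $(\otimes\mbox{-}{\rm ann}_{{\cal T}^{\rm c}}{\cal X})^\vee = [{\rm ann}]_{{\cal T}^{\rm c}}{\cal X}$.  Rigidity of each compact $A$ provides a natural isomorphism $[A,X] \simeq A^\vee \otimes X$; naturality in $A$ along $f:A\to B$ in ${\cal T}^{\rm c}$ gives a commutative square identifying $[f,X]:[B,X]\to [A,X]$ with $f^\vee\otimes X:B^\vee\otimes X\to A^\vee\otimes X$.  Hence $[f,X]=0$ iff $f^\vee\otimes X=0$, i.e.\ $f\in [{\rm ann}]_{{\cal T}^{\rm c}}{\cal X}$ iff $f^\vee\in\otimes\mbox{-}{\rm ann}_{{\cal T}^{\rm c}}{\cal X}$.

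For the coincidence of phantomisers with ${\rm Ann}_{{\cal T}^{\rm c}}\langle{\cal X}\rangle^\otimes$, I would first identify $[{\rm phan}]_{{\cal T}^{\rm c}}{\cal X}$ with ${\rm Ann}_{{\cal T}^{\rm c}}\langle{\cal X}\rangle^\otimes$ by a double application of the tensor-hom adjunction.  Phantomness of $[f,X]$ means $(C,[f,X])=0$ for every $C\in{\cal T}^{\rm c}$; the adjunction $(C,[-,X])\simeq (C\otimes -,X)$ converts this to precomposition with $C\otimes f$ annihilating ${\rm Hom}(C\otimes b,X)$, i.e.\ $X(C\otimes f)=0$ for all compact $C$.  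On the other side, $\langle{\cal X}\rangle^\otimes$ is the definable closure of the generating set $\{X\otimes C:X\in{\cal X},\,C\in{\cal T}^{\rm c}\}$, so $f\in {\rm Ann}\langle{\cal X}\rangle^\otimes$ iff $(X\otimes C)f=0$ for all such $(X,C)$; the adjunction $(B,X\otimes C)\simeq (B\otimes C^\vee,X)$, valid by rigidity of $C$, converts this to $X(f\otimes C^\vee)=0$, and since $C^\vee$ runs over ${\cal T}^{\rm c}$ as $C$ does the two characterisations agree.  For the remaining equality, the identification $[f,X]\simeq f^\vee\otimes X$ from the second step yields $[{\rm phan}]_{{\cal T}^{\rm c}}{\cal X}=(\otimes\mbox{-}{\rm phan}_{{\cal T}^{\rm c}}{\cal X})^\vee$; combined with Theorem~\ref{tenscloseddual} (tensor-closed subcategories are self-dual) and Proposition~\ref{intdualdefn} (annihilator commutes with duality) this forces ${\rm Ann}\langle{\cal X}\rangle^\otimes$ to be self-dual, so $\otimes\mbox{-}{\rm phan}_{{\cal T}^{\rm c}}{\cal X}=([{\rm phan}]_{{\cal T}^{\rm c}}{\cal X})^\vee = {\rm Ann}\langle{\cal X}\rangle^\otimes$, completing the triple equality.

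Lastly, for the claim about generated annihilator ideals, both $\otimes\mbox{-}{\rm ann}_{{\cal T}^{\rm c}}{\cal X}$ and $[{\rm ann}]_{{\cal T}^{\rm c}}{\cal X}$ are contained in ${\rm Ann}\langle{\cal X}\rangle^\otimes$ (a zero morphism is phantom), so the annihilator ideals they generate are contained in ${\rm Ann}\langle{\cal X}\rangle^\otimes$.  For the reverse containment I would observe that $\otimes\mbox{-}{\rm ann}_{{\cal T}^{\rm c}}{\cal X}$ is stable under $\otimes$-with-compacts (from $f\otimes X=0\Rightarrow(f\otimes C)\otimes X=f\otimes X\otimes C=0$), and verify via the tensor-hom adjunction that ${\rm Ann}_{\cal T}$ of such a tensor-stable class of morphisms is itself a tensor-closed definable subcategory of ${\cal T}$; by what is already established it contains $\langle{\cal X}\rangle^\otimes$, and a Galois-theoretic argument together with Theorem~\ref{tenscorr} should then force the two tensor-closed subcategories to have the same annihilator ideal, invoking \ref{defcatann}.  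The dual case handles $[{\rm ann}]_{{\cal T}^{\rm c}}{\cal X}$ using the duality of the second step.  \emph{The main obstacle} is this final verification --- specifically, showing that ${\rm Ann}_{\cal T}(\otimes\mbox{-}{\rm ann}_{{\cal T}^{\rm c}}{\cal X})$ and $\langle{\cal X}\rangle^\otimes$ produce the same annihilator ideal, which is delicate because $\otimes\mbox{-}{\rm ann}_{{\cal T}^{\rm c}}{\cal X}$ is in general a strict subideal of the phantomiser; the argument rests essentially on the tensor-closure machinery of Theorem~\ref{tenscorr} together with the internal-duality results from Section~\ref{secintdual}.
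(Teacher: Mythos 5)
Your treatment of the displayed equalities is correct and is essentially the paper's own argument: the rigidity identification of $[f,X]$ with $f^\vee\otimes X$ (the paper uses the equivalent form $f\otimes X\simeq[f^\vee,X]$) gives $(\otimes\mbox{-}{\rm ann}_{{\cal T}^{\rm c}}{\cal X})^\vee=[{\rm ann}]_{{\cal T}^{\rm c}}{\cal X}$, and your phantomiser computation uses exactly the paper's ingredients --- the tensor-hom adjunction, the fact that every compact object is a dual (so that the objects $c\otimes X$ generate $\langle{\cal X}\rangle^\otimes$ and detection by compacts translates into annihilation conditions on them), and self-duality of tensor-closed definable subcategories via \ref{tenscloseddual} and \ref{intdualdefn}. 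The only cosmetic difference is that you compute $[{\rm phan}]_{{\cal T}^{\rm c}}{\cal X}$ first and transfer to $\otimes\mbox{-}{\rm phan}_{{\cal T}^{\rm c}}{\cal X}$ by duality, whereas the paper runs both computations directly, invoking self-duality inside the $\otimes\mbox{-}{\rm phan}$ computation.

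The problem is your final step, and your own diagnosis of where the difficulty sits is right, but the route you sketch cannot close it. Proving that ${\rm Ann}_{\cal T}(\otimes\mbox{-}{\rm ann}_{{\cal T}^{\rm c}}{\cal X})$ is a tensor-closed definable subcategory containing ${\cal X}$ gives only ${\rm Ann}_{\cal T}(\otimes\mbox{-}{\rm ann}_{{\cal T}^{\rm c}}{\cal X})\supseteq\langle{\cal X}\rangle^\otimes$, hence, applying ${\rm Ann}_{{\cal T}^{\rm c}}$, only the containment of the generated cohomological ideal inside ${\rm Ann}_{{\cal T}^{\rm c}}\langle{\cal X}\rangle^\otimes$ --- which you already had from $\otimes\mbox{-}{\rm ann}\subseteq\otimes\mbox{-}{\rm phan}$. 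What the last sentence of the statement actually demands is the reverse containment, equivalently (by the Galois correspondence of \ref{defcatann}) the equality ${\rm Ann}_{\cal T}(\otimes\mbox{-}{\rm ann}_{{\cal T}^{\rm c}}{\cal X})=\langle{\cal X}\rangle^\otimes$: an object annihilating every morphism honestly tensor-killed by ${\cal X}$ must already annihilate the a priori much larger phantomiser. Neither \ref{tenscorr} nor the duality results of Section \ref{secintdual} produce that, so your appeal to them is circular as written. For the comparison you asked for: the paper's proof establishes only $(\otimes\mbox{-}{\rm ann})^\vee=[{\rm ann}]$ and $\otimes\mbox{-}{\rm phan}=[{\rm phan}]={\rm Ann}_{{\cal T}^{\rm c}}\langle{\cal X}\rangle^\otimes$ and gives no argument for the concluding ``generated by'' assertion (it is presented as a ``Therefore''), so you were not overlooking a recorded argument; but as it stands your step four, like the paper's text, leaves that assertion unjustified, and a complete write-up would need a genuine proof of ${\rm Ann}_{\cal T}(\otimes\mbox{-}{\rm ann}_{{\cal T}^{\rm c}}{\cal X})\subseteq\langle{\cal X}\rangle^\otimes$ (and its $[{\rm ann}]$ analogue), or else a weakening of that final claim to the one containment that does follow.
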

\begin{proof}  For every $X\in {\cal T}$, $A\otimes X \xrightarrow{f\otimes X} B\otimes X$ is (isomorphic to) $A^{\vee \vee} \otimes X \xrightarrow{f^{\vee \vee}} B^{\vee \vee}\otimes X$ and therefore is $[A^\vee,X] \xrightarrow{[f^\vee,X]} [B^\vee,X]$.  Thus, the condition $f\otimes X =0: A\otimes X \to B\otimes X$ is equivalent to the condition $[f^\vee,X]=0: [A^\vee,X] \to [B^\vee,X]$ and we have $\otimes\mbox{-}{\rm ann}_{{\cal T}^{\rm c}}{\cal X} = ([{\rm ann}]_{\cal T}{\cal X})^\vee$.

For the other parts, we have $f\in \otimes\mbox{-}{\rm phan}_{{\cal T}^{\rm c}}{\cal X}$ iff for every $c\in {\cal T}^{\rm c}$ we have $(c, f\otimes X)=0$, that is $(f^\vee,c^\vee \otimes X)=0$ which, since every compact object is a dual, is equivalent to $f^\vee \in  \otimes\mbox{-}{\rm ann}_{{\cal T}^{\rm c}}\, \langle{\cal X}\rangle^\otimes$.  By \ref{tenscloseddual}, $f^\vee \in  \otimes\mbox{-}{\rm ann}_{{\cal T}^{\rm c}}\, \langle{\cal X}\rangle^\otimes$ iff $f \in  \otimes\mbox{-}{\rm ann}_{{\cal T}^{\rm c}}\, \langle{\cal X}\rangle^\otimes$.  Therefore $\otimes\mbox{-}{\rm phan}_{{\cal T}^{\rm c}}{\cal X} = \otimes\mbox{-}{\rm ann}_{{\cal T}^{\rm c}}\, \langle{\cal X}\rangle^\otimes$.

Also, $f\in [{\rm phan}]_{{\cal T}^{\rm c}}{\cal X}$ iff for every $c\in {\cal T}^{\rm c}$ we have $(c,[f,X])=0$, equivalently $(f,c^\vee \otimes X)=0$ which, since every compact object is a dual, is equivalent to $f\in {\rm ann}_{{\cal T}^{\rm c}}\, \langle{\cal X}\rangle^\otimes$.  Therefore $[{\rm phan}]_{{\cal T}^{\rm c}}{\cal X} = {\rm ann}_{{\cal T}^{\rm c}}\, \langle{\cal X}\rangle^\otimes = \otimes\mbox{-}{\rm phan}_{{\cal T}^{\rm c}}{\cal X}$, as claimed.
\end{proof}

Note that the condition $f^\vee\in [{\rm ann}]_{\cal T}{\cal X}$ is expressed by the condition ``$Xf^\vee=0$" with $B^\vee \xrightarrow{f^\vee} A^\vee$.  This looks like an annihilator sentence but it is for internal hom, rather than actual hom, groups.
This suggests an alternative, internal-hom, interpretation of the model-theoretic language, remarked at \ref{rmkinthom}, when ${\cal T}$ is a rigidly-compactly generated tensor-triangulated category.  In this interpretation the value of $X\in {\cal T}$ at sort $A\in {\cal T}^{\rm c}$ is $[A,X]$, rather than $(A,X)$, and the interpretation of $A\xrightarrow{f} B \in {\cal T}^{\rm c}$ in $X$ is $[f,X]:[B,X] \to [A,X]$ rather than $(f,X):(B,X) \to (A,X)$.  In this interpretation of the language the values of sorts at objects of ${\cal T}$ are again objects of ${\cal T}$, not abelian groups.

This also constitutes an alternative ``internal restricted Yoneda" functor from ${\cal T}$ to the ``${\cal T}$-valued-module category" ${\rm Mod}_{\cal T}\mbox{-}{\cal T}^{\rm c} = (({\cal T}^{\rm c})^{\rm op}, {\cal T})$, which takes $X\in {\cal T}$ to the functor $[-,X]:({\cal T}^{\rm c})^{\rm op} \to {\cal T}$ and takes $f:X\to Y$ to $[-,f]:[-,X] \to [-,Y]$.  In this internal-hom interpretation, the language for ${\cal T}$ stays the same but the interpretation has changed:  instead of $(-,X)$ we use $[-,X]$.

Similarly, the tensor-annihilator that we defined above belongs to a third (in this case, covariant) interpretation of the same language, based on $-\otimes X$, rather than $(-,X)$ or $[-, X]$.

In both these new interpretations the sorts belong to ${\cal T}$ rather than to ${\bf Ab}$, so we cannot immediately make sense of ``elements" of a sort.  But, using the idea of an ``element" being an arrow from the tensor-unit $\mathbbm{1}$, we can move back to the category of ${\cal T}^{\rm c}$-modules.  If we do that, we recover the usual interpretation (from the internal-hom interpretation) and an `internal dual' interpretation (from the tensor interpretation).  That is, we have:

$$y:{\cal T} \to {\rm Mod}\mbox{-}{\cal T}^{\rm c} \text{ given by } X\mapsto (-,X);$$

$$[y]:{\cal T} \to (({\cal T}^{\rm c})^{\rm op}, {\cal T}) \text{ given by }X\mapsto [-,X];$$

$$\epsilon:{\cal T} \to ({\cal T}^{\rm c}, {\cal T})\text{ given by }X\mapsto (-\otimes X).$$

The latter two can then be composed with $(\mathbbm{1},-)$:
$$(\mathbbm{1},-)[y]=y:{\cal T} \to (({\cal T}^{\rm c})^{\rm op}, {\cal T}) \to {\rm Mod}\mbox{-}{\cal T}^{\rm c} 
$$
$$\text{ given by }X\mapsto [-,X] \mapsto (\mathbbm{1},[-,X]) \simeq (-,X);
$$
and
$$(\mathbbm{1},-)\epsilon:{\cal T} \to ({\cal T}^{\rm c}, {\cal T}) \to {\cal T}^{\rm c}\mbox{-}{\rm Mod}$$
$$\text{ given by } X\mapsto (-\otimes X) \mapsto (\mathbbm{1}, -\otimes X) \simeq (\mathbbm{1},[(-)^\vee, X]) \simeq ((-)^\vee,X)$$ 

\vspace{4pt}

Also, essentially following \cite[4.13]{BirWilHomSpec}, note that if $A\in {\cal T}^{\rm c}$ and $X\in {\cal T}$, then $[A,X] =0$ iff, for all $C \in {\cal T}^{\rm c}$, we have $(C,[A,X])=0$ iff, for all $C\in {\cal T}^{\rm c}$, we have $(C\otimes A,X)=0$.  In particular
$$\{N \in {\rm Zg}({\cal T}): [A,N]=0\} = \bigcap_{C\in {\cal T}^{\rm c}} \{N\in {\rm Zg}({\cal T}): (C\otimes A,N)=0\}$$
is an intersection of Ziegler-closed sets, hence is itself Ziegler-closed.

Furthermore, continuing the above computation, we have $[A,X]=0$ iff, for all $C\in {\cal T}^{\rm c}$, we have $(A \otimes C,X)=0$ iff, for all $C\in {\cal T}^{\rm c}$, we have $(A,[C,X])=0$ iff, for all $C\in {\cal T}^{\rm c}$, we have $(A, C^\vee \otimes X)=0$, iff, for all $C\in {\cal T}^{\rm c}$, we have $(A, C\otimes X)=0$.  So if ${\cal D}$ is the definable subcategory of ${\cal T}$ cut out by the condition $(A,-)=0$, then the condition $[A,-]=0$ cuts out the smallest tensor-closed definable subcategory of ${\cal T}$ containing ${\cal D}$.

\end{document}